\DeclareMathOperator{\rig}{rig}
\DeclareMathOperator{\Hom}{\mathsf{Hom}}
\DeclareMathOperator{\Bun}{\mathsf{Bun}}
\DeclareMathOperator{\ev}{ev}
\DeclareMathOperator{\reg}{reg}
\DeclareMathOperator{\sm}{sm}
\DeclareMathOperator{\codim}{codim}
\DeclareMathOperator{\inv}{\mathsf{inv}}
\DeclareMathOperator{\hash}{\#}
\DeclareMathOperator{\A}{{\mathsf{A}}}
\DeclareMathOperator{\B}{{\mathsf{B}}}
\DeclareMathOperator{\Ho}{\mathsf{Ho}}
\DeclareMathOperator{\Nb}{\mathbf{N}}
\DeclareMathOperator{\Sh}{Sh}
\DeclareMathOperator{\Cb}{\mathbb{C}}
\DeclareMathOperator{\Fb}{\mathbb{F}}
\DeclareMathOperator{\Xc}{\mathcal{X}}
\newcommand{\Cc}{\mathcal{C}}
\newcommand{\Gb}{\mathbb{G}}
\newcommand{\Ec}{\mathcal{E}}
\newcommand{\Hc}{\mathcal{H}}
\DeclareMathOperator{\Pb}{\mathbb{P}}
\DeclareMathOperator{\colim}{\mathsf{colim}}
\renewcommand{\lim}{\mathsf{lim}}
\DeclareMathOperator{\ani}{ani}
\DeclareMathOperator{\Inn}{Int}
\newcommand{\C}{\mathsf{C}}
\newcommand\toover[1]{\mathrel{\smash{\overset{#1}{\to}}}}
\newcommand\varto[1]{\mathrel{\hbox to #1pt{\rightarrowfill}}}
\def\isoto{\stackrel{\sim}{\longrightarrow}}
\DeclareMathOperator{\id}{id}
\DeclareMathOperator{\Tt}{\mathcal{T}}
\DeclareMathOperator{\Lie}{Lie}
\DeclareMathOperator{\Ind}{\mathsf{Ind}}
\DeclareMathOperator{\Zb}{\mathbb{Z}}
\DeclareMathOperator{\Xb}{\mathbb{X}}
\DeclareMathOperator{\Mb}{\mathbb{M}}
\DeclareMathOperator{\F}{\mathcal{F}}
\DeclareMathOperator{\Gal}{Gal}
\DeclareMathOperator{\Aut}{\mathsf{Aut}}
\DeclareMathOperator{\G}{\mathbb{G}}
\DeclareMathOperator{\GL}{GL}
\DeclareMathOperator{\PGL}{PGL}
\DeclareMathOperator{\SL}{SL}
\DeclareMathOperator{\M}{{\mathsf{M}}}
\DeclareMathOperator{\D}{\mathsf{D}}
\DeclareMathOperator{\Tb}{\mathbb{T}}
\newcommand{\Mc}{\mathcal{M}}
\DeclareMathOperator{\Tr}{\mathsf{Tr}}
\DeclareMathOperator{\Fr}{Fr}
\DeclareMathOperator{\Gg}{\mathcal{G}}
\DeclareMathOperator{\Pc}{\mathcal{P}}
\DeclareMathOperator{\vol}{\mathsf{vol}}
\DeclareMathOperator{\Spec}{\mathsf{Spec}}
\DeclareMathOperator{\Hhom}{\underline{\Hom}}
\DeclareMathOperator{\Oo}{\mathcal{O}}
\renewcommand{\top}{\mathsf{top}}
\DeclareMathOperator{\ord}{ord}
\DeclareMathOperator{\Tc}{\mathcal{T}}
\DeclareMathOperator{\ad}{\mathsf{ad}}
\DeclareMathOperator{\Qb}{\mathbb{Q}}
\newcommand{\BA}{{\mathbb{A}}}
\newcommand{\BB}{{\mathbb{B}}}
\newcommand{\BF}{{\mathbb{F}}}
\newcommand{\BG}{{\mathbb{G}}}
\newcommand{\BH}{{\mathbb{H}}}
\newcommand{\BM}{{\mathbb{M}}}
\newcommand{\BQ}{{\mathbb{Q}}}
\newcommand{\BT}{{\mathbb{T}}}
\newcommand{\BW}{{\mathbb{W}}}
\newcommand{\BX}{{\mathbb{X}}}
\newcommand{\BZ}{{\mathbb{Z}}}
\newcommand{\CE}{{\mathcal E}}
\newcommand{\CF}{{\mathcal F}}
\newcommand{\CM}{{\mathcal M}}
\newcommand{\CO}{{\mathcal O}} 
\DeclareMathOperator{\Res}{\mathsf{Res}}
\DeclareMathOperator{\Nm}{\mathsf{Nm}}
\DeclareMathOperator{\charac}{\mathsf{char}}
\DeclareMathOperator{\AJ}{\mathsf{AJ}}
\DeclareMathOperator{\Pic}{\mathsf{Pic}}
\DeclareMathOperator{\Out}{Out}
\newcommand{\wpc}{\widetilde{\mathcal{P}} }
\newcommand{\wmc}{\widetilde{\Mc} }
\newcommand{\wac}{\widetilde{\A} }
\newcommand{\wm}{\widetilde{M} }
\newcommand{\g}{\mathfrak{g} }
\newcommand{\FA}{\A}
\newcommand{\cc}{\mathfrak{c} }
\DeclareMathOperator{\diag}{diag}
\DeclareMathOperator{\Ext}{Ext}
\newcommand{\defeq}{\colonequals}
\let\into\hookrightarrow
\theoremstyle{definition}
\newtheorem{definition}{Definition}[section]
\newtheorem{construction}[definition]{Construction}
\newtheorem{rmk}[definition]{Remark}
\theoremstyle{plain}
\newtheorem{theorem}[definition]{Theorem}
\newtheorem{proposition}[definition]{Proposition}
\newtheorem{corollary}[definition]{Corollary}
\newtheorem{lemma}[definition]{Lemma}
\newtheorem{example}[definition]{Example} 
\newtheorem{claim}[definition]{Claim}
\newtheorem{situation}[definition]{Situation}
\newtheorem{goal}[definition]{Goal}
\begin{document}
\title{Geometric stabilisation via $p$-adic integration\let\thefootnote\relax\footnotetext{M. G. was funded by a Marie Sk\l odowska-Curie fellowship: This project has received funding from the European Union's Horizon 2020 research and innovation programme under the Marie Sk\l odowska-Curie Grant Agreement No. 701679. D.W. was supported  by the Foundation Sciences Math\' ematiques de Paris, as well as a public grant overseen by the French National Research Agency (ANR) as part of the \emph{Investissements d'avenir} program (reference: ANR-10-LABX-0098) and also by ANR-15-CE40-0008 (D\'efig\'eo). P.Z. was supported by the Swiss National Science Foundation. \\
\includegraphics[height = 1cm,right]{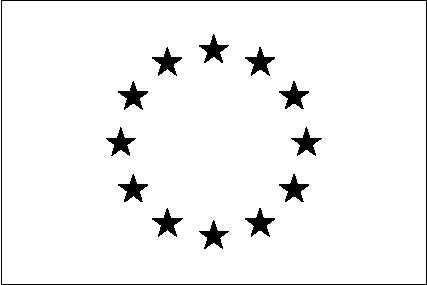} 
} 
}

\author[1]{Michael Groechenig\thanks{\url{michael.groechenig@utoronto.ca}}}
\author[2]{Dimitri Wyss\thanks{\url{dimitri.wyss@epfl.ch}}}
\author[3]{Paul Ziegler\thanks{\url{paul.ziegler@maths.ox.ac.uk}}}
\affil[1]{Department of Mathematics, University of Toronto}
\affil[2]{\'Ecole polytechnique f\'ed\'erale de Lausanne}
\affil[3]{Mathematical Institute, University of Oxford}

\renewcommand\Authands{ and }
\maketitle 
\abstract{In this article we give a new proof of Ng\^o's \textit{Geometric Stabilisation Theorem}, which implies the Fundamental Lemma. This is a statement which relates the cohomology of Hitchin fibres for a quasi-split reductive group scheme $G$ to the cohomology of Hitchin fibres for the endoscopy groups $H_{\kappa}$.
Our proof avoids the Decomposition and Support Theorem, instead the argument is based on results for $p$-adic integration on coarse moduli spaces of Deligne-Mumford stacks. Along the way we establish a description of the inertia stack of the (anisotropic) moduli stack of $G$-Higgs bundles in terms of endoscopic data, and extend duality for generic Hitchin fibres of Langlands dual group schemes to the quasi-split case.
}

\tableofcontents

\section{Introduction}

The proof of the Fundamental Lemma of Langlands-Shelstad given by Ng\^o \cite{MR2653248} contains a spectacular amount of mathematics. The original conjecture was formulated as an equality between $\kappa$-orbital integrals for a reductive group $G$ and stable orbital integrals for its $\kappa$-endoscopic group $H$ over a non-archimedean local field. In the equicharacteristic case, Goresky, Kottwitz and MacPherson \cite{GMK04} gave a geometric interpretation of the Fundamental Lemma in terms of the cohomology of affine Springer fibres. Subsequently Waldspurger \cite{Wa06} showed that it was sufficient to prove the function field case and further reduced the Fundamental Lemma to its Lie algebra version. For the latter, Ng\^o found a global reformulation in terms of the Hitchin fibration and finally deduced the Fundamental Lemma from his Geometric Stabilisation Theorems \cite[Theorem 6.4.1, Theorem 6.4.2]{MR2653248} (see also Laumon--Ng\^o \cite{MR2434884} where the case of the unitary group is treated).

In this paper we give a new proof of geometric stabilisation using $p$-adic integration along the Hitchin fibration. Our main motivation comes from the work of Hausel and Thaddeus \cite{MR1990670}, where they prove that the Hitchin fibrations for the Langlands dual groups $\SL_n$ and $\PGL_n$ are dual in the sense of abelian varieties, and conjecture a relation between the cohomology of these spaces (in \emph{loc. cit.} this is referred to as SYZ-duality). It was already observed by Hausel \cite[Section 5.4]{hausel2011global}, that their conjecture was closely related to the Geometric Stabilisation Theorem for $\SL_n$. In \cite{gwz} we established their conjecture via $p$-adic integration relying heavily on the aforementioned duality, which by work of Donagi-Pantev \cite{MR2957305} and Chen-Zhu \cite{chenzhu} also holds for general pair of Langlands dual groups $(G, \widehat{G})$. In the the present work we thus extend our methods from \cite{gwz} to general $(G, \widehat{G})$ and infer geometric stabilisation for $G$. This sheds new light on the algebro-geometric origins of Geometric Stabilisation, and thus the Fundamental Lemma. Furthermore, our proof does neither rely on the Decomposition Theorem nor on codimension estimates for the Hitchin system.


\subsubsection*{Statement of the main results}
We start by briefly introducing the Hitchin fibration, for a more detailed account we refer the reader to Section \ref{HiggsBundles} and \cite[Section 4]{MR2653248}. 

Let $X$ be a smooth, projective, geometrically connected curve of genus $g$ over a finite field $k$ of size $q$ and $D$ a line bundle of degree even $d$ on $X$ (see Section \ref{proofgst} for precise conditions on $d$). Let $G \to X$ be a quasi-split reductive group scheme on $X$ and $\g \to X$ its sheaf of Lie algebras. 
We assume the existence of a base point $\infty \in X(k)$, such that the group scheme $G_{\infty}$ on $\Spec k$ splits. Furthermore, we assume that $p = \mathrm{char}(k)$ is sufficiently large (see Subsection \ref{sub:overview} for more details).

 A $G$-Higgs bundle with coefficients in $D$ on $X$ is a pair $(E,\theta)$ where $E$ is a $G$-torsor on $X$, and $\theta$ a section of $\ad E \otimes D$, where $\ad E = E \times^{G} \g$ denotes the induced sheaf of Lie algebras of infinitesimal automorphisms of $E$. We denote by $\Mb_G = \Mb_G(X,D)$ the stack of $G$-Higgs bundles. The latter stack is a gerbe over the rigidification $\Mc_G$ by the Weil restriction of the centre $Z(X,G)$. Most of our constructions will be based on the rigidified stack of $G$-Higgs bundles $\Mc_G$.


The Chevalley map $\g \to \cc = \mathfrak{h}/W$ gives rise to a map from $\Mc_G$ to an affine space $\A_G$, called the Hitchin base. This map is referred to as the Hitchin fibration
\begin{equation}\label{hitfib}\chi: \Mc_G \to \FA_G.\end{equation}
Finally, there is a commutative group scheme $\Pc_G \to \FA_G$, related to the sheaf of regular centralisers. There is a natural action of $\Pc_G$ on $\Mc_G$, which over a dense open $\FA_G^{\Diamond} \subset \FA_G$ makes $\Mc_G$ into a $\Pc_G$-torsor. Geometric stabilisation phenomena occur on a slightly larger open subset, the anisotropic Hitchin base $\FA_G^{\Diamond} \subset \FA_G^{\ani} \subset \FA_G$ (see Definition \ref{defi:ani}). For technical reasons we further pass to an \'etale open covering $\widetilde{\FA}_G \to \FA_G^{\ani}$ (see Definition \ref{defi:tilde}). 

For every $a \in \widetilde{\FA}_G$ let $\widetilde{\Mc}_{G,a}$ and $\widetilde{\Pc}_{G,a}$ denote the respective fibres over $a$. The action of $\widetilde{\Pc}_{G,a}$ on the compactly supported $\ell$-adic cohomology $H^*_c(\widetilde{\Mc}_{G,a}, \overline{\Qb}_\ell)$ factors through the component group $\pi_0(\widetilde{\Pc}_{G,a})$ and hence we can define for every character $\kappa: \pi_0(\widetilde{\Pc}_{G,a}) \to \overline{\Qb}_\ell^\times$ the $\kappa$-point count
\begin{equation}\label{ladicpc}\#^\kappa \widetilde{\Mc}_{G,a}(k) = \sum_n (-1)^n \mathrm{tr}\left(\text{Frob}_k,H_c^n(\widetilde{\Mc}_{G,a}, \overline{\Qb}_\ell)^\kappa\right),\end{equation}
where $H_c^*(\widetilde{\Mc}_{G,a})^\kappa$ denotes the $\kappa$-isotypical component of $H_c^*(\widetilde{\Mc}_{G,a}, \overline{\Qb}_\ell)$. For $\kappa = \mathrm{1}$ the trivial character we write 
\[\#^{stab} \widetilde{\Mc}_{G,a}(k) = \#^{\mathrm{1}} \widetilde{\Mc}_{G,a}(k).\]

For an endoscopic group $H \to X$ of $G$ one has a closed immersion of Hitchin bases $\widetilde{\FA}_{H} \to \widetilde{\FA}_G$ and furthermore for every $a \in \widetilde{\FA}_{H}(k) \subset \widetilde{\FA}_{G}(k)$ a character $\kappa_a: \pi_0(\widetilde{\Pc}_{G,a}) \to \overline{\Qb}_\ell^\times$. The main result of this article is a new proof of Ng\^o's \cite[Theorem 6.4.2]{MR2653248}. 

\begin{theorem}[Corollary \ref{cor:geometric_endoscopy} \& \cite{MR2653248} Theorem 6.4.2]\label{gmst} Suppose that we are under the assumptions of Corollary \ref{cor:geometric_endoscopy}. For every $a \in \widetilde{\FA}_H(k)$ we have 
\[\#^{\kappa_a} \widetilde{\Mc}_{G,a}(k) = q^{r_H^G(D)}\#^{stab} \widetilde{\Mc}_{H,a}(k), \]
where $r_H^G(D)$ denotes $\frac{1}{2}(\dim \widetilde{\Mc}_{G} - \dim \widetilde{\Mc}_{H})$.
\end{theorem}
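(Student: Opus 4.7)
The plan is to deduce the equality from a $p$-adic integration identity, globalising our approach from \cite{gwz} for $(\SL_n,\PGL_n)$ to arbitrary Langlands dual pairs. Rather than comparing the two sides fibrewise, I would work in a $p$-adic analytic neighbourhood of the full locus $\widetilde{\FA}_H(k) \subset \widetilde{\FA}_G(k)$ and interpret each side of the claimed equality as an integral against a stringy (orbifold) measure on the coarse moduli of the Hitchin system. The main tool on the $G$-side is the inertia description established earlier in the paper, which expresses the components of $I\Mc_G$ in terms of endoscopic data; the main tool on the $\widehat{G}$-side (which serves as a bridge to $H$) is the extension of SYZ-duality for Hitchin systems to quasi-split group schemes, also proved in the paper.

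First, I would lift the situation to characteristic zero: choose a finite extension $F/\Qb_p$ with ring of integers $R$ and residue field $k$ (containing enough roots of unity to detect $\kappa$), together with compatible $R$-models of $X,\ G,\ H$ and of $\widetilde{\Mc}_G,\ \widetilde{\Pc}_G,\ \widetilde{\FA}_G$. Under the large-$p$ hypothesis these models are smooth over $\widetilde{\FA}_G$ on the anisotropic locus. For each $a \in \widetilde{\FA}_H(k)$, the fibres $\widetilde{\Mc}_{G,a}$ and $\widetilde{\Mc}_{H,a}$ (over $R$) are smooth DM stacks. The key formula is the stringy-motivic/\,$p$-adic volume identity
\[
q^{-\dim \widetilde{\Mc}_{G,a}} \cdot \#^\kappa \widetilde{\Mc}_{G,a}(k) \;=\; \int_{|\widetilde{\Mc}_{G,a}|(R)} f_\kappa \, |\omega|,
\]
where $f_\kappa$ is the stringy weight twisted by the character $\kappa$ of the inertia and $|\omega|$ is a canonically normalised top form on the coarse moduli. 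For $\kappa = 1$ this is the usual Denef--Loeser/orbifold formula, and the twisted version is obtained by weighting each component of the inertia by the value of $\kappa$.

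Second, I would apply the inertia description to rewrite the $\kappa$-weighted integral on the $G$-side as a sum of contributions indexed by endoscopic subgroups. Concentrating on the contribution of $H$, the corresponding component of $I\widetilde{\Mc}_{G,a}$ is a gerbe over (an open in) $\widetilde{\Mc}_{H,a}$, so its integral matches $q^{-\dim \widetilde{\Mc}_{H,a}} \#^{stab}\widetilde{\Mc}_{H,a}(k)$ up to a Tate twist and a constant gerbe class. The remaining task is to show that this gerbe class integrates trivially; here one invokes the extended SYZ-duality: the Hitchin fibres for the Langlands dual pair $(G,\widehat G)$ are dual, which on $p$-adic points gives a Fourier-type identification of characters of $\widetilde{\Pc}_{G,a}$ with connected components of the dual. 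Pontryagin duality over the local field $F$ then forces the ``non-stable'' characters on the $G$-side to integrate to zero, leaving only the endoscopic contribution.

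Third, assembling the pieces and normalising by $q^{\dim \widetilde{\Mc}_{G,a}}$ yields the factor $q^{r_H^G(D)} = q^{(\dim \widetilde{\Mc}_G - \dim \widetilde{\Mc}_H)/2}$ in front of the $H$-count. I expect the main obstacle to be the precise matching, on the $p$-adic side, of the character $\kappa_a$ defined by endoscopic data with the character through which the action of $\widetilde{\Pc}_{G,a}$ on the cohomology factors; equivalently, checking that the gerbe class produced by SYZ-duality restricts to the trivial class exactly on the $H$-stratum of the inertia. A secondary technical issue is that $\widetilde{\FA}_G^{\ani}$ is strictly larger than the generic locus $\widetilde{\FA}_G^\Diamond$ where the $\Pc_G$-action is free, so one has to extend the $p$-adic integration and the duality across this boundary; the passage to the \'etale cover $\widetilde{\FA}_G \to \FA_G^{\ani}$ is what makes this extension possible without losing the stringy formula. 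Once these points are resolved, the theorem follows by equating the two expressions for the $\kappa$-twisted integral.
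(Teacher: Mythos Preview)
Your outline captures the broad strategy of the paper---$p$-adic integration on the coarse moduli of the Hitchin system, the inertia description, and duality of Pryms---but it has two structural gaps that prevent it from going through as written.

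First, the inertia stack $I_{\widehat\mu}\widetilde{\Mc}_G$ decomposes in terms of \emph{coendoscopy} groups of $G$ (i.e.\ endoscopy groups of $\widehat G$), not endoscopy groups of $G$. The group $H$ in the statement is an endoscopy group of $G$, so it does \emph{not} appear as a component of $I_{\widehat\mu}\widetilde{\Mc}_{G,a}$; rather, its Langlands dual $\widehat H$ appears as a component of $I_{\widehat\mu}\widetilde{\Mc}_{\widehat G,a}$. Consequently the comparison of $p$-adic integrals runs between $\widetilde{\Mc}_G$ and $\widetilde{\Mc}_{\widehat G}$ (this is Theorem~\ref{mainidentity}), and the endoscopy group $H$ only enters after one proves the non-standard Fundamental Lemma $\#^{stab}\widetilde{\Mc}_{H,a}(k)=\#^{stab}\widetilde{\Mc}_{\widehat H,a}(k)$ (Corollary~\ref{nonstd}) to convert $\widehat H$ back to $H$. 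Your proposal conflates these two steps and places $H$ on the wrong side.

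Second, your ``key formula'' expressing $\#^\kappa\widetilde{\Mc}_{G,a}(k)$ directly as a single $\kappa$-twisted $p$-adic integral does not exist in the form you state. The paper extracts $\kappa$-isotypic point counts by introducing the family of \emph{unramified twists} $\widetilde{\Mc}_{G,a}^{\blacksquare,t}$ indexed by $t\in\pi_0(\widetilde{\Pc}_{G,a}^\blacksquare)$ and then Fourier-inverting over $t$ (Lemma~\ref{Fourier}); the integrand on each twist is the function $\chi_s$ coming from a specific $\Tb/Z(X,G)$-torsor, whose Hasse invariant is identified via the gerbe machinery of Section~\ref{sec:hasse} and matched with the dual twist under Tate duality (Lemmas~\ref{lemma:0}--\ref{lemma:1}). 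Without this twist-and-Fourier mechanism there is no way to isolate a single character $\kappa$ from the volume. Finally, the paper works in equicharacteristic $F=k((t))$ with the Hitchin system pulled back constantly along $\Spec k[[t]]\to\Spec k$; your lift to mixed characteristic is an unnecessary detour that would require genuine spreading-out arguments not present here.
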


We prove this in Corollary \ref{cor:geometric_endoscopy}. As Ng\^o explains in \cite[Section 8]{MR2653248}, Theorem \ref{gmst} implies the Fundamental Lemma. We note that in \emph{loc. cit.} this result is stated for the stacks $\Mb_G$. However, the following remark applies:

 \begin{rmk}\label{rmk:cohomology}
 Since $\Mb_{G,a}$ is a gerbe over $\Mc_{G,a}$ banded by a finite \'etale group scheme, the $\Qb_\ell$-adic cohomology groups of $\Mb_{G,a}$ and $\Mc_{G,a}$ are canonically isomorphic. In particular, the two stacks have the same number of $k$-rational points over a finite field $k$.
 \end{rmk}

In our $p$-adic approach it is not the endoscopic group $H$ that appears naturally but rather its dual group $\widehat{H}$. Thus our proof of Theorem \ref{gmst} relies on a geometric version of Walspurger's non-standard Fundamental Lemma for dual groups \cite{Wa08}, which we can also prove with our methods.

\begin{theorem}[Corollary \ref{nonstd} \& \cite{MR2653248} Theorem 8.8.2 ]\label{walem} For every $a \in \widetilde{\FA}_G(k) \cong \widetilde{\FA}_{\widehat{G}}(k)$ we have
\[ \#^{stab} \widetilde{\Mc}_{G,a}(k)= \#^{stab} \widetilde{\Mc}_{\widehat{G},a}(k).\]
\end{theorem}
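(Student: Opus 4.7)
The strategy is to adapt the $p$-adic integration method of \cite{gwz}, where the analogous equality was established for the Langlands dual pair $(\SL_n, \PGL_n)$. Two ingredients drive the argument: (i) a stacky $p$-adic integration formula relating the stable point count $\#^{stab} \widetilde{\Mc}_{G,a}(k)$ to a canonical $p$-adic volume on the coarse moduli space of the Hitchin fibre; and (ii) the identification of the coarse moduli on the $G$-side with that on the $\widehat{G}$-side coming from a generic duality between the two Hitchin systems.

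I would first spread out to characteristic zero by choosing a lift $\widetilde{a} \in \widetilde{\FA}_G(\Oc_F)$ of $a$, where $F$ is a $p$-adic local field with residue field $k$. The anisotropy hypothesis guarantees that $\widetilde{\Mc}_{G,\widetilde{a}}$ is a proper Deligne-Mumford stack over $\Oc_F$. A stacky generalisation of Weil's volume formula for tame Deligne-Mumford stacks equipped with a gauge form should then yield an identity of the form
$$\#^{stab} \widetilde{\Mc}_{G,a}(k) = q^{d_G/2} \cdot \vol\bigl( |\widetilde{\Mc}_{G,\widetilde{a}}|(\Oc_F) \bigr),$$
where $d_G = \dim \widetilde{\Mc}_G$ and the volume is computed against a canonical orbifold measure on the coarse space, built from the rigidified symplectic (or Liouville) form on $\widetilde{\Mc}_G$. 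The same formula on the $\widehat{G}$-side reduces the theorem to a matching of $p$-adic volumes on the coarse moduli of $\widetilde{\Mc}_{G,\widetilde{a}}$ and $\widetilde{\Mc}_{\widehat{G},\widetilde{a}}$.

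I would next invoke the fibrewise duality extending the Donagi-Pantev and Chen-Zhu theorems to the quasi-split setting, advertised as a separate contribution of the paper in the introduction. Over the generic open locus $\widetilde{\FA}_G^{\Diamond} \subset \widetilde{\FA}_G^{\ani}$, the Hitchin systems for $G$ and $\widehat{G}$ become torsors under the dual Picard stacks $\widetilde{\Pc}_G$ and $\widetilde{\Pc}_{\widehat{G}}$; this duality produces an isomorphism between the corresponding coarse moduli over the generic base, compatible with the orbifold volume forms up to a factor prescribed by the discriminant divisor. For $a$ in the generic locus this yields the equality of volumes, hence of stable point counts.

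The principal obstacle is to extend the identity from the generic locus to the full anisotropic locus $\widetilde{\FA}_G^{\ani}$, on which a fibrewise duality of Hitchin systems is not directly available. The remedy should be to pass to global $p$-adic integrals on $\widetilde{\Mc}_G$ and $\widetilde{\Mc}_{\widehat{G}}$, then invoke a Fubini-type decomposition along the Hitchin base together with the observation that the complement of the generic locus in $\widetilde{\FA}_G^{\ani}$ has $p$-adic measure zero in the base. Shrinking $p$-adic neighbourhoods of $a$ then promotes the generic identity to a pointwise identity at anisotropic but non-generic $a$, using continuity of the fibrewise volume in the base parameter. The delicate step here is ensuring that, under the duality, the inertia stacks of $\widetilde{\Mc}_G$ and $\widetilde{\Mc}_{\widehat{G}}$ are matched in such a way that the \emph{stable} (i.e.\ $\pi_0$-invariant) isotypical components correspond correctly; this compatibility should rest on the endoscopic description of the inertia stack of $\widetilde{\Mc}_G$ advertised in the abstract.
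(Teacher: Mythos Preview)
Your general philosophy—comparing $p$-adic volumes fibrewise over the Hitchin base, using the generic duality and a measure-zero argument for the complement of $\widetilde{\A}^{\Diamond}$—matches the paper's. But there are two genuine gaps in the mechanism you propose.

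First, the formula $\#^{stab}\widetilde{\Mc}_{G,a}(k) = q^{d_G/2}\cdot \vol(|\widetilde{\Mc}_{G,\widetilde a}|(\Oo_F))$ is not available. The stacky Weil formula (Theorem~\ref{thm:volume}) computes the volume of the coarse moduli space as a \emph{weighted count over the twisted inertia stack} $I_{\widehat{\mu}}\widetilde{\Mc}_{G,a}(k)$, not as the stable count. Even ignoring inertia, the plain point count $\#\widetilde{\Mc}_{G,a}(k)$ is not $\#^{stab}$. The paper extracts the stable isotypical piece by a Fourier-theoretic device you have not introduced: for each $t\in\pi_0(\widetilde{\Pc}_{G,a}^{\blacksquare})$ one forms the unramified twist $\widetilde{\Mc}_{G,a}^{\blacksquare,t}$, observes (Lemma~\ref{Fourier}) that $t\mapsto \#\widetilde{\Mc}_{G,a}^{\blacksquare,t}(k)$ is the Fourier transform of $\chi\mapsto \#^{\chi}\widetilde{\Mc}_{G,a}^{\blacksquare}(k)$, and then proves the main identity (Theorem~\ref{mainidentity}) for each twist separately via $p$-adic integration with an auxiliary character $\chi_s$. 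Summing that identity over all $s$ and $t$ kills the non-trivial inertia sectors (by orthogonality of $s\mapsto s(\kappa)$) and simultaneously isolates $\#^{stab}$ on both sides. Without the twists and the character $\chi_s$, you cannot separate the stable part from the rest.

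Second, two details of the fibrewise comparison are off. The generic Hitchin fibres for $G$ and $\widehat G$ are \emph{dual}, not isomorphic; the volume match over $\widetilde{\A}^{\Diamond}$ comes from Proposition~\ref{abvol} (dual abelian varieties have equal $p$-adic volume) applied via Ng\^o's isogeny $\Pc_G\to\Pc_{\widehat G}$, after restricting to the $\blacksquare$-components where the fibres are genuinely abelian varieties. And the passage from the generic locus to arbitrary $a\in\widetilde{\A}^{\ani}(k)$ is not by any continuity of fibrewise volumes (which would be false—the fibres degenerate). It is built into the formalism: one integrates over $\widetilde{M}_{G,U_a}(\Oo)$ for $U_a=\Spec\Oo^h_{\widetilde{\A},a}$, the complement of $U_a(\Oo)^{\flat}$ has measure zero, so only generic $F$-fibres contribute; then the specialisation map $e$ of Theorem~\ref{thm:volume} converts this integral directly into the weighted inertia count over the \emph{special} fibre at $a$.
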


\subsubsection*{Structure of the proof}

In order to explain the proofs of Theorem \ref{gmst} and \ref{walem} we first summarize the $p$-adic integration theory of Section \ref{padicint}. This is a generalisation of \cite[Section 4]{gwz}, which is motivated by \cite{Ba99,DL2002}, and also \cite{Ya06}. The origin of the theory of $p$-adic integration is the following result of Weil (\cite[Theorem 2.2.5]{weil2012adeles}). Consider a local field $F$ with ring of integers $\Oo_F$ and residue field $k = \Fb_q$. For a scheme $X/\Oo_F$ we denote by $e\colon X(\Oo_F) \to X(k)$ the specialisation map, induced by the map of affine schemes $\Spec k \to \Spec \Oo_F$.

\begin{theorem}[Weil] \label{weilthm}
Let $X/\Oo_F$ be a smooth scheme of relative dimension $d$. There exists a canonical measure $\mu_{can}$ on the set $X(\Oo_F)$, such that for a $k$-rational point $x \in X(k)$ we have
$$\vol(e^{-1}(x)) = \frac{1}{q^d}.$$
In particular, one has $\vol(X(\Oo_F)) = \frac{|X(k)|}{q^d}$.
\end{theorem}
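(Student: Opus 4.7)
The plan is to build the measure locally from top-degree differential forms and to observe that the construction is independent of choices because $p$-adic units have absolute value one. First I would cover $X$ by affine opens $U_i$ on which the line bundle $\Omega^d_{X/\Oo_F}$ admits a trivializing section $\omega_i$. By the Zariski-local structure theorem for smooth morphisms, after refining the cover each $U_i$ admits an étale map $\varphi_i \colon U_i \to \Ab^d_{\Oo_F}$. Write $\omega_i = f_i \cdot \varphi_i^*(dx_1 \wedge \cdots \wedge dx_d)$ with $f_i \in \Oo_{U_i}^\times$, and define the measure $|\omega_i|$ on $U_i(\Oo_F)$ to be $|f_i|_F$ times the pullback along $\varphi_i$ of the product Haar measure on $\Oo_F^d$ (normalized so that $\vol(\Oo_F)=1$).

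Next I would verify that this measure is canonical. Two choices of trivialization $\omega_i$ and $\omega_i'$ differ by a unit $u \in \Oo_{U_i}^\times$, and for any $x \in U_i(\Oo_F)$ the pullback $x^*u$ lies in $\Oo_F^\times$, hence has $|x^*u|_F = 1$. Therefore $|\omega_i|$ is independent of the trivialization and the local measures glue to a globally defined measure $\mu_{can}$ on $X(\Oo_F)$. This independence is the conceptual heart of Weil's construction.

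For the fibre volume computation, fix a $k$-rational point $x \in X(k)$. Working in a Zariski open $U \subset X$ with an étale chart $\varphi \colon U \to \Ab^d_{\Oo_F}$ around (a lift of) $x$, Hensel's lemma for étale morphisms identifies $e^{-1}(x)$ with the fibre over $\varphi(x) \in k^d$ of the specialization map $\Oo_F^d \to k^d$. This fibre is a translate of $\pf_F^d$, whose measure under the product Haar measure equals $q^{-d}$. Consequently $\vol(e^{-1}(x)) = q^{-d}$. The \emph{in particular} statement then follows because, by surjectivity of $e$ (Hensel again), $X(\Oo_F)$ decomposes as the disjoint union $\bigsqcup_{x \in X(k)} e^{-1}(x)$, so summation yields $\vol(X(\Oo_F)) = |X(k)|/q^d$.

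The main obstacle I expect is the identification of $e^{-1}(x)$ with a translate of $\pf_F^d$ in étale coordinates, which requires carefully combining the structure theorem for smooth morphisms with Hensel's lemma for étale maps. All other ingredients — the glueing via the unit-valuation argument, and the normalization of Haar measure — are essentially routine.
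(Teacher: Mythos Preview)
The paper does not give its own proof of this theorem; it is stated as a classical result of Weil with a reference to \cite[Theorem 2.2.5]{weil2012adeles} and then used as a starting point. Your argument is the standard proof and is correct. Moreover, the construction you outline --- locally trivialize $\Omega^d_{X/\Oo_F}$, observe that two trivializations differ by a unit whose absolute value on $\Oo_F$-points is $1$, and glue --- is exactly what the paper later carries out in the more general DM-stack setting in Lemma~\ref{defcanmes}, so your approach is fully aligned with the paper's framework. The one point you might make more explicit is that the \'etale chart $\varphi$ induces a local analytic isomorphism on $\Oo_F$-points (by Hensel), which is what makes ``pullback of Haar measure along $\varphi_i$'' meaningful and what guarantees that the Hensel bijection $e^{-1}(x)\simeq \varphi(x)+\pf_F^d$ is measure-preserving.
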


Our article relies on a refinement of Weil's formula to varieties with tame quotient singularities which is the content of Section \ref{padicint}.
Let $\Mc / \Oo_F$ be a smooth and tame Deligne-Mumford stack satisfying the assumptions of Situation \ref{situation:locally_quotient}. Denote by $M$ the coarse moduli space of $\Mc$ and by $U \subset M$ the maximal open over which the quotient $\Mc \to M$ is an isomorphism. The key object for $p$-adic integration on $\Mc$ is the $F$-analytic manifold 
\[M(\Oo_F)^\natural = M(\Oo_F) \cap U(F),\]
as it admits a real-valued measure $\vol_{\Mc}$. Furthermore we construct a specialisation map
 \[e: M(\Oo_F)^\natural \to [I_{\widehat{\mu}}\Mc(k)],  \]
where $I_{\widehat{\mu}}\Mc(k)$ is a twisted version of the usual inertia stack, taking into account possibly non-constant automorphism groups, and the brackets $[ \cdot ]$ denote the set of isomorphism classes in a groupoid. The main result of Section \ref{padicint} is the following.
\begin{theorem}[Theorem \ref{thm:volume}]\label{canmi} For every $x \in I_{\widehat{\mu}}\Mc(k)$ we have
\[ \vol_{\Mc}\left(e^{-1}(x)\right) =  \frac{q^{-w(x)}}{|\Aut_{I_{\widehat \mu}\Mc(k)}(x)|},   \]
where $w: I_{\widehat{\mu}}\Mc(k) \to \Qb$ is the \textit{weight function}, see Definition \ref{defi:weight}. In particular, we see that 
$$\vol\left(M(\Oo_F)^{\natural}\right) = \#^wI_{\widehat{\mu}}\Mc(k),$$
where $\#^w$ is a weighted count taking both automorphisms and $w$ into account. 
\end{theorem}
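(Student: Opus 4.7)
The plan is to prove the formula by reducing to a local étale model where $\Mc$ looks like a quotient stack $[V/G]$ and then combining Weil's Theorem \ref{weilthm} on the smooth scheme $V$ with a careful analysis of how the $G$-action rescales the canonical measure.

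First, I would use the local structure of tame Deligne--Mumford stacks: étale-locally on the coarse moduli $M$, one can present $\Mc \simeq [V/G]$, where $V$ is a smooth affine $\Oo_F$-scheme and $G$ is a finite étale group scheme whose order is prime to the residue characteristic. Both sides of the claimed identity are local on $M$ in the étale topology (the measure $\vol_\Mc$ on $M(\Oo_F)^{\#}$ is defined so as to be étale-local, and the twisted inertia $I_{\widehat{\mu}}\Mc$ commutes with étale base change), so it suffices to verify the statement in this local model. In this situation, $U \subset M$ is the complement of the image of the ramification locus of $V \to V/G$, and a point $y \in M(\Oo_F)^{\#}$ lifts, after passing to an unramified extension $F'/F$ of degree equal to the residue degree of $y$, to a $G$-orbit $\{\tilde y_1, \dots, \tilde y_r\}$ in $V(\Oo_{F'})$ avoiding the ramification divisor.

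Next, I would describe the specialisation map in this local model. Pick one lift $\tilde y = \tilde y_1$. The geometric Frobenius $\sigma$ permutes the $G$-orbit of $\tilde y$, and tameness guarantees a \emph{unique} $g \in G(\overline{k})$ with $\sigma(\tilde y) = g \cdot \tilde y$. The reduction $v$ of $\tilde y$ mod the maximal ideal is then fixed by $g$, and the pair $(v, g)$, together with the canonical identification of the cyclic group $\langle g \rangle$ with a quotient of $\widehat\mu$ determined by the eigenvalues of $g$ on the roots of unity, is exactly a $k$-point of $I_{\widehat{\mu}}\Mc$; this, up to $G$-conjugacy, is $e(y)$. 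Conversely, the fibre $e^{-1}(x)$ of a given $x = [(v, g)]$ is precisely the image in $M(\Oo_F)^{\#}$ of the set of $\tilde y \in V(\Oo_{F'})$ that reduce to $v$ and satisfy $\sigma(\tilde y) = g \tilde y$.

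Then I would compute the volume. Choose étale coordinates $(x_1, \dots, x_d)$ at $v$ in which $g$ acts diagonally with eigenvalues $\zeta^{c_1}, \dots, \zeta^{c_d}$ for a primitive $m$-th root of unity $\zeta = \zeta_m$ and integers $0 \leq c_i < m$. The twisted Frobenius condition $\sigma(\tilde y) = g \tilde y$ forces the $i$-th coordinate of $\tilde y$ to lie in $\pi^{c_i/m} \cdot \Oo_{F'}$ (after passing to a totally ramified extension of degree $m$ to make sense of $\pi^{1/m}$); applying Theorem \ref{weilthm} on $V$ with respect to the canonical measure $\mu_{can}$, the volume of the set of such lifts is
\[
\frac{1}{|\Aut_{I_{\widehat{\mu}}\Mc(k)}(x)|}\cdot q^{-d} \cdot q^{-\sum_i c_i/m},
\]
where the automorphism factor accounts for the $C_G(g)$-orbit structure on lifts of $v$. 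The remaining exponent $\sum_i c_i/m$ is exactly the invariant the paper calls the weight $w(x)$ (see Definition \ref{defi:weight}); geometrically, it is the difference between the top forms of $V$ and $M$ measured at the fixed point of $g$, which matches the age grading in orbifold cohomology.

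The main obstacle will be justifying the identification of the exponent $\sum c_i/m$ with the weight function $w(x)$ as defined in Definition \ref{defi:weight}, and in particular showing that the local-to-global transition is consistent when the twisted inertia point $x$ is not represented by a single $k$-point of $\Mc$ but only by a Galois-twisted point (which is the whole reason $\widehat{\mu}$-twists are needed rather than the naive inertia). This requires a careful tracking of how a uniformiser of $\Oo_F$, raised to a fractional power, interacts both with the $G$-action on tangent vectors and with the Frobenius that provides the descent data, and it is here that the tameness assumption and the fact that $\Mc$ is \emph{generically} a scheme become essential.
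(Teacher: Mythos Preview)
Your outline follows the same broad strategy as the paper (reduce to a local finite quotient, then to a linear model and compute explicitly), but there are two genuine gaps.

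First, your description of the specialisation map is mixing up the unramified and inertial parts of $\Gal^{\mathrm{tr}}(F)$. You write that a lift $\tilde y$ satisfies $\sigma(\tilde y)=g\cdot\tilde y$ for the geometric Frobenius $\sigma$, and then identify $g$ with the automorphism $\alpha$ appearing in $I_{\widehat\mu}\Mc(k)$. But $\alpha$ is the \emph{inertia} contribution: in the paper's language (Subsection~\ref{sub:torsors}) a $\Gamma$-torsor over $F$ is encoded by a pair $(x_\beta,x_\gamma)$, and $e(y)$ records $x_\gamma$, not $x_\beta$. The paper handles this by first twisting by an unramified torsor so that $x$ lifts to $U(k)$ (Lemma~\ref{lemma:madealemmaoutofit}(iii)), and then describing $e^{-1}(x,\alpha)$ via $\mu_N$-equivariant maps $\Spec\Oo_L\to U$ for the \emph{totally ramified} extension $L=F(\underline\pi^{1/N})$ (Construction~\ref{totramlift}). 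Your ``twisted Frobenius'' picture would only see the unramified piece.

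Second, your volume formula $q^{-d}\cdot q^{-\sum c_i/m}/|\Aut(x)|$ is off by a factor of $q^{-d}$ in the nontrivially-acting directions. With the paper's convention (Definition~\ref{defi:weight}) the weight takes values in $(0,1]$ per coordinate, so the trivial action already contributes $1$ per coordinate; your extra $q^{-d}$ double-counts. The underlying reason is that you cannot simply apply Weil's theorem to the lift locus in $V(\Oo_L)$ and read off the volume on $M(\Oo_F)$: the passage from $V$ to $M=V/G$ requires the change-of-variables formula of Proposition~\ref{intools}(\ref{chovg}). The paper's route is to factor $[U/\Gamma]\to[U/\mu_N]$, use Proposition~\ref{intools}(\ref{chovg}) twice to reduce to the cyclic case, then linearise (via \cite[Lemma~A.6]{gwz}) to $[\mathbb A^n/\mu_N]$, and finally compute by an explicit $\Oo_F$-morphism $\lambda\colon\mathbb A^n\to\mathbb A^n/\mu_N$ sending $x_i\mapsto \underline\pi^{c_i/N}x_i$ on invariants (Lemma~\ref{diagcase}); the Jacobian of $\lambda$ produces exactly $q^{-w(\xi)}$ with no additional $q^{-d}$. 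The automorphism count is then matched to $|C_\Gamma(\alpha)(k)|$ via the identity $|\Gamma_L(F)|=|C(\alpha)(k)|$, which is where the twist by the totally ramified torsor $L$ reappears.
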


We remark that the theorem above is well-known in the context of motivic integration (see Denef--Loeser and Yasuda \cite{DL2002,Ya06}), and that $p$-adic volumes of varieties with quotient singularities are also studied in Yasuda's \cite{Yasuda:2014aa}. However we do not know how to formally deduce Theorem \ref{canmi} from these results. The arguments of \emph{loc. cit.} served as a source of inspiration.

From now on on we let $F=k((t))$. By pulling back along $\Spec(\Oo) \to \Spec(k)$, the base curve $X$ as well as the corresponding Hitchin fibration $\chi: \Mb_G \to \FA_G $ are now defined over $\Oo = k[[t]]$ and the special fibre coincides with \eqref{hitfib}. Furthermore our $p$-adic integration theory applies to $\widetilde{\Mc}_G$, the rigidification of $\widetilde{\Mb}_G$ with respect to the generic automorphism group $Z(X,G)$ (see \cite[Theorem 5.1.5]{abramovich2003twisted} for the concept of rigidification). The measure $\mu_{G}$ on $\widetilde{M}_G(\Oo)^\natural$ induces for every $b \in \widetilde{\FA}^\Diamond_G(F)$ a measure on the fibre $\widetilde{M}_{G,b}(F)$ and we have
\begin{equation}\label{stdcomp}  \frac{\mu_{can,G}\left(\widetilde{M}_{G,b}(F)\right)}{|H^0(X,Z(\widehat{G}))| } = \frac{\mu_{can,\widehat{G}}\left(\widetilde{M}_{\widehat{G},b}(F)\right)}{|H^0(X,Z(G))|}.\end{equation}
This is essentially a consequence of the duality result of \cite{MR2957305,chenzhu} for pairs of Langlands dual grups, which we extend to quasi-split reductive group schemes $G \to X$ in Theorem \ref{thm:duality_quasi-split}. The duality implies that the neutral connected components of $\widetilde{M}_{G,b}$ and $\widetilde{M}_{\widehat{G},b}$ are dual abelian varieties and thus have the same volume (see Proposition \ref{abvol}). 

By integrating the volume of the Hitchin fibres over all points $b \in \widetilde{\FA}^\Diamond_G(F) \cap \widetilde{\FA}_G(\Oo)$ which restrict to a fixed point $a \in \widetilde{\FA}_G(k)$, we obtain from \eqref{stdcomp} and Theorem \ref{canmi} the following special case of Theorem \ref{mainidentity}:
\begin{equation}\label{incount} \frac{\#^w I_{\widehat{\mu}}\widetilde{\Mc}_{G,a}(k)}{{|H^0(X,Z(\widehat{G}))| }} = \frac{\#^w I_{\widehat{\mu}}\widetilde{\Mc}_{\widehat{G},a}(k)}{{|H^0(X,Z(G))|}},  \end{equation}
Let us stress here, that formula \eqref{incount} is derived by only considering generic fibres (of the $p$-adic fibration), which is the key point in the way we use $p$-adic integration. In Subsection \ref{ExampleSection} we also give a toy example for this argument.

The key points of the argument are contained in Subsections \ref{set-up}-\ref{character}, and the main computation is given in Subsection \ref{character}. We recommend that a reader familiar with the theory of Higgs bundles, starts reading these subsections, before delving into the more technical parts of the paper. 

Section \ref{secinst} is devoted to the study of the groupoid $I_{\widehat{\mu}}\widetilde{\Mc}_{G}(k)$. Under some extra assumptions we prove in Theorem \ref{IMTheoremTwisted} an equivalence  

\begin{equation}\label{tinstack}  I_{\hat\mu}\widetilde{\mathcal{M}}_G(k) \cong \bigsqcup_{\CE} \widetilde{\mathcal{M}}^{G-\infty,G-\rig}_{H_\CE}(k), \end{equation}
where the disjoint union runs over coendoscopic data $\CE$ for $G$, which are essentially endoscopic data for $\widehat{G}$, and the coendsoscopic groups $H_\CE$ are dual to the corresponding endoscopic groups for $\widehat{G}$. For the superscripts $G-\infty,G-\rig$ see Constructions \ref{TildeMuCons} and \ref{grig}. This picture is reminiscent of Frenkel--Witten's \cite{MR2417848}.

Combining \eqref{incount} and \eqref{tinstack} we obtain an equality between two sums of point counts of Hitchin fibres, ranging over a set of coendoscopic data for $G$ and $\widehat{G}$ respectively (see \ref{mainidentity}). This is the prototypical example of an equality we can obtain through $p$-adic integration.

In order to prove Theorems  \ref{walem} and \ref{gmst} we need twisted versions of \eqref{stdcomp}, which we develop in Section \ref{proofgst}. Similar to the proof of the refined topological mirror symmetry conjecture \cite[Theorem 7.24]{gwz}, the idea is to consider for an $a \in \widetilde{\FA}_G(k)$ and $t \in \pi_0(\widetilde{\Pc}_{G,a})$ the stack $\widetilde{\Mc}_{G,a}^t=\widetilde{\Mc}_{G,a} \times^{\widetilde{\Pc}_{G,a}} T_t $, where $T_t$ is a $\widetilde{\Pc}_{G,a}$-torsor representing $t$ via the isomorphisms $H^1(k,\widetilde{\Pc}_{G,a}) \cong H^1(k,\pi_0(\widetilde{\Pc}_{G,a})) \cong \pi_0(\widetilde{\Pc}_{G,a})$. To compensate this twist on the $\widehat{G}$-side we introduce a natural function on $\chi_t:I_{\widehat{\mu}}\widetilde{\Mc}_{\widehat{G},a}(k) \to \Cb$ and show that for every $b \in \widetilde{\FA}^\Diamond_G(F) \cap \widetilde{\FA}_G(\Oo)$ restricting to $a$ over $\Spec(k)$ the function $\chi_s \circ e$ on $\widetilde{\Mc}_{G,b}(F)$ can be interpreted as the Hasse invariant of a certain $\Gb_m$-gerbe on $\widetilde{\Mc}_{G,b}$. Using the non-degeneracy of the Tate-duality pairing for abelian varieties we are able to derive in Theorem \ref{mainidentity} the twisted versions of \eqref{stdcomp} and \eqref{incount} needed to prove first Theorem \ref{walem} and then \ref{gmst}. This part relies on the results of Section \ref{sec:hasse} which gives a \emph{stack-theoretic} interpretation of the \emph{Hasse invariant} (generalising \cite[Section 5]{gwz}).

In this article we consider the Fundamental Lemma for Lie algebras as studied in \cite{MR2653248} with the intention to showcase the versatility of $p$-adic integration. We remark that since the appearance of \emph{loc. cit.} more general versions were studied in the literature, see for example Chaudouard--Laumon \cite{MR2735371,Chaudouard:2009uq}.

\subsubsection*{Acknowledgments} We thank Pierre-Henri Chaudouard, H\'el\`ene Esnault, Tamas Hausel, Jochen Heinloth, Michael McBreen, Ng\^o B\`ao Ch\^au, Yiannis Sakellaridis and Arul Shankar for pleasant conversations and their interest in our project. We also thank the anonymous referee for helpful comments and suggestions. In particular for pointing out a gap in the previous version of the proof of \ref{abvol} and for explaining how to resolve it.

\section{p-adic integration}\label{padicint}
We fix a local field $F$ with ring of integers $\CO_F$ and residue field $k=k_F$ of order $q=p^r$. We also fix an algebraic closure $\bar F$ of $F$ and consider the intermediate field extensions
\begin{equation*}
  F \subset F^\text{un} \subset F^{tr} \subset \bar F
\end{equation*}
given by maximal unramified (respectively tamely ramified) extension of $F$ in $\bar F$.

In this section we introduce our formalism for $p$-adic integration on a smooth and tame Deligne-Mumford stack $\Mc$ with coarse moduli space $M$, satisfying the assumptions in \ref{situation:locally_quotient}. The main result is Theorem \ref{thm:volume} which computes the volume of a fibre of the specialisation map $e$ (Construction \ref{defi:e}) with respect to the canonical measure on $M(\Oo_F)$ introduced in Definition \ref{defcanmes}.

\subsection{Integrating $p$-adic differential forms}

 We write $|\cdot|$ for the non-archimedean norm on $F$ and $\mu_F^n$ for the Haar measure on $F^n$ with the usual normalisation $\mu_F^n(\Oo_F^n)=1$. Analytic Manifolds and differential forms over $F$ are essentially defined the same way as over the real numbers, as explained in \cite{MR1743467}. 
Given an $n$-dimensional manifold $X$ over $F$ and a global section $\omega$ of $(\Omega^n_X)^{\otimes r}$ we can define a measure $d\mu_\omega$ as follows: Given a compact open chart $U \hookrightarrow F^n$ of $X$ and an analytic function $f: U \rightarrow F$, such that $\omega_{|U} = f(x)(d x_1 \wedge d x_2 \wedge\dots \wedge d x_n)^{\otimes r}$ we set
\[ \mu_{\omega}(U)  = \int_U |f|^{1/r} d\mu_{F^n}.\]
This extends to a measure on $X$ as in \cite[3.2]{Yasuda:2014aa}.

Starting with any geometrically reduced algebraic space $X$ of finite type over $\Oo_F$ one has the following construction, which can be found for example in \cite[Section 4]{Yasuda:2014aa}. Traditionally, the literature contains the superfluous assumption that $X$ be a scheme. The case of algebraic spaces is dealt with in exactly the same way.

First write $X_F=X\times_{\Spec(\Oo_F)} \Spec(F)$ and $X_{k}=X\times_{\Spec(\Oo_F)} \Spec(k)$. Let $X_F^{sm}$ be the smooth locus of $X_F$ and set
\[ X^\circ= X(\Oo_F) \cap X_F^{sm}(F),\]
where we think of $X(\Oo_F)$ as a subset of $X(F) = X_F(F)$. Then $X^\circ$ has naturally the structure of an analytic manifold over $F$. Thus we can integrate any section $\omega \in H^0(X_F^{sm},(\Omega^{top}_{X/F})^{\otimes r})$ on $X^\circ$. In this way we obtain a measure $\mu_\omega$ on $X^\circ$ which we extend by zero to all of $X(\Oo_F)$. The following two results will be essential for dealing with these measures.

\begin{proposition}[{\cite[Lemma 4.3, Theorem 4.8]{Yasuda:2014aa}\label{intools}}]

 \begin{enumerate}[(a)]
\item\label{measurezero} For a closed algebraic subspace $Y \subset X$ of positive codimension one has $\mu_{\omega}(Y(\Oo_F)) = 0$.
\item\label{chovg} Let $f:Y \rightarrow X$ be a morphism of geometrically reduced algebraic spaces of finite type over $\Oo_F$. Assume that $Y$ admits a generically stabiliser-free action by a finite \'etale group scheme $\Gamma$ over $\Oo_F$, that the morphism $f$ is $\Gamma$-invariant and that the induced morphism $Y/\Gamma \rightarrow X$ is birational. Then for any open $\Gamma$-invariant subset $A \subset Y(\Oo_F)$ and any section $\omega$ of $(\Omega_X^{\top})^{\otimes r}$ we have
\[ \frac{1}{|\Gamma(F)|} \int_{A} |f^*\omega|^{1/r} = \int_{f(A)} |\omega|^{1/r}.\]
\end{enumerate}
\end{proposition}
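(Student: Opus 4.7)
Both parts of the proposition are local on $X$, and after pulling back to an affine étale chart they reduce to statements about measures on open subsets of $F^n$.

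For part (a), the plan is Noetherian induction on $Y$: we may assume $Y$ is integral. In any affine smooth étale chart of $X$ the subspace $Y$ is cut out by some non-zero regular function $g$, and then $Y(\Oo_F) \cap X^\circ$ is contained in the $F$-analytic vanishing locus of $g$. It is therefore enough to observe that the vanishing locus of a non-zero $F$-analytic function on an open of $F^n$ has Haar measure zero, which follows from Weierstrass preparation (reducing to a distinguished polynomial with finitely many zeros in each slice), or equivalently from showing that the density of its reduction modulo $\varpi^m$ tends to $0$ as $m \to \infty$.

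For part (b), the plan is to localise to the open where $f$ is a $\Gamma$-torsor. Let $U \subset X$ be the maximal open over which $f$ is étale and $V \defeq f^{-1}(U) \to U$ is a $\Gamma$-torsor; by generic freeness of the $\Gamma$-action and birationality of $Y/\Gamma \to X$, both $X \setminus U$ and $Y \setminus V$ have positive codimension in $X$, resp.\ $Y$. Part (a) then allows us to replace $A$ by $A' = A \cap V(\Oo_F)$ and $f(A)$ by $f(A) \cap U(\Oo_F)$ without changing either side of the formula. On $V$ the étaleness of $f$ implies that in local $p$-adic charts $f$ is an analytic isomorphism, so that the pushforward of $|f^*\omega|^{1/r}$ along $f$ coincides with the weighted measure $u \mapsto \#(f^{-1}(u) \cap A') \cdot |\omega|^{1/r}$ on $U(\Oo_F)$. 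The desired identity is therefore equivalent to the assertion that this counting function equals $|\Gamma(F)|$ almost everywhere on $f(A) \cap U(\Oo_F)$.

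The counting statement is the main obstacle, and I would handle it as follows. Any two $\Oo_F$-points of $V$ lying over a common point of $U$ live in a fibre of $f$ which is a $\Gamma$-torsor over $\Spec \Oo_F$ possessing a rational point, hence trivial; therefore they differ by a unique element of $\Gamma(\Oo_F)$, and $\Gamma(\Oo_F) = \Gamma(F)$ by the valuative criterion of properness applied to the finite étale $\Gamma$. Combined with the freeness of the $\Gamma$-action on $V$, this shows that every fibre of $A' \to f(A) \cap U(\Oo_F)$ is a single free $\Gamma(F)$-orbit of size $|\Gamma(F)|$, producing exactly the required factor.
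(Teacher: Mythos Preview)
Your proof is correct and follows essentially the same approach as the paper's: the paper simply cites Yasuda for (a) and for (b) observes that $f$ restricts to a degree-$|\Gamma(F)|$ analytic covering map over an open whose complement has measure zero, which is exactly your reduction via part (a) to the locus where $V\to U$ is a $\Gamma$-torsor followed by the fibre-counting argument. One tiny imprecision: in (a) you say $Y$ is ``cut out by'' a single nonzero $g$ in a chart, but of course $Y$ may have higher codimension---what you actually use (and what suffices) is that $Y$ is \emph{contained} in $V(g)$ for some nonzero $g$.
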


\begin{proof}
Statement (a) is stated in exactly the same form in \cite[Lemma 4.3]{Yasuda:2014aa} for schemes. The proof for algebraic spaces is the same. Statement (b) is a slightly more general version of \cite[Theorem 4.8]{Yasuda:2014aa}, the proof follows the exact same strategy: the morphism $f$ induces a map of $F$-analytic varieties $f^{\circ}\colon X^{\circ} \to Y^{\circ}$ which is a finite covering map of degree $|\Gamma(F)|$ of an open subset of $Y^{\circ}$. Therefore, we have
$\int_{A} |(f^\circ)^*\omega|^{1/r} = \int_{f^{\circ}(A)} |\omega|^{1/r}$ for every Borel measurable subset $A = f^{\circ,-1}(B) \subset X^{\circ}$.
\end{proof}

We finish this subsection with a comparison of $p$-adic volumes of dual abelian varieties, which lies at the heart of the comparison of $p$-adic volumes of moduli stacks of Higgs bundles. This generalises the key lemma \cite[Lemma 6.15]{gwz}, and in fact provides an alternative argument.

\begin{proposition}\label{abvol}
Let $A$ and $B$ be dual abelian varieties over $F$ and $\phi: A \to B$ an isogeny of degree coprime to $p$. Then for any $\omega \in H^0(B,\Omega_{B/F}^\text{top})$ we have
\[\int_{A(F)} |\phi^* \omega| = \int_{B(F)} |\omega|.\]
\end{proposition}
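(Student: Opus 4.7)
The plan is to apply Proposition \ref{intools}(b) to the translation action of $\Gamma := \ker(\phi)$ on $A$, reducing the problem to the index equality
\[
(\ast) \qquad |\Gamma(F)| = [B(F) : \phi(A(F))],
\]
which for dual abelian varieties is a standard consequence of local arithmetic duality.

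Since $\deg(\phi)$ is coprime to $p$, the isogeny $\phi$ is étale, so $\Gamma$ is finite étale over $F$, acts freely on $A$ by translations, and the induced map $A/\Gamma \to B$ is an isomorphism. Proposition \ref{intools}(b) then gives
\[
\int_{A(F)} |\phi^*\omega| = |\Gamma(F)| \cdot \int_{\phi(A(F))} |\omega|.
\]
Since an abelian variety has trivial canonical bundle, $\omega$ is a scalar multiple of a translation-invariant top form on $B$; hence $|\omega|$ is a Haar measure on the compact abelian group $B(F)$. As $\phi$ is étale and $A(F)$ is compact, the image $\phi(A(F)) \subset B(F)$ is a finite-index compact open subgroup, and translation-invariance yields
\[
\int_{\phi(A(F))} |\omega| = [B(F):\phi(A(F))]^{-1} \int_{B(F)} |\omega|.
\]
Combining the last two displays reduces the proposition to $(\ast)$.

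For $(\ast)$, in the good-reduction case the étaleness of $\phi$ forces it to be bijective on the (pro-$p$) formal groups, so the kernel and cokernel of $\phi\colon A(F) \to B(F)$ agree with those of the reduction $\bar\phi\colon A_k(k) \to B_k(k)$. Since $A_k$ and $B_k$ are dual abelian varieties over the finite residue field $k$, we have $|A_k(k)| = |B_k(k)|$, and any homomorphism between finite abelian groups of the same cardinality has kernel and cokernel of equal order, yielding $(\ast)$. The general case follows either by working with Néron models and invoking Grothendieck's pairing on component groups, or by applying Tate local duality for finite Galois modules of order prime to $p$ together with the Tate pairing $H^1(F,A) \times B(F) \to \Qb/\Zb$ for dual abelian varieties. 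The main obstacle is the identity $(\ast)$: tracing the compatibility of the Weil pairing on $\Gamma$ with the pairings for $A$, $B$ requires care. The remainder of the argument is a clean change of variables via Proposition \ref{intools}(b), which accounts for the simplification over \cite[Lemma 4.11]{gwz}.
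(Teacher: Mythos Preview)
Your approach is correct and genuinely different from the paper's, though both ultimately rest on the same arithmetic input.

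The paper does not use the change of variables formula at all. Instead it passes immediately to N\'eron models $\mathcal{A},\mathcal{B}$: after scaling, $\omega$ extends to a gauge form on $\mathcal{B}$, the isogeny $\phi$ extends to an \'etale morphism $\mathcal{A}\to\mathcal{B}$ (so $\phi^*\omega$ is again a gauge form), and Weil's formula reduces the statement to $|\mathcal{A}(k)|=|\mathcal{B}(k)|$. This is then proved by splitting off identity components via Lang's theorem (giving $|\mathcal{A}^0(k)|=|\mathcal{B}^0(k)|$ from the isogeny and a short cohomology sequence) and handling component groups via Grothendieck's pairing $\Phi_{\mathcal{A}}\times\Phi_{\mathcal{B}}\to\Qb/\Zb$.

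Your route through Proposition~\ref{intools}(b) and translation-invariance of $|\omega|$ is a clean alternative reduction; note however that as stated in the paper that proposition requires $\Gamma$ finite \'etale over $\Oo_F$, which $\ker\phi$ need not be (the Galois action on $\Gamma(\bar F)$ may be ramified) --- you are really invoking the underlying analytic change of variables on $F$-manifolds, which is fine but worth saying. Your identity $(\ast)$ is, via the snake lemma applied to the reduction sequences $0\to\mathcal{A}(\mathfrak{m})\to A(F)\to\mathcal{A}(k)\to 0$ (using that $\phi$ is an isomorphism on the pro-$p$ formal groups), exactly equivalent to the paper's $|\mathcal{A}(k)|=|\mathcal{B}(k)|$. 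So the two arguments converge: your good-reduction case is a pleasant self-contained special case, but the general case still needs Grothendieck's pairing (or, as you note, a Tate-duality argument tracking the Weil pairing through the boundary maps). The paper's route has the advantage of reaching that point in one step without the analytic detour; yours has the advantage of making the Haar-measure interpretation explicit.
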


\begin{proof} Let $\mathcal{A}$ and $\mathcal{B}$ be the N\'eron models of $A$ and $B$. The form $\omega$ extends to a rational section of $\Omega_{\mathcal{B}/\Oo_F}^\text{top}$. By translation-invariance of this rational section, its pole-order along a connected component of the special fibre $\mathcal{B} \times_{\Oo_F} k_F$ is constant. The statement is invariant under scaling $\omega$ by elements of $F$, so we may assume that $\omega$ extends to a generating section of the trivial line bundle $\Omega_{\mathcal{B}/\Oo_F}^\text{top}$. We write $\mathcal{A}^0$ for the neutral component, $\Phi_{\mathcal{A}}$ for the component group scheme of the special fibre of $\mathcal{A}_{k}$ and similarly $\mathcal{B}^0$ and $\Phi_\mathcal{B}$. Then by \cite[7.3.6]{NeronModels} the isogeny $\phi$ extends to an isogeny $\phi:\mathcal{A} \to \mathcal{B}$, which is in particular \'etale. Hence $\phi^* \omega$ is again a generating section of $\Omega_{\mathcal{A}/\Oo_F}^\text{top}$ and by Weil's Theorem \ref{weilthm} the proposition is equivalent to showing $|\mathcal{A}(k)| = | \mathcal{B}(k)|$.

First by Lang's theorem we have $H^1(k,  \mathcal{A}^0)=0$, thus $|\mathcal{A}(k)| = |\mathcal{A}^0(k)||\Phi_{\mathcal{A}}(k)|$ and similarly for $\mathcal{B}$. Since $\phi$ induces an isogeny $\phi^0:\mathcal{A}^0 \to \mathcal{B}^0$ we have an exact sequence 
\[ 0 \rightarrow \ker(\phi^0)(k) \rightarrow  \mathcal{A}^0(k) \rightarrow   \mathcal{B}^0(k) \rightarrow H^1(k,\ker(\phi^0)) \rightarrow H^1(k,  \mathcal{A}^0) = 0.  \]
Since $\ker(\phi^0)$ is abelian we have $| \ker(\phi^0)(k)| = |H^1(k,\ker(\phi^0))|$ and thus by exactness also $|\mathcal{A}^0(k)| = |\mathcal{B}^0(k)|$. Finally in \cite[Proposition 4.3]{MR2961846}, Lorenzini shows $|\Phi_{\mathcal{A}}(k)|=|\Phi_{\mathcal{B}}(k)|$ using Grothendieck's paring $\Phi_{\mathcal{A}} \times \Phi_{\mathcal{B}} \to \BQ/\BZ$.

\end{proof} 

\subsection{Examples of $p$-adic integration} \label{ExampleSection}
In this subsection we give a toy example for our main argument using $p$-adic integration which will appear in Subsection \ref{PAdicComparison}. This subsection will not be used in the rest of the article and only serves to illustrate our argument in a simplified situation.

We consider the following situation: 
\begin{situation}
  Let $\M$ and $\A$ be two smooth $\Oo_F$-schemes together with a dominant morphism $\pi\colon \M \to \A$. We assume that $\pi$ is generically smooth and fix an open dense subset $\A^\lozenge \subset \A$ for which the restriction of $\pi$ to $\M^{\lozenge} \defeq \M \times_{\A} \A^\lozenge$ is smooth.
\end{situation}

By our assumptions the sheaf $\Omega^\top_{\M/\Oo_F}$ is a line bundle. By integrating local generating sections $\omega$ of this line bundle we obtain as above a measure $\mu_{\M}$ on $\M(\Oo_F)$. Since any other choice of $\omega$ differs from the given one by a section of $\Oo_{\M/\Oo_F}^*$ whose values on $\M(\Oo_F)$ will have $p$-adic norm one, this measure is independent of the choice of $\omega$. In the same way we obtain a measure $\mu_{\A}$ on $\A(\Oo_F)$.
 
We denote by $e\colon \M(\Oo_F) \to \M(k)$ the specialisation map. For a set of points $S \subset \M(k)$, by Weil's Theorem \ref{weilthm}, the cardinality of $S$ can be computed using the measure $\mu_{\M}$ as follows:
\begin{equation} \label{PointCount1}
  |S|=q^{\dim \M} \mu_{\M}(e^{-1}(S))
\end{equation}
The idea is now to compute the measure $\mu_{\M}(e^{-1}(S))$ fibrewise along the morphism $\pi$ using a Fubini-type statement:

The short exact sequence 
\begin{equation*}
  0 \to \pi^* \Omega^1_{\A^\lozenge/\Oo_F} \to \Omega^1_{\M^\lozenge/\Oo_F} \to \Omega^1_{\M^\lozenge/\A^\lozenge} \to 0
\end{equation*}
induces an isomorphism
\begin{align} \label{WedgeIso}
  \pi^*\Omega^\top_{\A^\lozenge/\Oo_F} \otimes \Omega^\top_{\M^\lozenge/\A^\lozenge} &\isoto \Omega^\top_{\M^\lozenge/\Oo_F} \\
  \eta \otimes \phi \mapsto \eta\wedge \phi. \nonumber
\end{align}
We consider the subset $\A(\Oo_F)^\flat \defeq \A(\Oo_F) \cap \A^\lozenge(F)$ of $\A(\Oo_F)$. This is an $F$-analytically open subset of $\A(F)$ whose complement has measure zero by \cite[Lemma 4.3]{Yasuda:2014aa}. Given $a \in \A(\Oo_F)$ we will write $a_F$ for its image in $\A(F)$.

Working locally on $\M$ and $\A$, as above we choose a local generating section $\omega$ of $\Omega^\text{top}_{\M/\Oo_F}$ and $\eta$ of $\Omega^\text{top}_{\A/\Oo_F}$. Then by the isomorphism \eqref{WedgeIso}, locally on $\M$ there exists a unique section $\phi$ of $\Omega^\top_{\M^\lozenge/\A^\lozenge}$ such that $\pi^*\eta \wedge \phi=\omega$. Then for any $a \in \A(\Oo_F)^\flat$ we can restrict $\phi$ to a top-degree differential form on the fibre $\M_{a_F}$ over $F$ and hence get a measure $\mu_{\phi}$ on $\M_a(\Oo_F)$. As before, since $\pi^*\eta$ and $\omega$ are well defined up to sections of $\Oo_{\M}^*$, so is $\phi$ and hence the measure $\mu_\phi$ is independent of these choices. Thus for every $a \in \A(\Oo_F)^\flat$ we obtain a canonical measure $\mu_a$ on $\M_a(\Oo_F)$. For $a \in \A(\Oo_F) \setminus \A(\Oo_F)^\flat$ we let $\mu_a$ be the zero measure on $\M_a(\Oo_F)$.

The differential forms appearing in the construction above may only be Zariski locally defined. However, in our applications below all these forms will be assumed to exist globally. Then the measure $\mu_a$ on the fibre is given by integrating the single differential form obtained by restricting $\phi$ to the fibre. 

Then by combining \eqref{PointCount1} with the Fubini theorem for $p$-adic integration (c.f. \cite[Theorem 7.6.1]{MR1743467}) we obtain the following formula:
\begin{equation}
  \label{PointCount2}
  |S| = q^{\dim \M} \int_{a \in \A(\Oo_F)^\flat} \mu_a(e^{-1}(S) \cap \M_a(\Oo_F)) d\mu_{\A}= q^{\dim \M} \int_{a \in \A(\Oo_F)} \mu_a(e^{-1}(S) \cap \M_a(\Oo_F)) d\mu_{\A}
\end{equation}

\begin{example} Consider the example
\begin{align*} \pi: \BA^2_{\Oo_F} &\rightarrow \BA^1_{\Oo_F} \\
(x,y) &\mapsto xy.
\end{align*}

We can use \eqref{PointCount2} to compute the number of points of the special fibre $|\pi^{-1}(0)(k)| = 2q-1$ as a $p$-adic double integral. We have $(\BA^1_{\Oo_F})^\lozenge = \BG_{m,\Oo_F}$ and for any point $a\in \BA^1(\Oo_F) \cap \BG_{m}(F)= \Oo_F \setminus \{0\}$, projection onto the $x$-variable defines an isomorphism of $F$-analytic manifolds
\begin{equation}\label{profib} \pi^{-1}(a)(\Oo_F) \cong \{ x \in  \Oo_F \ |\ |x| \geq |a|\}.    \end{equation}
To obtain the measure on the fibre $\pi^{-1}(a)$ we fix the standard volume forms $dx \wedge dy$ and $da$ on $\BA^2$ and $\BA^1$ and find
\[ \frac{dx \wedge dy}{da} = \frac{dx \wedge dy}{ydx + xdy} = \frac{dx}{x}, \]
under the isomorphism \eqref{profib}. Then we can compute the right hand side of \eqref{PointCount2} directly as

\begin{align*} q^2 \int_{a \in \mathfrak{m} \setminus \{0\} } |da| \int_{1\geq |x| \geq |a|} \frac{|dx|}{|x|} &=  q^2 \int_{a \in \mathfrak{m} \setminus \{0\} } (\nu(a)+1)(1-q^{-1}) da \\
&= q^2(1-q^{-1})^2(2q^{-1}+3q^{-2}+4q^{-3}+ \dots) = 2q-1.
\end{align*}
Here $\nu(a) \in \BZ$ denotes the valuation of $a$.
\end{example}

As an application of \eqref{PointCount2} we obtain the following:
\begin{theorem}
 Consider smooth $\Oo_F$-schemes $\M_i$ for $i=1,2$, a smooth $\Oo_F$-scheme $\A$ and proper surjective morphisms $\pi_i\colon \M_i \to \A$. Consider in addition smooth group schemes $\M_i^\text{reg}$ over $\A$ together with open immersions $\M_i^\text{reg} \into \M_i$ and an open dense subscheme $\A^\lozenge \subset \A$ such that the following conditions are satisfied:
 \begin{enumerate}[(i)]
 \item $\codim(\M_i \setminus \M_i^\text{reg} \subset \M_i) \geq 2$
 \item There exists an isogeny $\rho\colon (\M_1^\text{reg})^\circ \to (\M_2^\text{reg})^\circ$ between the relative connected components of the identity which has degree prime to $p$.
 \item The preimages $\M^\lozenge_i \defeq \M_i \times_{\A} \A^\lozenge$ of $A^\lozenge$ in $\M_i$ are contained in $\M_i^\text{reg}$. With respect to the resulting relative group scheme structure the schemes $\M_i^\lozenge$ are dual abelian schemes over $\A^\lozenge$.
 \end{enumerate}

Then for every point $a \in \A(k)$ we have 
\begin{equation*}
  |\M_{1,a}(k)|=|\M_{2,a}(k)|.
\end{equation*}
\end{theorem}
\begin{proof}
We choose an isogeny $\rho$ as in (ii).

  Since $\M_i^\text{reg}$ is a smooth group scheme, the relative sheaf of differentials $\Omega^1_{\M_i^\text{reg}/\A}$ is trivial (in fact it is trivialized by translation-invariant sections). Hence the same holds for the line bundle $\Omega^\top_{\M_i^\text{reg}/ \A}$. Let $\phi_2$ be a nowhere vanishing global section of $\Omega^\top_{\M_2^\text{reg}/ \A}$. Then $\phi_1\defeq \rho^*\phi_2$ is a nowhere vanishing global section of $\Omega^\top_{\M_1^\text{reg}/ \A}$ by the assumption on $\rho$. 

The claim is local for the Zariski topology on $\A$. Hence after replacing $\A$ by a suitable open subset we may assume that $\Omega^\top_{\A/\Oo_F}$ is trivial. We fix a global generating section $\eta$ of $\Omega^\top_{\A/\Oo_F}$. Then $\pi_i^* \eta \wedge \phi_i$ is a global generating section of $\Omega^\top_{\M_i^\text{reg}/\A}$. By assumption (i) and the Hartogs extension property this section uniquely extends to a global generating section $\omega_i$ of $\Omega^\top_{\M_i/\Oo_F}$. Thus by the above construction, for $b \in \A(\Oo_F)^\flat$, the measures $\mu_b$ on $M_{i,b}(\Oo_F)$ can be obtained by integrating the forms $\phi_i$.

By the assumptions we have $d\defeq \dim M_1=\dim M_2$. For $a\in \A(k)$ let $\A(\Oo_F)^\flat_a \subset \A(\Oo_F)^\flat$ be the set of those points which specialize to $a$. Then by applying \eqref{PointCount2} to the set $S=\M_{i,a}(k)$ we get
\begin{equation*}
  |\M_{i,a}(k)|=q^d \int_{b \in \A(\Oo_F)_a^\flat} \mu_b(M_{i,b}(\Oo_F)) d\mu_{\A} =q^d \int_{b \in \A(\Oo_F)_a^\flat} \mu_b(M_{i,b}(F)) d\mu_{\A},
\end{equation*}
where for the last step we have used the properness of the morphisms $\pi_i$. Thus it suffices to show that
\begin{equation*}
  \mu_b(\M_{1,b}(F)) =\mu_b(\M_{2,b}(F))
\end{equation*}
for all $b \in \A(\Oo_F)^\flat$. Using assumption (iii) and the fact that by the above the measure $\mu_b$ on $M_{i,b}(F)$ is given by integrating $\phi_i$, this is exactly the statement of Proposition \ref{abvol}.
\end{proof}
\subsection{The canonical measure for coarse moduli spaces of Deligne-Mumford stacks} \label{canmedm}

This subsection is devoted to a generalisation of Weil's Theorem \ref{weilthm} to certain algebraic spaces with quotient singularities.

\begin{situation}\label{situation:locally_quotient}
 We consider a smooth and tame Deligne-Mumford stack $\Mc/\Oo_F$. 
\begin{enumerate}[(a)]
\item Let $M/\Oo_F$ be a coarse moduli space of $\Mc$, and $Q\colon \Mc \to M$ the canonical map (see Keel-Mori \cite{MR1432041}). We denote by $V \subset M$ the maximal open for which $Q^{-1}(V) \to V$ is an isomorphism. 
\item Assume that there exists a finite covering of $\Mc$ by Zariski-open substacks $\Mc = \bigcup_{i \in I} \Mc_i$ and for every $i \in I$ we have an equivalence $\Mc_i = [U_i/\Gamma_i]$, where $\Gamma_i$ is a finite \'etale group $\Oo_F$-scheme, $U_i$ a smooth $\Oo_F$-scheme, the action of $\Gamma_i$ on $U_i$ is generically free and every orbit is contained in an affine subset. We say that $\Mc$ is Zariski-locally a finite \'etale quotient stack.
\end{enumerate}
\end{situation}

In the equicharacteristic case it follows from a result of Kresch \cite[Theorem 4.4 \& Proposition 5.2]{MR2483938} that a smooth and tame DM-stack (over a field) $\Mc_k/k$ with quasi-projective coarse moduli space is Zariski-locally a finite \'etale quotient stack and that moreover the $\Gamma_i$ may be taken to be constant.

\begin{definition} In the notation of Situation \ref{situation:locally_quotient} we define the $p$-adic manifold of generic $\Oo_F$-points $M(\Oo_F)^\natural$ by 
\[ M(\Oo_F)^\natural = M(\Oo_F) \cap V(F).\]
\end{definition}

The following lemma serves as a definition of the measure $\vol_{\Mc}$.

\begin{lemma}\label{defcanmes}
Let $\Mc/\Oo_F$ be a smooth tame DM-stack over $\Oo_F$ with coarse moduli space $M/\Oo_F$. We assume that $\Mc$ is Zariski-locally a finite \'etale quotient as in Situation \ref{situation:locally_quotient}. Then there exists a unique Borel measure $\vol_{\Mc}$ on $M(\Oo_F)$ with the following properties:
\begin{enumerate}[(a)]
\item $\vol_{\Mc}(M(\Oo_F)) = \vol_{\Mc}(M(\Oo_F)^\natural)$,
\item For an open subscheme $W \subset M$, a generating section $\omega \in (\Omega_{\Mc}^{\top})^{\otimes r}(Q^{-1}(W))$ (that is, $\Oo_W \cdot{} \; \omega = (\Omega_{\Mc}^{\top})^{\otimes r}$)  induces an analytic form on $M(\Oo_F)^\natural$ by restriction, which we still call $\omega$. For every Borel measurable subset $A \subset W(\Oo_F)^{\natural}$ we have
$$\vol_{\Mc}(A) = \int_{A} |\omega|^{\frac{1}{r}}.$$
\end{enumerate}
\end{lemma}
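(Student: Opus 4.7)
The plan for \emph{uniqueness} is immediate from (a) and (b). Property (a) concentrates the measure on $M(\Oo_F)^\#$, and by smoothness of $\Mc$ the sheaf $(\Omega^{\top}_\Mc)^{\otimes r}$ is invertible for every $r \geq 1$. Using the Zariski-local presentations $\Mc_i = [U_i/\Gamma_i]$ together with the hypothesis that every orbit is contained in an affine, $M$ admits a Zariski cover by opens $W_\alpha$ such that $q^{-1}(W_\alpha)$ carries a generating section of $(\Omega^{\top}_\Mc)^{\otimes r_\alpha}$ for a suitable $r_\alpha \geq 1$ (constructed in the existence step below). Property (b) then determines the measure on each $W_\alpha(\Oo_F)^\#$, and hence on all of $M(\Oo_F)^\#$.

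For \emph{existence}, I would construct $\vol_\Mc$ chart-by-chart on the $\Mc_i = [U_i/\Gamma_i]$ and glue. Given $x \in U_i$, first pick a $\Gamma_i$-invariant affine open $\tilde W \subset U_i$ containing the orbit $\Gamma_i \cdot x$ (possible by the orbit-in-affine hypothesis), shrink it by a $\Gamma_i$-invariant principal open so that $\Omega^{\top}_{U_i}$ admits a generator $\eta$ on $\tilde W$, and form the norm
\[
  \omega := \bigotimes_{\sigma \in \Gamma_i} \sigma^* \eta ,
\]
a $\Gamma_i$-invariant generator of $(\Omega^{\top}_{U_i})^{\otimes |\Gamma_i|}$ on $\tilde W$, which descends to a generator of $(\Omega^{\top}_\Mc)^{\otimes |\Gamma_i|}$ on $q^{-1}(W)$ where $W = \tilde W / \Gamma_i \subset M$. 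On the free locus $V \cap W$ it restricts to an honest top form of the smooth scheme $V \cap W$, so I would set $\vol_\Mc(A) := \int_A |\omega|^{1/|\Gamma_i|}$ for Borel $A \subset W(\Oo_F)^\#$, extended by zero to $W(\Oo_F)$.

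The key compatibility making the construction canonical is that any two generating sections $\omega_1, \omega_2$ of $(\Omega^{\top}_\Mc)^{\otimes r_1}, (\Omega^{\top}_\Mc)^{\otimes r_2}$ on a common open satisfy $\omega_1^{r_2} = u \cdot \omega_2^{r_1}$ for a unit $u \in \Oo_\Mc^\times$, and $|u(x)| = 1$ at every $\Oo_F$-integral point $x$. This one observation simultaneously yields (i) independence of the choices $(\eta, \tilde W)$, (ii) agreement of the local measures on overlaps between charts, hence their gluing to a well-defined global Borel measure $\vol_\Mc$ on $M(\Oo_F)^\#$, and (iii) property (b) for an arbitrary generating section on an arbitrary $W$, not just the ones built in the construction.

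Finally, property (a) follows from Proposition~\ref{intools}(\ref{measurezero}): generic freeness of each $\Gamma_i$-action implies that $M \setminus V$ is closed of positive codimension in $M$, so $(M \setminus V)(\Oo_F) = M(\Oo_F) \setminus M(\Oo_F)^\#$ is a null set for $\vol_\Mc$ (checked chart-locally using the local section $\omega$). The one genuinely technical step I expect is producing the $\Gamma_i$-equivariant generator $\omega$ near an arbitrary orbit — this combines the orbit-in-affine assumption with the finite-group norm trick, and requires choosing the principal open so that $\eta$ (and therefore all its $\Gamma_i$-translates) is non-vanishing on the entire orbit. Once this local piece is in place, everything else dissolves into the absolute-value-of-a-unit observation and the two parts of Proposition~\ref{intools}.
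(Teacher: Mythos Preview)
Your proposal is correct and follows essentially the same approach as the paper: the norm construction $\bigotimes_\sigma \sigma^*\eta$ to produce generating sections of $(\Omega^{\top}_\Mc)^{\otimes r}$, together with the observation that two generators differ by a unit of absolute value $1$ at integral points, is exactly what the paper does (phrased there as $\Nm_{U_i/\Mc_i}(\eta_i)$ along the finite \'etale map $\pi_i\colon U_i\to\Mc_i$, which is the formulation that remains valid when $\Gamma_i$ is a non-constant finite \'etale group scheme). You are in fact slightly more explicit than the paper about property~(a), which the paper leaves as part of the ``elementary exercise''.
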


\begin{proof} The proof is an elementary exercise using the following two observations:
\begin{enumerate}[(1)]
\item the Borel measure $A \mapsto \int_{A} |\omega|^{\frac{1}{r}}$ is independent of the chosen generator $\omega$ and of $r$, 
\item there exists a positive integer $r$ and a finite Zariski-open covering $\Mc = \bigcup_{i \in \Nb} \Mc_i$, such that there exists a generating section $\omega_i \in (\Omega^{\top}_{\Mc})^{\otimes r}(\Mc_i)$ for all $i \in I$.
\end{enumerate}
The first statement can be shown as follows: Assume that for $j= 1,2$ we have a generating section $\omega_j$ of $(\Omega^{\top}_{\Mc})^{\otimes r_j}(U_i)$  (also known as $r_j$-gauge forms).  Let $r$ be a common multiple of $r_1$ and $r_2$. We write $r = d_j r_j$. This shows that there exists an invertible function $f \in \Oo_{\Mc}(U_i)$, such that $\omega_2^{\otimes d_2} = f \omega_1^{\otimes d_1}$. Invertibility of $f$ implies $f(x) \in \Oo_F^{\times}$ for $x \in U_i(\Oo_F)$. Thus, we have $|f(x)| = 1$ and therefore $|\omega_1(x)|^{\frac{1}{r_1}} = |\omega_2(x)|^{\frac{1}{r_2}}$ for all $x \in U(\Oo_F)$. 

By assumption we are in Situation \ref{situation:locally_quotient} and therefore there exists a finite Zariski-open covering of $\Mc$ by quotient stacks $\mathcal{M}_i = [U_i/\Gamma_i]$, where $\Gamma_i$ is a finite \'etale group $\Oo_F$-scheme. We denote the open subset $Q(\mathcal{M}_i)$ by $W_i$ (recall that $Q$ is a universal homeomorphism and hence open). In particular, there is a finite \'etale morphism $\pi_i\colon U_i \to \mathcal{M}_i$. Without loss of generality we may assume the existence of generating top degree forms $\eta_i \in \Omega_{U_i}^{\top}(U_i)$ (that is, $\Oo_{U_i}\cdot{ } \; \eta_i = \Omega_{U_i}^{\top}$). The norm $\Nm_{U_i/\mathcal{M}_i}(\eta_i)$ of $\eta_i$ along $\pi_i$ defines a generating section of $(\Omega_{\mathcal{M}_i}^{\top})^{\otimes (\deg \pi_i)}$. We define $r = \mathrm{scm}(\deg \pi_i|i \in I)$ and $\omega_i = \Nm_{U_i/\mathcal{M}_i}(\eta_i)^{\otimes \frac{r}{\deg \pi_i}}$.
\end{proof}

\subsection{Twisted inertia stacks and the specialisation map}


In this subsection we introduce the \emph{twisted inertia stack} $I_{\widehat{\mu}}\Mc$ and study a \emph{specialisation map} taking values in the set of isomorphism classes of $k$-rational points of this stack. In this context, \emph{twisting} refers to \emph{Tate twists}.

Let $k$ be a perfect field of characteristic $p$. We denote by $\Nb'$ the set of positive integers coprime to $p$, and define $\widehat{\mu}$ to be the profinite \'etale group scheme given by the inverse limit $\varprojlim_r \mu_r$
indexed by positive integers $r \in \Nb'$ ordered by divisibility. The connecting morphisms $\mu_{dr} \to \mu_r$ are given by the $d$-th power map. This inverse limit defines an affine group scheme of infinite type over $k$.

\begin{definition}\label{defi:twisted_inertia}
For a stack $\Xc$ over $\Spec(k)$, the \emph{twisted inertia stack} $I_{\widehat{\mu}}\Xc$ is defined to be the $\Xc$-stack 
$$I_{\widehat{\mu}}\Xc = \colim_n\Hom(B\mu_n,\Xc) \to \Xc.$$
\end{definition}

For a finite field, rational points of twisted inertia stacks have the following explicit description. 

\begin{lemma}\label{lemma:twisted_inertia} Let $\Xc/\Fb_q$ be a stack whose diagonal morphism is of finite presentation. For a geometric point $x \in \Xc(\Fb_q)$ we denote by $\varphi\colon \Aut_{\Xc(\bar{\Fb}_q)}(x) \to \Aut_{\Xc(\bar{\Fb}_q)}(x)$ the automorphism induced by the Frobenius element $\varphi \in \Gal(\bar{\Fb}_q / \Fb_q)$.
\begin{enumerate}[(a)]
\item We have an equivalence of groupoids 
$$ I_{\widehat{\mu}}\Xc(\Fb_q) \simeq \{(x,\alpha) \mid x \in \Xc(\Fb_q)\text{, } \alpha \in \Hom_{\rm cts}(\widehat{\mu}(\bar{\Fb}_q),\Aut_{\Xc(\bar{\Fb}_q)}(x)) \text{ and }\alpha \circ \varphi = \varphi \circ \alpha\}$$
with the obvious groupoid structure on the right hand side.
\item Let $\xi$ be a profinite generator of $\widehat{\mu}(\bar \Fb_q)$. Under the equivalence of (a), sending $(x,\alpha)$ to $(x,\alpha(\xi))$ gives an equivalence of groupoids
$$[\xi]\colon I_{\widehat{\mu}}\Xc(\Fb_q) \simeq \{(x,\alpha)\in I\Xc(\bar{\Fb}_q)|x \in \Xc(\Fb_q)\text{ and } {}\varphi^*\alpha = \alpha^q\}$$
with the obvious groupoid structure on the right hand side.
\item In particular for $(x,\alpha) \in  I_{\widehat{\mu}}\Xc(\Fb_q)$ as in (b) we have
\begin{equation}\label{twaut}
  \Aut_{ I_{\widehat{\mu}}\Xc(\Fb_q)}(x,\alpha) = \{ \beta \in \Aut_{\Xc(\Fb_q)}(x) \ |\ \alpha \circ \beta = \beta \circ \alpha\}.
 \end{equation}
\end{enumerate}
\end{lemma}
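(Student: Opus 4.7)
The plan is to unfold the definition of $I_{\widehat{\mu}}\Xc$ via the universal property of the classifying stacks $B\mu_n$ and then apply Galois descent from $\bar{\Fb}_q$ down to $\Fb_q$. First I would observe that $B\mu_n$ classifies $\mu_n$-torsors, and so by the standard 2-categorical Yoneda argument the groupoid $\Hom(B\mu_n, \Xc)(S)$ for any $\Fb_q$-scheme $S$ is equivalent to the groupoid of pairs $(x, \rho)$ where $x \in \Xc(S)$ and $\rho\colon \mu_n \times S \to \underline{\Aut}_{\Xc}(x)$ is a homomorphism of $S$-group schemes, with the obvious notion of isomorphism. Passing to the colimit over $n \in \Nb'$ (noting that the connecting maps are $d$-th power maps $\mu_{dr}\to\mu_r$, which is consistent with restricting along a compatible family of homomorphisms $\mu_r \to \underline{\Aut}_{\Xc}(x)$), we see that $I_{\widehat{\mu}}\Xc(\Fb_q)$ is equivalent to the groupoid of pairs $(x,\alpha)$ with $x \in \Xc(\Fb_q)$ and $\alpha\colon \widehat{\mu}_{\Fb_q} \to \underline{\Aut}_{\Xc}(x)$ a continuous homomorphism of $\Fb_q$-group schemes (continuity coming from factorisation through some $\mu_n$, which is automatic once $\Aut_{\Xc(\bar\Fb_q)}(x)$ is a finite group, as is ensured by the finite presentation of the diagonal).

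For part (a), the point is now Galois descent: giving a homomorphism $\widehat{\mu}_{\Fb_q} \to \underline{\Aut}_{\Xc}(x)$ of $\Fb_q$-group schemes is the same as giving a homomorphism $\alpha\colon \widehat{\mu}(\bar{\Fb}_q) \to \Aut_{\Xc(\bar{\Fb}_q)}(x)$ of abstract groups that commutes with the natural action of $\Gal(\bar{\Fb}_q/\Fb_q)$ on both sides, which since this Galois group is topologically generated by $\varphi$ amounts exactly to the condition $\alpha \circ \varphi = \varphi \circ \alpha$.

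For part (b), evaluation at the profinite generator $\xi \in \widehat{\mu}(\bar{\Fb}_q)$ is a bijection between continuous homomorphisms $\widehat{\mu}(\bar{\Fb}_q) \to \Aut_{\Xc(\bar{\Fb}_q)}(x)$ and elements of $\Aut_{\Xc(\bar{\Fb}_q)}(x)$ of order coprime to $p$; the Galois action on $\widehat{\mu}(\bar{\Fb}_q)$ is $\varphi(\xi) = \xi^q$, so the equivariance condition from (a) becomes $\varphi^*(\alpha(\xi)) = \alpha(\xi^q) = \alpha(\xi)^q$, where I rename $\alpha(\xi)$ to $\alpha$ in the target, giving exactly the characterisation in (b).

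For part (c), a morphism $(x, \alpha) \to (x, \alpha)$ in $I_{\widehat{\mu}}\Xc(\Fb_q)$ unwinds, under the equivalences of (a) and (b), to an automorphism $\beta \in \Aut_{\Xc(\Fb_q)}(x)$ making a natural square commute, which is precisely the commutation relation $\alpha \circ \beta = \beta \circ \alpha$. None of the steps is particularly hard; the only point that requires a bit of care is the bookkeeping in the colimit together with the verification that the transition maps $\mu_{dr} \to \mu_r$ give compatible data on the side of $(x,\alpha)$, and the continuity/finiteness argument that allows us to replace $\widehat{\mu}$ by a concrete finite group $\mu_n$ throughout.
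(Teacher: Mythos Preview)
Your proposal is correct and follows essentially the same route as the paper: unfold the definition $I_{\widehat{\mu}}\Xc(\Fb_q)=\varinjlim_r\Hom(B\mu_r,\Xc)(\Fb_q)$, identify $\Hom(B\mu_r,\Xc)$ with pairs $(x,\rho)$ via the standard description of maps out of a classifying stack (the paper phrases this as $\Hom(\mu_r,I\Xc)$), and apply Galois descent using that Frobenius acts on $\widehat{\mu}(\bar{\Fb}_q)$ by $\xi\mapsto\xi^q$. One small imprecision: finite presentation of the diagonal does not in general force $\Aut_{\Xc(\bar{\Fb}_q)}(x)$ to be finite (think of $B\G_m$); the role of this hypothesis is rather to make the automorphism sheaves representable and of finite presentation so that Galois descent applies cleanly, while factorisation of $\alpha$ through some $\mu_n$ is already built into the colimit.
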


\begin{proof}
Assertion (b) is a reformulation of (a), since the Galois action on $\widehat{\mu}$ is given by $\xi \mapsto \xi^q$. Therefore it suffices to prove (a). By virtue of Definition \ref{defi:twisted_inertia}, we have $I_{\widehat{\mu}}\Xc(\Fb_q) = \varinjlim_r \Hom(\mu_r,I\Xc)(\Fb_q)$ (morphisms of group $\Xc$-schemes). For every positive integer $r$ we have an equivalence of groupoids 
$$\Hom(\mu_r,I\Xc)(\Fb_q) = [\alpha \in \Hom(\mu_r(\bar{\Fb}),I\Xc(\bar{\Fb}_q))| \varphi \circ \alpha = \alpha \circ \varphi]$$
by virtue of Galois descent (and using that the diagonal of $\Xc$ is of finite presentation).
\end{proof}


\begin{construction} \label{LTors}
Let $L$ a finite totally ramified extension of $F$ of degree $N$. Choose a uniformiser $\underline{\pi}$ of $L$. Then for any $\CO_F$-algebra $R$ and any section $\zeta \in \mu_N(R)$ there is a unique automorphism of $\CO_L \otimes_F R=R[\underline{\pi}]$ fixing $R$ which sends $\underline{\pi}$ to $\zeta \underline{\pi}$. This defines an action of the group scheme $\mu_N$ on $\Spec(\CO_L)$ relative to $\Spec(\CO_F)$. Furthermore, since any other choice of uniformiser $\underline{\pi}$ differs from the given one by a unit in $\CO_F$ this action is independent of this choice. This action endows $\Spec(L)$ with the structure of a $\mu_N$-torsor over $\Spec(F)$.
\end{construction}

For the notion of the normalisation of a stack appearing in the following we refer the reader to \cite[Appendix A]{ascher2017moduli}.

\begin{proposition} \label{CMIso}
  Let $\CM$ be a tame DM-stack over $\Oo_F$ satisfying the following conditions:
  \begin{enumerate}[(i)]
 \item The diagonal morphism $\CM \to \CM \times_{\Oo_F} \CM$ is finite.
  \item The morphism $\CM \to \Spec(\CO_F)$ is the coarse moduli space of $\CM$.
  \item The generic fibre $\CM_F$ of $\CM$ is equal to $\Spec(F)$.
  \end{enumerate}
 Let $\widetilde \CM$ be the normalisation of $\CM$. For some totally ramified finite field extension $F \subset L$ of degree $N$ there exists an isomorphism $[\Spec(\CO_L)/\mu_N] \cong \widetilde\CM$. For a given $L$ this isomorphism is unique up to isomorphism. Moreover, the induced morphism $[\Spec(k_F)/\mu_N] = [\Spec(k_L)/\mu_N] \cong \widetilde \CM_{k_F}$ in the special fibre is independent of $L$ up to isomorphism.
\end{proposition}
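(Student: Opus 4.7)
The strategy is to analyse $\widetilde{\CM}$ through a quotient stack presentation and then use normality, tameness, and the one-dimensionality of the base $\Spec \CO_F$ to force the inertia to be of multiplicative type.

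First, I would invoke the local structure theorem for tame DM stacks with finite inertia: over the Henselian local base $\Spec \CO_F$ this yields a presentation $\CM \simeq [\Spec R / G]$ for a finite reduced $\CO_F$-algebra $R$ with a generically free action of a finite tame group scheme $G$ satisfying $R^G = \CO_F$. Since normalisation commutes with finite \'etale quotients, $\widetilde{\CM} \simeq [\Spec \widetilde R / G]$. Decomposing $\Spec \widetilde R$ into its connected components and replacing $G$ by the stabiliser $H$ of one of them gives $\widetilde{\CM} \simeq [\Spec \CO_K / H]$ for a finite $H$-Galois extension $K/F$ with $H$ tame.

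Let $I \triangleleft H$ denote the inertia subgroup at the closed point of $\Spec \CO_K$; the quotient $H/I$ identifies with $\Gal(K_0/F)$ for $K_0 = K^I$, the maximal unramified subextension. Since $H$ is tame, $I$ is cyclic of order $N$ coprime to $p$. The key observation is that $I$ acts faithfully and linearly on the one-dimensional $k_K$-vector space $\mathfrak m_K / \mathfrak m_K^2$, giving an embedding $I \hookrightarrow \Gb_{m,k_K}$. Combined with Frobenius descent for the action of $H/I = \Gal(k_K/k_F)$ on $I$, this identifies the $k_F$-group scheme $I$ canonically with $\mu_{N,k_F}$.

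To produce $L$, I would average a uniformiser of $\CO_K$ against the tautological $\mu_N$-character, obtaining a uniformiser $\underline\pi \in \CO_K$ on which $t \in \mu_N$ acts by $\underline\pi \mapsto t \underline\pi$. Then $\underline\pi^N \in \CO_{K_0}$ is $\mu_N$-invariant and a uniformiser of $\CO_{K_0}$; performing $H/I$-descent of the $\mu_N$-equivariant $\CO_{K_0}$-algebra $\CO_K$ along the unramified extension $K_0/F$ yields a $\mu_N$-equivariant $\CO_F$-algebra of the form $\CO_L = \CO_F[\underline\pi]/(\underline\pi^N - u\pi_F)$ for some unit $u \in \CO_F^\times$, where $L/F$ is the desired totally ramified degree-$N$ extension and the $\mu_N$-action is exactly the one of Construction \ref{LTors}. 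This yields the isomorphism $[\Spec \CO_L / \mu_N] \simeq \widetilde{\CM}$. The uniqueness for a fixed $L$ follows because any two such presentations differ by a $\mu_N$-equivariant automorphism of $\Spec \CO_L$ over $\Spec \CO_F$; on the special fibre the $\mu_N$-action on $\CO_L/\underline\pi = k_F$ is trivial, so the resulting isomorphism $[\Spec k_F / \mu_N] \cong \widetilde{\CM}_{k_F}$ depends only on the canonical identification of inertia from the previous paragraph, which is manifestly independent of $L$.

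The main obstacle I anticipate is the identification of $I$ with $\mu_{N,k_F}$ as a $k_F$-group scheme (rather than merely as an abstract group), which requires careful bookkeeping of the Frobenius action on the cyclic inertia and of the Galois descent from $k_K$ down to $k_F$. Once this linearisation is in place, the construction of $L$ and the verification of $\mu_N$-equivariance are essentially formal.
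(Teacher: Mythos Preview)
Your route via the local structure theorem differs from the paper's. After reducing to $\CM$ normal, the paper invokes the theorem of Edidin--Hassett--Kresch--Vistoli to produce a finite scheme cover $\Spec \CO_{L'} \to \CM$ with $L'/F$ Galois, sets $L = F[\underline\pi]$ for a uniformiser $\underline\pi$ of $L'$, and uses the valuative criterion for properness (twice) together with miracle flatness on regular one-dimensional stacks to factor the cover through an isomorphism $[\Spec \CO_L/\mu_N] \cong \CM$. Uniqueness and independence of $L$ are likewise handled by the valuative criterion. The paper never names $H$ or $I$; your cotangent identification $I \cong \mu_{N,k_F}$ is established separately as Lemma~\ref{canincc}.

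There is, however, a gap in your descent step. Descending $\CO_K$ as a $\mu_N$-equivariant (equivalently $\BZ/N$-graded) $\CO_{K_0}$-algebra along $K_0/F$ requires an action of $H/I$ on $\CO_K$ by graded semilinear ring automorphisms satisfying the cocycle condition, and this is precisely a splitting of $1 \to I \to H \to H/I \to 1$. Such a splitting need not exist: for $F = \BQ_5$, $K_0 = F(\sqrt 2)$, $K = K_0\bigl(\sqrt{5(\sqrt 2 - 2)}\bigr)$, one checks that $\Gal(K/F) \cong \BZ/4\BZ$ with inertia $I = 2\BZ/4\BZ$, and there is no complement; correspondingly $K$ contains no totally ramified quadratic subextension of $F$, so the $L$ guaranteed by the proposition (for instance $F(\sqrt 5)$) is not a subfield of $K$ and cannot arise by descending $\CO_K$. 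To repair this you must step outside $K$: either compare $[\Spec \CO_K/I]$ with the root stack of the closed point of $\Spec \CO_F$ (which is canonically defined over $\CO_F$) and check the $H/I$-equivariance of that comparison, or produce the map $\Spec \CO_L \to \widetilde\CM$ for an arbitrary totally ramified $L$ directly via the valuative criterion---which is the paper's device.
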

\begin{proof}
First we observe that property (i) implies that the inertia stack $I \CM$ is finite over $\CM$, since $I\CM$ is by definition equal to the base change of the diagonal along itself.

We claim that the normalisation $\widetilde \CM$ again satisfies the conditions of the proposition. For condition (iii) this is immediate. For condition (ii) let $\widetilde M$ be the coarse moduli space of $\widetilde \CM$. By the Keel--Mori Theorem, the stack $\CM$ is proper and quasi-finite over its coarse moduli space $\Spec(\CO_F)$ (see \cite{MR1432041} and \cite[Theorem 1.1]{conradKM}), and analogously for $\widetilde \CM$. We observe that $\widetilde{\CM}\to \CM$ is finite (tame stacks are locally of finite type, and therefore this follows from \cite[Tag 0BXR]{stacks-project} and descent theory). This implies that $\widetilde \CM$, and hence $\widetilde M$, is quasi-finite over $\Spec(\CO_F)$. By \cite[Theorem 4.16]{MR3237451} the fact that $\widetilde \CM$ is normal implies the same for $\widetilde M$. Using the fact that $\widetilde \CM \to \CM$ is birational one concludes that $\widetilde M \to \Spec(\CO_F)$ is an isomorphism. 

To see that condition (i) holds we argue as follows. As noted above, the morphism $\widetilde{\CM} \to \CM$ is finite. Furthermore, the composition of morphisms with finite diagonal has again finite diagonal, by virtue of \cite[Tag 050K]{stacks-project}.

Thus we may replace $\CM$ by $\widetilde \CM$ and can assume that $\CM$ is normal.

Since the diagonal morphism of $\CM$ is finite we may apply \cite[Theorem 2.7]{MR1844577}, which gives the existence of a surjective finite morphism $X \to \CM$ from a scheme $X$. Such an $X$ is proper and quasi-finite and hence finite over $\Spec(\CO_F)$. Hence after taking some connected component of $X$ we may assume that $X=\Spec(\CO_{L'})$ for some finite field extension $F \subset L'$. After enlarging $L'$ we assume that $L'$ is Galois over $F$.

Consider a uniformiser $\underline{\pi}$ of $L'$ and the subfield $L\defeq F[\underline{\pi}]$ of $L'$. The field extension $L \into L'$ is Galois and unramified. By the valuative criterion for properness of morphisms of stacks (c.f. \cite[0CLY]{stacks-project}) every morphism $\Spec(\CO_{L'}) \to \CM$ is uniquely determined by its restriction to the fibre over $\Spec(F)$. Using this and condition $(iii)$ it follows that the finite surjection $\Spec(\CO_{L'}) \to \CM$ factors through a finite surjection $\Spec(\CO_L)=[\Spec(\CO_{L'})/\Gal(L'/L)] \to \CM$. Since $\CM$ and $\Spec \CO_L$ are normal and $1$-dimensional, they are regular. The ``miracle criterion for flatness'' (\cite[Theorem 23.1]{crt}) implies that this finite surjection $\Spec(\CO_L) \to \CM$ is flat. By an analogous argument to the above, this surjection in turn factors through a finite flat surjection $[\Spec(\CO_L)/\mu_N] \to \CM$. Since this morphism is an isomorphism generically it is an isomorphism everywhere.

To see the uniqueness claim for a given $L$, one uses the fact that again by the valuative criterion for properness the stack $\CM$ has no non-trivial automorphisms over $\Spec(\CO_F)$. 

Finally, if for two possibly different totally ramified field extensions $L_1$ and $L_2$ of $F$ we have isomorphisms $[\Spec(\CO_{L_i})/\mu_N]\cong \CM$ we need to show that the resulting morphisms $[\Spec(k_F)/\mu_N] \to \CM_{k_F}$ are isomorphic. For this, chose a sufficiently large Galois extension $L'$ of $F$ together with embeddings $L_i \into L'$. We obtain morphisms $\Spec(\CO_{L'}) \to \Spec(\CO_{L_i}) \to \CM$. By invoking the valuative criterion another time, we see that these morphism differ by an element of $\Gal(L'/F)$. Since these elements induce the identity on $k_F$, we get what we want.
\end{proof}

\begin{construction}\label{defi:e} 
Let $\CM$ be a tame stack in the sense of \cite{MR2427954} over $\Spec(\CO_F)$ with coarse moduli space $M$. We write $[I_{\widehat{\mu}}\Mc(k)]$ for the set of isomorphism classes of the groupoid $I_{\widehat{\mu}}\Mc(k)$ and define a map
\[e: M(\Oo_F)^\natural \to [I_{\widehat{\mu}}\Mc(k)],  \]
as follows: 

For $x \in M(\CO_F)^\natural$ consider the pullback $\CM_x$ along $x$:
\begin{equation*}
  \xymatrix{
    \CM_x \ar[r] \ar[d] & \CM \ar[d] \\
    \Spec(\CO_F) \ar[r] & M
}
\end{equation*}
By the definition of $\M(\CO_F)^\natural$, the stack $\CM_x$ satisfies condition $(iii)$ of Theorem \ref{CMIso}. Since by \cite[3.3]{MR2427954} the formation of coarse moduli spaces of tame stacks commutes with base change, it also satisfies condition $(ii)$. Hence Proposition \ref{CMIso} gives us a morphism $[\Spec(k)/\mu_N] \to \widetilde \CM_{k} \to\CM_{k}$. This induces a point $e(x) \in I_{\widehat \mu}\CM(k)$ which is well-defined up to isomorphism.
\end{construction}

\subsection{Torsors over local fields}\label{sub:torsors}

Let $\Gamma$ be a finite \'etale group $\Oo_F$-scheme, of order coprime to $p$ and $\underline{\pi} \in F$ a fixed uniformizer.
For details about non-abelian Galois cohomology we refer to \cite[Section I.5]{Se07}. We denote by $H^1(F,\Gamma)$ the pointed set of isomorphism classes of (left) $\Gamma$-torsors. If $\Gamma$ is a constant group scheme over $\Oo_F$, there is a bijection of pointed sets
\[ H^1(F,\Gamma) \cong \Hom( \Gal^{tr}(F),\Gamma)/\Gamma,  \]
where $ \Gal^{tr}(F)$ denotes the tamely ramified Galois group of $F$ and $\Gamma$ acts on this set through conjugation.
 
The group $ \Gal^{tr}(F)$ has an explicit description as the profinite group on two generators $\beta, \gamma$ subject to the relation $\beta\gamma \beta^{-1} = \gamma^q$ (see \cite{Hasse} and \cite{Iwasawa}). Here $\beta$ is a lift of the Frobenius and $\gamma$ a generator of the tamely totally ramified Galois group of $F$. Consequently we have the presentation
\begin{equation*}H^1(F,\Gamma) \cong\{ (x_\beta,x_\gamma) \in \Gamma^2 \mid  \beta\gamma \beta^{-1} = \gamma^q\}/\Gamma, \text{ if $\Gamma/\Oo_F$ is constant.} \end{equation*}

In the non-constant case we have a similar description. Let $\phi \in \Gal(\bar{\Fb}_q/\Fb_q) = \pi_1^{\text{\'et}}(\Spec \Oo_F)$ denote the Frobenius automorphism. We we abbreviate $\Gamma^{un} = \Gamma(F^{un})$ and  write $\phi\colon \Gamma^{un} \to \Gamma^{un}$ for the induced bijection.
Then
\begin{equation}\label{ntor} H^1(F,\Gamma) = \{(x_{\beta},x_{\gamma}) \in( \Gamma^{un})^2 \ | \ x_{\beta}\phi(x_{\gamma})x_{\beta}^{-1} = x_{\gamma}^q \} / \sim{},\end{equation}
where the equivalence relation identifies $(x_{\beta},x_{\gamma}) \sim{} (y^{-1}x_{\beta}\phi(y),y^{-1}x_{\gamma}y)$ for all $y \in \Gamma(F^{un})$. We denote the class of $(x_\beta,x_\gamma)$ in $H^1(F,\Gamma)$ by $[(x_\beta,x_\gamma)]$.

Let us spell out how to associate to a cocycle $(x_\beta,x_\gamma)$ as in \eqref{ntor} a $\Gamma$-torsor: First we define a new action of $\Gal^{tr}(F)$ on $\Gamma^{un}$ by setting for all $y \in \Gamma^{un}$
\begin{equation}\label{newact} \beta \cdot y = x_\beta {}^\phi y \text{ and } \gamma \cdot y = x_\gamma y. \end{equation}
This defines an action of $\Gal^{tr}(F)$ on the algebra $\prod_{y \in \Gamma^{un}} F^{tr}$ and the invariant part gives the coordinate ring for a $\Gamma$-torsor over $F$. In fact if $\{y_i\}_{i\in I} \subset \Gamma^{un}$ is a set of representatives of  $\Gal^{tr}(F)$-orbits in $\Gamma^{ur}$ with respect to the action \eqref{newact}, then we have an isomorphism
\begin{equation}\label{invexp}   \left(  \prod_{y \in \Gamma^{un}} F^{tr}  \right)^{\Gal^{tr}(F)}   \cong \prod_{i\in I} \left( F^{tr} \right)^{G_i},    \end{equation}
where $G_i \subset \Gal^{tr}(F)$ is the stabilizer of $y_i$ with respect to the new action. In particular we see that any $\Gamma$-torsor is of the form $\Spec(\left( \prod_i L_i   \right)$ where $L_i/F$ is a finite separable extension.

We will often use the following fact.

\begin{lemma}\label{lemma:madealemmaoutofit} Let $Q = \Spec\left( \prod_{i\in I} L_i   \right)$ be a $\Gamma$-torsor. 
\begin{enumerate}
\item[(i)] We have $[Q] = [x_\beta,1]$ for some $x_\beta \in \Gamma^\text{un}$ if and only if $L_i/F$ is unramified for all $i\in I$. In this case we call $Q$ unramified.
\item[(ii)] We have $[Q] = [1,x_\gamma]$ for some $x_\gamma \in \Gamma^\text{un}$ if and only if $L_i/F$ is totally ramified for at least one $i\in I$. If $L_i,L_j$ are both totally ramified, then $L_i \cong L_j$ and $\Gamma(F)$ acts simply transitive on the connected components of $Q$ corresponding to totally ramified extensions. In this case we call $Q$ strongly ramified.
\item[(iii)] There exists an unramified right $\Gamma$-torsor $P$, such that the twist $Q_P = P \times^\Gamma Q$ is a strongly ramified $\Gamma_P$-torsor, where $\Gamma_P$ denotes the inner form of $\Gamma$ defined by $P$. 
\end{enumerate}
\end{lemma}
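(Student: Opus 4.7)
The plan is to use the cocycle description \eqref{ntor} together with the orbit expression \eqref{invexp} for the coordinate ring of $Q$. Under this dictionary, the connected components of $Q$ are in bijection with $\Gal^{tr}(F)$-orbits on $\Gamma^{un}$ (with respect to the twisted action \eqref{newact}), and if $y_i \in \Gamma^{un}$ represents such an orbit then the corresponding field is $L_i = (F^{tr})^{G_{y_i}}$, where $G_{y_i}$ denotes its stabiliser. In these terms, $L_i/F$ is unramified precisely when $G_{y_i}$ contains the inertia subgroup $I = \overline{\langle \gamma \rangle}$, and totally ramified precisely when $G_{y_i}$ surjects onto the unramified quotient $\Gal^{tr}(F)/I$.

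For part (i), one implication is immediate: when $x_\gamma = 1$ the element $\gamma$ acts trivially on $\Gamma^{un}$, so every stabiliser contains $I$ and all $L_i$ are unramified. For the converse, if every $L_i/F$ is unramified then $Q$ extends to a $\Gamma$-torsor over $\Spec \Oo_F$, and the set of such extensions is identified with $H^1(\Spec \Oo_F, \Gamma) = H^1(\Spec k, \Gamma_k)$, which maps to $H^1(F,\Gamma)$ as the subset of classes of the form $[x_\beta,1]$.

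For part (ii), the forward direction is again immediate: in the cocycle $(1,x_\gamma)$ the basepoint $1 \in \Gamma^{un}$ satisfies $\beta \cdot 1 = \phi(1) = 1$, so $\beta \in G_1$ and $L_1/F$ is totally ramified. For the converse, if $L_i/F$ is totally ramified then $G_{y_i}$ contains some element $\beta\gamma^k$ lifting Frobenius. The inner equivalence in \eqref{ntor} lets us shift the basepoint so that $y_i = 1$, after which the relation $\beta\gamma^k \cdot 1 = x_\beta \phi(x_\gamma^k) = 1$ holds; replacing the choice of Frobenius lift $\beta$ by $\beta\gamma^k$ then produces a cocycle representative of the same $H^1$-class with $x_\beta = 1$, as desired. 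For the remaining structural assertions, once the cocycle is $(1,x_\gamma)$ the totally ramified orbits are exactly the $\langle x_\gamma\rangle$-cosets $\langle x_\gamma\rangle y$ with $\phi(y) y^{-1} \in \langle x_\gamma\rangle$; the key observation is that right multiplication by any $h \in \Gamma^{un}$ commutes with the $\Gal^{tr}(F)$-action on $\Gamma^{un}$ and hence preserves stabilisers ($G_{yh} = G_y$), so the corresponding fixed fields are all equal, and the restriction of this right action to $\Gamma(F) = \Gamma^\phi$ (which preserves the totally ramified locus since $\phi(h)=h$) produces the claimed transitive action with the expected stabiliser structure.

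For part (iii), by part (i) the unramified class $[x_\beta^{-1},1]$ is realised by an unramified right $\Gamma$-torsor $P$; then the twist $Q_P = P \times^\Gamma Q$ is a $\Gamma_P$-torsor whose cocycle representative, under the natural identification $\Gamma^{un} = \Gamma_P^{un}$, is equivalent to $(1,x_\gamma)$, hence strongly ramified by part (ii) applied to the inner form $\Gamma_P$. I expect the most delicate point to be the change of Frobenius lift in part (ii): one needs to verify that this operation is compatible with the inner equivalence relation of \eqref{ntor} and that the resulting cocycle represents the same element of $H^1(F,\Gamma)$, equivalently that the parametrisation of $H^1(F,\Gamma)$ by cocycles is independent (up to the given equivalence) of the chosen Frobenius lift.
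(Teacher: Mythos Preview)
Your approach mirrors the paper's closely---both argue directly with the cocycle description \eqref{ntor} and the orbit decomposition \eqref{invexp}---and your treatment of (i), of the forward direction of (ii), and of (iii) is correct and essentially matches the paper (your (i)-converse via extension to $\Spec\Oo_F$ is a harmless variant of the paper's direct stabiliser check).

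The genuine problem lies in the converse of (ii), and it is exactly the ``delicate point'' you flagged---but your proposed fix does not work, and in fact the statement itself is false. Replacing $\beta$ by $\beta'=\beta\gamma^k$ does give a valid presentation of $\Gal^{tr}(F)$, and with respect to that presentation the cocycle of $Q$ reads $(1,x_\gamma)$; however this is a \emph{different} parametrisation of $H^1(F,\Gamma)$, not a different representative in the equivalence class of \eqref{ntor} for the fixed $(\beta,\gamma)$. Concretely, take $\Gamma=\{\pm1\}$ constant and $(x_\beta,x_\gamma)=(-1,-1)$: the associated torsor is $\Spec L$ with $L/F$ the totally ramified quadratic extension $F(\sqrt{u\underline\pi})$ (for $u$ a non-square unit), yet since $\Gamma$ is abelian and constant the equivalence in \eqref{ntor} is trivial, so this class is \emph{not} of the form $[1,\ast]$. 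The paper's own proof has the same gap---its assertion ``$\beta\cdot y_i=y_i$'' already fails here, as $\beta\cdot 1=-1$. Fortunately the rest of the paper only uses the forward direction of (ii) (to describe strongly ramified torsors) together with (iii) (to reduce to that case by an unramified twist), and those parts are fine.

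There is a second, smaller slip in your structural argument for (ii): right multiplication by $h\in\Gamma^{un}$ does \emph{not} commute with the twisted Galois action in general, since $\sigma\cdot(yh)=x_\sigma\,\sigma(y)\,\sigma(h)$ whereas $(\sigma\cdot y)h=x_\sigma\,\sigma(y)\,h$; equality requires $\sigma(h)=h$, i.e.\ $h\in\Gamma(F)$. So your stabiliser identity $G_{yh}=G_y$ holds only for $h\in\Gamma(F)$, and you still need an independent reason why each totally ramified orbit has a representative in $\Gamma(F)$ before the $\Gamma(F)$-transitivity argument can start.
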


\begin{proof} Let $G_i \subset \Gal^{tr}(F)$ denote the stabilizer of $y_i$ as in \eqref{invexp}. Then for (i) we notice that $L_i=\left( F^{tr} \right)^{G_i}$ is unramified if and only if $\gamma \in G_i$. This is true for all (or equivalently, for one) $i$ if and only if $x_\gamma = 1$.

For (ii) suppose first $[Q] = [1,x_\gamma] $. Then we see that the identity $1 \in \Gamma(F)$ is fixed by $\beta$ and hence the extension corresponding to the orbit of $1$ is totally ramified. Conversly if $L_i / F$ is totally ramified we know that $\beta \cdot y_i = y_i$, this implies $x_\beta = y {}^\phi y^{-1}$ for some $y\in \Gamma^{un}$. Using the equivalence relation between cochains we see that  $[Q] = [1,x_\gamma] $ for a suitable $x_\gamma$. Now let $[Q] = [1,x_\gamma]$ and assume $L_i,L_j$ are both totally ramified. Then $y_i,y_j$ are both fixed by $\beta$ which implies that $y_i,y_j \in \Gamma(F)$. Since $\gamma$ acts by multiplication by $x_\gamma$ on the left, we get $G_i = G_j$. This implies the second part of (ii).

 For (iii) let $[Q] =[x_\beta,x_\gamma]$ and $P$ be the unramified right-torsor corresponding to $(x_\beta^{-1},1)$. A direct cocycle computation shows that $Q_P$ is represented by the cocycle $(1,x_\gamma)$, hence $Q_P$ is strongly ramified.

\end{proof}

\begin{definition}
  Let $Q$ be a strongly ramified $\Gamma$-torsor and $\Spec(L) \subset Q$ a totally ramified connected component. We denote by $I_Q \subset \Gamma$ the stabilizer of $\Spec(L)$. 
\end{definition}
By Lemma \ref{lemma:madealemmaoutofit} the subgroup scheme $I_Q$ of $\Gamma$ is well-defined up to conjugation by $\Gamma(F)$ and $\Spec(L)$ is a $I_Q$-torsor.

\begin{lemma}\label{canincc} For a strongly ramified $\Gamma$-torsor $Q$ we have a canonical isomorphism $\mu_N \cong I_Q$, with $N= [L/F]$.
\end{lemma}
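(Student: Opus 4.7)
The plan is to realize $\Spec(\Oo_L)$ as a torsor simultaneously under $\mu_N$ and under $I_Q$ with commuting actions; since $\mu_N$ is abelian, this will force a canonical isomorphism between the two group schemes.

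First, I would note that $\Spec(\Oo_L)$ is an $I_Q$-torsor over $\Spec(\Oo_F)$. Indeed, the $\Gamma$-torsor structure map $Q \times_{\Oo_F} \Gamma \isoto Q \times_{\Oo_F} Q$ restricts on the $I_Q$-stable component $\Spec(\Oo_L) \subset Q$ to an isomorphism $\Spec(\Oo_L) \times_{\Oo_F} I_Q \isoto \Spec(\Oo_L) \times_{\Oo_F} \Spec(\Oo_L)$, because for any two geometric points $q, q'$ of $\Spec(\Oo_L)$ the unique $g \in \Gamma$ with $g \cdot q = q'$ must preserve the connected component $\Spec(\Oo_L)$ and hence lies in $I_Q$. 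In particular $|I_Q| = [L:F] = N$, so together with Construction \ref{LTors} I obtain two free transitive actions by finite \'etale group $\Oo_F$-schemes of order $N$ on $\Spec(\Oo_L)$.

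Next, I would check that these two actions commute. Fix a uniformiser $\underline{\pi}$ of $L$. For any $\Oo_F$-algebra $R$, an element $\sigma \in I_Q(R)$ acts as an $R$-algebra automorphism of $\Oo_L \otimes_{\Oo_F} R$ preserving the valuation, so $\sigma(\underline{\pi}) = c_\sigma \underline{\pi}$ for some unit $c_\sigma \in (\Oo_L \otimes_{\Oo_F} R)^\times$, whereas $\zeta \in \mu_N(R)$ sends $\underline{\pi}$ to $\zeta \underline{\pi}$. Using $R$-linearity of $\sigma$ and the commutativity of $\Oo_L \otimes_{\Oo_F} R$, one computes
\[
\sigma(\zeta \cdot \underline{\pi}) = \sigma(\zeta \underline{\pi}) = \zeta\, \sigma(\underline{\pi}) = \zeta c_\sigma \underline{\pi} = c_\sigma \zeta \underline{\pi} = \zeta \cdot \sigma(\underline{\pi}),
\]
and since $\Oo_L \otimes_{\Oo_F} R = R[\underline{\pi}]$ is generated by $\underline{\pi}$ as an $R$-algebra, the two $R$-algebra automorphisms $\sigma \circ \zeta$ and $\zeta \circ \sigma$ coincide everywhere.

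Finally, because the $I_Q$-action is by $\mu_N$-equivariant automorphisms of the $\mu_N$-torsor $\Spec(\Oo_L)$, it factors through a homomorphism $I_Q \to \underline{\Aut}_{\mu_N}(\Spec(\Oo_L))$ of group $\Oo_F$-schemes. Since $\mu_N$ is abelian, this automorphism sheaf is canonically $\mu_N$ itself; injectivity of the resulting map $I_Q \to \mu_N$ follows from the freeness of the $I_Q$-action, and surjectivity is forced by the equality of orders $|I_Q| = N = |\mu_N|$. The only point requiring care is that every construction be canonical at the level of group $\Oo_F$-schemes rather than merely at rational points, but because every group scheme in sight is \'etale over the Henselian base $\Oo_F$ (using that $|\Gamma|$, and hence $N$, is coprime to $p$), the generic fibre determines everything.
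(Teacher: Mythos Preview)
Your argument is essentially the same as the paper's: the canonical $\mu_N$-torsor structure on $\Spec(L)$ from Construction \ref{LTors} is compatible with the $I_Q$-action (the paper says simply ``this torsor structure is compatible with any automorphism of $\Spec(L)$ over $\Spec(F)$''), and this yields a canonical homomorphism $I_Q \to \mu_N$, which is then an isomorphism by counting. One small slip: $\Spec(\Oo_L) \to \Spec(\Oo_F)$ is \emph{not} a torsor under either group (the map is totally ramified, hence not \'etale, and both actions fix the closed point), so the torsor statements should be phrased over $\Spec(F)$ as the paper does; your closing remark that the generic fibre determines the \'etale group schemes is exactly the fix needed, so the argument stands.
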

\begin{proof} 
By Construction \ref{LTors} we have a canonical structure of $L$ as a $\mu_N$-torsor over $F$. This torsor structure is compatible with any automorphism of $\Spec(L)$ over $\Spec(F)$. Hence we obtain a canonical  morphism $I_Q \to \mu_N$ which is necessarily an isomorphism.
\end{proof}

\begin{construction}\label{defi:XGamma}
 Let $Q$ be a $\Gamma$-torsor over $F$. We denote by $Q_{\Oo}$ the normalisation of $\Spec \Oo_F$ in $Q$. It follows from the valuative criterion for properness that the action of $\Gamma$ on $Q$ extends uniquely to an action of $\Gamma$ on $Q_{\Oo}$. Hence we get a smooth tame Deligne-Mumford $\Oo_F$-stack
$$\Xc_{\Gamma,Q} = [Q_{\Oo}/\Gamma].$$
\end{construction}

The stack $\Xc_{\Gamma,Q}$ contains a dense open subspace which is isomorphic to $\Spec F = [Q/\Gamma] \subset [Q_{\Oo}/\Gamma]$. We denote the open immersion by
$$j\colon \Spec F \hookrightarrow \Xc_{\Gamma,Q}.$$

Since normalisation commutes with smooth base change, for any unramified right $\Gamma$-torsor $P$ there is a canonical equivalence
\[   \Xc_{\Gamma,Q} \cong \Xc_{\Gamma_P,Q_P}.     \]

In particular by Lemma \ref{lemma:madealemmaoutofit} we can take $P$, such that $Q_P$ is stongly ramified and hence we can identify the special fibre of $\Xc_{\Gamma_P,Q_P}$ with $B_{k}I_{Q_P}$. Together with Lemma \ref{canincc} we finally obtain a closed immersion 
\begin{equation}\label{caninc} B_{k}\mu_N \hookrightarrow   \Xc_{\Gamma,Q}.  \end{equation}
This closed immersion is not unique, however see Remark \ref{rmk:noncan}.

\subsection{The specialisation map and the volume of its fibres}\label{sub:e}

\begin{construction}\label{constrspe}
Consider a tame finite \'etale quotient stack $\CM=[U/\Gamma]$, where $U$ is a smooth $\Oo_F$-scheme. In this case we can render the specialisation map $e\colon M(\CO_F)^\natural$ of Construction \ref{defi:e} more explicit.

For $x \in M(\Oo_F)^\natural$ the pullback $\Spec(F) \times_{M} U$ is a $\Gamma$-torsor $Q$ over $F$. 
We obtain the following commutative diagram:
\[
\xymatrix{
Q_{\Oo} \ar[d] \ar@/^10pt/@{-->}[rr] & \ar@{_(->}[l] Q \ar[r] \ar[d] & U \ar[d] \\
\Spec \Oo_F  \ar@/_10pt/[rr] &  \ar@{_(->}[l]  \Spec F \ar[r] & U/\Gamma
}
\]
As before $Q_{\Oo}$ is the normalisation of $\Spec \Oo_F$ in $Q$. If we write $Q_{\Oo} = \Spec(\prod_i \Oo_{L_i})$ the dashed map is obtained by applying the valuative criterion of properness to the finite morphism $U \to U/\Gamma$ and the discrete valuation rings $\Oo_{L_i}$. The dashed morphism is in particular $\Gamma$-equivariant and taking the stack-quotient by $\Gamma$ gives a morphism $\Xc_{\Gamma,Q} \to \Mc$. Through \eqref{caninc} we obtain a morphism $B_k\mu_N \to \Mc$. By comparing this construction with Construction \ref{defi:e} one sees that the corresponding point in $[I_{\hat{\mu}}\Mc(k)]$ is equal to $e(x)$. 
\end{construction}

\begin{construction}\label{totramlift}

Let us describe the fibres of $e$ more explicitly in the case when $\Mc=[U/\Gamma]$ is a finite \'etale quotient stack, essentially following \cite[Section 2]{DL2002}. We fix a profinite generator $\xi \in \widehat{\mu}(\bar k)$ and let $(x,\alpha) \in I_{\widehat{\mu}}\Mc(k)$ be as in Lemma \ref{lemma:twisted_inertia} (b), i.e. $x \in \Mc(k)$ and $\alpha \in \Aut_x(\bar{k})$. After possibly twisting $U$ and $\Gamma$ by a suitable unramified torsor (Lemma \ref{lemma:madealemmaoutofit}) we may further assume that $x$ lies in the image of the quotient map $U(k) \to \Mc(k)$. Then up to conjugation in $\Gamma$ we can identify $\alpha$ with an element in $\Gamma(\bar{k})$ and such an identification gives an inclusion $\mu_N \hookrightarrow \Gamma$ where $N = \ord(\alpha)$.

Now let $\underline{\pi} \in F$ be a uniformiser and $L = F(\underline{\pi}^{\frac{1}{N}})$ be the tamely totally ramified extension of $F$ obtained by adjoining an $N$-th root of $\underline{\pi}$. By Construction \ref{LTors} the scheme $\Spec(L)$ is naturally a $\mu_N$-torsor over $\Spec(F)$. The pushforward of $\Spec(L)$ to a $\Gamma$-torsor can be represented by the cocycle $(1,\alpha)$. We denote by $U(\Oo_L)^{\mu_N}_x$ the set of $\mu_N$-equivariant morphisms $\Spec(\Oo_L) \to U$ which specialize to $x$ on the special fibre. Then using Constrution \ref{constrspe} it follows that the natural quotient morphism $U(\Oo_L)^{\mu_N}_x \to M(\Oo_F)^\natural$ defines a surjection
\begin{equation}\label{liftingsq} U(\Oo_L)^{\mu_N}_x \to e^{-1}(x,\alpha).\end{equation}
\end{construction}

\begin{definition}\label{defi:weight}
Let $k$ be a field.
\begin{enumerate}[(a)]
\item Let $(\chi_1,\dots,\chi_r) \in (\Qb/\Zb)^r$. We define $w(\chi_1,\dots,\chi_r) = \sum_{i=1}^r c_i$, where $c_i\in \Qb$ denotes the unique rational representative of $\chi_i \in \Qb/\Zb$, such that $0 < c_i \leq 1$.
\item To a $k$-vector space and an algebraic action $a\colon \widehat{\mu} \to \mu_N \to \Aut(V)$ we can associate a character decomposition $V_{\bar{k}} \cong \bigoplus_{i=1}^rV_{\bar{k}}(\chi_r)$, where $\chi_i \in \widehat{\mu}^{\vee} \cong \Qb/\Zb$ (as discrete groups). We denote by $w(a) = w(\chi_1,\dots,\chi_r)$ and refer to it as the weight of $(V,a)$.
\item Let $(x,a) \in I_{\widehat{\mu}}\Mc(k)$ where $\Mc$ is a smooth DM-stack over $k$. We denote by $w(x,a)$ the weight of the induced action $\widehat{\mu}$ on $T_x\Mc$.
\end{enumerate}
\end{definition}
Usually the weight is defined for points in the (untwisted) inertia stack $I\Mc$ by fixing a primitive root of unity, see for example \cite[Definition 2.2]{gwz}. It is not hard to check, that Definition \ref{defi:weight} agrees with the usual one through Lemma \ref{lemma:twisted_inertia}.

\begin{theorem}\label{thm:volume}
Let $\Mc/\Oo_F$ be a DM-stack as in Situation \ref{situation:locally_quotient}(b). For $(x,\alpha) \in I_{\widehat{\mu}}\Mc(k)$ we have
$$\vol_{\Mc}\left(e^{-1}(x,\alpha)\right) = \frac{q^{-w(x,\alpha)}}{|\Aut_{I_{\widehat \mu}\Mc(k)}(x,\alpha)|}.$$
\end{theorem}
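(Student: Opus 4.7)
The plan is to reduce the statement to a local Jacobian calculation on the totally ramified extension $L = F(\underline{\pi}^{1/N})$ with $N = \ord(\alpha)$, by using Construction \ref{totramlift} to parametrise the fibre $e^{-1}(x,\alpha)$. By Situation \ref{situation:locally_quotient}(b) the statement is local on $M$, so I would restrict to an open where $\Mc = [U/\Gamma]$ with $U$ smooth over $\Oo_F$ and $\Gamma$ finite \'etale of order coprime to $p$. Then by Lemma \ref{lemma:madealemmaoutofit}(iii) and the twisting step in Construction \ref{totramlift}, I may further assume $x$ lifts to a $\mu_N$-fixed point $u \in U(k)$ and that $\alpha$ corresponds to an embedding $\iota \colon \mu_N \hookrightarrow \Gamma$ of order $N$. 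Construction \ref{totramlift} then provides the natural surjection
$$U(\Oo_L)^{\mu_N}_u \twoheadrightarrow e^{-1}(x,\alpha)$$
given by descending a $\mu_N$-equivariant morphism $\phi \colon \Spec \Oo_L \to U$ along the coarse moduli map of $[\Spec \Oo_L/\mu_N]$.

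Tameness of the $\mu_N$-action at $u$ yields \'etale coordinates $x_1, \ldots, x_d$ on $U$ at $u$ diagonalising the $\mu_N$-action with weights $m_1, \ldots, m_d \in \Zb/N$. Writing $\tilde m_i \in \{1,\ldots, N\}$ for the unique representative with $\tilde m_i/N = c_i \in (0,1]$ as in Definition \ref{defi:weight}, the $\mu_N$-invariant section $(dx_1 \wedge \cdots \wedge dx_d)^{\otimes N}$ descends to a generating $N$-gauge section $\omega$ of $(\Omega^{\top}_{\Mc})^{\otimes N}$, so that $\vol_{\Mc} = |\omega|^{1/N}$ locally near $x$. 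Letting $y$ be a uniformiser of $\Oo_L$ with $y^N = \underline{\pi}$ (so $|y| = q^{-1/N}$), every $\phi \in U(\Oo_L)^{\mu_N}_u$ is uniquely of the form $\phi^\#(x_i) = y^{\tilde m_i} a_i$ with $a_i \in \Oo_F$, identifying $U(\Oo_L)^{\mu_N}_u$ with $\Oo_F^d$.

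The computational core is now a Jacobian calculation. Substituting $x_i = y^{\tilde m_i} a_i$ in the expression for $\omega$ in the $M$-coordinates (equivalently, in the $\mu_N$-invariants of $\Oo_F[x_1,\ldots,x_d]$) and extracting the $N$-th root of the resulting top form on $\Oo_F^d$ produces $|g^*\omega|^{1/N} = \prod_i |y|^{\tilde m_i} \, d\mu_{F^d} = q^{-\sum_i c_i} \, d\mu_{F^d} = q^{-w(x,\alpha)} \, d\mu_{F^d}$; the model one-variable case $g(a) = \underline{\pi} a^N$ gives $|g^*\omega|^{1/N} = q^{-1/N} \, d\mu_F$. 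Applying Proposition \ref{intools}(b) to the $\Gamma$-quotient $U \to M$ on $\Oo_L$-points and restricting to the $\mu_N$-fixed locus, the degree of the covering $\Oo_F^d \cong U(\Oo_L)^{\mu_N}_u \to e^{-1}(x,\alpha)$ equals the order of the subgroup of $\Gamma(F)$ centralising $\iota(\mu_N)$ and fixing $u$, which by Lemma \ref{lemma:twisted_inertia}(c) is $|\Aut_{I_{\widehat{\mu}}\Mc(k)}(x,\alpha)|$. Dividing yields the claimed formula.

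The hardest part will be tracking the interplay of three normalisations: the one coming from the ramified base change $\Spec \Oo_L \to \Spec \Oo_F$, the Jacobian of the substitution $x_i = y^{\tilde m_i} a_i$, and the degree of the parametrising cover, together with the correct identification of the set of strongly ramified twists that appear in the image of the parametrisation. The analogous computations of Yasuda \cite{Yasuda:2014aa} and Denef--Loeser \cite{DL2002} serve as useful guides.
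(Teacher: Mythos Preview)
Your overall strategy is correct and closely mirrors the paper's: parametrise $e^{-1}(x,\alpha)$ via $U(\Oo_L)^{\mu_N}_u$, linearise the $\mu_N$-action at $u$, and compute the Jacobian of the substitution $x_i = y^{\tilde m_i} a_i$. The core computation $|g^*\omega|^{1/N} = q^{-w(x,\alpha)}\, d\mu_{F^d}$ is exactly the content of the paper's Lemma~\ref{diagcase}, and your identification of the effective covering degree with $|\Aut_{I_{\widehat\mu}\Mc(k)}(x,\alpha)|$ is the right answer.

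Two steps, however, are imprecise as written and this is where the paper's organisation differs from yours. First, the form $(dx_1\wedge\cdots\wedge dx_d)^{\otimes N}$ is only $\mu_N$-invariant, not $\Gamma$-invariant, so it does \emph{not} descend to a section of $(\Omega^{\top}_{\Mc})^{\otimes N}$ for $\Mc=[U/\Gamma]$; it only descends to $[U/\mu_N]$. Second, Proposition~\ref{intools}(\ref{chovg}) cannot be applied ``to $U\to M$ on $\Oo_L$-points and then restricted to the $\mu_N$-fixed locus''---the proposition requires a $\Gamma$-invariant subset of $\Oo_F$-points of an $\Oo_F$-scheme. The correct move is to pass to the totally ramified twist $U_L$ (so that $U_L(\Oo_F)=U(\Oo_L)^{\mu_N}$) and apply the proposition to the algebraic quotient $U_L\to U_L/\Gamma_L=M$; the relevant group is then $\Gamma_L(F)=C_\Gamma(\alpha)(k)$, and to extract your stabiliser as the effective degree over $e^{-1}(x,\alpha)$ you must first take the $\Gamma_L(F)$-saturation $\bigcup_{u'\in\Gamma_L(F)\cdot u} U_L(\Oo_F)_{u'}$ and then use orbit--stabiliser. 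The paper handles both issues at once by factoring through the intermediate quotient $s\colon[U/\mu_N]\to[U/\Gamma]$: it first relates $\vol_{\Mc}$ to $\vol_{[U/\mu_N]}$ via two applications of Proposition~\ref{intools}(\ref{chovg}) (to $U_L\to U_L/\mu_N$ and to $U_L\to U_L/\Gamma_L$), then computes $\vol_{[U/\mu_N]}(e_{\mu_N}^{-1}(y,\xi))$ in the linear model $[\BA^n/\mu_N]$ (Lemma~\ref{diagcase}, invoking \cite[Lemma~A.6]{gwz} for the linearisation), and finally counts the fibre $I_{\widehat\mu}s^{-1}(x,\alpha)$ as the $C(\alpha)(k)$-orbit of $u$. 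Your direct route can be made to work with these fixes, but the paper's detour through $[U/\mu_N]$ is what makes both the gauge-form descent and the change-of-variables bookkeeping clean.
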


\begin{proof} Without loss of generality we can assume $\Mc =[U/\Gamma]$. By twisting $U$ and $\Gamma$ with an element of $H^1(\Oo_F,\Gamma)$ if necessary, we may further assume, that $x\in\Mc(k)$ lies in the image of the quotient map $\rho:U(k) \to [U/\Gamma](k)$, see Lemma \ref{lemma:madealemmaoutofit}. Then \eqref{liftingsq} implies, that every $\psi \in e^{-1}(x,\alpha)$ fits into a commutative square
\[ \xymatrix{ \Spec(\Oo_L) \ar[d] \ar[r]^{\psi_L} & U \ar[d] \\
\Spec(\Oo_F) \ar[r]^{\psi} & U/\Gamma,
}
\]
where $L$ is as in Construction \ref{totramlift} and $\psi_L$ is $\mu_N$-equivariant with respect to the fixed inclusion $\mu_N \hookrightarrow \Gamma$, which sends the primitive root $\xi$ to $\alpha$. From this we see that the quotient $s:[U/\mu_N] \to [U/\Gamma]$ induces a surjection $s^{-1}(e^{-1}(x,\alpha)) \to e^{-1}(x,\alpha)$. Denote by $U_L$ the twist  and $\Gamma_L$ the inner form defined by the $\mu_N$-torsor $L$. Then the diagram
\[ \xymatrix{ U_L \ar[rd] \ar[d] &  \\
U_L/\mu_N = U/\mu_N \ar[r]^s & U_L / \Gamma_L = U/\Gamma ,
}
\]
commutes and every arrow is a generically \'etale morphism of smooth $F$-varieties. By applying Proposition \ref{intools} (\ref{chovg}) to $U_L \to U_L/\mu_N$ and to $U_L \to U_L/\Gamma_L$, we see
\[ \vol_{\Mc}(e^{-1}(x,\alpha)) = \frac{|\mu_N(F)|}{|\Gamma_L(F)|}\vol_{[U/\mu_N]}(s^{-1}(e^{-1}(x,\alpha))). \]

Now essentially by definition we have a commutative diagram

\[ \xymatrix{  U/\mu_N(\Oo_F)^\natural \ar[r]^{e_{\mu_N}} \ar[d]^s  &  [I_{\widehat{\mu}}[U/\mu_N](k)]       \ar[d]^{ I_{\widehat{\mu}}s}  \\
U/\Gamma(\Oo_F)^\natural \ar[r]^e & [I_{\widehat{\mu}}[U/\Gamma](k)],           }\]
from which we deduce the decomposition 
\[s^{-1}(e^{-1}(x,\alpha)) = \bigsqcup_{(y,\xi) \in I_{\widehat{\mu}}s^{-1}(x,\alpha)} e_{\mu_N}^{-1}(y,\xi).\]

The volume of $e_{\mu_N}^{-1}(y,\xi)$ can be computed as follows. First we use \cite[Lemma 4.19]{gwz} (the proof for non-constant $\mu_N$ is the same) to reduce to the case where $U=\BA^n$, $y=0$ is the origin and the action of $\mu_N$ is linear. In this case we have 
\[ \vol_{[U/\mu_N]}(e_{\mu_N}^{-1}(y,\alpha)) = \frac{q^{-w(\alpha)}}{|\mu_N(F)|},\]
by Lemma \ref{diagcase} below. In particular it is independent $(y,\xi)$, so it remains to determine the cardinality of $I_{\widehat{\mu}}s^{-1}(x,\alpha)$. 

Since $x$ is in the image of $\rho$, we can associate to $(x,\alpha)$ an element $x' \in U(k)$ invariant under $\alpha$, which is well-defined up to isomorphisms in $ I_{\widehat{\mu}}[U/\Gamma](k)$ i.e. up to the action of the centralizer $C(\alpha)(k)$. The image $y$ of $x'$ in $[U/\mu_N](k)$ gives rise to a point $(y,\xi)  \in I_{\widehat{\mu}}s^{-1}(x,\alpha)$ and conversely every $(y,\xi)  \in I_{\widehat{\mu}}s^{-1}(x,\alpha)$ arises in this way. In particular we have 
\[  |I_{\widehat{\mu}}s^{-1}(x,\alpha)| = |C(\alpha)(k)\cdot x'|,  \]
where $x'$ is any lift of $(x,\alpha)$. Putting everything together we get
 \[ \vol_{\Mc}(e^{-1}(x,\alpha)) = \frac{ |C(\alpha)(k)\cdot x'|q^{-w(x,\alpha)}}{|\Gamma_L(F)|}. \]

Finally we compute $ |\Aut_{I_{\widehat \mu}[U/\Gamma](k)}(x,\alpha)|$. Let $x' \in U(k)$ denote again a lift of $x$. By  \eqref{twaut} and the orbit formula for finite groups we have

\[ |\Aut_{I_{\widehat \mu}[U/\Gamma](k)}(x,\alpha)| = |\{ \beta \in \Aut_{[U/\Gamma](k)}(x) \ |\ \alpha \circ \beta = \beta \circ \alpha\}| =\frac{|C(\alpha)(k)|}{|C(\alpha)(k)\cdot x'|}. \]
By defintion $\Gamma_L$ is the inner form of $\Gamma$ on which the Frobenius acts as on $\Gamma$ and the generator of the totally ramified part by conjugation with $\alpha$. From this we see $|\Gamma_L(F)| = |C(\alpha)(k)|$, which finishes the proof.
\end{proof}

\begin{lemma}\label{diagcase} Let $\CM = [\BA^n/\mu_N]$, where $\mu_N$ acts linearly and non-trivially on $\BA^n$ over $\CO_F$. Let $\xi \in \mu_N(\overline{F})= \mu_N(\overline{\BF}_q)$ be a primitive $N$-th root and $0 \in \BA^n$ the origin. Then $(0,\xi)$ defines an element in $I_{\hat{\mu}}\CM(\BF_q)$ and we have
\[  \vol_{\Mc}(e^{-1}(0,\xi)) = \frac{q^{-w(0,\xi)}}{|\mu_N(F)|}. \] 
\end{lemma}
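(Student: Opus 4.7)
Since $\mu_N$ is a diagonalisable $\Oo_F$-group scheme, we may assume the linear action on $\BA^n$ is diagonal with characters $(c_1,\ldots,c_n)\in(\Zb/N)^n$. Choose representatives $c_i\in\{1,\ldots,N\}$ as in Definition \ref{defi:weight}(a), so that $w(0,\xi)=\tfrac{1}{N}\sum_i c_i$ (with $c_i=N$ recording a trivial character); Situation \ref{situation:locally_quotient}(b) forces $\gcd(c_1,\ldots,c_n,N)=1$.

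Set $L=F(\underline\pi^{1/N})$ and $t=\underline\pi^{1/N}$, so that $\Oo_L=\Oo_F[t]/(t^N-\underline\pi)$ with $\mu_N$-action $t\mapsto\xi\cdot t$. Following Construction \ref{totramlift}, the $\xi^{c_i}$-eigenspace of $\Oo_L=\bigoplus_{j=0}^{N-1}\Oo_F\cdot t^j$ equals $\Oo_F\cdot t^{c_i\bmod N}$, yielding the identification
\[ U(\Oo_L)^{\mu_N}_0\;\cong\;\Oo_F^n,\qquad (h_1,\ldots,h_n)\longleftrightarrow(\psi_i=t^{c_i}h_i).\]
The condition of specialising to the origin is automatic, since $t^{c_i}\in(t)$ for every $c_i\in\{1,\ldots,N\}$. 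Pre-composition with the $\mu_N$-torsor automorphisms $t\mapsto \xi t$ shows that $\mu_N(F)$ acts on $\Oo_F^n$ by $\xi\cdot(h_i)=(\xi^{c_i}h_i)$, which is free away from the coordinate hyperplanes thanks to $\gcd(c_1,\ldots,c_n,N)=1$; these hyperplanes have measure zero by Proposition \ref{intools}(a), so the natural map $\Oo_F^n\twoheadrightarrow e^{-1}(0,\xi)$ from Construction \ref{totramlift} is a degree-$|\mu_N(F)|$ cover off a null set.

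The key calculation is the pullback of the generating $N$-differential $\omega_\Mc^{\otimes N}$, which descends from the $\mu_N$-invariant section $(dx_1\wedge\cdots\wedge dx_n)^{\otimes N}$ on $\BA^n$. Along the family $(h_i)\mapsto(t^{c_i}h_i)$, each $dx_i$ pulls back to $t^{c_i}\,dh_i$, whence
\[(dx_1\wedge\cdots\wedge dx_n)^{\otimes N}\;\longmapsto\; t^{N\sum c_i}(dh_1\wedge\cdots\wedge dh_n)^{\otimes N}\;=\;\underline\pi^{\sum c_i}(dh_1\wedge\cdots\wedge dh_n)^{\otimes N},\]
which lies in $F$ despite the intermediate appearance of $t$. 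Taking $N$-th roots of absolute values yields the $F$-analytic density $q^{-\sum c_i/N}\,|dh_1\wedge\cdots\wedge dh_n|=q^{-w(0,\xi)}\,|dh|$ on $\Oo_F^n$. Combining with the covering description above, Proposition \ref{intools}(b) produces
\[\vol_\Mc\bigl(e^{-1}(0,\xi)\bigr)\;=\;\frac{1}{|\mu_N(F)|}\int_{\Oo_F^n} q^{-w(0,\xi)}\,|dh|\;=\;\frac{q^{-w(0,\xi)}}{|\mu_N(F)|},\]
using the Haar normalisation $\int_{\Oo_F^n}|dh|=1$. The main technical subtlety is justifying Proposition \ref{intools}(b) in this setting, since the map $\Oo_F^n\to e^{-1}(0,\xi)$ is not literally a quotient of $\Oo_F$-schemes but is defined through the ramified cover $\Spec\Oo_L$; this is remedied by passing to the $F$-form $U\times^{\mu_N}\Spec L$ of $\BA^n$, whose $\Oo_F$-points are exactly $U(\Oo_L)^{\mu_N}_0\cong\Oo_F^n$ and which admits an honest generically-free $\mu_N$-quotient to $\Mc$ to which Proposition \ref{intools}(b) applies directly.
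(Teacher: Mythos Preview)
Your proof is correct and follows the same strategy as the paper: parametrise $e^{-1}(0,\xi)$ by $\Oo_F^n$ via $(h_i)\mapsto(\underline\pi^{c_i/N}h_i)$, compute the pullback of the orbifold $N$-form, and apply Proposition~\ref{intools}(b). Your opening observation that any linear $\mu_N$-action on a free $\Oo_F$-module is diagonalisable (because $\mu_N$ is of multiplicative type and $\Oo_F$ is local) is a genuine simplification over the paper, which instead handles the non-diagonal case separately by diagonalising over an unramified extension and then running a Frobenius argument to show the resulting map descends to $\Oo_F$.
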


\begin{proof} Let us assume first that the action of $\mu_N$ is diagonal, in which case the proof is essentially taken from \cite[Section 2]{DL2002}. Let $A_\xi = \diag(\xi^{c_1},\dots,\xi^{c_n})$, where $1 \leq c_1,\dots,c_n \leq N$, be the matrix through which $\xi$ acts on $\BA^n$. Notice that in this notation we have $w(\xi) = \frac{1}{N}\sum_i c_i$. Then we define a morphism
\begin{equation}\label{lambdasur} \lambda: \BA^n \to \BA^n/\mu_N\end{equation}
on the level of coordinate rings as $h(x_1,\dots,x_n) \mapsto h(\underline{\pi}^\frac{c_1}{N}x_1,\dots,\underline{\pi}^\frac{c_n}{N}x_n)$ for $h\in \CO[x_1,\dots,x_n]^{\mu_N}$. Notice that the $\mu_N$-invariance will imply that $\lambda$ is in fact defined over $\CO_F$. One can check that $\lambda$ is generically $|\mu_N(F)|$-to-$1$ and (see \cite[(2.3.4)]{DL2002})
\begin{equation*} \lambda(\CO_F^n) \cap (\BA^n/\mu_N)(\CO_F)^{\natural} = e^{-1}(0,\xi).\end{equation*}
The $N$-th power of the canonical form $dx_1\wedge \dots \wedge dx_n$ certainly descends to $\CM$ and thus defines an orbifold form $\omega_{orb} \in H^0(\CM,(\Omega^{top}_\CM)^{\otimes N})$. One sees that
\[ \lambda^* \omega_{orb} = \underline{\pi}^{\sum_i c_i} (dx_1\wedge \dots dx_n)^{\otimes N},\]
which, together with Lemma \ref{intools}, gives
\[\vol_{\Mc}(e^{-1}(0,\xi)) = \int_{e^{-1}(0,\xi)} |\omega_{orb}|^{\frac{1}{N}} = \frac{1}{|\mu_N(F)|}\int_{\CO_F^n} |\underline{\pi}^{\sum_i c_i}|^{\frac{1}{N}}dx_1\cdots dx_n = \frac{q^{-w(\xi)}}{|\mu_N(F)|}.\]

If the action of $\mu_N$ is not diagonal we may can still diagonalize it over some finite unramified extension $L/F$. In concrete terms let $A_\xi$ be the matrix through which $\xi$ acts on $\BA^n$ and $B \in \GL_n(\CO_L)$ such that $\widetilde{A}_\xi = B A_\xi B^{-1} $ is diagonal. Similar as before we define $\lambda: \BA^n \to \BA^n/\mu_N$ on the level of coordinate rings as
\[  h(x)  \mapsto h(B^{-1} \tilde{\lambda} B x), \]
for $h \in F[x_1,\dots,x_n]^{\mu_N}$. Here $\tilde{\lambda}$ is the diagonal matrix $\diag(\underline{\pi}^\frac{c_1}{N},\dots,\underline{\pi}^\frac{c_n}{N})$ and $c_1,\dots,c_n$ are defined using the eigenvalues of $\widetilde{A}_\xi$. Again $\mu_N$-invariance implies that $\lambda$ is defined over $\CO_L$ and in fact it is even defined over $\CO_	F$. 

To see this we need to show ${}^\phi (B^{-1} \tilde{\lambda} B) = B^{-1} \tilde{\lambda} B$, where $\phi \in \Gal(L/F)$ denotes the Frobenius morphism. Since $\mu_N$ acts algebraically on $\BA^n$ we have $ {}^\phi A_\xi = A^q_\xi$ and hence 
\[ ^\phi\left(B^{-1}\widetilde{A}_\xi B \right) = {}^\phi A_\xi = A^q_\xi = B^{-1} \widetilde{A}_\xi^q B.\]
On the other hand since $\widetilde{A}_\xi$ is a diagonal matrix whose entries are roots of unity we find
\[{}^\phi\left(B^{-1}\widetilde{A}_\xi B \right) = {}^\phi B^{-1} {}^\phi {\widetilde{A}_\xi} {}^\phi B = {}^\phi B^{-1} \widetilde{A}_\xi^q {}^\phi B.   \]

It follows that $\widetilde{A}_\xi^q$ commutes with $^\phi B	B^{-1}$. Since $q$ and $N$ are coprime also $\widetilde{A}_\xi$ commutes with $^\phi B	B^{-1}$. Thus $^\phi B	B^{-1}$ preserves the eigenspaces of $\widetilde{A}_\xi$ and it follows from the definition of $\tilde{\lambda}$ that $^\phi B	B^{-1}$ finally commutes with $\tilde{\lambda}$ which implies that $\lambda$ is an $\CO_F$-morphism. We can now use the same argument as in the diagonal case to finish the proof.
\end{proof}

\section{The Hasse invariant and inertia stacks}\label{sec:hasse}

In this paper we will consider not only volumes but also integrals of functions which are associated to canonical gerbes on the moduli space of Higgs bundles (over non-archimedean local fields $F$). The origin of these functions is a classical construction going back to Hasse. 

The Hasse invariant assigns to a $\G_m$-gerbe on a non-archimedean local field $F$ an element of $\Qb/\Zb$. The purpose of this section is to give a stack-theoretic interpretation of Hasse invariants. From this we will deduce the following assertion which plays an important role in the proof of Geometric Stabilisation (see Section \ref{proofgst}).

\begin{goal}
Let $\Mc/\Oo_F$ be a smooth and tame Deligne-Mumford stack (satisfying \ref{situation:locally_quotient}(b)) and $\alpha \in Br(\Mc)$ be a gerbe on $\Mc$. Then there exists a function $f_{\alpha}\colon I_{\widehat{\mu}}\Mc(k) \to \Qb/\Zb$, such that $\inv(x_F^*\alpha) = f_{\alpha}(e(x))$ for every $x \in M(\Oo_F)^{\natural}$ (see Proposition \ref{prop:hasse2}).
\end{goal}

\subsection{Recollection on gerbes}

Let $\Xc$ be a Deligne-Mumford stack, and $A$ a commutative group scheme. An $A$-gerbe on $\Xc$ is by definition a morphism of stacks $\Gg \to \Xc$, for which that there exists a surjective \'etale morphism of finite presentation $f\colon Y \to \Xc$, such that the base change $\Gg \times_{\Xc} Y \to Y$ is isomorphic to the morphism $Y \times BA \to Y$. The following result is well-known and follows directly from the definition and descent theory.

\begin{lemma}[Giraud]
The set of equivalence classes of $A$-gerbes on $\Xc$ is in natural bijection with the \v Cech cohomology group $\check H^2_{\text{\'et}}(\Xc,A)$. 
\end{lemma}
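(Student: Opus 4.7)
The plan is to construct mutually inverse maps between equivalence classes of $A$-gerbes on $\Xc$ and $\check H^2_{\text{\'et}}(\Xc,A)$ by means of a \v Cech cocycle argument on a trivialising cover, using that $A$-torsors over any \'etale $\Xc$-scheme correspond to elements of $\check H^1_{\text{\'et}}$ in $A$.

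First I would construct the map from gerbes to cohomology. Given an $A$-gerbe $\Gg \to \Xc$, by definition one can find a surjective \'etale morphism $f\colon Y \to \Xc$ of finite presentation together with an equivalence $\varphi\colon \Gg \times_{\Xc} Y \isoto Y \times BA$. Composing the unit section $Y \to Y \times BA$ with $\varphi^{-1}$ supplies a section $s\colon Y \to \Gg \times_{\Xc} Y$. On the double fibre product $Y_2 \defeq Y \times_{\Xc} Y$ we obtain two sections $\mathrm{pr}_1^* s, \mathrm{pr}_2^* s$ of the trivial $A$-gerbe $\Gg \times_{\Xc} Y_2 \simeq Y_2 \times BA$; the difference of these sections is classified by an $A$-torsor $T$ on $Y_2$, i.e.\ an element of $\check H^1_{\text{\'et}}(Y_2,A)$. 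Pulling back to $Y_3 \defeq Y \times_{\Xc} Y \times_{\Xc} Y$ the three torsors $\mathrm{pr}_{ij}^* T$ satisfy the cocycle identity $\mathrm{pr}_{12}^*T \otimes \mathrm{pr}_{23}^* T \simeq \mathrm{pr}_{13}^* T$, and choosing trivialisations of these isomorphisms gives an $A$-valued \v Cech $2$-cocycle on $Y^{\bullet/\Xc}$, whose class in $\check H^2_{\text{\'et}}(\Xc,A)$ is easily checked to be independent of the choice of $\varphi$ and $s$.

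Next I would construct the inverse. Given a \v Cech $2$-cocycle representative $c \in A(Y_3)$ on some \'etale hypercover $f\colon Y \to \Xc$, one builds an $A$-gerbe $\Gg_c$ by descent: over an \'etale $\Xc$-scheme $U$ the groupoid $\Gg_c(U)$ has as objects pairs $(g,\eta)$ with $g\colon U \to Y$ a lift of $U \to \Xc$ and $\eta$ a trivialisation twisted by $c$ along $g \times g$ on $U \times_{\Xc} Y$ to patch the local trivial gerbes $U \times BA$, the cocycle condition on $c$ ensuring associativity. The two constructions are mutually inverse: applied to $\Gg_c$ with the tautological trivialisation one recovers $c$, and applied to a gerbe $\Gg$ with a chosen $(f,s)$, the gerbe $\Gg_{c(\Gg)}$ is canonically equivalent to $\Gg$ by the universal property of descent.

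Finally I would verify that both constructions descend to the respective equivalence classes: refining the cover $Y \to \Xc$ changes the cocycle by a coboundary, and two equivalent gerbes produce cohomologous cocycles after a common refinement of trivialising covers. The main technical obstacle is bookkeeping of the $2$-categorical data, namely tracking that the $1$-morphisms and $2$-morphisms witnessing the cocycle condition really match the \v Cech differentials; this is the standard price of passing from $\check H^1$ (torsors) to $\check H^2$ (gerbes), and is handled uniformly by invoking descent for $A$-torsors over each $Y_n$ and the fact that $\mathrm{Aut}(BA) = A$-torsors as a $2$-group.
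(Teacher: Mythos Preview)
Your sketch is correct and is precisely the standard cocycle/descent argument that the paper has in mind: the paper does not give a proof at all, only remarking that the result ``is well-known and follows directly from the definition and descent theory'' and attributing it to Giraud. Your construction of the \v Cech $2$-cocycle from a trivialisation of $\Gg$ over $Y$, the inverse construction by gluing trivial gerbes via a $2$-cocycle, and the verification that these are well-defined on equivalence classes, together constitute exactly what that phrase unpacks to.
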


We will be mostly concerned with $\G_m$-gerbes on $\Xc$, which by the lemma above are classified by elements of $\check H^2_{\text{\'et}}(\Xc,\G_m)$. Henceforth we apply the convention that the term \emph{gerbe} refers to a $\G_m$-gerbe, unless stated otherwise. Furthermore we remark that if $\Xc$ is a scheme which is quasi-projective over an affine scheme, we have $\check H^2_{\text{\'et}}(\Xc,\G_m) = H^2_{\text{\'et}}(\Xc,\G_m)$ (see \cite[Theorem 2.17]{MR559531} for this isomorphism). 

For a non-archimedean local field $F$, the cohomology group $H^2_{\text{\'et}}(F,\G_m)$ is well-understood. This amounts to the computation of the Brauer group of $F$ which plays an important role in this article.

\begin{proposition}[Hasse invariant]\label{prop:Hasse}
There exists an isomorphism $\inv\colon H^2_{\text{\'et}}(F,\G_m) \xrightarrow{\simeq} \Qb/\Zb$, called the \emph{Hasse invariant}.
\end{proposition}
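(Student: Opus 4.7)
The plan is to follow the classical route through local class field theory, identifying $H^2_{\text{\'et}}(F,\G_m)$ with the Brauer group $\mathrm{Br}(F) = H^2(\Gal(\bar F / F),\bar F^{\times})$ and computing it via the filtration by unramified subextensions. The final identification with $\Qb/\Zb$ is then obtained from the canonical Frobenius generator of $\Gal(F^{\text{un}}/F)$.

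First, I would reduce to the unramified situation by establishing that $\mathrm{Br}(F^{\text{un}}) = 0$. Combined with the inflation--restriction sequence, this gives an isomorphism
\[ H^2(\Gal(F^{\text{un}}/F),(F^{\text{un}})^{\times}) \xrightarrow{\;\sim\;} \mathrm{Br}(F). \]
The vanishing $\mathrm{Br}(F^{\text{un}}) = 0$ follows from the fact that the residue field of $F^{\text{un}}$ is the algebraic closure $\bar k$: Hensel's lemma lifts idempotents, and the Brauer group of $\bar k$ vanishes (e.g.\ by Wedderburn, or equivalently because $\Gal(\bar k/k) \cong \widehat{\Zb}$ has cohomological dimension one and $\bar k^{\times}$ is divisible).

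Second, I would exploit the $\Gal(F^{\text{un}}/F)$-equivariant short exact sequence
\[ 1 \to \Oo_{F^{\text{un}}}^{\times} \to (F^{\text{un}})^{\times} \xrightarrow{\mathrm{val}} \Zb \to 0. \]
A standard layerwise argument, filtering $\Oo_{F^{\text{un}}}^{\times}$ by the subgroups $1 + \underline{\pi}^n \Oo_{F^{\text{un}}}$ and using that each successive quotient is either $\bar k^{\times}$ or a $\bar k$-vector space, shows that $H^i(\Gal(F^{\text{un}}/F),\Oo_{F^{\text{un}}}^{\times}) = 0$ for $i \geq 1$ (the vanishing on successive quotients comes from Hilbert 90 over $\bar k$, Lang's theorem, and the fact that the action on each layer is trivially topologically unipotent in the appropriate sense). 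The associated long exact sequence then produces an isomorphism
\[ H^2(\Gal(F^{\text{un}}/F),(F^{\text{un}})^{\times}) \xrightarrow{\;\sim\;} H^2(\Gal(F^{\text{un}}/F),\Zb). \]

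Third, using the Frobenius to identify $\Gal(F^{\text{un}}/F) \cong \widehat{\Zb}$, I would deduce $H^2(\widehat{\Zb},\Zb) \cong \Qb/\Zb$ from the short exact sequence $0 \to \Zb \to \Qb \to \Qb/\Zb \to 0$ together with the vanishing of $H^i(\widehat{\Zb},\Qb)$ for $i \geq 1$ (as $\Qb$ is uniquely divisible). Concatenating the three isomorphisms above produces the Hasse invariant $\inv$. The main obstacle in this outline is the vanishing $\mathrm{Br}(F^{\text{un}}) = 0$ together with the cohomological triviality of $\Oo_{F^{\text{un}}}^{\times}$; once these are in place, the rest is formal. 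In practice, since this is entirely a piece of classical local class field theory, one could simply cite Serre's \emph{Local Fields}, Chapters XII--XIII, in place of the above sketch.
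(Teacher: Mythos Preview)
Your argument is correct and is the classical Galois-cohomological computation from Serre's \emph{Local Fields}. The paper's proof is the same result packaged slightly differently: rather than working with $\Gal(F^{\text{un}}/F)$-cohomology directly, it works on the small \'etale site of the henselian trait $D = \Spec \Oo_F$ (following Milne's \emph{\'Etale Cohomology}), using the short exact sequence of sheaves $0 \to \Oo_D^{\times} \to i_*\Oo_{D^{\circ}}^{\times} \to j_*\Zb \to 0$ on $D$. Since $\pi_1^{\text{\'et}}(\Spec \Oo_F) = \Gal(\bar k/k) \cong \widehat{\Zb}$, this is essentially your valuation exact sequence viewed sheaf-theoretically. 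The vanishing input in the paper is $\mathrm{Br}(\Oo_F) = 0$ (henselian local ring with finite residue field), which plays the role of your two ingredients $\mathrm{Br}(F^{\text{un}}) = 0$ and $H^{\geq 1}(\widehat{\Zb},\Oo_{F^{\text{un}}}^{\times}) = 0$ combined; the Bockstein step $H^2(\widehat{\Zb},\Zb) \simeq H^1(\widehat{\Zb},\Qb/\Zb) \simeq \Qb/\Zb$ is identical. Neither approach buys anything substantial over the other here---they are two standard presentations of the same classical fact.
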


\begin{proof}
For the convenience of the reader we recall the construction of Hasse's invariant (in a modern formulation) which follows the presentation in \cite{MR559531}. We denote by $D$ the affine scheme $\Spec \Oo_F$, and $D^{\circ} = \Spec F$. There is an open immersion $i\colon D^{\circ} \hookrightarrow D$ with closed complement denoted by $j\colon \bullet = \Spec k\hookrightarrow D$. We have a short exact sequence of sheaves on the small \'etale site $(D)_{\text{\'et}}$
$$0 \to \Oo_{D}^{\times} \to i_*\Oo_{D^{\circ}}^{\times} \to j_*\Zb_{\bullet} \to 0.$$
Therefore we have an exact sequence
$$H^2_{\text{\'et}}(D,\Oo_D^{\times}) \to H^2_{\text{\'et}}(F,\G_m)= H^2_{\text{\'et}}(D^{\circ},\Oo_{D^{\circ}}^{\times}) \to H^2_{\text{\'et}}(D,j_*\Zb_{\bullet}) \to H^3_{\text{\'et}}(D,\Oo^{\times}_D).$$ The cohomology group $H^2_{\text{\'et}}(D,j_*\Zb_{\bullet})$ can be identified with $H^2_{\text{\'et}}(\bullet,\Zb)$ (see \cite[Example 2.22]{MR559531} for a similar argument). Using the Bockstein isomorphism $H^2_{\text{\'et}}(\bullet,\Zb) \simeq H^1_{\text{\'et}}(\bullet,\Qb/\Zb)$, and the fact that $\bullet = \Spec k$, we obtain $H^1_{\text{\'et}}(\bullet,\Qb/\Zb) \simeq \Qb/\Zb$. Furthermore we have a splitting of the morphism $F^{\times} \to \Zb$ given by $1 \mapsto \underline{\pi} \in F$ (where $\underline{\pi}$ is a uniformiser). Hence the morphism $H^2_{\text{\'et}}(D,j_*\Zb_{\bullet}) \to H^3_{\text{\'et}}(D,\Oo^{\times}_D)$ is the zero map.

The cohomology group on the left is by definition of $D$ equal to $H^2_{\text{\'et}}(\Oo_F,\G_m) = Br(\Oo_F)$. Since $\Oo_F$ is a henselian local ring with a finite residue field, its Brauer group vanishes. This shows that we have an isomorphism $H^2_{\text{\'et}}(D^{\circ},\Oo_{D^{\circ}}^{\times}) \xrightarrow{\simeq} H^2_{\text{\'et}}(D,j_*\Zb_{\bullet}) \simeq \Qb/\Zb$.
\end{proof}

\subsection{A stacky interpretation}

We begin this subsection by formally computing the Brauer group of $B\widehat{\mu}$.

\begin{lemma}\label{lemma:Brauer_stack}
Let $k$ be a field and $N$ a positive integer which is invertible in $k$. Then we have an equivalence $f\colon H^2_{\text{\'et}}(B_k\mu_N,\G_m) \simeq \Zb/N\Zb$. Furthermore, for $N' = dN$ (and $d$ coprime to $p$ as well), we have a commutative diagram
\[
\xymatrix{
H^2_{\text{\'et}}(B_k\mu_N,\G_m) \ar[r] \ar[d] & \Zb/N\Zb \ar[d]^{d} \\
H^2_{\text{\'et}}(B_k\mu_{dN},\G_m)  \ar[r] & \Zb/dN\Zb.
}
\]
\end{lemma}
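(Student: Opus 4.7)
The plan is to compute $H^2_{\text{\'et}}(B_k\mu_N, \G_m)$ via the Hochschild--Serre spectral sequence for the base change $B_k\mu_N \to B_{\bar k}\mu_N$, of the form
\[
E_2^{p,q} = H^p\bigl(\Gal_k, H^q(B_{\bar k}\mu_N, \G_m)\bigr) \;\Longrightarrow\; H^{p+q}(B_k\mu_N, \G_m).
\]
The inner cohomology reduces, via the \'etale $\mu_N$-torsor $\Spec \bar k \to B_{\bar k}\mu_N$ (which is \'etale since $N$ is invertible in $k$), to the group cohomology of the cyclic group $\mu_N(\bar k)$ acting trivially on $\bar k^\times$.

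First, by the periodicity of cyclic group cohomology together with the $N$-divisibility of $\bar k^\times$, one finds $H^0 = \bar k^\times$, $H^1 \cong \Hom(\mu_N, \G_m) \cong \Zb/N\Zb$, and $H^2(\mu_N(\bar k), \bar k^\times) = 0$. The identification $H^1 \cong \Zb/N\Zb$ is Galois equivariant for the \emph{trivial} action: a homomorphism $\mu_N \to \G_m$ is determined by an integer $d$ (via $\zeta \mapsto \zeta^d$), which is manifestly Galois-invariant.

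With these inputs, the $E_2$-page relevant to total degree $2$ consists only of $E_2^{2,0} = Br(k)$, $E_2^{1,1} = H^1(k, \Zb/N\Zb)$, and $E_2^{0,2} = 0$. For the $k$ of interest (in particular $k$ a finite residue field), $Br(k) = 0$ by Wedderburn, and $H^1(k, \Zb/N\Zb) \cong \Zb/N\Zb$ via the Frobenius evaluation $\widehat{\Zb} \to \Zb/N\Zb$; moreover, the outgoing differential $d_2\colon E_2^{1,1} \to E_2^{3,0}$ vanishes as well. Combining yields the isomorphism $f\colon H^2_{\text{\'et}}(B_k\mu_N, \G_m) \cong \Zb/N\Zb$.

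For the functoriality statement, the morphism $B_k\mu_{dN} \to B_k\mu_N$ is induced by the $d$-th power surjection $[d]\colon \mu_{dN} \to \mu_N$. Pullback along $[d]$ on characters sends $\Hom(\mu_N, \G_m) \to \Hom(\mu_{dN}, \G_m)$ via $\chi \mapsto \chi\circ [d]$, which under the identifications with $\Zb/N\Zb$ and $\Zb/dN\Zb$ (by evaluation at compatibly chosen primitive roots of unity) is precisely multiplication by $d$. Naturality of the Hochschild--Serre spectral sequence, combined with applying $H^1(k,-)$ to this map of coefficients, yields the commutativity of the claimed diagram.

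The main obstacle is to keep straight the distinction between the Galois module $\mu_N(\bar k)$ with its cyclotomic action and its dual $\Hom(\mu_N, \G_m)$ with the trivial action: although these are abstractly isomorphic to $\Zb/N\Zb$, only the latter identification of $H^1(\mu_N, \bar k^\times)$ is Galois equivariant, and this is what makes $E_2^{1,1}$ nonvanishing and equal to $H^1(k, \Zb/N\Zb)$, producing the correct result.
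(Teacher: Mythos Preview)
Your approach is essentially the same as the paper's: both compute $H^2_{\text{\'et}}(B_k\mu_N,\G_m)$ via the Leray/Hochschild--Serre spectral sequence for $B_k\mu_N \to \Spec k$, reducing to $H^1(k,\Hhom(\mu_N,\G_m)) \simeq H^1(k,\Zb/N\Zb) \simeq \Zb/N\Zb$. The one substantive difference is in how the key vanishing $H^2_{\text{\'et}}(B_{\bar k}\mu_N,\G_m)=0$ is obtained: you use periodicity of cyclic group cohomology together with $N$-divisibility of $\bar k^\times$, whereas the paper argues via central extensions of $\mu_N$ by $\G_m$, showing any such extension is diagonalisable and hence split by Cartier duality. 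Your route is more elementary; the paper's is more in the spirit of group schemes. The paper also spends most of its proof on an explicit gerbe/cocycle description of the map $H^2_{\text{\'et}}(B_k\mu_N,\G_m)\to H^1(k,\Zb/N\Zb)$, which is not strictly needed for the lemma as stated but is used downstream.

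There is one small gap you should close: you assert that the differential $d_2\colon E_2^{1,1}\to E_2^{3,0}$ vanishes without justification. Two clean ways to do this: either observe that $E_2^{3,0}=H^3(k,\bar k^\times)=0$ since a finite field has strict cohomological dimension at most $2$; or, more robustly, use that the structure morphism $B_k\mu_N \to \Spec k$ has a section (the trivial $\mu_N$-torsor), which splits the edge map $H^*(k,\G_m)\hookrightarrow H^*(B_k\mu_N,\G_m)$ and hence forces every differential hitting the bottom row $E_r^{*,0}$ to vanish. With this added, your argument is complete.
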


\begin{proof}
We have an equivalence $H^2_{\text{\'et}}(B_k\mu_N,\G_m) \simeq H^1_{\text{\'et}}(\Spec k,\Hhom(\mu_N,\G_m)) \simeq H^1_{\text{\'et}}(\Spec k,\Zb/N\Zb) \simeq \Zb/N\Zb$: 

The morphism $H^2_{\text{\'et}}(B_k\mu_N,\G_m) \to H^1_{\text{\'et}}(\Spec k,\Hhom(\mu_N,\G_m))$ can be understood in more concrete terms, using either the language of gerbes, or cocycles. We begin with the first. An element $\alpha$ of $H^2_{\text{\'et}}(B_k\mu_N,\G_m)$ represents a $\G_m$-gerbe $\Gg \to B_k\mu_N$. We claim $H^2_{\text{\'et}}(B_{\bar{k}}\mu_N,\G_m) = 0$. 
\begin{claim}
$H^2_{\text{\'et}}(B_{\bar{k}}\mu_N,\G_m) = 0$
\end{claim}

\begin{proof}
The cohomology group $H^2_{\text{\'et}}(B_{\bar{k}}\mu_N,\G_m)$ classifies central extensions 
$$0 \to \G_m \to E \to \mu_N \to 0$$
of the finite \'etale (and constant) group scheme $\mu_N$ by $\G_m$. Since $\mu_N$ is cyclic, a central extension $E$ as above is automatically abelian (use the short exact sequence (*) in \cite{prasad} to deduce this). Using \cite[Exp. 9 Prop. 8.2]{MR0274459} we conclude that $E$ is a diagonalisable group scheme. 

Cartier duality $\Hhom(-,\G_m)$ yields a short exact sequence
$$0 \to \Zb/n\Zb \to \Hhom(E,\G_m) \to \Zb \to 0$$
of abelian group schemes. Since $\Hhom(E,\G_m)$ has a rational point over $\bar{k}$ this sequence splits. By Cartier dualising a second time we obtain that the original sequence splits (equivalently we could refer to the anti-equivalence of the category of diagonalisable group schemes with the category of abelian groups). 
\end{proof}

This shows that the gerbe $\Gg$ splits when pulled back to $B_{\bar{k}}\mu_N$. We choose such a splitting, which can be either represented by a section $s\colon B_{\bar k}\mu_N \to \Gg$ or an isomorphism $\Gg \times_{B_k\mu_N} B_{\bar{k}}\mu_N \simeq B_{\bar{k}}\mu_N \times B\G_m$.  The Frobenius automorphism $\varphi \in \Gal(\bar{k}/k)$ yields a second splitting $\phi(s)$. Two splittings of the same gerbe differ by a $\G_m$-torsor, and we may therefore view $\varphi(s)/s$ as an element of $\Pic(B_{\bar{k}}\mu_N) = \mu_N^{\vee} = \Zb/N\Zb = H^1_{\text{\'et}}(k,\mu_N^{\vee})$.

The description in terms of cocyles is similar: Since $H^2_{\text{\'et}}(B_{\bar{k}}\mu_N,\G_m) = 0$ there exists a finite field extension $k'/k$, such that $\alpha$ splits when pulled back to $B_{k'}\mu_N$. Therefore, there exists a $1$-cochain $(\psi_{ij})$ for the sheaf $\G_m$ on the small \'etale site of $B_{k'}\mu_N$, such that we have $d(\psi_{ij}) = (\phi_{ijk})_{k'}$. As before we denote by $\varphi\in \Gal(k'/k)$ the Frobenius automorphism. Since $\phi_{ijk}$ is a $2$-cochain defined over $k$ we have
$\varphi(\phi_{ijk})_{k'} = (\phi_{ijk})_{k'}$. Therefore, $\varphi(\psi_{ij})/\psi_{ij}$ is a $1$-cochain representing a $\G_m$-torsor on $B_{k}\mu_N$. It corresponds to an element of $H^1(B_k\mu_N,\G_m) = H^1(k,\Zb/N\Zb)$.

Naturality with respect to the morphism $\mu_N \to \mu_{dN}$ is built into the cocycle computation above.
\end{proof}


\begin{definition}
Let $k$ be a finite field, $\Mc/\Spec k$ an algebraic stack and $\alpha$ a $\G_m$-gerbe on $\Mc$. We define a function 
$$f_{\alpha}\colon I_{\widehat{\mu}}\Mc(k) \to \Qb/\Zb$$
as follows. For a $k$-rational point $y\colon B_k\mu_N \to \Mc$ of $I_{\widehat{\mu}}\Mc$ we let $f_{\alpha}(y)$ be the invariant of $y^*\alpha \in H^2_{\text{\'et}}(B_k\mu_N,\G_m) \simeq \Zb/N\Zb = \frac{1}{N}\Zb/\Zb \subset \Qb/\Zb$ defined in Lemma \ref{lemma:Brauer_stack}. 
\end{definition}

We claim that this definition yields another way to compute the Hasse invariant. The proof of the following proposition will be given below.
\begin{proposition}\label{prop:hasse2}
Let $F$ be a local field and $\Mc/\Oo_F$ a smooth tame $DM$-stack with coarse moduli space $M/\Oo_F$ as in Situation \ref{situation:locally_quotient}(b). We let $\alpha$ be a $\G_m$-gerbe on $\Mc$. For every $x \in M(\Oo_F)^{\natural}$ we have 
$$\inv(x_F^*\alpha) = f_{\alpha}\left(e(x)\right)$$
where $x_F\colon \Spec F \to \Mc$ denotes the induced map, and $e\colon M(\Oo_F)^{\natural} \to I_{\widehat{\mu}}\Mc(k)$ is the specialisation map of Construction \ref{defi:e}.
\end{proposition}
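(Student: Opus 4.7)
The plan is to reduce the assertion to a local model $\Xc = [\Spec \Oo_L/\mu_N]$ and then identify both invariants as projections of a single connecting homomorphism associated to a canonical short exact sequence on $\Xc$.

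First, I would reduce to the model case as follows. By Construction \ref{defi:e} and Proposition \ref{CMIso}, the specialisation $e(x) \in I_{\widehat{\mu}}\Mc(k)$ factors through the normalisation $\widetilde{\Mc}_x \cong [\Spec \Oo_L/\mu_N]$ of the pullback $\Mc_x = \Mc \times_M \Spec \Oo_F$, for some totally ramified extension $L/F$ of degree $N$. Setting $\Xc = [\Spec \Oo_L/\mu_N]$ and letting $\phi\colon \Xc \to \Mc$ be the induced morphism, the generic fibre $\phi_F$ agrees up to isomorphism with $x_F$, and the restriction of $\phi$ to the closed substack $B_k\mu_N \hookrightarrow \Xc_k$ is $e(x)$. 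Hence, with $\beta := \phi^*\alpha$, we have $\beta|_{\Spec F} = x_F^*\alpha$ and $\beta|_{B_k\mu_N} = e(x)^*\alpha$, and the task reduces to showing $\inv(\beta|_{\Spec F}) = f(\beta|_{B_k\mu_N})$ in $\Qb/\Zb$ for every gerbe $\beta$ on $\Xc$, where $f$ denotes the identification of Lemma \ref{lemma:Brauer_stack}.

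Next, I would construct a single cohomological framework on $\Xc$ containing both invariants. The standard short exact sequence on the small \'etale site of $\Spec \Oo_L$,
$$0 \to \G_m \to j_{L,*}\G_m \to i_{L,*}\Zb \to 0,$$
is $\mu_N$-equivariant (since the valuation $v_L$ is fixed by the $\mu_N$-action), and it descends to a sequence
$$0 \to \G_m \to j_*\G_m \to i_*\Zb \to 0$$
on $\Xc$, where $j\colon \Spec F \hookrightarrow \Xc$ and $i\colon B_k\mu_N \hookrightarrow \Xc$ are the generic and special fibre immersions. The associated long exact sequence yields a connecting homomorphism
$$\delta\colon H^2_{\text{\'et}}(\Spec F, \G_m) \longrightarrow H^1_{\text{\'et}}(B_k\mu_N, \Qb/\Zb),$$
after identifying $H^2_{\text{\'et}}(\Xc, j_*\G_m) \simeq H^2_{\text{\'et}}(\Spec F, \G_m)$ using $Br(\Oo_L) = 0$ (which forces $R^{\geq 1}j_*\G_m = 0$), and $H^2_{\text{\'et}}(\Xc, i_*\Zb) \simeq H^1_{\text{\'et}}(B_k\mu_N, \Qb/\Zb)$ via the Bockstein together with the vanishing of $H^{\geq 1}(B_k\mu_N, \Qb)$ over the finite field $k$.

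The key step is then to argue that both invariants factor through $\delta$. The Leray spectral sequence for $B_k\mu_N \to \Spec k$ furnishes a canonical splitting $H^1_{\text{\'et}}(B_k\mu_N, \Qb/\Zb) \simeq H^1(k, \Qb/\Zb) \oplus H^0(k, \mu_N^\vee) \simeq \Qb/\Zb \oplus \Zb/N\Zb$. I would verify that the projection of $\delta$ to the first summand recovers the Hasse invariant of Proposition \ref{prop:Hasse}, since the sequence above pulls back from the one used there (with a factor of $N$ absorbed into the valuation map); and that the composition of the restriction $H^2_{\text{\'et}}(\Xc, \G_m) \to H^2_{\text{\'et}}(B_k\mu_N, \G_m)$ with $f$ agrees with the projection of $\delta$ to the second summand. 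Because $\beta$ becomes trivial on pullback to $\Spec \Oo_L$ (as $Br(\Oo_L) = 0$), its class $\beta|_{\Spec F}$ splits over $L$, forcing $\inv(\beta|_{\Spec F}) \in \tfrac{1}{N}\Zb/\Zb$; under the canonical identification $\tfrac{1}{N}\Zb/\Zb = \Zb/N\Zb \subset \Qb/\Zb$ used in Lemma \ref{lemma:Brauer_stack}, the two invariants coincide.

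The hard part will be the precise cocycle-level verification that the two projections of $\delta$ correspond to $\inv$ and $f$ respectively. This amounts to unwinding both constructions in parallel via the Frobenius obstruction to a splitting over the maximal unramified extension, and the crucial compatibility is mediated by Construction \ref{LTors}: the canonical $\mu_N$-torsor structure on $\Spec L$ pins down the identification of $\mu_N^\vee(\bar k) \simeq \Zb/N\Zb$ with the subgroup $\tfrac{1}{N}\Zb/\Zb \subset \Qb/\Zb$ that appears in the construction of the Hasse invariant.
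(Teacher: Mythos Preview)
Your reduction to the local model $\Xc = [\Spec \Oo_L/\mu_N]$ is correct and matches the paper's strategy (the paper works with the slightly more general $\Xc_{\Gamma,Q}$ and invokes Lemma \ref{lemma:pre_hasse}, which is the analogue of what you want to prove on $\Xc$).

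The long exact sequence approach, however, has a genuine gap. The sequence $0 \to \G_{m,\Xc} \to j_*\G_m \to i_*\Zb \to 0$ on $\Xc$ yields
\[
H^2_{\text{\'et}}(\Xc, \G_m) \xrightarrow{\; j^*\;} H^2_{\text{\'et}}(\Spec F, \G_m) \xrightarrow{\;\delta\;} H^1_{\text{\'et}}(B_k\mu_N, \Qb/\Zb),
\]
and by exactness $\delta \circ j^* = 0$. Hence for every $\beta \in H^2_{\text{\'et}}(\Xc,\G_m)$ one has $\delta(\beta|_{\Spec F}) = 0$. Your claim that $f \circ i^*$ agrees with the second projection of $\delta \circ j^*$ would therefore force $f(\beta|_{B_k\mu_N}) = 0$ for all such $\beta$, which is false (take $\beta = \alpha_{L_\chi}$ as in Lemma \ref{lemma:unramified_gerbe}). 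The same exactness contradicts your first claim: if the first projection of $\delta$ were $\inv$, then $\inv(\beta|_{\Spec F}) = 0$ for all gerbes on $\Xc$. In fact, unwinding your $\delta$ via the \'etale cover $\Spec \Oo_L \to \Xc$ shows that its first projection is $\inv_L \circ \mathrm{res}_{L/F} = N \cdot \inv_F$, so the normalisation is off as well.

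The conceptual point is that the Hasse invariant is constructed from the analogous sequence on $D = \Spec \Oo_F$, where the kernel term $H^2_{\text{\'et}}(D,\G_m) = Br(\Oo_F)$ vanishes; on $\Xc$ the kernel $H^2_{\text{\'et}}(\Xc,\G_m)$ is nonzero and is exactly where the gerbes of interest live, so the next map $\delta$ cannot detect them. The paper therefore proves the identity $\inv \circ j^* = f \circ i^*$ on $H^2_{\text{\'et}}(\Xc_{\Gamma,Q},\G_m)$ by an explicit cocycle computation (Lemma \ref{lemma:pre_hasse}): one trivialises the gerbe over the maximal unramified extension using an $N$-th root $\underline{\pi}^{1/N}$ of a uniformiser, and then tracks simultaneously the Frobenius obstruction (giving $\inv$) and the $\mu_N$-action on the chosen splitting (giving $f$). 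The compatibility of these two is precisely what relates the Bockstein for $\Zb \to \Zb \to \Zb/N\Zb$ to the one for $\Zb \to \Qb \to \Qb/\Zb$, and this step does not seem to be avoidable by a purely formal long exact sequence argument.
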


In the following remark and definition we use the notation introduced in Construction \ref{defi:XGamma}.

\begin{rmk}\label{rmk:noncan}
We assume that $Q$ is strongly ramified (see Lemma \ref{lemma:madealemmaoutofit}). There is a (non-canonical) closed immersion of stacks $i\colon B_k \mu_N \hookrightarrow \Xc_{\Gamma,Q}$, such that the induced map of stabiliser groups is given by the canonical inclusion $\mu_N \subset \Gamma$. The induced morphism $Br( \Xc_{\Gamma,Q}) \to Br(B_{k}\mu_N)$ is independent of choices. 
\end{rmk}

\begin{definition}
Assume that $Q$ is strongly ramified. We define the map $f_{\Gamma,Q}\colon H^2_{\text{\'et}}(\Xc_{\Gamma,Q},\G_m) \to \Qb/\Zb$ as the composition
\[f_{\Gamma,Q}\colon H^2_{\text{\'et}}(\Xc_{\Gamma,Q},\G_m) \xrightarrow{i^*} H^2_{\text{\'et}}(B_kI_Q,\G_m) = H^2_{\text{\'et}}(B_k\mu_N,\G_m) = \frac{1}{N}\Zb/\Zb \subset \Qb/\Zb.\]
\end{definition}

\begin{lemma}\label{lemma:pre_hasse}
For a strongly ramified $\Gamma$-torsor $Q$ we have a commutative diagram 
\[
\xymatrix{
H^2_{\text{\'et}}(\Xc_{\Gamma,Q},\G_m) \ar[r]^-{j^*} \ar[rd]_{f_{\Gamma,Q}} & H^2_{\text{\'et}}(F,\G_m) \ar[d]^{\inv} \\
 & \Qb/\Zb.
}
\]
\end{lemma}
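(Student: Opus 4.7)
The proof plan is as follows. Using the unramified-twist invariance of $\Xc_{\Gamma, Q}$ noted before Lemma \ref{canincc}, combined with Lemma \ref{lemma:madealemmaoutofit}(iii), I first reduce to the universal case $\Gamma = \mu_N$ and $Q = \Spec L$ with $L = F(\underline{\pi}^{1/N})$. Then $\Xc := \Xc_{\mu_N, \Spec L} = [\Spec\Oo_L/\mu_N]$, with coarse moduli map $\pi\colon \Xc \to \Spec\Oo_F$, generic immersion $j\colon \Spec F \hookrightarrow \Xc$, and closed immersion $i\colon B_k\mu_N \hookrightarrow \Xc$ coming from (\ref{caninc}).

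The key cohomological input is the computation $H^2_{\text{\'et}}(\Xc, \G_m) \cong \Zb/N\Zb$, obtained via descent along the Galois cover $\Xc_{\Oo_{F^{un}}} \to \Xc$. First, $H^2_{\text{\'et}}(\Xc_{\Oo_{F^{un}}}, \G_m) = 0$: by the Leray spectral sequence for the coarse moduli map $\Xc_{\Oo_{F^{un}}} \to \Spec\Oo_{F^{un}}$, together with the vanishing of the Brauer groups of $\Oo_{F^{un}}$, $F^{un}$ and $B_{\bar k}\mu_N$ (the last by the argument in the proof of Lemma \ref{lemma:Brauer_stack} applied over an algebraically closed residue field, which forces $H^1(\bar k, \mu_N^\vee) = 0$). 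Moreover, $\Pic(\Xc_{\Oo_{F^{un}}}) = \Zb/N\Zb$ is generated by the tautological character line bundle, with trivial Galois action because this line bundle is defined over $\Oo_F$. The Hochschild--Serre spectral sequence then produces $H^2(\Xc, \G_m) \cong H^1(\widehat{\Zb}, \Pic(\Xc_{\Oo_{F^{un}}})) = \Zb/N\Zb$, the term $H^2_{\mathrm{cts}}(\widehat{\Zb}, \Oo_{F^{un}}^\times)$ vanishing by rerunning the same Leray argument for $\Spec\Oo_{F^{un}} \to \Spec\Oo_F$ (which has $H^{\geq 2}(\G_m) = 0$).

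Both invariants are then computed from this descent. For $f_{\Gamma, Q}$, the compatibility of Hochschild--Serre spectral sequences for $\Xc_{\Oo_{F^{un}}} \to \Xc$ and $B_{\bar k}\mu_N \to B_k\mu_N$ (the latter also of the form $H^1(\widehat{\Zb}, \Pic)$ by the same argument, giving the identification of Lemma \ref{lemma:Brauer_stack}) identifies $i^*\colon H^2(\Xc, \G_m) \to H^2(B_k\mu_N, \G_m)$ with the restriction of the character line bundle, which is an isomorphism of $\Zb/N\Zb$'s; so $f_{\Gamma, Q}$ is the canonical inclusion $\frac{1}{N}\Zb/\Zb \hookrightarrow \Qb/\Zb$. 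For $\inv \circ j^*$, the $\mu_N$-equivariant valuation sequence on $\Spec\Oo_L$ descends to
\[0 \to \Oo_\Xc^\times \to j_*\G_m \to i_*\Zb \to 0\]
on $\Xc$, with $\mu_N$ acting trivially on $\Zb$. Since $H^1(B_k\mu_N, \Zb) = 0$, this exact sequence gives an injection $j^*\colon H^2(\Xc, \G_m) = \Zb/N\Zb \hookrightarrow H^2(F, \G_m) = \Qb/\Zb$; being an injection of abelian groups of order $N$, its image is forced to be the unique subgroup $\frac{1}{N}\Zb/\Zb$. Thus $\inv \circ j^* = f_{\Gamma, Q}$ up to an automorphism of $\Zb/N\Zb$.

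The main obstacle is pinning down this remaining automorphism, i.e. matching the two maps on a specific generator of $H^2(\Xc, \G_m)$. I expect to do so by testing on an explicit Kummer class such as the one associated to the pair $(\CL_\chi, \underline{\pi})$, where the character line bundle has a meromorphic section $\underline{\pi}^{1/N}$ whose $F$-valuation equals $1/N$; this computes directly to $1/N$ on both sides, via Lemma \ref{lemma:Brauer_stack} for $f_{\Gamma, Q}$ and via the standard cyclic algebra computation of Hasse invariants for $\inv \circ j^*$.
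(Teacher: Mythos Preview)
Your approach is structurally different from the paper's and is largely sound, but the final step you defer is exactly where the content lies.

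The reduction to $\Gamma=\mu_N$ and $Q=\Spec L$ is valid: the map $[\Spec\Oo_L/\mu_N]\to\Xc_{\Gamma,Q}$ is compatible with both $j$ and $i$, so commutativity for $\mu_N$ implies it for $\Gamma$. Your computation $H^2_{\text{\'et}}(\Xc,\G_m)\cong\Zb/N\Zb$ via Hochschild--Serre is correct and gives useful context the paper does not make explicit. One small gap: to get the injection $H^2(\Xc,\G_m)\hookrightarrow H^2(F,\G_m)$ from the valuation sequence you need $H^2(\Xc,j_*\G_m)=H^2(F,\G_m)$, i.e.\ $R^{>0}j_*\G_m=0$; this holds (check on the \'etale cover $\Spec\Oo_L$, where it reduces to $H^{>0}(L^{sh},\G_m)=0$), but you should say so.

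The real issue is the last paragraph. You have shown that both $f_{\Gamma,Q}$ and $\inv\circ j^*$ are injections $\Zb/N\Zb\hookrightarrow\Qb/\Zb$, hence agree up to an automorphism of $\Zb/N\Zb$, and propose to pin this down by testing on $\alpha_{\CL_\chi}$. The $f_{\Gamma,Q}$ side is indeed easy via Lemma \ref{lemma:unramified_gerbe}. But computing $\inv(j^*\alpha_{\CL_\chi})$ is not: under Hochschild--Serre, $\alpha_{\CL_\chi}$ lives in $E_2^{1,1}$ for $\Xc$, while $H^2(F,\G_m)$ is entirely $E_2^{2,0}=H^2(\widehat\Zb,F^{un,\times})$, so $j^*$ involves a filtration jump that the map of spectral sequences does not see directly. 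Your heuristic ``$\underline\pi^{1/N}$ has $F$-valuation $1/N$'' is suggestive but not a computation --- $\underline\pi^{1/N}\notin F$, and you need to actually produce the unramified $2$-cocycle for $j^*\alpha_{\CL_\chi}$ and read off its valuation. Doing so amounts to: trivialise the gerbe over $F^{un}$ using a trivialisation of $\CL_\chi|_{F^{un}}$, track how Frobenius acts on that trivialisation, and extract the resulting $2$-cocycle in $F^{un,\times}$.

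This is precisely what the paper does, only phrased for a general class $\alpha$ rather than a single generator: it represents $\alpha$ by $\underline\pi^{\phi_{ijk}}$, lifts to a splitting $\underline\pi^{\psi_{ij}/N}$ over $L$, and reads off both invariants from $\psi$ via Bockstein. So your structural reduction is a genuine simplification in that it isolates the computation to one class, but that one computation \emph{is} the paper's cocycle chase; you have not avoided it, only postponed it.
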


\begin{proof}
At first we unravel the definition of the Hasse invariant. A priori, the Brauer group of $F$ is computed by the Galois cohomology group $H^2(\Gal(\bar{F}/F),\bar{F}^{\times})$, however every gerbe on $F$ splits on an unramified cover, and we can therefore identify this Galois cohomology group with $H^2(\Gal(F^{un}/F),(F^{un})^{\times})$. We use the canonical identification $\widehat{\Zb} \simeq \Gal(F^{un}/F) \simeq \Gal(\bar{k}/k)$.

Recall from Lemma \ref{lemma:Brauer_stack} that the Hasse invariant is defined by composition of the isomorphisms 
$$H^2(\Gal(F^{un}/F) ,(F^{un})^{\times}) \simeq H^2(\Gal(\bar{k}_F/k),\Zb)) \simeq H^1_{\text{\'et}}(\widehat{\Zb},\Qb/\Zb) \simeq \Qb/\Zb.$$
The morphism on the left is induced by the natural map $(F^{un})^{\times} \to \Zb$ which sends $\underline{\pi} \mapsto 1$. We observe  that this map has a section $\Zb \to (F^{un})^{\times}$ sending $1$ to $\underline{\pi}$.

Let $\alpha \in Br(F)$. We write $\phi=(\phi_{ijk})$ for the $2$-cocycle on the site of unramified covering of $F$ (that is, the site equivalent to the small \'etale site of $\Oo_F$) of the constant sheaf $\Zb$ corresponding to $\alpha$ under the above isomorphism. By the paragraph above, the gerbe $\alpha$ is represented by the $2$-cocycle $\underline{\pi}^{\phi_{ijk}}$.

By assumption, $\ord(\alpha)$ divides $N=|I_Q|$ and is coprime to $p$. Therefore, $(N\cdot{}\phi_{ijk})$ is a coboundary. That is, there exists a $1$-cochain $\psi=(\psi_{ij})$, such that $d(\psi) = N \cdot \phi$.

Let $L/F$ be a totally ramified field extension which splits the torsor $Q$, and let $\underline{\pi}^{1/N}$ be a uniformiser for $L$. We then have a $1$-cochain $(\underline{\pi}^{\frac{\psi_{ij}}{N}})$ with values in $L^{\times}$, such that 

\begin{equation}\label{eqn:3}d(\underline{\pi}^{\frac{\psi_{ij}}{N}}) = \underline{\pi}^{\phi_{ij}}.\end{equation}

We now define a $2$-cochain taking values in the sheaf $\Zb/N\Zb = \Hhom(\mu_n,\G_m)$. It is given by 
$$c_{ij}\colon \xi \mapsto \xi^{\psi_{ij}}$$
for $\xi \in \mu_N$. We claim that $d(c_{ij}) = 1$, indeed for $\xi \in \mu_N$ we have a field automorphism $\sigma_{\xi}$ of $L/F$ (and all unramified base changes thereof) which sends $\underline{\pi}^{\frac{1}{N}} \mapsto \xi \cdot{} \underline{\pi}^{\frac{1}{N}}$. Applying $\sigma_{\xi}$ to \eqref{eqn:3} we obtain
$$d(\xi^{\psi_{ij}} \underline{\pi}^{\frac{\psi_{ij}}{N}}) = \underline{\pi}^{\phi_{ij}}.$$
This shows $d(\xi^{\psi_{ij}}) = 1$ as we wanted. Hence $c_{ij}= (\xi \mapsto \xi^{\psi_{ij}})$ is a $1$-cocycle. 
We claim that this is the $1$-cocycle which represents $f_{\Gamma,Q}(\alpha)$. 
\begin{claim}
The $1$-cocycle $(c_{ij})$ represents $f_{\Gamma,Q}(\alpha) \in H^1(k,\mu_N^{\vee})$.
\end{claim}

\begin{proof}
Recall that $\alpha$ is represented by the $2$-cocycle $(\underline{\pi}^{\phi_{ijk}})$ on the small site of unramified \'etale schemes over $F$ (equivalently, the small \'etale site of $\Oo_F$). We pull back to the $\Gamma$-torsor $Q \to \Spec F$ where we may choose an $N$-th root of $\underline{\pi}$, which we denote by $\underline{\pi}^{\frac{1}{N}}$. The pulled back gerbe $\alpha_Q$ is still represented by the cocycle $(\underline{\pi}^{\phi_{ijk}})$ but this time it is a coboundary, as we have 
$$(\underline{\pi}^{\phi_{ijk}}) = \left((\underline{\pi}^{\frac{1}{N}})^{N\cdot{}\phi_{ijk}}\right) = \left( (\pi^{\frac{1}{N}})^{d(\psi_{ij})} \right).$$
In the theory of gerbes, the $1$-cochain $\left( (\pi^{\frac{1}{N}})^{(\psi_{ij})} \right)$ represents a splitting of the pulled back gerbe $\alpha_Q$.
We have an action of $\mu_N$ on $Q$ which sends $\underline{\pi}^{\frac{1}{N}}$ to $\xi \mapsto \xi \cdot{} \underline{\pi}^{\frac{1}{N}}$. With respect to this action, the splitting of $\alpha_Q$ represented by $\left( (\pi^{\frac{1}{N}})^{(\psi_{ij})} \right)$ is sent to 
$$\left(\xi \mapsto  (\xi^{\psi_{ij}}\pi^{\frac{1}{N}})^{(\psi_{ij})} \right) = \left( c_{ij}(\xi)\cdot{} \underline{\pi}^{\psi_{ij}} \right) .$$
This shows that $(c_{ij})$ is a cocycle representing $f_{\Gamma,Q}(\alpha)$.
\end{proof}

By identifying $\Zb/N\Zb$ with $\mu_N^{\vee}$ we obtain the $1$-cocycle $(\psi_{ij})$ in $\Zb/N\Zb$. It is easy to see that the boundary map of the Bockstein sequence
$$0 \to \Zb \to \Zb \to \Zb/N\Zb \to 0$$
sends $(\phi_{ijk})$ to the cocycle $(\psi_{ij})$ in $\Zb/N\Zb$. By means of the commutative diagram with exact rows
\[
\xymatrix{
0 \ar[r] & \Zb \ar[r] \ar[d] & \Zb \ar[r] \ar[d] & \Zb/N\Zb \ar[r] \ar[d] & 0 \\
0 \ar[r] & \Zb \ar[r] & \Qb \ar[r] & \Qb/\Zb \ar[r] & 0
}
\]
and the definition of the Hasse invariant as image of $(\psi_{ijk}) \in H^2(k,\Zb) \simeq H^1(k,\Qb/\Zb)$ we conclude $f_{\Gamma,Q}(\alpha) = \inv(\alpha)$.
%
\end{proof}

We now turn to the proof of the proposition above.

\begin{proof}[Proof of Proposition \ref{prop:hasse2}]
For $x \in M(\Oo_F)^{\natural}$ we want to show the identity $\inv(x_F^*\alpha) = f_{\alpha}(e(x))$. In Construction \ref{constrspe} we showed that $x\colon \Spec \Oo_F \to M$ fits into a commutative diagram
\[
\xymatrix{
& \Xc_{\Gamma,Q} \ar[r] \ar[d] & \Mc \ar[d] \\
\Spec F \ar[r] \ar[ru] & \Spec \Oo_F \ar[r]^x & M.
}
\]
The induced morphism $B_k\mu_N \to \Mc_k$ corresponds to $e(x) \in I_{\widehat{\mu}}\Mc(k)$.
This shows that $x_F^*\alpha$ extends to a $\G_m$-gerbe on $\Xc_{\Gamma,Q}$. By virtue of Lemma \ref{lemma:pre_hasse} we have $\inv(x_F^*\alpha) = f_{\Gamma,Q}(x_F^*\alpha)$. This shows what we wanted. 
\end{proof}

We end this section with an example computation of a Hasse invariant using Proposition \ref{prop:hasse2}. This observation will play an important role later on.

\begin{definition}\label{defi:unramified_gerbe}
Let $\Mc/\Oo_F$ be a proper DM-stack over the ring of integers of a local field $F$, and $L \in \Pic(\Mc)$ a $\G_m$-torsor of finite order. We define the gerbe $\alpha_L \in H^2_{\text{\'et}}(\Mc,\G_m)$ to be the the element induced by $H^1(\pi_1^{\text{\'et}}(\Oo_F),H^1_{\text{\'et}}(\Mc_{\Oo_{F^{\text{un}}}},\G_m))$ corresponding to the morphism $\widehat{\Zb} \to H^1_{\text{\'et}}(\Mc,\G_m)$ given by $1 \mapsto L$.
\end{definition}

A $\G_m$-torsor on $\B_k\mu_N$ corresponds to a character $\mu_N \to \G_m$, that is, an element of $\mu_N^{\vee} = \Zb/N\Zb \subset \Qb/\Zb$. We denote the corresponding isomorphism by $H^1_{\text{\'et}}(B_k\mu_N,\G_m) \simeq \mu_N^{\vee} = \Zb/N\Zb$ by $\chi \leftrightarrow L_{\chi}$.

\begin{lemma}\label{lemma:unramified_gerbe}
Using the notation of Definition \ref{defi:unramified_gerbe} we have $f(\alpha_{L_{\chi}}) = \chi \in \Zb/N\Zb \subset \Qb/\Zb$ where $f$ denotes the invariant of Lemma \ref{lemma:Brauer_stack}.
\end{lemma}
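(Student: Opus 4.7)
My plan is to interpret both sides of the equality $f(\alpha_{L_\chi}) = \chi$ as edge maps of Leray spectral sequences and then invoke base-change compatibility. Throughout I take $\Mc = B_{\Oo_F}\mu_N$, noting that $\Pic(B_{\Oo_F}\mu_N) = \Pic(B_k\mu_N) = \mu_N^\vee = \Zb/N\Zb$ so that $L_\chi$ extends uniquely to a line bundle on $\Mc$, and write $\pi\colon \Mc \to \Spec \Oo_F$.

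Next I would analyse the Leray spectral sequence $E_2^{p,q} = H^p_{\text{\'et}}(\Oo_F, R^q\pi_*\G_m) \Rightarrow H^{p+q}_{\text{\'et}}(\Mc, \G_m)$. Since $N$ is invertible in $\Oo_F$, the sheaf $R^1\pi_*\G_m$ is the constant \'etale sheaf $\mu_N^\vee \cong \Zb/N\Zb$ on $\Spec \Oo_F$, with trivial action of $\pi_1^{\text{\'et}}(\Spec \Oo_F) = \widehat \Zb$. Thus $E_2^{1,1} = \Hom(\widehat \Zb, \Zb/N\Zb) = \Zb/N\Zb$, and Definition \ref{defi:unramified_gerbe} is precisely the statement that $\alpha_{L_\chi}$ is the image of $\chi$ under the edge map $E_2^{1,1} \to H^2_{\text{\'et}}(\Mc, \G_m)$. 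The analogous Leray spectral sequence for $B_k\mu_N \to \Spec k$ has the same $E_2^{1,1} = H^1(\Gal(\bar k/k), \mu_N^\vee) = \Zb/N\Zb$, and pulling back along the closed immersion of the special fibre induces a map of spectral sequences which, by the compatibility of Picard groups and \'etale fundamental groups noted above, is the identity on $E_2^{1,1}$-terms. Consequently the restriction of $\alpha_{L_\chi}$ to $B_k\mu_N$ is the image of $\chi$ under the corresponding edge map over $k$.

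The final step is to recognise $f$ as the inverse of the latter edge map. Inspecting the proof of Lemma \ref{lemma:Brauer_stack}, the construction of $f$ takes a gerbe $\alpha$ on $B_k\mu_N$, splits it over $B_{\bar k}\mu_N$ (which is possible since $H^2_{\text{\'et}}(B_{\bar k}\mu_N, \G_m) = 0$), and records the Frobenius difference $\varphi(s)/s \in \Pic(B_{\bar k}\mu_N) = \Zb/N\Zb$. This is exactly the formula for the inverse of the $E_2^{1,1}$-edge map in the Hochschild--Serre spectral sequence: splittings over $B_{\bar k}\mu_N$ witness the vanishing of $\alpha$ upstairs, and the resulting $1$-cocycle is the Frobenius difference. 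Combining all three steps, $f(\alpha_{L_\chi}) = \chi$. The main obstacle is this last translation between the ad hoc cocycle construction of $f$ in Lemma \ref{lemma:Brauer_stack} and the spectral-sequence edge map; while essentially tautological, it requires a careful book-keeping of cocycles.
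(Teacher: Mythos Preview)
Your proof is correct and follows essentially the same idea as the paper's: both recognise that $\alpha_{L_\chi}$ is by construction the image of $\chi$ under the Hochschild--Serre edge map $H^1(\widehat{\Zb},\Pic(B_{\bar k}\mu_N)) \to H^2_{\text{\'et}}(B_k\mu_N,\G_m)$, and that $f$ is precisely the inverse of this isomorphism. The paper dispatches this in two sentences (``by direct inspection''), whereas you spell out the spectral-sequence formalism and the base-change step from $\Oo_F$ to $k$; the latter detour is harmless but unnecessary, since Definition~\ref{defi:unramified_gerbe} can be read directly over $k$ via $\pi_1^{\text{\'et}}(\Spec\Oo_F)\cong\pi_1^{\text{\'et}}(\Spec k)\cong\widehat{\Zb}$.
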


\begin{proof}
We have seen in the proof of Lemma \ref{lemma:Brauer_stack} that 
$$H^2_{\text{\'et}}(B_k\mu_N,\G_m) \simeq H^1_{\text{\'et}}(k,\Hhom(\mu_n,\G_m))\simeq H^1_{\text{\'et}}(k,\Zb/N\Zb).$$
By direct inspection we see that $f(\alpha_{\chi})$ corresponds to the $\Zb/N\Zb$-torsor given by the continuous map $\widehat{\Zb} \to \Zb/N\Zb$ which sends the topological generator $1 \mapsto \chi$.
\end{proof}

Proposition \ref{prop:hasse2} immediately implies the following assertion. Recall that a $\G_m$-torsor $L$ on a stack $\Mc$ induces a function $\chi_L\colon I_{\widehat{\mu}}\Mc(k) \to \Qb/\Zb$. Indeed, $y \in I_{\widehat{\mu}}\Mc(k)$ corresponds to a morphism $y\colon B_k\mu_N \to \Mc$. Pullback of $L$ along $y$ yields $y^*L \in \Pic(B_k\mu_N)$ and the latter corresponds to a character $\mu_N \to \G_m$, that is, an element of $\Zb/N\Zb \subset \Qb/\Zb$.

\begin{corollary}\label{cor:hasse}
Let $\Mc$ be a proper tame DM-stack as in Situation \ref{situation:locally_quotient}(b), and $L \in \Pic(\Mc)$. For $x \in M(\Oo_F)^{\natural}$ we have $\inv(x_F^*\alpha_L) = \chi_L(e(x))$.
\end{corollary}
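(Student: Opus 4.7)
The plan is to combine Proposition \ref{prop:hasse2} with Lemma \ref{lemma:unramified_gerbe} via a straightforward naturality argument.

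First, I would apply Proposition \ref{prop:hasse2} directly to the gerbe $\alpha = \alpha_L$ to obtain
\[
\inv(x_F^*\alpha_L) = f_{\alpha_L}(e(x)).
\]
It therefore suffices to identify $f_{\alpha_L}(e(x))$ with $\chi_L(e(x))$. By definition, the point $e(x) \in I_{\widehat{\mu}}\Mc(k)$ corresponds to a morphism $y\colon B_k\mu_N \to \Mc$ for some $N$ coprime to $p$, and $f_{\alpha_L}(e(x))$ is the Hasse-type invariant of the pullback gerbe $y^*\alpha_L \in H^2_{\text{\'et}}(B_k\mu_N,\G_m)$ under the identification of Lemma \ref{lemma:Brauer_stack}.

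Next I would check naturality of the construction $L \mapsto \alpha_L$ of Definition \ref{defi:unramified_gerbe} with respect to pullback along $y$. Concretely, the element $\alpha_L$ is the image of $L \in \Pic(\Mc)$ under the edge map
\[
H^1_{\text{\'et}}(\pi_1^{\text{\'et}}(\Oo_F),H^1_{\text{\'et}}(\Mc_{\Oo_{F^{\text{un}}}},\G_m)) \to H^2_{\text{\'et}}(\Mc,\G_m)
\]
of the Leray spectral sequence for $\Mc \to \Spec \Oo_F$; functoriality of this spectral sequence in the base gives $y^*\alpha_L = \alpha_{y^*L}$ (where the right-hand side uses the analogous construction for $B_k\mu_N$, viewed through the specialisation isomorphism). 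In other words, pulling back a torsor $L$ and then applying the $\alpha_{(-)}$ construction agrees with applying $\alpha_{(-)}$ first and then pulling back along $y$.

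Combining this with Lemma \ref{lemma:unramified_gerbe} applied to the character $\chi = \chi_L(e(x)) \in \mu_N^\vee = \Zb/N\Zb$, which by definition represents $y^*L \in \Pic(B_k\mu_N)$, yields
\[
f_{\alpha_L}(e(x)) = f(\alpha_{y^*L}) = f(\alpha_{L_{\chi_L(e(x))}}) = \chi_L(e(x))
\]
in $\Zb/N\Zb \subset \Qb/\Zb$, which is the desired identity. The only real content beyond citing the preceding results is this naturality check, which I expect to be routine given the explicit cocycle description of $\alpha_L$ already used in the proof of Lemma \ref{lemma:pre_hasse}; no further obstacles are anticipated.
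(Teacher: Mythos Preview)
Your proposal is correct and follows essentially the same approach as the paper: the paper states that the corollary follows immediately from Proposition~\ref{prop:hasse2}, with Lemma~\ref{lemma:unramified_gerbe} implicitly supplying the identification $f_{\alpha_L}=\chi_L$ via the naturality of $L\mapsto\alpha_L$ under pullback along $y\colon B_k\mu_N\to\Mc$. Your explicit naturality check is the only thing you add beyond what the paper writes, and it is indeed routine.
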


Henceforth we view this as an identity of complex valued functions, by virtue of the embedding 
$$\Qb/\Zb \hookrightarrow \Cb^{\times},$$
given by $\lambda \mapsto e^{2\pi i \lambda}$.

\section{Preliminaries on Higgs bundles} \label{HiggsBundles}

In this section we recall basic facts on moduli stacks of $G$-Higgs bundles where $G/X$ is a quasi-split reductive group scheme on $X$.

\subsection{Quasi-split reductive group schemes}

\begin{definition} \label{PinnDef}
  Let $S$ be a scheme and $G$ a reductive group scheme over $S$. 
  \begin{enumerate}[(i)]
 \item  A \emph{pinning} of $G$  over $S$ is a triple $(T,B,s)$, such that $T$ is a maximal torus of $G$ over $S$, $B$ a Borel subgroup of $G$ over $S$ containing $T$ and $s$ a section of $\Lie B$ over $S$, such that there exists an \'etale covering $S'$ of $S$ satisfying the following: Over $S'$ the groups $T$, $B$ and $G$ become constant and hence $B$ admits root subgroups $U_\alpha$ for each root $\alpha$ in the set $\Phi^+$ of roots $T$ appearing in $\Lie B$. Then over $S'$ the section $s$ decomposes as $s=\sum_{\alpha \in \Phi^+} s_\alpha$ for nowhere vanishing sections $s_\alpha$ of $\Lie U_\alpha$ over $S'$.
 \item A pinning $(T,B,s)$ is \emph{split} if the torus $T$ is split.
  \item Let $H\subset G$ be a closed reductive subgroup scheme of $G$. A pinning $(T',B',s')$ of $H$ over $S$ is \emph{compatible} with a pinning $(T,B,s)$ of $G$ over $S$ if $T'=T$, $B'\subset B$ and if over a suitable \'etale covering $S'$ of $S$ over which all the appearing groups become constant and the sections $s$ and $s'$ decompose into summands $s_\alpha$ and $s'_\alpha$ we have $s_\alpha=s'_\alpha$ for those roots $\alpha$ which appear in $\Lie B'$.
  \end{enumerate}
\end{definition}

The following fact follows from \cite[XXIV.3.10]{SGA3III}:
\begin{proposition}
  For a reductive group scheme $G$ over $S$, the sheaf of pinnings of $G$ is a torsor under $G^\text{ad}$.  
\end{proposition}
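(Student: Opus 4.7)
The plan is to verify the two defining properties of a torsor: that $G^{\mathrm{ad}}$ acts simply transitively on pinnings, and that the sheaf of pinnings admits sections étale-locally on $S$. Since everything in sight is compatible with étale base change, I would work étale-locally, which by standard structure theory (SGA3 XXII) allows me to reduce to the case where $G$ is split with a chosen split maximal torus $T_0$ and Borel $B_0$. In this situation the existence of at least one pinning is immediate: pick any nowhere vanishing sections of $\Lie U_\alpha$ for the simple roots $\alpha$ (such sections exist because the $\Lie U_\alpha$ are line bundles trivialised by the splitting) and take their sum $s_0$.

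Next I would construct the action of $G^{\mathrm{ad}}$ and check simple transitivity in two steps, corresponding to the two pieces of data $(T,B)$ and $s$. First, by conjugation $G^{\mathrm{ad}}$ acts on Borel pairs $(T,B)$, and it is a standard fact (conjugacy of Borels, then conjugacy of maximal tori inside a Borel) that this action is transitive with stabiliser $T/Z(G) = T^{\mathrm{ad}}$. Hence, fixing $(T_0,B_0)$, it remains to show that $T^{\mathrm{ad}}_0$ acts simply transitively on the set of valid pinning sections compatible with $(T_0,B_0)$. A pinning section decomposes as $s = \sum_{\alpha \in \Delta} s_\alpha$ over the simple roots $\Delta$ (sections $s_\beta$ for non-simple positive roots are determined by the requirement that only simple-root contributions enter, or may be set to zero in the standard convention — here I would use the convention of \emph{loc.\,cit.}), and $T^{\mathrm{ad}}_0$ acts on $s_\alpha$ by the character $\alpha \in X^*(T^{\mathrm{ad}}_0)$. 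Since the simple roots form a $\Zb$-basis of $X^*(T^{\mathrm{ad}}_0)$, the resulting action on $\prod_{\alpha \in \Delta} \Lie U_\alpha^{\times}$ is simply transitive.

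Combining these two steps: any two pinnings differ by a unique element of $G^{\mathrm{ad}}$, so the sheaf of pinnings is a pseudo-torsor under $G^{\mathrm{ad}}$. Combined with étale-local existence from the split case, this yields the torsor structure, which is precisely the content of \cite[XXIV.3.10]{SGA3III}.

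The main obstacle is the simple-root argument: one must check that the action on the space of nonvanishing sections is indeed simply transitive, which depends on the convention used in Definition~\ref{PinnDef}(i) for how $s$ decomposes into $s_\alpha$ over all positive roots versus only simple ones. Once this bookkeeping matches the convention of SGA3, the rest is formal, so I would cite \cite[XXIV.3.10]{SGA3III} for the precise statement rather than reprove it here.
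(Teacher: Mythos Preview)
Your proposal is correct in spirit and more detailed than necessary: the paper does not give a proof at all, it simply records the statement as a consequence of \cite[XXIV.3.10]{SGA3III}. Your sketch of the two-step argument (transitivity on Borel pairs with stabiliser $T^{\mathrm{ad}}$, then simple transitivity of $T^{\mathrm{ad}}$ on generators of simple root spaces) is the standard one behind that reference, and your decision to cite rather than reprove matches the paper exactly.

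One remark on the obstacle you flag: the paper's Definition~\ref{PinnDef}(i) literally writes $s=\sum_{\alpha\in\Phi^+}s_\alpha$ over all positive roots with each $s_\alpha$ nowhere vanishing. Taken at face value this would \emph{not} give a $G^{\mathrm{ad}}$-torsor, since $T^{\mathrm{ad}}$ cannot act simply transitively on a product of $\G_m$-torsors indexed by all of $\Phi^+$ (there are more positive roots than simple roots in general). The standard SGA3 notion of \emph{\'epinglage} involves only the simple roots, and the cited result \cite[XXIV.3.10]{SGA3III} is for that notion. So your instinct that the bookkeeping needs to match SGA3's convention is exactly right; the paper's $\Phi^+$ should be read as the set of simple roots for the proposition to hold as stated.
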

 This implies the following:
\begin{lemma} \label{PinnSplit}
  Let $S$ be a scheme, $G$ a reductive group scheme over $S$. Consider the exact sequence
  \begin{equation*}
    1 \to G^\text{ad} \to \Aut(G) \to \Out(G) \to 1
  \end{equation*}
of sheaves on $S$. For a pinning of $G$ over $S$, the subsheaf of $\Aut(G)$ consisting of those automorphisms which fix the pinning maps isomorphically to $\Out(G)$. Hence every pinning of $G$ gives a splitting of the above sequence.
\end{lemma}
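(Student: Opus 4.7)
The plan is to exploit the torsor property stated in the preceding proposition. I would first fix a pinning $p = (T,B,s)$ of $G$ over $S$ and consider the subsheaf $\Aut(G,p) \subset \Aut(G)$ of pinning-preserving automorphisms. The claim to prove is that the composition $\Aut(G,p) \hookrightarrow \Aut(G) \twoheadrightarrow \Out(G)$ is an isomorphism of sheaves on $S$; granted this, its inverse gives the desired splitting.

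For injectivity, I would look at the kernel, i.e.\ $\Aut(G,p) \cap G^{\mathrm{ad}}$. This is precisely the stabilizer of $p$ for the action of $G^{\mathrm{ad}}$ on the sheaf of pinnings. Since that sheaf is a torsor under $G^{\mathrm{ad}}$, the action is free, so the stabilizer is trivial. Hence $\Aut(G,p) \to \Out(G)$ is a monomorphism of sheaves.

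For surjectivity (as a map of étale sheaves), I would argue locally on the étale site of $S$. Given a local section $\bar\phi$ of $\Out(G)$, after passing to a further étale cover $S' \to S$ the surjection $\Aut(G) \to \Out(G)$ admits a set-theoretic lift $\phi \in \Aut(G)(S')$. Then $\phi(p)$ is again a pinning of $G$ over $S'$, so by the torsor property there is a unique $g \in G^{\mathrm{ad}}(S')$ with $\mathrm{int}(g) \circ \phi$ sending $\phi(p)$ back to $p$; equivalently $\mathrm{int}(g) \circ \phi \in \Aut(G,p)(S')$, and since $\mathrm{int}(g)$ is inner, this element still maps to $\bar\phi$ in $\Out(G)(S')$. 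This proves surjectivity on the étale site.

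I do not expect any real obstacle here: once the torsor property is in hand, freeness yields injectivity and the usual local-lifting trick yields surjectivity; the only point requiring mild care is to phrase everything sheaf-theoretically (working étale-locally on $S$) rather than pointwise, but this is formal.
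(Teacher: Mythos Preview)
Your proposal is correct and is exactly the argument the paper has in mind: the paper gives no explicit proof of this lemma at all, simply writing ``This implies the following'' immediately after the torsor proposition, and your injectivity/surjectivity argument is the standard way to unpack that implication. There is nothing to add.
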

Using Lemma \ref{PinnSplit}, whenever we are working with a reductive group scheme $G$ and a fixed pinning of $G$, we will consider $\Out(G)$ as a subsheaf of $\Aut(G)$.

\begin{lemma} \label{CompPinn}
    Let $S$ be a scheme, $G$ a reductive group scheme over $S$ and $H \subset G$ a closed reductive subgroup scheme of $G$. Let $(T,B,s)$ be a pinning of $G$, such that $T\subset H$. Then there exists a unique pinning $(T',B',s')$ of $H$ which is compatible with $(T,B,s)$. 
\end{lemma}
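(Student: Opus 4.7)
The plan is to show that each of $T'$, $B'$, and $s'$ is forced by the compatibility conditions, and then to check that the resulting data actually defines a pinning of $H$.

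First, $T' = T$ is immediate from the definition of compatibility. For the Borel $B'$, recall that by \cite{SGA3III} the Borel subgroup schemes of $H$ containing $T$ are, étale-locally on $S$, in bijection with systems of positive roots for the (relative) root system $\Phi_H \subset \Phi$ of $H$ with respect to $T$. Since $\Phi_H$ is closed under the $\Zb$-linear relations inherited from $\Phi$, the intersection $\Phi_H^+ \defeq \Phi_H \cap \Phi^+$ is a system of positive roots for $\Phi_H$: this set together with its negatives exhausts $\Phi_H$, and it is closed under addition within $\Phi_H$ because $\Phi^+$ is. The compatibility requirement $B' \subset B$ forces the positive system of $B'$ with respect to $T$ to be contained in $\Phi^+$, and hence to equal $\Phi_H^+$. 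This determines $B'$ uniquely; for existence one may take $B'$ to be the unique Borel of $H$ whose Lie algebra is $\Lie(T) \oplus \bigoplus_{\alpha \in \Phi_H^+} \Lie(U_\alpha)$, equivalently the identity component of $B \cap H$.

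For the pinning section $s'$, use the canonical $T$-isotypic decomposition of $\Lie(B)$ as a $T$-module, which is globally defined since $T$ is a torus. It refines the given decomposition $\Lie(B) = \Lie(T) \oplus \bigoplus_{\alpha \in \Phi^+} \Lie(U_\alpha)$ (which only exists étale-locally). This yields a canonical $T$-equivariant projection $\mathrm{pr}\colon \Lie(B) \to \Lie(B')$ defined globally on $S$, namely the projection onto the sum of the weight spaces for weights in $\Phi_H^+ \cup \{0\}$. Define $s' \defeq \mathrm{pr}(s)$; then étale-locally the components of $s'$ satisfy $s'_\alpha = s_\alpha$ for $\alpha \in \Phi_H^+$, so $s'$ satisfies the compatibility requirement and its components are nowhere vanishing because those of $s$ are. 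Conversely, any $s'$ satisfying the compatibility has the same étale-local components and is therefore equal to $\mathrm{pr}(s)$, proving uniqueness.

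The only subtlety is to verify that these étale-local root-theoretic descriptions glue to genuine global data on $S$: this is taken care of by working throughout with the $T$-isotypic decompositions (which are intrinsic and descend) rather than with the individual root subgroups (which in general only exist after an \'etale base change splitting $T$). With this caveat, all of the above constructions are manifestly functorial in $S$, and the \'etale-local verification of the pinning axioms for $(T,B',s')$ is straightforward.
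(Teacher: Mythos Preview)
Your argument is correct and follows essentially the same route as the paper's own proof, which simply takes $T'=T$, $B'=B\cap H$, and obtains $s'$ by noting that \'etale-locally it is uniquely determined by the compatibility condition and then invoking descent. Your version supplies considerably more detail: you explain why $\Phi_H\cap\Phi^+$ is a positive system for $\Phi_H$ (hence why $B'$ exists and is unique), and you produce $s'$ directly as the image of $s$ under the global $T$-equivariant projection $\Lie B \to \Lie B'$ rather than appealing to abstract descent. Two small remarks: your sentence that the $T$-isotypic decomposition ``refines'' the root-space decomposition is phrased backwards (\'etale-locally they coincide; globally the isotypic decomposition is, if anything, coarser), though what you actually use---that $\Lie B'$ is a globally defined $T$-stable direct summand of $\Lie B$---is correct; and in fact $B\cap H$ is already connected (it is self-normalising in $H$ once you know its identity component is a Borel), so your hedge ``identity component of $B\cap H$'' is unnecessary but harmless.
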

\begin{proof}
 We have to take $T'=T$ and $B'=B\cap H$ and it remains to consider $s'$. Over an \'etale covering $S'$ of $S$ as in Definition \ref{PinnDef} existence and uniqueness of $s'$ follow directly from the definition. Hence the same holds over $S$ by descent.
\end{proof}


\begin{definition}
   Let $S$ be a scheme and $\G$ a reductive group scheme over $S$ with a pinning $(T,B,s)$. An outer form of $\Gb$ on $S$ is given by a group scheme $G \to S$ together with an $\Out(\Gb)$-torsor $\rho_G$ on $S$ as well as an isomorphism of group schemes $G \simeq \rho_G \times^{\Out(\Gb)}_S \Gb$. This isomorphism induces a pinning of $G$.
\end{definition}

\begin{definition}
  A reductive group scheme $G$ over a scheme $S$ is \emph{quasi-split} if it admits a pinning.
\end{definition}

\begin{lemma}
  Let $S$ be a scheme and $G$ a quasi-split reductive group scheme over $S$ with a chosen pinning. There exists a split reductive group scheme $\Gb$ over $S$ together with a split pinning and an $\Out(\Gb)$-torsor $\rho$ such that there exists an isomorphism $G \cong \Gb \times^{\Out(\Gb)} \rho$ respecting the pinnings on these group schemes.
\end{lemma}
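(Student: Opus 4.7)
The plan is to realise $G$ as a twist of the Chevalley--Demazure split form in a canonical way, using the pinning of $G$ to reduce structure group from $\Aut(\Gb)$ to $\Out(\Gb)$.

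First, I would construct $\Gb$. Since the type of a reductive group scheme is locally constant on the base (and the statement is local), I may assume $S$ is connected, so that $G$ has a well-defined root datum type $\Psi$. Let $\Gb/S$ be the Chevalley--Demazure split group scheme of type $\Psi$, equipped with its canonical split pinning $(\Tb,\Bb,\underline{s})$.

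Next, I would define the torsor. Consider the sheaf on the \'etale site of $S$
\[
\rho := \Isom^{\mathrm{pinn}}_S(\Gb, G)
\]
whose sections over $S' \to S$ are isomorphisms of group schemes $\Gb_{S'} \xrightarrow{\sim} G_{S'}$ sending $(\Tb,\Bb,\underline{s})$ to $(T,B,s)$. Étale locally on $S$, the group scheme $G$ becomes isomorphic to $\Gb$ (by the structure theorem for reductive group schemes, \cite[XXIV]{SGA3III}), and any two pinnings of a split reductive group scheme are conjugate under $G^{\mathrm{ad}}$ (this is precisely the $G^{\mathrm{ad}}$-torsor assertion quoted immediately before Lemma \ref{PinnSplit}). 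Hence $\rho$ admits \'etale-local sections. Moreover, once a section $\phi$ of $\rho$ is given over $S'$, any other section is of the form $\phi \circ \alpha$ where $\alpha$ is an automorphism of $\Gb_{S'}$ fixing the pinning, so by Lemma \ref{PinnSplit} the set of sections becomes canonically a torsor under $\Out(\Gb)_{S'}$. Therefore $\rho$ is an $\Out(\Gb)$-torsor on $S$.

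Finally I would write down the isomorphism. Using the splitting $\Out(\Gb) \into \Aut(\Gb)$ provided by the pinning of $\Gb$ (Lemma \ref{PinnSplit}), we may form the contracted product $\Gb \times^{\Out(\Gb)} \rho$, which is a reductive group scheme on $S$. The evaluation map
\[
\Gb \times \rho \lto G, \qquad (g,\phi) \mapsto \phi(g),
\]
is $\Out(\Gb)$-invariant: if $\sigma \in \Out(\Gb)$ acts via its pinning-preserving lift in $\Aut(\Gb)$, then $\phi\sigma \in \rho$ (still pinned) and $(\phi\sigma)(\sigma^{-1}g) = \phi(g)$. Hence the map descends to a homomorphism $\Gb \times^{\Out(\Gb)} \rho \to G$ of group schemes. Étale locally, a section of $\rho$ trivialises both sides and identifies the map with the identity, so it is an isomorphism. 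By construction it carries the induced pinning on $\Gb \times^{\Out(\Gb)} \rho$ to the pinning of $G$, which completes the proof.

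The only non-formal ingredient is the \'etale-local non-emptiness of $\rho$, which is where the Chevalley--Demazure classification and $G^{\mathrm{ad}}$-conjugacy of pinnings enter; once this is granted, the rest is a direct application of Lemma \ref{PinnSplit} and torsor formalism.
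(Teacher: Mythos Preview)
Your proof is correct and follows the same approach as the paper: both use the \'etale-local existence of a split form $\Gb$ (the Chevalley--Demazure group of the given type) together with Lemma~\ref{PinnSplit} to reduce the structure group to $\Out(\Gb)$. The paper's proof is in fact only a one-line sketch (``there exists a split reductive group scheme $\Gb$ on $S$ which is isomorphic to $G$ \'etale-locally on $S$''), so your explicit construction of $\rho = \Isom^{\mathrm{pinn}}_S(\Gb, G)$ and verification of the evaluation isomorphism fills in exactly the details the paper leaves to the reader.
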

\begin{proof}
  By the theory of reductive group schemes (c.f. e.g. \cite[Theorem 6.1.16]{MR3362641}), there exists a split reductive group scheme $\Gb$ on $\Spec(\Zb)$ together with a split pinning which is isomorphic to $G$ compatibly with the pinnings \'etale-locally on $S$. Then we may take $\rho$ to be the sheaf which sends $S' \to S$ to the set of isomorphisms $G_S \cong \Gb_S$ which respect the pinnings. By Lemma \ref{PinnSplit} this is an $\Out(\Gb)$-torsor.
\end{proof}
Now let $k$ be a field and $\Gb$ a split reductive group scheme $\Gb$ over $k$. We fix a split pinning of $\Gb$.

The group $\Out(\Gb)$ is not necessarily finite. Indeed, for $\Gb$ a torus of rank $r \geq 2$ the group $\Out(\Gb) = \GL_r(\Zb)$ is infinite. However, the following lemma shows that an $\Out(\Gb)$-torsor $\rho$ over a a smooth projective curve over $k$ always admits a reduction to a finite group scheme:

\begin{lemma} \label{Isotriviality}
 Let $X$ be a smooth projective curve over $k$ and $\rho$ an $\Out(\Gb)$-torsor on $X$. There exists a finite \'etale covering $X'$ of $X$ over which the torsor $\rho$ becomes trivial.
\end{lemma}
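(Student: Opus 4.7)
The plan is to reduce to a monodromy computation on \'etale fundamental groups. Since $\Gb$ is split over $k$, the sheaf $\Out(\Gb)$ on $X_{\text{\'et}}$ is the constant sheaf attached to the abstract group $\Aut(\text{based root datum of }\Gb)$, which I also denote $\Out(\Gb)$. The subtlety is that this abstract group may well be infinite (for instance $\GL_r(\Zb)$ when $\Gb$ is a split torus of rank $r\geq 2$), so the content of the lemma is exactly that any torsor under it has finite monodromy and is hence trivialised on a finite \'etale cover.

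First I would fix a geometric point $\bar{x}$ of $X$ and invoke the standard equivalence of categories between \'etale sheaves of sets on the connected scheme $X$ and continuous $\pi_1^{\text{\'et}}(X,\bar{x})$-sets, where continuity means that the stabiliser of every element is an open subgroup of the profinite group $\pi_1^{\text{\'et}}(X,\bar{x})$. Applying this to $\rho$, whose stalk $\rho_{\bar{x}}$ inherits a free transitive $\Out(\Gb)$-action commuting with the action of $\pi_1^{\text{\'et}}(X,\bar{x})$, a choice of basepoint $p_0 \in \rho_{\bar{x}}$ yields a homomorphism
\[
\phi \colon \pi_1^{\text{\'et}}(X,\bar{x}) \to \Out(\Gb), \qquad \sigma \cdot p_0 = \phi(\sigma) \cdot p_0.
\]

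Next, the kernel $\ker\phi$ coincides with the stabiliser of $p_0$ and so, by continuity, is an open (and therefore finite-index) normal subgroup of $\pi_1^{\text{\'et}}(X,\bar{x})$. Hence $\mathrm{Im}(\phi)$ is a finite subgroup of $\Out(\Gb)$. Letting $X' \to X$ denote the finite \'etale Galois covering corresponding to the open normal subgroup $\ker\phi$, the pullback of $\rho$ to $X'$ has trivial monodromy and is therefore trivial as an $\Out(\Gb)$-torsor; this is the covering sought by the lemma.

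The main (indeed essentially the only) non-routine input is the appeal to the equivalence of \'etale sheaves with continuous $\pi_1^{\text{\'et}}$-sets, and specifically the openness of stabilisers for the action on a stalk. This is standard and no specific property of $X$ being a smooth projective curve is used beyond $X$ being connected and noetherian (the latter to ensure $\pi_1^{\text{\'et}}(X,\bar{x})$ is genuinely profinite), so in fact the same argument applies in considerably greater generality.
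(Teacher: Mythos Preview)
Your monodromy approach differs from the paper's, which first splits the maximal torus $T$ on a finite \'etale cover $X'$ (via \cite[X.5.16]{SGA3II}, using that $X$ is normal) and then observes that on $X'$ the pinning trivialises $\rho$ Zariski-locally, hence globally since $X'$ is irreducible.

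There is a genuine gap in your argument, however. The equivalence you invoke between \'etale sheaves and continuous $\pi_1^{\text{\'et}}$-sets is standard only for \emph{finite} locally constant sheaves; for a torsor under an infinite discrete group the stalk need not carry a continuous $\pi_1^{\text{\'et}}$-action, and your assertion that only connectedness and noetherianness of $X$ are required is false. Over a nodal cubic $X$ (connected, noetherian, but not normal), the infinite chain of $\mathbb{P}^1$'s obtained by unwinding the node is an \'etale $\Zb$-torsor which is connected and not trivialised by any finite \'etale cover, so it cannot arise from a continuous homomorphism $\pi_1^{\text{\'et}}(X)=\widehat{\Zb}\to\Zb$. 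The missing hypothesis is that $X$ be geometrically unibranch (normality suffices); under this assumption your argument does go through, and in particular it is valid for the smooth projective curve at hand. But the required fact---that locally constant sheaves with arbitrary stalks on such $X$ correspond to continuous $\pi_1^{\text{\'et}}$-sets---is exactly the kind of isotriviality statement proved in \cite[X.5.16]{SGA3II}, so your route rests on the same nontrivial input as the paper's rather than on something more elementary.
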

\begin{proof}
  Since $X$ is normal, by \cite[X.5.16]{SGA3II}, there exists a finite \'etale covering $X'$ of $X$ over which the maximal torus $T$ of $G$ becomes split. Then Zariski-locally on $X'$, the pinning of $G$ gives a splitting of $G$ in the sense of \cite[XXIII.1.1]{SGA3III}. Hence by the unicity of split reductive group schemes (c.f. \cite[XXIII.4.1]{SGA3III}) we see that Zariski-locally on $X'$ there exists an isomorphism $G\cong \Gb$ respecting the pinnings. That is the torsor $\rho$ splits Zariski-locally on $X'$. But since $\rho_{X'}$ is a disjoint union of proper curves which are \'etale over $X'$, this implies that $\rho$ splits over $X'$.
\end{proof}

We now discuss Langlands dual groups:

\begin{definition}
Let $\widehat{\Gb}/k$ be the reductive group classified by the root datum dual to the one of $\Gb/k$. There exists a natural isomorphism $\Out(\widehat{\Gb}) = \Out(\Gb)$. For a scheme $S$ over $k$ and an outer form $G$ of $\Gb$ over $S$ given by a $\Out(\Gb)$-torsor $\rho$ over $S$ we call the quasi-split outer form $\widehat G$ of $\widehat \Gb$ over $S$ given by $\rho$ considered as an $\Out(\widehat\Gb)$-torsor the Langlands dual of $G$. 
\end{definition}




\subsection{An overview of the Hitchin system}\label{sub:overview}

In order to fix terminology we give a brief overview of the basic definitions and constructions. We consider a relatively general framework at first, but remind the reader that in this article the ring $R$ below will either be a finite field, or $\Oo_F$, the ring of integers of a local field $F$.

\begin{situation}\label{situation:Higgs_general}
\begin{enumerate}[(a)]
\item Let $R$ be a commutative ring and $X_R/R$ a flat family of smooth projective curves with geometrically connected fibres. 
\item Let $D$ be a line bundle on $X_R$, of even degree $d$.
\item We fix a split reductive group scheme $\Gb$ on $\Spec(R)$ together with a split pinning $(\Tb,\mathbb{B},s)$ (see Definition \ref{PinnDef}) and consider a quasi-split form $G \to X_R$ of $\Gb$ on $X_R$. That is, we fix an $\Out(\Gb)$-torsor $\rho$ on $X_R$, such that $G = \Gb \times^{\Out(\Gb)} \rho$. The induced maximal torus of $G$ is denoted by $T/X$. 
\item We denote the (sheaves of) Lie algebras of $\Gb$, $G$, $\Tb$ and $T$ by $\mathbf{g}$, $\mathfrak{g}$, $\mathbf{t}$ and $\mathfrak{t}$ respectively. 
\item We assume that there exists a finite subgroup scheme $\Theta \hookrightarrow \Out(\Gb)$ such that the order $|\Theta \ltimes \mathbb{W}|$ is invertible on $X_R$ and such that there exists a $\Theta$-reduction of $\rho$.
\end{enumerate}
\end{situation}

\begin{rmk}
  By Lemma \ref{Isotriviality}, there always exists a reduction of $\rho$ to a finite subgroup $\Theta \into \Out(\Gb)$ in case $R$ is a field.
\end{rmk}
For the rest of this section we put ourselves into Situation \ref{situation:Higgs_general}. Then we define $G$-Higgs bundles on $X/R$ as follows. Recall that the adjoint bundle of a $G$-torsor $E$ is defined to be $\ad(E) = (E \times_{X_R}^{G} \mathfrak{g})$.

\begin{definition}\label{defi:higgs}
\begin{enumerate}[(a)]
\item  A $G$-Higgs bundle (with coefficients in $D$) is a pair $(E,\theta)$ where $E$ is a $G$-torsor on $X_R$ and $\theta$ is a global section of $\ad(E) \otimes D$.
\item We denote by $\mathbb{M}_G = \mathbb{M}_G(X,D)$ the stack which associates to an affine $R$-scheme $S$ the groupoid of $G$-Higgs bundles on $X_S/S = X \times_R S/S$.
\item The generic stabiliser group of $\mathbb{M}_G$ is the group scheme $Z(X,G)$ obtained as the Weil restriction of the group scheme $Z(G)$ over $X$ along the proper morphism $X \to \Spec(R)$ (the representability of this group scheme follows from \cite[Theorem 1.5]{MR2239345}). The rigidification (\`a la \cite[Theorem 5.1.5]{abramovich2003twisted}) of $\mathbb{M}_G$ by this group scheme will be denoted by $\Mc_G$. 
\end{enumerate}
\end{definition}

There exists an elegant reformulation of Definition \ref{defi:higgs}(a), which appears in \cite[4.2.2]{MR2653248}. 

\begin{construction}\label{construction:ngo_higgs}
We denote by $\mathfrak{g}_D$ the principal Lie algebra bundle on $X_R$ obtained by twisting $\mathfrak{g}/X$ of Definition \ref{defi:higgs}(d) with the line bundle $D$. That is, we define $\mathfrak{g}_D = \mathfrak{g} \times_X^{\G_m} D$. The adjoint action of $\Gb$ on $\mathbf{g}$ induces a relative action of $G$ on $\mathfrak{g}_D$. The datum of a $G$-Higgs bundle $(E,\theta)$ with coefficients in $D$ is equivalent to a section of the morphism of $X$-stacks
\[
\xymatrix{
[\mathfrak{g}_D/G] \ar[r] & X \ar@{=}[d] \\
& X. \ar[lu]
}
\] 
Indeed, by the definition of quotient stacks, a section $X \to [\mathfrak{g}_D/G]$ corresponds to a commutative diagram
\[
\xymatrix{
\mathfrak{g}_D \ar[r] & X \ar@{=}[d] \\
E \ar[r] \ar[u]_{\tilde{\theta}} & X 
}
\]
where $E$ is a $G$-torsor, and $\tilde{\theta}$ is a $G$-equivariant map $E \to \mathfrak{g}_D$. By $G$-equivariance, the morphism $\tilde{\theta}$ gives rise to a global section $\theta\colon X \to E\times^G \mathfrak{g}_D$. We leave the verification of the opposite direction that a $G$-Higgs bundle $(E,\theta)$ gives rise to a commutative diagram as above, to the reader.
\end{construction}

This abstract viewpoint is convenient in defining the Hitchin morphism. The reference for point (a) below is \cite[1.1]{MR2653248}.

\begin{definition}\label{defi:Hitchin_map}
\begin{enumerate}[(a)]
\item We denote by $\mathbf{c}$ the Chevalley base of $\mathbf{g}$. That is, $\mathbf{c} = \mathbf{g}/G$. We write $\mathfrak{c}/X$ for the induced affine space bundle on $X$. 
\item The relative Chevalley base $\mathfrak{c}$ is endowed with a $\G_m$-action. We use the notation $\mathfrak{c}_D = \mathfrak{c} \times^{\G_m}_X D$.
\item Let $\A = \A_G(X,D)$ be the affine scheme corresponding to the vector space $H^0(X,\mathfrak{c}_D)$.
\item Let $S$ be an affine $R$-scheme. The Hitchin map $\chi \colon \mathbb{M}_G \to \A$ sends an $S$-family of Higgs bundle $(E,\theta)$ to the global section of $\mathfrak{c}_D$ defined by the commutative diagram
\[
\xymatrix{
\mathfrak{c}_D  & X \ar[l] \ar[ld] \\
[\mathfrak{g}_D/G]. \ar[u] & 
}
\] 
\end{enumerate}
\end{definition}

The Hitchin base $\A$ corresponds to the vector space $H^0(X,\mathfrak{c}_D)$. If $G/X$ is split the latter can be identified with a direct sum $\bigoplus_{i=1}^m H^0(X,D^{\otimes e_i})$, where $e_1,\dots, e_m$ are positive integers (see \cite[4.13]{MR2653248}). 

In order to study the Hitchin map we introduce the cameral cover. 

\begin{definition}\label{defi:cameral_cover}
Let $S$ be an affine $R$-scheme, and $a\in \A(S)$ be an $S$-valued point of the Hitchin base. The cameral cover $\Cc_a$ is defined to be the finite $W$-equivariant morphism $\Cc_a \to X \times_R S$ which belongs to a cartesian diagram
\[
\xymatrix{
\Cc_a \ar[r] \ar[d] & \mathfrak{t}_D \ar[d] \\
X \times_R S \ar[r]^a &  \mathfrak{c}_D.
}
\]
We denote the universal cameral cover by $\Cc \to X \times_R \A$.
\end{definition}

We recall in Subsection \ref{sub:prym} how cameral covers can be used to understand a natural family of Picard $\A$-stacks, the so-called abstract Prym, acting on the Hitchin system. Using work of Donagi--Gaitsgory (\cite{MR1903115}), these strict Picard stacks turn out to be moduli stacks of $T$-torsors on $\Cc$ equipped with a certain extra structure. We recall this picture in Definition \ref{defi:donagi-gaitsgory}.

\subsection{The abstract Prym}\label{sub:prym}

The moduli stack $\mathbb{M}_G$ is acted on by a strict Picard stack $\Pb$, that is, an abelian group object in stacks. 

\begin{definition}
A stack $\Pb$ together with a strict symmetric monoidal structure $\otimes \colon \Pb \times \Pb \to \Pb$ is called a \emph{strict Picard stack}, if the induced monoid structure on the sheafification of isomorphism classes
$\pi_0^{sh}(\Pb)$ is a group structure.
\end{definition}

\begin{definition}
\begin{enumerate}[(a)]
\item We denote by $J/[\mathfrak{c}/\G_m]$ the sheaf of regular centralisers as defined in \cite[2.1]{MR2653248}.
\item For $a\in \A(S)=H^0(X_S,\mathfrak{c}_D)$ we write by abuse of notation $J/X_S$ for the induced commutative group scheme $a^* J$.
\item For $a\in \A(S)$ we denote the stack of $J$-torsors by $\Pb=\Pb_{G,X_S,D}$.
\end{enumerate}
\end{definition}

The action of the Prym $\Pb$ on $\mathbb{M}$ relative to the Hitchin base $\A$ is defined in \cite[4.3.2]{MR2653248}. We recall the main points of the definition below.

\begin{construction}\label{const:action}
Let $S$ be an affine $R$-scheme and $(E,\theta)$ a Higgs bundle, giving rise to an $S$-point $a \in \A(S)$ of the Hitchin base. There exists a natural morphism $J \to \Aut(E,\theta)$. Given a $J$-torsor $M$ we define $M \otimes E = E \times_X^{J} M$. Since $J$ acts on $E$ through automorphisms preserving the Higgs field, the twist $M \otimes E$ is endowed with a canonical section of $\ad(E \otimes M) \otimes D$. We denote the resulting Higgs bundle by $M \otimes (E,\theta)$, or $(M \otimes E,\theta)$.
\end{construction}

There exists an open subset $\mathbb{M}^{\reg}$, which is non-empty if $D$ has even degree, and is a $\Pb$-torsor in this case:

\begin{definition}
A Higgs bundle $(E,\theta)$ is regular, if the classifying section $X \to [\mathfrak{g}_D/G]$ factors through the open dense substack $[\mathfrak{g}^{\reg}/G]$. Similarly, we say that an $S$-family of Higgs bundles is regular, if its classifying section $X \times_R S \to [\g_D / G]$ factors through $[\g^{\reg}_D/G]$. The resulting open substack will be denoted by $\mathbb{M}^{\reg}$.
\end{definition}

Details for the following proposition can be found in \cite[4.2.4 and Proposition 4.3.3]{MR2653248}.

\begin{proposition}\label{prop:kostant}
\begin{enumerate}[(i)]
\item To any square root $D'$ of $D$ (that is, $D'^{\otimes 2} \simeq D$) we may canonically associate a section $[\epsilon]^{D'}\colon \A \to \mathbb{M}^{\reg}$, the so-called \emph{Kostant section}. In particular, the open subset $\mathbb{M}^{\reg}$ is non-empty. 
\item The action of $\Pb$ on $\mathbb{M}^{\reg}$ relative to $\A$ defines a torsor structure on $\mathbb{M}^{\reg}$. In particular, every square root $D'$ of $D$ as in (a) yields a trivialisation of torsors $\epsilon^{D'}\colon \Pb \xrightarrow{\sim} \mathbb{M}^{\reg}$.
\end{enumerate}
\end{proposition}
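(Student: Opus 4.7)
The plan is to construct the Kostant section first for the split form $\Gb/R$ via a principal $\mathfrak{sl}_2$-triple, descend it along the outer twist using the fact that it depends only on the pinning, and then globalise by twisting with a square root $D'$ of $D$.

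For part (i), the pinning $(\Tb,\mathbb{B},s)$ canonically determines a principal nilpotent $\mathbf{e}=s$, which under the characteristic assumptions may be completed to a principal $\mathfrak{sl}_2$-triple $(\mathbf{e},\mathbf{h},\mathbf{f})$. Kostant's slice theorem then furnishes a section $\kappa\colon \mathbf{c} \to \mathbf{g}^{\reg}$ whose image is the affine subspace $\mathbf{f}+\mathbf{g}^{\mathbf{e}}$. The crucial feature is that $\kappa$ is $\G_m$-equivariant for the natural grading on $\mathbf{c}$ and the action $t\cdot x = t^{2}\,\mathrm{Ad}(2\rho^{\vee}(t^{-1}))\,x$ on $\mathbf{g}$, where $2\rho^{\vee}\colon \G_m \to \Tb$ is the principal cocharacter; the factor of $t^{2}$ is precisely what forces the appearance of a square root of $D$. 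Since $\kappa$ is built only from the pinning, it is preserved by $\Out(\Gb)$ and hence descends along the twist $\rho$ to a morphism over $X$. Given $D'$ with $(D')^{\otimes 2}\cong D$, I would then extend the $\G_m$-torsor $D'^{\times}$ along $2\rho^{\vee}$ to obtain a canonical $G$-torsor $E_{D'}$; applying the descended $\kappa$ to any $a\in \A_G(S)$ produces a regular section of $\ad(E_{D'})\otimes D$, thereby defining $[\epsilon]^{D'}\colon \A_G \to \Mb_G^{\reg}$.

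For part (ii), I would verify the two torsor axioms for the $\Pb$-action on $\Mb_G^{\reg}$. Freeness rests on the defining property of the regular centraliser $J$: the automorphism sheaf of any regular $(E,\theta)$ over $a$ is canonically $J_a$, so twisting by a non-trivial $J_a$-torsor strictly changes the isomorphism class. Transitivity is a local statement: over an \'etale cover of $X_S$ trivialising both of two given regular Higgs bundles, one can bring them simultaneously into Kostant form, producing an \'etale-local identification well-defined modulo $J_a$; these identifications then glue by descent into a global $J$-torsor $M$ with $M\otimes (E_1,\theta_1)\simeq (E_2,\theta_2)$. Given that $\Mb_G^{\reg}$ is a $\Pb$-torsor, the Kostant section yields the trivialisation $\epsilon^{D'}\colon \Pb \xrightarrow{\sim} \Mb_G^{\reg}$ as the orbit map $M\mapsto M\otimes [\epsilon]^{D'}(-)$.

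The main subtlety is conceptual: the appearance of the square root $D'$ originates from the fact that the principal grading has weight $2\rho^{\vee}$ rather than $\rho^{\vee}$, so the Kostant slice is only $\G_m$-equivariant for the doubled scaling. In the quasi-split setting one must additionally ensure that descent of $\kappa$ from $\Gb$ to $G$ is compatible with the outer twist; this is precisely what Lemma \ref{PinnSplit} affords, since $\kappa$ is built only from pinning data and $\Out(\Gb)$ is identified with the stabiliser of the pinning inside $\Aut(\Gb)$.
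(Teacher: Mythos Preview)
The paper does not prove this proposition itself but refers to Ng\^o \cite[4.2.4 and Proposition 4.3.3]{MR2653248}, and your sketch is precisely the argument one finds there: build the Kostant slice from the principal $\mathfrak{sl}_2$ determined by the pinning, use its $\Out(\Gb)$-invariance to descend to the quasi-split form, twist by $D'$ via the principal cocharacter, and deduce the torsor property from $\Aut(E,\theta)\cong J_a$ on the regular locus. One minor correction: with $\mathbf{e}=s$ nilpositive and $\mathbf{f}$ spanned by negative simple root vectors one has $\mathrm{Ad}(2\rho^\vee(t))\,\mathbf{f}=t^{-2}\mathbf{f}$, so the slice-preserving action is $t\star x = t^{2}\,\mathrm{Ad}(2\rho^\vee(t))\,x$ rather than your version with $2\rho^\vee(t^{-1})$.
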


Furthermore, there exists an open subset $\A^{\Diamond} \subset \A$, such that $\Pc^{\Diamond} = \Pc \times_{\A} \A^{\Diamond}$ is a so-called Beilinson $1$-motive.

\begin{definition}\label{beil}
A strict Picard stack $\Pb\to S$ on a scheme $S$ is called a Beilinson $1$-motive, if there exists a surjective \'etale covering $S' \to S$, such that $\Pb \times_S S'$ is equivalent to a product 
$$C \times_{S'} A \times_{S'} B_{S'}K,$$
where $C$ and $K$ are finite \'etale group schemes over $S'$, and $A \to S'$ is a (connected) abelian $S'$-scheme.
\end{definition} 

The $2$-category of Beilinson $1$-motives has a natural duality which simultaneously extends the classical construction of the dual abelian variety and Cartier duality for finite \'etale group schemes. It is for this reason that we care about determining where the Prym variety $\Pb$ is a Beilinson $1$-motive.

\begin{definition}\label{lemma:Agood}
We denote by $\A^{\Diamond}$ the open subset corresponding to geometric points $a\colon \Spec \bar{k} \to \A$, such that the cameral cover $\Cc_a$ is smooth. We denote by $\mathbb{M}^\Diamond$, $\Pb^\Diamond$, etc. the pullbacks of $\mathbb{M}$, $\Pb$, etc. to $\A^\Diamond$.
\end{definition}

For a divisor of high degree the set $\A^\Diamond$ is non-empty. It is shown in \cite[Proposition 4.7.1]{MR2653248} that this is the case for $\deg D > 2g$.

It follows from \cite[Proposition 4.7.7]{MR2653248} that $\Pb^{\Diamond}$ is a Beilinson $1$-motive:

\begin{lemma}\label{lemma:B1}
We have $\mathbb{M}^{\Diamond} \subset \mathbb{M}^{\reg}$. Furthermore, $\Pb^{\Diamond}$ is a Beilinson $1$-motive.
\end{lemma}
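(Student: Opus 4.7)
The plan is to reduce both statements to Ngô's treatment in the split case and then extend by étale descent along the $\Theta$-torsor $\rho$ giving the quasi-split form.

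For the inclusion $\mathbb{M}^{\Diamond} \subset \mathbb{M}^{\reg}$, regularity is a pointwise condition on the classifying morphism $X_S \to [\mathfrak{g}_D/G]$, so it may be tested after an étale base change trivialising $\rho$; this reduces us to the split case. Given $(E,\theta)$ over $S$ with $a=\chi(E,\theta) \in \A^{\Diamond}(S)$, we need to check that at every geometric point $\bar{x}$ of $X_S$ the value $\theta(\bar{x})$ lies in $\mathbf{g}^{\reg}$. The key observation is that the smoothness of the cameral cover $\Cc_a \to X_S$ is equivalent to the section $a\colon X_S \to \mathfrak{c}_D$ meeting the discriminant divisor in $\mathfrak{c}_D$ transversally. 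Combined with Kostant's section and the fact that the formation of the regular centraliser $J$ commutes with pullback, one deduces that $\theta$ factors through $\mathbf{g}^{\reg}_D$ pointwise; this is the content of \cite[Proposition 4.7.7]{MR2653248} in the split case.

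For the Beilinson $1$-motive statement, the Donagi--Gaitsgory description (recalled in the following Subsection and Definition \ref{defi:donagi-gaitsgory}) identifies $\Pb^{\Diamond}$ with the stack of $W$-equivariant $\Tb$-torsors on the universal cameral cover $\Cc \to X \times_R \A^{\Diamond}$ satisfying the appropriate twist condition; in the quasi-split setting $W$ has to be replaced by $W \rtimes \Theta$, but this is still a finite group of order invertible on $X_R$ by Situation \ref{situation:Higgs_general}(e), so the tameness assumptions needed for the arguments to go through are preserved. Over $\A^{\Diamond}$ the cameral cover is smooth, hence the Picard stack of $T$-bundles on $\Cc_a$ is an extension of a discrete part (the component group, finite \'etale) by an abelian scheme (the connected component, a generalised Prym variety of the cover), with a residual $BZ(X,G)$-gerbe. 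Taking $W \rtimes \Theta$-invariants preserves this decomposition \'etale-locally on $\A^{\Diamond}$, yielding the required factorisation $C \times A \times BK$ after a suitable \'etale cover, exactly matching Definition \ref{beil}.

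The main obstacle is the detailed bookkeeping in the quasi-split case: one must verify that the Donagi--Gaitsgory equivalence continues to split $\Pb^{\Diamond}$ as a product of a finite \'etale part, an abelian part and a gerbe banded by a finite \'etale group once the outer automorphism twist is incorporated. Concretely, one passes to a finite \'etale cover $X'_R \to X_R$ trivialising $\rho$ (using Lemma \ref{Isotriviality} together with the $\Theta$-reduction assumption), applies Ngô's analysis there to obtain the Beilinson $1$-motive structure for the pulled-back Prym, and then descends the decomposition back to $\A^{\Diamond}$; the invertibility of $|\Theta \ltimes \mathbb{W}|$ on $X_R$ is what allows this descent to preserve the abelian-versus-finite-\'etale dichotomy.
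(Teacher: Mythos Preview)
Your approach matches the paper's: both defer to Ng\^o's \cite[Proposition 4.7.7]{MR2653248}, which already treats the quasi-split case directly, so your reduction to the split situation and the subsequent descent bookkeeping are unnecessary (though harmless).

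One caveat on your sketch of the regularity argument: transversality of $a$ to the discriminant together with the existence of the Kostant section does not by itself force an \emph{arbitrary} Higgs field $\theta$ over $a$ to be everywhere regular --- the Kostant section exhibits one regular point in each Hitchin fibre, not a regularity criterion for all points, and ``formation of $J$ commutes with pullback'' does not bridge that gap. The inclusion $\mathbb{M}^\Diamond \subset \mathbb{M}^{\reg}$ genuinely requires the local analysis carried out in Ng\^o's Section~4.7, which you cite but do not accurately summarise. Since you do invoke 4.7.7 explicitly, the proof still goes through; just be aware that the heuristic you offer for it is not the actual mechanism.
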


We now discuss Donagi--Gaitsgory's description of $J$-torsors \cite{MR1903115}, following the exposition of Chen--Zhu \cite[Section 3]{chenzhu}. Although \emph{loc. cit.} is formulated for the split case $G = \Gb \times X$, we remark that their description can be immediately extended to the quasi-split case, as their description is \'etale-local on $X$. We refer the reader to the beginning of section 3 of \emph{loc. cit.} for a more detailed exposition of these technicalities.

\begin{situation}\label{situation:donagi-gaitsgory}
Let $S$ be an affine $R$-scheme, and $a \in \A(S)$. We denote by $\Cc_a \to X \times_R S$ the corresponding cameral cover, and by $J$ the sheaf of regular centralisers. Recall that we have a finite \'etale group scheme $W \to X$ and \'etale morphisms $\Phi \to X$ and $\Phi^{\vee} \to X$ whose sections are roots, respectively coroots of $G$. 
\end{situation}

\begin{definition}\label{defi:donagi-gaitsgory}
\begin{enumerate}[(a)]
\item Consider the maximal torus $T \subset G$ which is part of the pinning of $G$. There exists a natural action of the finite \'etale group scheme $W \to X$ on $T$, which we denote by 
$$\mathsf{act}_1\colon W \times_X T \to T.$$ 
\'Etale-locally on $X$, the group scheme $G$ splits and this action is equivalent to the standard action of $\mathbb{W}$ on $\Tb$. 
\item We refer to the canonical action of $W$ on $\Cc_a$ by $\mathsf{act}_2\colon W \times_X \Cc_a \to \Cc_a$.
\item Let $E$ be a $T$-torsor on $\Cc_a$. For an \'etale morphism $U \to X \times_R S$ and $w \in W(U)$ we write
$$w(E) = (\mathsf{act}_2(w^{-1})^*E) \times^{T,w}_{X \times_R S} T.$$
This defines an action of $W$ on the stack $B_{\Cc_a}T$ of $T$-torsors on the small \'etale site of $\Cc_a$. 
\item The stack which sends $S \to \Spec R$ to the groupoid of $W$-equivariant $T$-torsors on $\Cc_a$ will be denoted by $\Bun^W_T(\Cc_a)$.
\item For a $W$-equivariant $T$-torsor we denote by $\mathsf{can_{\alpha,\check \alpha}}\colon (E|\Cc^{s_\alpha}_a) \times^T \G_m \times^{\check \alpha,\G_m} T \simeq \Cc_a^{s_\alpha} \times T$ the isomorphism induced from the $W$-equivariant structure.
\item A $+$-structure on $E \in \Bun^W_T(\Cc_a)(S)$ is given by the following datum: for every \'etale $U \to X \times_R S$ and sections $\alpha \in \Phi(U)$, an isomorphism $c_{\alpha}\colon (E|_{\Cc_a^{s_\alpha}}) \times^T \G_m \simeq \Cc_a^{s_\alpha} \times \G_m$, such that for an \'etale $U' \to U$ and a section $\check \alpha \in \Phi^{\vee}(U')$ we have
$$c_{\alpha} \times^{\check \alpha,\G_m} \id_T = \mathsf{can}_{\alpha,\check \alpha}\colon (E|\Cc^{s_\alpha}_a) \times^T \G_m \times^{\check \alpha,\G_m} T \xrightarrow{\simeq} \Cc_a^{s_\alpha} \times T.$$
The stack of $W$-equivariant $T$-torsors on $\Cc_a$ with a $+$-structure is denoted by $\Bun_T^{W,+}(\Cc_a)$.
\end{enumerate}
\end{definition}

With this long definition at hand we can finally state Donagi--Gaitsgory's description of the Prym variety $\Pb_a$.

\begin{theorem}[Donagi--Gaitsgory]\label{thm:dg}
There exists a canonical equivalence of strict Picard stacks 
\[\Bun_T^{W,+}(\Cc_a) \simeq \Pb_a.\]
\end{theorem}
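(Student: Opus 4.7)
The plan is to prove the equivalence by reducing to the split case and then appealing to an explicit local description of the sheaf of regular centralisers $J$. Since both sides are sheaves of groupoids in the \'etale topology on $X \times_R S$ (the cameral cover $\Cc_a$ being defined via a Cartesian square, and $J$-torsors satisfying \'etale descent), I would first reduce to the situation where $G = \Gb \times X$ is split by choosing an \'etale cover of $X$ on which the quasi-split structure trivialises. In the split case this becomes the content of Donagi--Gaitsgory's original theorem from \cite{MR1903115}, and both constructions glue along the chosen cover since the data involved are \'etale-local on $X$.

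In the split case, the key input is the description of $J$ as a closed group subscheme of the $W$-invariant Weil restriction of $T$ along the cameral cover. More precisely, there is a natural embedding
\[
J \hookrightarrow (\pi_* T_{\Cc_a})^W,
\]
where $\pi\colon \Cc_a \to X \times_R S$ denotes the universal cameral cover, and the image consists of those $W$-invariant sections $t$ such that for every root $\alpha$ and the corresponding reflection divisor $\Cc_a^{s_\alpha}$ one has $\alpha(t|_{\Cc_a^{s_\alpha}}) = 1$. This description is deduced by using a Kostant section to trivialise $J$ over the regular semisimple locus, and then extending across the discriminant by an explicit rank-one computation in the $\SL_2$ (or $\PGL_2$) subgroup attached to each root.

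Granting this description of $J$, the functor $\Bun_T^{W,+}(\Cc_a) \to \Pb_a$ is constructed by sending a pair $(E,\{c_\alpha\})$ to the sheaf of $W$-equivariant $T$-torsor isomorphisms $E_0 \to E$ which are compatible with the $+$-structure, where $E_0$ is a reference object obtained from a Kostant section; this sheaf is naturally a torsor under $(\pi_* \Aut(E_0))^{W,+} \cong J$. Conversely, a $J$-torsor $P$ is sent to the push-out of $E_0$ along the embedding $J \hookrightarrow (\pi_* T_{\Cc_a})^W$, equipped with its canonically inherited $+$-structure; the compatibility with the monoidal structures is then automatic from the constructions.

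The main obstacle will be the local verification that the functor is an equivalence over the discriminant locus of the cameral cover. Over the open locus where $\pi$ is \'etale, the equivalence is the standard one between $J$-torsors and $W$-equivariant $T$-torsors by descent along a Galois cover, and the $+$-condition imposes nothing since no reflection has fixed points there. The delicate content is therefore a case-by-case analysis at generic points of the branch divisor, where one must show that the possible extensions of a $W$-equivariant $T$-torsor across the ramification locus are precisely rigidified by the $+$-structure. By working formally-locally this reduces to an explicit rank-one calculation, matching the extensions of $W$-equivariant $T$-torsors on the local cameral cover $\Spec \Oo[[z]][t]/(t^2-z)$ against torsors under the regular centraliser of the corresponding $\SL_2$-Higgs system.
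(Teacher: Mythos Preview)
The paper does not give its own proof of this theorem; it is stated as the result of Donagi--Gaitsgory \cite{MR1903115} (following the exposition of Chen--Zhu \cite[Section 3]{chenzhu}), with the only additional remark, in the paragraph preceding Definition \ref{defi:donagi-gaitsgory}, that the extension from split to quasi-split group schemes is immediate because the constructions are \'etale-local on $X$. Your proposal correctly identifies this reduction step and then sketches the underlying split-case argument: the Galois-theoretic description of $J$ as the subsheaf of $(\pi_* T_{\Cc_a})^W$ cut out by the conditions $\alpha(t)|_{\Cc_a^{s_\alpha}} = +1$, the use of a Kostant reference object to build the functor, and the local rank-one analysis at the branch divisor. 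This is a faithful outline of the Donagi--Gaitsgory strategy (and compatible with Ng\^o's formulation of $J$ inside $J^1$ in \cite[2.4]{MR2653248}, which the present paper invokes elsewhere), so there is nothing to correct; the paper simply defers the proof to the cited sources rather than reproducing it.
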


\begin{construction}[Abel-Jacobi map]\label{const:AJ}
For a scheme $S$ over $\Spec(R)$ we denote by $\Cc_a^{sm}$ the maximal open subset of $\Cc_a$ for which $\Cc_a^{sm} \to S$ is smooth.
For an $S$-point $(x,\lambda) \in \Cc_a^{sm} \times \Xb_*(T)$, we denote by $\Oo(\lambda x)$ the induced $T$-torsor $\Oo_{\Cc}(x) \times^{\lambda,\G_m} T$. Using the same methods as in \cite[3.4]{chenzhu} one associates to $\Oo(\lambda x)$ a $W$-equivariant $T$-torsor on $\Cc_a$ with a $+$-structure.

We denote by $\Cc_a^{sm} \times_X \Xb_*({T}) \to \Pb_a \simeq \Bun_T^{W,+}(\Cc_a)$ the corresponding morphism and refer to it as the \emph{Abel-Jacobi map}.
\end{construction}

Until the end of this subsection we work over the open subset $\A^{\Diamond} \subset \A$. Pulling back a line bundle on $\Pb_G^\Diamond$ along the Abel-Jacobi map yields a line bundle on $\Cc^{\Diamond} \times_X \Xb_*(T)$. We have $\Xb_*(T) = \Xb^*(\widehat{T})$. The datum of a line bundle $L$ on $\Cc^{\Diamond} \times_X \Xb^*(\widehat{T})$ is equivalent to a $\widehat{T}$-torsor on $\Cc^{\Diamond}$: indeed there is a unique $\widehat{T}$-torsor $Q$ (up to a unique isomorphism) on $\Cc^{\Diamond}$, such that for every \'etale local section $\lambda$ of $\Xb^*(\widehat{T})$ we have an isomorphism $\lambda(Q) \simeq L|_{\Cc^{\Diamond} \times \{\lambda\}}$.

As explained in \cite[Section 3]{chenzhu}, the resulting $\widehat{T}$-torsor can be obtained with a strongly $W$-equivariant structure and a $+$-structure. This leads to a morphism of strict Picard stacks
$$\AJ_{G}^*\colon (\Pb_G^{\Diamond})^{\vee} \to \Pb_{\widehat{G}}^\Diamond.$$
According to the following theorem this is an equivalence of strict Picard stacks. This has been shown in characteristic zero and split reductive group schemes by Donagi--Pantev \cite{MR2957305} and by Chen--Zhu \cite[Section 3]{chenzhu} for split reductive group schemes in positive characteristic. In the appendix we explain how to prove this for quasi-split reductive group schemes.

\begin{theorem}[Donagi--Pantev, Chen--Zhu, Corollary \ref{cor:duality}]\label{thm:duality_quasi-split}
Let $X/k$ be a smooth projective curve over a field $k$, and $G/X$ a quasi-split reductive group scheme, that is, an outer form of a split reductive group $\Gb/k$. If $\charac(k)$ is $0$ or, $|\mathbb{W}|$ is not divisible by $\charac(k)$, the morphism
$\AJ_{G}^*\colon (\Pb_G^{\Diamond})^{\vee} \to \Pb_{\widehat{G}}^\Diamond$ is an equivalence of strict Picard stacks.
\end{theorem}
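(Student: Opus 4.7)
The plan is to reduce the quasi-split case to the split case, which has been proved by Donagi--Pantev in characteristic zero and by Chen--Zhu \cite[Section 3]{chenzhu} in positive characteristic. By Lemma \ref{Isotriviality}, we can choose a finite étale cover $p \colon X' \to X$ over which the $\Out(\Gb)$-torsor $\rho$ trivializes, so that $p^*G \simeq \Gb \times_k X'$ as pinned reductive group schemes, and similarly $p^*\widehat G \simeq \widehat \Gb \times_k X'$. After passing to a Galois closure we may assume $p$ is Galois with group $\Theta$, a finite quotient of $\Out(\Gb)$.

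First I would verify that the formation of the Hitchin base, the cameral cover, the Prym stack, and the Abel--Jacobi map all commute with this pullback, in the following sense. An $S$-point $a \in \A_G^\Diamond(S)$ pulls back to an $S$-point $a' \in \A_{\Gb \times X'}^\Diamond(S)$, the cameral covers satisfy $\Cc_{a'} \cong \Cc_a \times_X X'$, and under Donagi--Gaitsgory (Theorem \ref{thm:dg}) the pullback of $\Pb_{G,a}$ is identified with $\Pb_{\Gb \times X', a'}$. The point here is that the notions of $W$-equivariance and of $+$-structure in Definition \ref{defi:donagi-gaitsgory} are defined étale-locally on $X$, hence they transfer without change under $p$; similarly the Abel--Jacobi construction \ref{const:AJ} is étale-local and commutes with $p$.

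Second, I would use that being an equivalence of strict Picard stacks over $\A_G^\Diamond$ is an fpqc-local property on the base. Since both $(\Pb_G^\Diamond)^\vee$ and $\Pb_{\widehat G}^\Diamond$ are families of Beilinson $1$-motives (Lemma \ref{lemma:B1}) whose formation commutes with base change, it suffices to check the equivalence on geometric fibres $a \in \A_G^\Diamond(\bar k)$. At such a point, Chen--Zhu's argument proceeds through a local analysis on the smooth cameral cover $\Cc_a$: one constructs an explicit inverse to $\AJ_{G,a}^*$ via residues of meromorphic sections of $\Tb$- and $\widehat \Tb$-torsors along the ramification divisor, and the verification that this is a quasi-inverse reduces to identities in the root datum. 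Since these manipulations are étale-local on $X$, where $G$ becomes split by construction, the argument transfers to the quasi-split setting with only cosmetic changes.

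The main obstacle will be the bookkeeping for the $\Theta$-descent: one must confirm that the $\Theta$-equivariant structures induced on $\Pb_{\Gb \times X'}^\Diamond$ and $\Pb_{\widehat \Gb \times X'}^\Diamond$ by the chosen isomorphisms with $p^*G$ and $p^*\widehat G$ correspond to each other under the split Abel--Jacobi duality. This amounts to checking that the natural $\Out(\Gb)$-action on $\Tb$ and on $\widehat \Tb$ (through the pinned splitting of Lemma \ref{PinnSplit}) is intertwined by the identification $\Xb_*(\Tb) = \Xb^*(\widehat \Tb)$ used in the construction of $\AJ^*$, which is built into the very definition of Langlands duality. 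Once this compatibility is spelled out, fpqc descent along $p$ combined with the split-case theorem of Donagi--Pantev and Chen--Zhu yields the equivalence $\AJ_G^*\colon (\Pb_G^\Diamond)^\vee \xrightarrow{\sim} \Pb_{\widehat G}^\Diamond$.
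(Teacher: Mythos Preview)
Your overall strategy---pass to a finite \'etale Galois cover $p\colon X'\to X$ with group $\Theta$ over which $G$ splits, invoke the split duality of Donagi--Pantev/Chen--Zhu on $X'$, and then descend---is exactly the right idea and is what the paper does. But there is a genuine gap in how you execute the descent.

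First, a correction to your first paragraph: the Prym $\Pb_{G,a}$ on $X$ is \emph{not} identified with $\Pb_{\Gb\times X',a'}$ under pullback. The Prym on $X$ classifies $J_a$-torsors on $X$, while the Prym on $X'$ classifies $p^*J_a$-torsors on $X'$; pullback gives only the restriction map $\Res\colon \Pb_{G,a}\to \Pb_{\Gb\times X',a'}$, and descent theory expresses $\Pb_{G,a}$ as the $\Theta$-homotopy fixed points of the target, not as the target itself. Relatedly, ``fpqc descent along $p$'' is not directly applicable: $\AJ_G^*$ is a morphism of stacks over $\A^\Diamond$, not over $X$, and $p$ does not induce an fpqc cover of $\A^\Diamond$.

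The substantive gap is this. Even once you have verified that the split duality $\AJ_{\Gb}^*\colon \Pb_{\Gb,X'}^\vee \xrightarrow{\sim} \Pb_{\widehat\Gb,X'}$ is $\Theta$-equivariant, you cannot simply ``descend'' it. Descent gives $\Pb_{G,X}\simeq (\Pb_{\Gb,X'})^{h\Theta}$ and likewise $\Pb_{\widehat G,X}\simeq (\Pb_{\widehat\Gb,X'})^{h\Theta}$, but dualising a homotopy limit produces a homotopy \emph{colimit}: $(\Pb_{G,X})^\vee \simeq (\Pb_{\Gb,X'}^\vee)_{h\Theta}$. So the $\Theta$-equivariant split duality only gives you $(\Pb_{G,X})^\vee \simeq (\Pb_{\widehat\Gb,X'})_{h\Theta}$, and you still need to know that the norm map $(\Pb_{\widehat\Gb,X'})_{h\Theta}\to (\Pb_{\widehat\Gb,X'})^{h\Theta}$ is an equivalence. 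This is precisely the content of the paper's \emph{co-descent} result (Corollary~\ref{cor:co-descent}): the Prym $\Pb_X$ is recovered from the simplicial diagram $\Pb_{Y_\bullet}$ not only via restriction maps (descent, a limit) but also via norm maps (co-descent, a colimit). The paper then uses the compatibility $\Res_{Y/X}\leftrightarrow \Nm_{Y/X}^\vee$ under $\AJ^*$ (Remark~\ref{rmk:dual}) to match the two limit diagrams and conclude. Your proposal omits norm maps and co-descent entirely; without them the argument does not close. (Alternatively, since $|\Theta|$ is invertible in $k$ by your standing hypotheses, you could invoke the transfer isomorphism between homotopy orbits and homotopy fixed points directly---but you would need to say so.)
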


In \emph{loc. cit.} the reader will find the superfluous assumption of $k$ being algebraically closed. This can be relaxed by virtue of faithfully flat descent.

\begin{definition}\label{defi:Pc}
We denote the rigidification (see \cite[Theorem 5.1.5]{abramovich2003twisted}) of $\Pb$ with respect to the abelian group $Z(X,G)=H^0(X,Z(G))$ by $\Pc$. By definition, $\Pc \to \A$ is an algebraic space with a relative group object structure.
\end{definition}

By virtue of Lemma \ref{lemma:B1} we see that the fibres of $\Pc$ are extensions of abelian finite \'etale group schemes by abelian varieties. Similarly we see that the Deligne-Mumford stack $\Mc$ is acted on by $\Pc$ (relatively to $\A$), and obtain an analogue for $\Mc^{\reg}$ of Proposition \ref{prop:kostant}.

In \cite[Proposition 4.18.1]{MR2653248} Ng\^o defines isogenies between Prym varieties $\Pc_{G_1} \to \Pc_{G_2}$, where $G_1$ and $G_2$ are quasi-split reductive group $X$-schemes which correspond to isogenous root data (see \cite[D\'efinition 1.12.1]{MR2653248}). We recall his construction for the special case of $G$ and its Langlands dual $\widehat{G}$ (see \cite[Exemple 1.12.2]{MR2653248}). We refer the reader to \cite[Proposition 4.18.1]{MR2653248} for details on the construction below.

\begin{construction}\label{const:ngo_isogeny}
We denote by $J$ and $\widehat{J}$ the sheaves of regular centralisers associated to $G$ and $\widehat{G}$. There exists an isomorphism of neutral connected components $J^{\circ} \simeq \widehat{J}^{\circ}$ (this follows from \cite[Proposition 2.4.7]{MR2653248}, as the group schemes $J^1$ and $\widehat{J}^1$ are equivalent).

There exists a natural number $N$, for which the $N$-th power maps $[N]\colon J \to J$ and $[N]\colon \widehat{J} \to \widehat{J}$ factorise through the neutral connected component (simply take $N$ to be a number which is a multiple of the order of $\pi_0(J)$ and $\pi_0(\widehat{J})$). Using the aforementioned isomorphism $J^{\circ} \simeq \widehat{J}^{\circ}$ we obtain a map $\phi_N\colon J \to \widehat{J}$ which sits in a commutative diagram
\[
\xymatrix{
J \ar[r]^{[N]} \ar[d]_{\phi_N} & J^{\circ} \ar@{^(->}[d] \\
\widehat{J} & \widehat{J}^{\circ}. \ar@{_(->}[l]
}
\]
This gives rise to an isogeny  $\rho: \Pc_G \to \Pc_{\widehat{G}}$. Furthermore, if the base ring $R$ is a field $k$ of characteristic $p$, we can assume that $N$ is coprime to $p$. This follows from \cite[Corollaire 2.3.2]{MR2653248}: This statement implies that $|\pi_0(J)|$ is a divisor of $|Z(X,G)|$. The order of the group $Z(X,G)$ is coprime to $p$: Over $\rho$ we have a canonical isomorphism $G_\rho \cong \Gb_\rho$. Under this isomorphism the group $Z(X,G)_\rho$ embeds into $\Tb_\rho$. The latter being a torus, we deduce that the order of $Z(X,G)$ is coprime to $p$.

\end{construction}

%

\subsection{The anisotropic locus and $\widetilde{\Mc}$}\label{sub:ani}

We briefly recall the definition of the anisotropic locus and of the \'etale-open $\widetilde{\A} \to \A$. We adopt the terminology and conventions of Situation \ref{situation:Higgs_general}.

\begin{definition}\label{defi:ani}
Let $\bar{k}$ be an algebraically closed field, $R \to \bar{k}$ a homomorphism, and $a \in \A(\bar{k})$ a geometric point of the Hitchin base. We say that $a$ is \emph{anisotropic} if the group of connected components of the Prym variety $\pi_0(\Pc_a)$ is a finite group, and furthermore $a$ belongs to the open subset $\A^{\heartsuit}$ defined in \cite[4.5]{MR2653248} (amounting to a reduced cameral cover). According to \cite[6.1]{MR2653248} there exists an open subset $\A^{\ani}$, such that a geometric point of the Hitchin base is anisotropic, if and only if it belongs to $\A^{\ani}$. We denote the base change $\Mc \times_{\A} \A^{\ani}$ by $\Mc^{\ani}$. 
\end{definition}

Despite the ostensible dependance of $\A^{\ani}$ on the group $G$, this open subset is largely independent of the group $G$. It follows from \cite[4.10.5]{MR2653248} that $a \in \A^{\ani}$ is equivalent to $a \in \A^{\heartsuit}$ and finiteness of the group $\widehat{\Tb}^{W_a}$. This implies that $\A^{\ani}_{G_1} = \A^{\ani}_{G_2}$ if $G_1$ and $G_2$ have \emph{isogenous} root data (in \cite[1.12.4]{MR2653248} the groups $G_1$ and $G_2$ are said to be \emph{appari\'es}). In particular this applies to the case of Langlands dual reductive groups:

\begin{rmk}\label{rmk:ani}
With respect to the isomorphism $\A_G \simeq \A_{\widehat{G}}$ one has an identification of the open subsets $\A^{\ani}_G$ and $\A^{\ani}_{\widehat{G}}$.
\end{rmk}

It follows from \cite[Proposition 4.14.1]{MR2653248} that $\Mc^{\ani}$ is a smooth Deligne-Mumford stack if $\text{deg}(D) \geq 2g-2$.

\begin{rmk}
We remind the reader that our $\mathbb{M}_G$ is Ng\^o's $\Mc_G$, and our $\Mc_G$ refers to $\mathbb{M}_G$ rigidified at the generic stabiliser $Z(X,G)$. 
\end{rmk}

The aforementioned \'etale-open $\widetilde{\A}$ whose definition we recall below, depends on the choice of a base point $\infty\colon \Spec R \to X$.

\begin{definition}\label{defi:tilde}
We fix $\infty \in X(R)$ and denote by $\mathfrak{c}_{D,\infty}$ the pullback of $\mathfrak{c}_D=\mathbf{c} \times^{\G_m}_X D$ along $\infty$, and by $\ev_{\infty}\colon \A \to \mathfrak{c}_{D,\infty}$ the associated evaluation map. The \'etale morphism $\widetilde{\A} \to \A$ is defined by the cartesian square
\[
\xymatrix{
\widetilde{\A} \ar[r] \ar[d] & \A \ar[d]^{\ev_{\infty}} \\
\mathfrak{t}_{D,\infty}^{rs}\ar[r] & \mathfrak{c}_{D,\infty}.
}
\]
\end{definition}

Our main result is concerned with the cohomology of the Deligne-Mumford stack $\widetilde{\Mc}_G^{\ani}$ which we define next. The definition is taken from \cite[5.3.1]{MR2653248}.

\begin{definition} \label{TildeDef}
We denote by $\widetilde{\Mc}_G$ the base change $\Mc_G \times_{\A} \widetilde{\A}$, and by $\widetilde{\Mc}_G^{\ani}$ the base change $\Mc_G^{\ani} \times_{\A} \widetilde{\A}$.
\end{definition}

The following construction is a variant of  \cite[Corollaire 4.11.3]{MR2653248}.

\begin{construction}\label{cons:auto_torus}
Let $S$ be an affine scheme over $\Spec(R)$ and $(E,\phi,\tilde\infty) \in \widetilde{\mathbb{M}}^{\ani}_G(S)$. In the following we construct a canonical injective homomorphism $\Aut(E,\theta) \to T_{\infty}(S)$. Often we will use a chosen point $\infty_\rho \in \rho_\infty(R)$ to identify $G_\infty$ with $\BG$ and $T_\infty$ with $\BT$. In such a situation we will consider this homomorphism as a homomorphism $\Aut(E,\phi) \to \BT$.

We have a Higgs bundle $(E,\phi)$ over $S$ and a morphism $\tilde \infty\colon S \to \mathfrak{t}^\text{rs}_{D,\infty}$ making the following diagram commute:
\begin{equation} \label{SomeDiag}
  \xymatrix{
    X \times S \ar[r] & [\mathfrak{g}_D/G] \ar[r] & \mathfrak{g}_D/G \\
    S \ar[u]^\infty \ar[r]^{\tilde\infty} \ar[r] & \mathfrak{t}^\text{rs}_{D,\infty} \ar[ru] 
  }
\end{equation}
Consider an automorphism $\gamma$ of $(E,\phi)$. By \cite[4.11.4]{MR2653248}, since our Higgs bundle lies over $\A^{\ani}$, the automorphism $\gamma$ has finite order $r$ coprime to $p$. 

In the above picture, the automorphism $\gamma$ is is given by a global section of the pullback to $X \times S$ of the group scheme $I$ over $[\mathfrak{g}_D/G]$ from \cite[2.1]{MR2653248}. Over the regular semisimple locus, this group scheme can be identified with the pullback from $\mathfrak{g}_D/G$ of the group scheme $J$ from \emph{loc.cit.} Finally, the pullback of $J$ along $\mathfrak{t}^\text{rs}_\infty \to \mathfrak{g}_D/G$ can be canonically identified with $T_\infty$. Hence we see that the pullback of $\gamma$ along $\infty\colon S \to X \times S$ is given by a section $\gamma_\infty \in T_\infty[r](S)$. This defines the desired homomorphism. Finally the fact that $\gamma$ is of finite order implies that it is uniquely determined by its pullback along $\infty$.
\end{construction}

\begin{construction}\label{cons:AJ}
Recall from Construction \ref{const:AJ} that we have an Abel-Jacobi map $\AJ_G\colon \Cc^{\sm} \times_X \Xb_*(T) \to \Pc_G$. 

The space $\widetilde \A$ comes with a morphism $\widetilde \A \to \mathfrak{t}^\text{rs}_{D,\infty}$. Together with the morphism $\widetilde \A \to \A \toover{\infty} X \times \A$ this induces a canonical morphism $\widetilde{\A} \to \Cc^{\sm}$. 

Now we assume in addition that we have a point $\infty_\rho \in \rho(R)$ above $\infty$ and use this point to identify $T_\infty$ with $\Tb$. Then we get a morphism
\begin{equation*}
   \widetilde{\A} \times \Xb_*(\Tb) =\widetilde{\A} \times \Xb_*(T_\infty) \to \Cc^{\sm} \times_X \Xb_*(T) \to  \Pc_G
\end{equation*}
and hence a morphism
\begin{equation}\label{ajconst}
   \widetilde{\A} \times \Xb_*(\Tb) \to  \widetilde{\A} \times_{\A} \Pc_G =\widetilde{\Pc}_G.
\end{equation}
\end{construction}

The following result is a combination of \cite[Lemme 4.10.2]{MR2653248} and \cite[Corollaire 6.7]{MR2218781}. 

\begin{proposition}[Ng\^o]\label{prop:surjection}
The induced map $\Xb_*(\Tb) \times \widetilde{\A}^{\ani} \to \pi_0(\widetilde{\Pc}_G^{\ani})$ is a surjection. In particular for every $a \in \widetilde{\A}^{\ani}(k)$ the finite group scheme $\pi_0(\widetilde{\Pc}_{G,a}^{\ani})$ is constant.
\end{proposition}

We remark that Ng\^o's construction of the map $\Xb_*(\Tb) \times \widetilde{\A}^{\ani} \to \pi_0(\widetilde{\Pc}_G^{\ani})$ is explained in the proof of \cite[Proposition 6.8]{MR2218781}. Up to replacing $\infty$ by $u_S$ in \emph{loc. cit.} the two constructions agree.

\section{Endoscopy and inertia stacks of moduli spaces of Higgs bundles}\label{secinst}
Throughout this section we put ourselves in Situation \ref{situation:Higgs_general}, and assume additionally that the base ring $R$ is either  a finite field $k$ or the algebraic closure of a finite field $\bar{k}$. 

\subsection{Quotients by twisted group actions}

We will need the following description of the quotient stacks given by twisted group actions:
\begin{construction} \label{TwistedQuotient}
  Consider a base algebraic space $S$, smooth group algebraic spaces $A$ and $\BB$ over $S$ as well as an algebraic space $\BX$ over $S$ with actions of $A$ and $\BB$ over $S$. Let furthermore $A$ act on $\BB$ through group automorphisms in such a way that the action $\BB \times_S \BX \to \BX$ is $A$-equivariant. For an $A$-torsor $\rho$ over $S$, by twisting we obtain an algebraic space $X\defeq \BX \times^A \rho$ and a group algebraic space $B \defeq \BB \times^A \rho$ over $S$ with an action $B \times_S X \to X$. 

By definition, $X$ is the quotient of $\BX \times \rho$ by the antidiagonal action of $A$ and the action of $B$ on $X$ is induced by the action of $\BB$ on the first factor of $\BX \times \rho$. These actions of $A$ and $\BB$ on $\BX \times \rho$ fit together to an action of $\BB \rtimes A$ on $\BX \times \rho$. Then the quotient morphism $\BX \times \rho \to X$ induces an isomorphism of quotient stacks
\begin{equation*}
  [(\BX \times \rho)/ (\BB \rtimes A)] \cong [X/B].
\end{equation*}
\end{construction}

%
%
%

\subsection{Construction of certain homomorphisms}



For a homomorphism $\kappa\colon \hat \mu \to \BT$ over $\Spec(k)$ we consider the centraliser $(\BG \ltimes \Out(\BG))_\kappa$ of $\kappa$ in $\BG \ltimes \Out(\BG)$ and write its connected-\'etale sequence as
\begin{equation*}
  1 \to \BH_\kappa \to (\BG \rtimes \Out(\BG))_\kappa \to \pi_0(\kappa) \to 1,
\end{equation*}
where $\BH_\kappa$ is the connected centraliser of $\kappa$ in $\BG$. By Lemma \ref{CompPinn} there exists a unique pinning $(T,B\cap \BH_\kappa,s')$ of $\BH_\kappa$ compatible with the pinning of $\BG$. We will always endow $\BH_\kappa$ with this pinning. The group scheme $\pi_0(\kappa)$ comes with natural homomorphisms $o_\BG\colon \pi_0(\kappa)\to \Out(\BG)$ and $o_\BH\colon \pi_0(\kappa) \to \Out(\BH_\kappa)$. We let $\pi_0(\kappa)$ act on $\BH_\kappa$ and $\BG$ through these homomorphisms.


\begin{construction} \label{ikCons}
  Let $\kappa \colon \hat \mu \to  \BT$ be a homomorphism over $\Spec(k)$. We construct a canonical homomorphism $\iota_\kappa\colon \BH_\kappa \rtimes \pi_0(\kappa)  \to \BG \rtimes \pi_0(\kappa)$ satisfying the following conditions:
  \begin{enumerate}[(i)]
  \item The restriction of $\iota_\kappa$ to the normal subgroup $\BH_\kappa$ is the inclusion of $\BH_\kappa$ into the normal subgroup $\BG$.
  \item The morphism between the quotients $\pi_0(\kappa)$ induced by $\iota_\kappa$ is the identity. 
  \item If we let $\pi_0(\kappa) \ltimes \BH_\kappa$ act on $\BH_\kappa$ via the above action of $\pi_0(\kappa)$ on $\BH_\kappa$ and the action of $\BH_\kappa$ on itself via conjugation, and analogously for $\BG$, the inclusion $\BH_\kappa \into \BG$ is equivariant with respect to $\iota_\kappa$.
  \end{enumerate}

Let $\tilde \pi_0(\kappa) \subset (\BG \rtimes \Out(\BG))_\kappa$ be the subgroup sheaf consisting of those elements whose action on $\BH_\kappa$ preserves the pinning of $\BH_\kappa$. Since the stabiliser in $\BH_\kappa$ of the pinning is $Z(\BH_\kappa)$, the subgroup $\tilde\pi_0(\kappa)$ fits into an exact sequence 
\begin{equation*}
  1 \to Z(\BH_\kappa) \to \tilde \pi_0(\kappa) \to \pi_0(\kappa) \to 1.
\end{equation*}
We denote again by $o_\BH$ the composition $\tilde\pi_0(\kappa) \to \pi_0(\kappa) \toover{o_\BH} \Out(\BH_\kappa)$ and similarly for $o_\BG$. As for $\pi_0(\kappa)$ we let $\tilde\pi_0(\kappa)$ act on $\BH_\kappa$ and $\BG$ through these homomorphisms.

We first construct a morphism $\tilde\iota_\kappa\colon \BH \rtimes \tilde\pi_0(\kappa) \to \BG \rtimes \tilde\pi_0(\kappa)$: Let $\phi \in \tilde\pi_0(\kappa)$. Then $\phi=g_\phi o_G(\phi)$ for some $g_\phi \in \BG$. From the definition of $\tilde\pi_0(\kappa)$ it follows that the automorphism $o_\BH(\phi)$ of $\BH_\kappa$ is equal to the restriction of $\Inn(g_\phi)\circ o_\BG(\phi)$ to $\BH_\kappa$. Using this one checks that 
\begin{align*}
\tilde\iota_\kappa\colon \BH_\kappa \rtimes \tilde\pi_0(\kappa) &\to \BG \rtimes \tilde\pi_0(\kappa),\\ h \phi &\mapsto h g_\phi \phi  
\end{align*}
 is a group homomorphism.

One checks that $\tilde\iota_\kappa$ maps $\BH_\kappa \rtimes Z(\BH_\kappa)$ to $\BG \rtimes Z(\BH_\kappa)$ and hence induces a homomorphism $$\iota_\kappa\colon \BH_\kappa \rtimes \pi_0(\kappa)  \to \BG \rtimes \pi_0(\kappa).$$ By a direct verification this homomorphism has the required properties.
\end{construction}

\begin{proposition} \label{ikProps}
  In the situation of Construction \ref{ikCons}, the homomorphism $\iota_\kappa$ factors through $(\BG\rtimes \pi_0(\kappa))_\kappa$. By composition with $(\BG\rtimes \pi_0(\kappa))_\kappa \to (\BG \rtimes \Out(\BG))_\kappa$ it gives an isomorphism $\BH_\kappa \rtimes \pi_0(\kappa) \to (\BG \rtimes \Out(\BG))_\kappa$ which restricted to $\pi_0(\kappa)$ gives a splitting of the quotient homomorphism $(\BG \rtimes \Out(\BG))_\kappa \to \pi_0(\kappa)$. 
\end{proposition}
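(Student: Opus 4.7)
The strategy is to reduce the whole statement to the five lemma once the target of $\iota_\kappa$ has been pinned down. Properties (i) and (ii) of Construction \ref{ikCons} guarantee that the composition $\BH_\kappa \rtimes \pi_0(\kappa) \xrightarrow{\iota_\kappa} \BG \rtimes \pi_0(\kappa) \hookrightarrow \BG \rtimes \Out(\BG)$ restricts to the identity on the normal subgroup $\BH_\kappa$ and induces the identity on the quotient $\pi_0(\kappa)$. Hence, provided the image lies in the centraliser of $\kappa$, one has a morphism of short exact sequences whose outer arrows are identities, and both isomorphism and splitting claims become formal.

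First I would check that $\iota_\kappa$ factors through $(\BG \rtimes \pi_0(\kappa))_\kappa$. For elements of $\BH_\kappa$ this is tautological, since by construction $\BH_\kappa$ centralises $\kappa$. For a section $\phi \in \pi_0(\kappa) \subset \BH_\kappa \rtimes \pi_0(\kappa)$, unwinding Construction \ref{ikCons} shows $\iota_\kappa(\phi) = (g_\phi, \phi) \in \BG \rtimes \pi_0(\kappa)$, where $g_\phi \in \BG$ is determined by any lift $\tilde\phi = g_\phi \, o_\BG(\phi) \in \tilde\pi_0(\kappa)$. Because the action of $\pi_0(\kappa)$ on $\BG$ is defined via $o_\BG$, the conjugation action of $(g_\phi, \phi)$ on $\kappa \subset \BG$ coincides with the conjugation action of $\tilde\phi \in (\BG \rtimes \Out(\BG))_\kappa$, which is trivial. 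Hence $(g_\phi, \phi)$ centralises $\kappa$.

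Having verified the factorisation, the commutative diagram
\[
\xymatrix{
1 \ar[r] & \BH_\kappa \ar[r] \ar@{=}[d] & \BH_\kappa \rtimes \pi_0(\kappa) \ar[r] \ar[d] & \pi_0(\kappa) \ar[r] \ar@{=}[d] & 1 \\
1 \ar[r] & \BH_\kappa \ar[r] & (\BG \rtimes \Out(\BG))_\kappa \ar[r] & \pi_0(\kappa) \ar[r] & 1
}
\]
has exact rows and identities on the ends: the bottom row is the connected-\'etale sequence from the start of the subsection, and commutativity of the right-hand square follows from tracing $h\phi \mapsto h\tilde\phi$, whose image in $(\BG\rtimes\Out(\BG))_\kappa / \BH_\kappa = \pi_0(\kappa)$ equals $\phi$. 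The five lemma applied in the category of group sheaves then yields the asserted isomorphism, and the splitting is obtained by restricting the inverse of this isomorphism to the obvious sub-sheaf $\pi_0(\kappa) \hookrightarrow \BH_\kappa \rtimes \pi_0(\kappa)$. I expect the only delicate point to be bookkeeping the fact that $\iota_\kappa$ is independent of the auxiliary lift $\tilde\phi$ (which is only well-defined modulo $Z(\BH_\kappa)$), but this is already absorbed by the construction.
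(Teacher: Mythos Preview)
Your proposal is correct and follows essentially the same approach as the paper: both verify the factorisation through the centraliser (the paper says this ``follows directly from the construction'' while you spell out why $(g_\phi,\phi)$ centralises $\kappa$), and both reduce the isomorphism claim to checking that the map is the identity on the normal subgroup $\BH_\kappa$ and on the quotient $\pi_0(\kappa)$, which is exactly the short five lemma for group sheaves. The paper's verification that the induced map on $\pi_0(\kappa)$ is the identity is the same computation you sketch, using $\phi = g_\phi\, o_\BG(\phi)$.
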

\begin{proof}
  The first claim follows directly from the construction of $\iota_\kappa$.

Since $(\BG \rtimes \Out(\BG))_\kappa$ lies in an exact sequence
\begin{equation*}
  1 \to \BH_\kappa \to (\BG \rtimes \Out(\BG))_\kappa \to \pi_0(\kappa) \to 1,
\end{equation*}
to prove the remaining claim it suffices to show that the homomorphism $$\BH_\kappa \rtimes \pi_0(\kappa) \to (\BG\rtimes \Out(\BG))_\kappa$$ restricts to the identity on $\BH_\kappa$ and induces the identity on the quotients $\pi_0(\kappa)$. The claim for $\BH_\kappa$ follows directly from the construction of $\iota_\kappa$. To verify the claim for $\pi_0(\kappa)$ consider an element $\bar \phi \in \pi_0(\kappa)$. Then for a lift $\phi \in \tilde \pi_0(\kappa)$, the identity $\phi=g_\phi o_\BG(\phi)$ from Construction \ref{ikCons} shows that the image of $\iota_\kappa(\phi)$ under the projection $(\BG\rtimes \pi_0(\kappa))_\kappa \to (\BG \rtimes \Out(\BG))_\kappa \to \pi_0(\kappa)$ is equal to $\phi$.
\end{proof}

\subsection{The twisted inertia stack}

\begin{definition} \label{DEDef1}
A coendoscopic datum for $G$ over $k$ is a triple $\CE=(\kappa,\rho_\kappa,\rho_\kappa \to \rho)$ consisting of a homomorphism $\kappa \colon \hat\mu \to \BT$ over $\Spec(k)$, a $\pi_0(\kappa)$-torsor $\rho_\kappa$ over $X$ and a $o_\BG$-equivariant morphism $\rho_\kappa \to \rho$ (recall that $\rho$ is the $\Out(\BG)$-torsor which twists $\BG$ to $G$). We call $\kappa$ the \emph{type} of $\CE$.

For such a datum we let $H_\CE \defeq \BH_\kappa \times^{\pi_0(\kappa)} \rho_\kappa$, where $\pi_0(\kappa)$ acts on $\BH_\kappa$ through $o_\BH$. Equivalently, the group $H_\CE$ is obtained by twisting $\BH_\kappa$ with the $\Out(\BH_\kappa)$-torsor $\rho_\kappa \times^{\pi_0(\kappa)} \Out(\BH_\kappa)$ and hence is a quasi-split group scheme. We will denote the various objects from Section \ref{HiggsBundles} associated to $H_\CE$ with a subscript $\CE$.

 In particular, the maximal torus $\BT \subset \BH_\kappa$ induces by twisting a maximal torus $T_\CE \defeq \BT \times^{\pi_0(\kappa)} \rho_\kappa \subset H_\CE$, where as before $\pi_0(\kappa)$ acts on $\BT$ through $o_\BH$. Similarly the homomorphism $\kappa\colon \hat \mu \to \BT$ induces by twisting a homomorphism $\gamma_\CE \colon \hat\mu \to Z(H_\CE)$ over $X$. We denote by $\mathfrak{h}_\CE$ be the Lie algebra of $H_\CE$.
\end{definition}

\begin{rmk}
The Langlands dual $\widehat{H}_{\CE}$ is called an \emph{endoscopy group} for $\widehat{G}$.
\end{rmk}

\begin{construction} \label{muCons}
  Let $\CE=(\kappa,\rho_\kappa,\rho_\kappa \to \rho)$ be a coendoscopic datum for $G$ over $k$. We construct a canonical morphism 
  \begin{equation*}
    \mu_\CE \colon [\mathfrak{h}_{\CE,D}/H_\CE] \to [\mathfrak{g}_D/G]
  \end{equation*}
as follows: Construction \ref{TwistedQuotient} gives canonical isomorphisms
\begin{align*}
  [\mathfrak{h}_{\CE,D}/H_\CE] & \cong [(\mathbf{h}_\CE \times \rho_\kappa \times D) / (\BH_\kappa \rtimes (\pi_0(\kappa) \times \BG_m))] \\
\text{ and }  [\mathfrak{g}_D /G] & \cong [(\mathbf{g} \times \rho_\kappa \times D) / (\BG \rtimes (\pi_0(\kappa) \times \BG_m))].
\end{align*}
Now the product of the homomorphism $\iota_\kappa$ from Construction \ref{ikCons} and the identity on $\BG_m$ with respect to which the inclusion $\mathbf{h}_\CE \times \rho_\kappa \times D \into \mathbf{g} \times \rho_\kappa \times D$ is equivariant induces the desired morphism.

By passing to coarse moduli spaces we obtain a morphism 
\begin{equation*}
  \nu_\CE \colon \mathfrak{c}_\CE=\mathfrak{h}_{\CE,D}/H_\CE \to \mathfrak{c}=\mathfrak{g}_D/G.
\end{equation*}
One can check that this morphism coincides with the one constructed in \cite[1.9]{MR2653248}. In fact the construction we have given here is a direct extension of the construction given in \emph{loc. cit.}

Composition with $\nu_\CE$ defines a morphism $\nu_\CE\colon \A_{\CE} \to \A$. Similarly, composition with $\mu_\CE$ defines a compatible morphism $\BM_{H_\CE} \to \BM_G$. Since the homomorphism $\gamma_\CE$ maps to the centre of $H_\CE$, for every $H_\CE$-Higgs bundle $F$ it naturally induces a homomorphism $\hat \mu \to \Aut(F)$. If $E$ is the image in $\BM_G$ of such an $F$, we get a homomorphism $\hat\mu \to \Aut(E)$ by functoriality. Using these homomorphisms we lift the morphism $\BM_{H_\CE} \to \BM_G$ to a morphism $\mu_\CE\colon \BM_{H_\CE} \to I_{\hat\mu} \BM_G$.

\end{construction}
\begin{lemma}\label{AniComp}
  Let $\CE$ be a coendoscopic datum for $G$ over $k$. The morphism $\nu_\CE \colon \A_\CE \to \A$ satisfies $\nu_\CE^{-1}(\A^{\ani})=\A_\CE^{\ani}$.

\end{lemma}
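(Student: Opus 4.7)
The claim reduces to checking geometric points: for $a_\CE \in \A_\CE(\bar k)$ with image $a \defeq \nu_\CE(a_\CE)$, I shall show that $a_\CE$ is anisotropic for $H_\CE$ if and only if $a$ is anisotropic for $G$. The plan is to invoke the intrinsic characterisation of anisotropy provided by \cite[4.10.5]{MR2653248} and recalled in Remark \ref{rmk:ani}: a geometric point $b$ lies in $\A^{\ani}$ iff (i) $b \in \A^{\heartsuit}$, i.e.\ the cameral cover $\Cc_b$ is reduced, and (ii) the fixed-point group $\widehat{\Tb}^{W_b}$ under the monodromy action of the cameral cover is finite. I shall verify that both conditions transfer across $\nu_\CE$.

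First I compare the two cameral covers. By Lemma \ref{CompPinn} and Construction \ref{muCons} the composition $\mathfrak{t}_D \to \mathfrak{c}_{\CE,D} \to \mathfrak{c}_D$ coincides with the canonical projection $\mathfrak{t}_D \to \mathfrak{t}_D/\mathbb{W}$, factoring through the intermediate quotient $\mathfrak{t}_D/\mathbb{W}_{\BH_\kappa}$. After base change to an \'etale cover of $X$ splitting the outer torsor $\rho_\kappa$ (and hence $\rho$, via the given map $\rho_\kappa \to \rho$), this provides a closed immersion $\Cc_{\CE,a_\CE} \hookrightarrow \Cc_a$ and exhibits $\Cc_a$ as induced from $\Cc_{\CE,a_\CE}$ along the inclusion $\mathbb{W}_{\BH_\kappa} \hookrightarrow \mathbb{W}$. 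Since induction along a finite subgroup inclusion of étale covers preserves reducedness, $\Cc_a$ is reduced iff $\Cc_{\CE,a_\CE}$ is, establishing $\nu_\CE^{-1}(\A^{\heartsuit}) = \A_\CE^{\heartsuit}$.

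For condition (ii), I expect the monodromy of $\Cc_a$, viewed up to conjugacy as a subgroup of $\mathbb{W} \rtimes \Out(\BG)$, to coincide with the monodromy of $\Cc_{\CE,a_\CE}$ viewed up to conjugacy as a subgroup of $\mathbb{W}_{\BH_\kappa} \rtimes \Out(\BH_\kappa)$, under the natural inclusion of the latter into the former supplied by Proposition \ref{ikProps}. Since the underlying dual torus $\widehat{\Tb}$ is shared by $G$ and $H_\CE$, the groups $\widehat{\Tb}^{W_a}$ and $\widehat{\Tb}^{W_{a_\CE}}$ then coincide and (ii) transfers. The main obstacle I anticipate is the careful tracking of the quasi-split twists by $\rho_\kappa$ and $\rho$ through Constructions \ref{ikCons} and \ref{muCons}, ensuring that the monodromy identification is globally well-defined and not merely \'etale-local on $X$; the $o_\BG$-equivariance of $\rho_\kappa \to \rho$ built into Definition \ref{DEDef1} is precisely the compatibility one needs to descend the comparison from a splitting cover back to $X$.
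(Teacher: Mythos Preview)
Your approach is correct and genuinely different from the paper's. The paper does not compare cameral covers or monodromy at all: it first invokes Remark~\ref{rmk:ani} to replace $H_\CE$ by its Langlands dual $\widehat H_\CE$, which is an endoscopy group for $\widehat G$, and then appeals to \cite[4.17.2]{MR2653248}, which furnishes a surjective homomorphism $\Pc_a \to \Pc_{\widehat H_\CE,a_\CE}$ whose kernel is affine of finite type. Passing to $\pi_0$ then yields a surjection with finite kernel, so $\pi_0(\Pc_a)$ is finite if and only if $\pi_0(\Pc_{\widehat H_\CE,a_\CE})$ is. This is a one-line reduction to a black box from Ng\^o's paper and entirely sidesteps the quasi-split bookkeeping you flag as the main obstacle.

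Your route, by contrast, is self-contained once one accepts the characterisation from \cite[4.10.5]{MR2653248}. Your part~(i) is a genuine gain over the paper's writeup, which treats the compatibility $\nu_\CE^{-1}(\A^\heartsuit)=\A_\CE^\heartsuit$ as implicit. For part~(ii), your expectation is correct and the hedging can be removed: over the regular semisimple locus the factorisation $\mathfrak{t}_D \to \mathfrak{c}_{\CE,D} \to \mathfrak{c}_D$ exhibits $\Cc_a$ as the $W$-torsor induced from the $W_{H_\CE}$-torsor $\Cc_{\CE,a_\CE}$ along the inclusion $W_{H_\CE}\hookrightarrow W$, so the two monodromy images literally agree as subgroups. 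The compatibility of the actions on $\widehat\Tb$ under $\iota_\kappa$ follows from property~(iii) in Construction~\ref{ikCons}: for $\phi\in\pi_0(\kappa)$ one has $\phi = g_\phi\, o_\BG(\phi)$ with $g_\phi\in N_\BG(\Tb)$, and the resulting Weyl-group ambiguity is exactly absorbed by conjugacy. So your argument goes through; it is just longer than the paper's citation of \cite[4.17.2]{MR2653248}.
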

\begin{proof}
By Remark \ref{rmk:ani} the anisotropic locus $\A^{\ani}$ with respect to $H_\CE$ coincides with the one with respect to the endoscopic group $\widehat H_\CE$ for $\widehat G$. So we may consider the latter locus. We denote the associated abstract Prym by $\Pc_{\widehat H_\CE}$.

 Since the anisotropic loci are open subschemes of the Hitchin bases it suffices to consider $\bar k$-valued points. So let $a_{\CE} \in \A^{\ani}_{\CE}(\bar k)$ with image $a$ in $\A(\bar k)$. The point $a_{\CE}$ lies in $\A^{\ani}_{\CE}$ if and only if the group $\pi_0(\Pc_{\widehat H_\CE,a_\CE})$ is finite, and analogously for $a$ and the group $\pi_0(\mathcal{P}_a)$. So to prove the claim it suffices show that $\pi_0(\Pc_{\widehat H_\CE,a_\CE})$ is finite if and only if $\pi_0(\mathcal{P}_a)$ is finite.

 By \cite[4.17.2]{MR2653248} there exists a canonical surjective homomorphism $\mathcal{P}_a \to \Pc_{\widehat H_\CE,a_\CE}$ whose kernel is an affine group scheme of finite type over $\Spec(k)$. This morphism induces a surjective homomorphism $\pi_0(\mathcal{P}_a) \to \pi_0(\mathcal{P}_{\widehat H_\CE,a_\CE})$ whose kernel is finite. This implies the claim.
\end{proof}

Now we fix a point $\infty \in X(k)$ and consider the space $\widetilde{\BM}_G$ given by Definition \ref{TildeDef}. 
\begin{lemma} \label{BlaComp}
  Let $\CE=(\kappa,\rho_\kappa,\rho_\kappa \to \rho)$ be a coendoscopic datum for $G$ over $k$ and $\infty_{\rho_\kappa}$ be a point in $\rho_\kappa(k)$ above $\infty$. The point $\infty_{\rho_\kappa}$ induces identifications $G_\infty \cong \BG$ and $H_{\CE,\infty} \cong \BH_\kappa$ compatible with the pinnings on the groups. The induced isomorphism $\mathfrak{t}_{\CE,\infty} \cong \mathbf{t} \cong \mathfrak{t}_\infty$ makes the following diagram commute:
  \begin{equation*}
    \xymatrix{
      \mathfrak{t}_{\CE,D,\infty} \ar[r]^\cong \ar[d] & \mathfrak{t}_{D,\infty} \ar[d] \\
      \mathfrak{c}_{\CE,\infty} \ar[r]^{\nu_\CE} & \mathfrak{c}_\infty
}
  \end{equation*}
\end{lemma}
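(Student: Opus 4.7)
The plan is to trivialise everything at $\infty$ using the chosen point $\infty_{\rho_\kappa}$ and then reduce the commutativity of the diagram to the obvious compatibility of Chevalley maps under the inclusion $\mathbf{h}_\kappa \hookrightarrow \mathbf{g}$.

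First, the point $\infty_{\rho_\kappa} \in \rho_\kappa(k)$ trivialises the $\pi_0(\kappa)$-torsor $\rho_\kappa$ at $\infty$, and its image under $\rho_\kappa \to \rho$ trivialises the $\Out(\BG)$-torsor $\rho$ at $\infty$. By the very definitions $G = \BG \times^{\Out(\BG)} \rho$ and $H_\CE = \BH_\kappa \times^{\pi_0(\kappa)} \rho_\kappa$ from Situation \ref{situation:Higgs_general} and Definition \ref{DEDef1}, these trivialisations induce isomorphisms $G_\infty \cong \BG$ and $H_{\CE,\infty} \cong \BH_\kappa$; by the construction of the pinning on $H_\CE$ via Lemma \ref{CompPinn}, these isomorphisms respect the pinnings and identify both maximal tori with $\BT$, giving the required identification $\mathfrak{t}_{\CE,\infty} \cong \mathbf{t} \cong \mathfrak{t}_\infty$.

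Next I would identify the morphism $\nu_\CE$ at $\infty$ under these trivialisations. By Construction \ref{muCons}, $\nu_\CE$ arises from the morphism $[\mathfrak{h}_{\CE,D}/H_\CE] \to [\mathfrak{g}_D/G]$ described in terms of the homomorphism $\iota_\kappa\colon \BH_\kappa \rtimes \pi_0(\kappa) \to \BG \rtimes \pi_0(\kappa)$ from Construction \ref{ikCons}. Once we trivialise $\rho_\kappa$ at $\infty$ using $\infty_{\rho_\kappa}$, the twisting constructions collapse and the fibre of $\nu_\CE$ at $\infty$ is (by condition (i) of Construction \ref{ikCons}, which asserts that $\iota_\kappa$ restricted to $\BH_\kappa$ is the inclusion into $\BG$) simply the map $\mathbf{h}_\kappa/\BH_\kappa \to \mathbf{g}/\BG$ induced by the inclusion $\mathbf{h}_\kappa \hookrightarrow \mathbf{g}$.

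Commutativity now reduces to checking that the two Chevalley maps $\mathbf{t} \to \mathbf{h}_\kappa/\BH_\kappa$ and $\mathbf{t} \to \mathbf{g}/\BG$ are compatible with the inclusion $\mathbf{h}_\kappa \hookrightarrow \mathbf{g}$. This is immediate from the fact that $\BT$ is simultaneously a maximal torus of $\BH_\kappa$ and of $\BG$, so the composite $\mathbf{t} \hookrightarrow \mathbf{h}_\kappa \hookrightarrow \mathbf{g}$ is the inclusion used in defining both Chevalley maps. The only subtle point, which I do not expect to cause any real trouble, is verifying that the trivialisations at $\infty$ really do make $\iota_\kappa$ restrict to the identity on $\BT$ — but this follows from property (i) of Construction \ref{ikCons} together with $\BT \subset \BH_\kappa$.
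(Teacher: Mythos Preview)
Your proof is correct and is precisely the ``direct verification from the construction of $\nu_\CE$'' that the paper invokes as its entire proof. The paper gives no further detail, so your argument simply spells out what the authors left implicit: trivialise at $\infty$ via $\infty_{\rho_\kappa}$, identify $\nu_\CE$ there with the map $\mathbf{h}_\kappa/\BH_\kappa \to \mathbf{g}/\BG$ induced by the inclusion, and observe that the two Chevalley quotients $\mathbf{t} \to \mathbf{t}/\BW_{\BH_\kappa}$ and $\mathbf{t} \to \mathbf{t}/\BW$ are compatible.
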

\begin{proof}
  This follows by a direct verification from the construction of $\nu_\CE$.
\end{proof}

\begin{construction} \label{TildeMuCons}
  Let $\CE=(\kappa,\rho_\kappa,\rho_\kappa \to \rho)$ be a coendoscopic datum for $\BG$ over $k$ and let $\infty_{\rho_\kappa}$ be a point in $\rho_\kappa(k)$ above $\infty$. 

Inside $\A$ we have the open subset $\A^\infty$ of those elements which are regular semisimple at $\infty$. Then, as in Definition \ref{defi:tilde}, the space $\tilde \A$ given by the following pullback diagram:
\begin{equation*}
  \xymatrix{
    \widetilde \A \ar[r] \ar[d] & \mathfrak{t}^{rs}_{D,\infty} \ar[d] \\
    \A^\infty \ar[r] & \mathfrak{c}_{D,\infty}
  }
\end{equation*}

By Construction \ref{muCons} we have a canonical morphism $\nu_\CE \colon \A_{\CE} \to \A$. Let $\A_\CE^{G-\infty}$ be the preimage of $\A^\infty$ in $\A_\CE$ and $\widetilde \A_\CE^{G-\infty}$ the preimage of $\A_\CE^{G-\infty}$ in $\widetilde \A_\CE$. 

As in Lemma \ref{BlaComp}, the chosen point $\infty_{\rho_\kappa}$ induces an isomorphism $\mathfrak{t}_{\CE,\infty} \cong \mathfrak{t}_\infty$ compatible with $\nu_\CE$. This isomorphism induces a morphism $\tilde \nu_\CE \colon \widetilde \A^{G-\infty}_\CE \to \widetilde \A$ compatible with $\nu_\CE \colon \A_\CE \to \A$. 

We let $\widetilde{\A}^{\ani}$ be the preimage of $\A^{\ani} \cap \A^\infty$ in $\widetilde\A$ and $\widetilde\A_\CE^{G-\infty,\ani}$ the preimage of $\A_\CE^{G-\infty} \cap \A_\CE^{\ani}$ in $\widetilde \A_\CE^{G-\infty}$. For the various exponents $*$ occurring above, we will denote by $\widetilde \BM^* \to \widetilde \A$ (resp. $\widetilde \BM^*_{H_\CE} \to \widetilde \A^*_{\CE}$) the pullback of $\BM \to \A$ (resp. $\BM_{H_\CE} \to \A_{H_\CE}$) to $\widetilde \A^*$ (resp. $\widetilde \A^*_{\CE}$). Then using Lemma \ref{AniComp} by pulling back the morphism $\mu_\CE$ we obtain the following commutative diagram:
\begin{equation*}
\xymatrix{
  \widetilde\BM^{G-\infty,\ani}_{H_\CE} \ar[r]^{\tilde \mu_\CE} \ar[d] & I_{\hat\mu} \widetilde \BM^{\ani}_G\ar[d] \\
\widetilde\A^{G-\infty,\ani}_{\CE} \ar[r]  & \widetilde \A^{\infty,\ani}  
}
\end{equation*}
\end{construction}
In Theorem \ref{IMTheorem} below, we will show that the morphisms $\tilde\mu_\CE$ just constructed for varying $\CE$ completely describe the $k$-valued points of $I_{\hat\mu}\widetilde{\BM}^{\ani}_G$ (at least for $k$ sufficiently big). The proof of this result will rest on the following construction, which associates to every section of $I_{\hat\mu} \widetilde{\BM}^{\ani}_G$ a certain reduction $\tilde F$.
\begin{construction}  \label{ReductionCons} 
Consider a point $\infty_\rho \in \rho_\infty(k)$, a homomorphism $\kappa \colon \hat\mu \to \BT$ over $\Spec(k)$ and a scheme $S$ over $\Spec(k)$. Consider an object $(E,\theta,\tilde\infty)$ of $\widetilde\BM^{\infty,\ani}(S)$ together with a homomorphism $\gamma \colon \hat\mu \to \Aut(E,\theta)$ whose image under the homomorphism $\Aut(E,\theta) \to \BT$ from Construction \ref{cons:auto_torus} is equal to $\kappa$. 


Under the description of $[\mathfrak{g}_D/G]$ given by Construction \ref{TwistedQuotient}, the Higgs bundle $(E,\theta)$ corresponds to a $\BG\rtimes (\pi_0(\kappa) \times \BG_m)$-torsor $\tilde E$ over $X \times S$ together with an equivariant morphism $\tilde\theta\colon \tilde E \to \mathbf{g} \times \rho_\kappa\times D$. In this picture the automorphism $\gamma$ corresponds to an automorphism of the torsor $\tilde E$ which stabilises $\tilde \theta$. 

Fix an \'etale covering $X'$ of $X \times S$ over which $\tilde E$ has a section. By also fixing such a section we can identify $\gamma$ with a homomorphism $\hat\mu_{X'} \to (\BG\rtimes (\pi_0(\kappa) \times \BG_m))_{X'}$. Changing the section amounts to conjugating $\gamma$. Since the image of $\gamma$ stabilizes $\tilde \theta$ and since the group $\pi_0(\kappa)\times \BG_m$ acts without fixed points on $\mathbf{g} \times \rho_\kappa \times D$, the homomorphism $\gamma$ factors through $\BG \subset \BG\rtimes (\pi_0(\kappa) \times \BG_m)$. First we show:
  \begin{lemma}
    The homomorphisms $\gamma,\kappa \colon \hat\mu_{X'} \to \BG_{X'}$ are conjugate \'etale-locally over $X'$.
  \end{lemma}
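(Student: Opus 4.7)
The plan is as follows. First, since $(E, \theta, \tilde\infty)$ lies in $\widetilde{\BM}^{\infty, \ani}(S)$, the automorphism group scheme $\Aut(E, \theta)$ is finite of order coprime to $p$ by \cite[4.11.4]{MR2653248}. Therefore $\gamma$ factors through $\mu_N \to \Aut(E, \theta)$ for some positive integer $N$ coprime to $p$, and the resulting homomorphism $\gamma_{X'}\colon \mu_{N,X'} \to \BG_{X'}$ is a section of the $X'$-scheme $\Hhom(\mu_N, \BG)_{X'}$. The constant homomorphism $\kappa\colon \mu_N \to \BT \subset \BG$ also defines such a section, and the goal is to show these two sections are conjugate by a section of $\BG$ \'etale-locally on $X'$.

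Next, I would analyse $\Hhom(\mu_N, \BG)$ as a $k$-scheme equipped with its conjugation action by $\BG$. Since both $N$ and $|\mathbb{W}|$ are invertible in $k$, any homomorphism $\mu_N \to \BG$ has image in a maximal torus \'etale-locally, and there are only finitely many $\mathbb{W}$-orbits in $\Hhom(\mu_N, \BT) = \Xb_*(\BT)/N\Xb_*(\BT)$. Consequently $\Hhom(\mu_N, \BG)$ decomposes as a disjoint union, indexed by these finitely many $\mathbb{W}$-orbits, of smooth $\BG$-homogeneous spaces of the form $\BG/Z_\BG(\lambda)$. Each such orbit is closed in $\Hhom(\mu_N,\BG)$ (semisimple conjugacy classes in a reductive group are closed), and hence, being finite in number, also open. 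In particular the set-theoretic quotient $\Hhom(\mu_N,\BG)/\BG$ is a finite discrete scheme.

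The composition $X' \toover{\gamma_{X'}} \Hhom(\mu_N, \BG) \to \Hhom(\mu_N, \BG)/\BG$ is insensitive to the choice of trivialising section of $\tilde E$ on $X'$, since changing such a section conjugates $\gamma_{X'}$ by a section of $\BG$. Hence this composition descends to a locally constant morphism $\bar\gamma\colon X \times S \to \Hhom(\mu_N, \BG)/\BG$. By the hypothesis built into Construction \ref{cons:auto_torus}, the specialisation of $\gamma$ along $\infty$ lies in $\BT$ and equals $\kappa$, so $\bar\gamma(\infty \times S) = [\kappa]$. Since $X$ is smooth, proper and geometrically connected over $k$, the projection $X \times S \to S$ identifies connected components, so every connected component of $X \times S$ meets $\infty \times S$. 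Therefore $\bar\gamma$ is constantly equal to $[\kappa]$, and $\gamma_{X'}$ takes values in the $\BG$-orbit $\BG/Z_\BG(\kappa) \hookrightarrow \Hhom(\mu_N, \BG)$ of $\kappa$.

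Finally, the projection $\BG \to \BG/Z_\BG(\kappa)$ is smooth and surjective, hence admits \'etale-local sections. Pulling back such a section along $\gamma_{X'}\colon X' \to \BG/Z_\BG(\kappa)$ yields, \'etale-locally on $X'$, an element $g \in \BG$ with $g \gamma_{X'} g^{-1} = \kappa$, as required. The principal technical step is the clopen decomposition of $\Hhom(\mu_N, \BG)$ into $\BG$-orbits, which relies on the standing hypotheses that $N$ and $|\mathbb{W}|$ are invertible in $k$; once this is established, the descent of the orbit map to $X \times S$ and the identification of the constant value via the anisotropic specialisation at $\infty$ are formal.
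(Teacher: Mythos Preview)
Your approach via the clopen orbit decomposition of $\Hhom(\mu_N,\BG)$ is different from the paper's and is conceptually appealing, but the descent step contains a gap. You assert that changing the trivialising section of $\tilde E$ conjugates $\gamma_{X'}$ by a section of $\BG$. However, $\tilde E$ is not a $\BG$-torsor: it is a torsor for $\BG \rtimes (\Out(\BG) \times \BG_m)$ (this is precisely how the quasi-split twist of $G$ is encoded via Construction~\ref{TwistedQuotient}), so two sections of $\tilde E$ differ by an element of this larger group, and the $\Out(\BG)$-component acts on $\BG$ by outer automorphisms. Since $\Out(\BG)$ may permute the $\BG$-conjugacy classes in $\Hhom(\mu_N,\BG)$, the map $X' \to \Hhom(\mu_N,\BG)/\BG$ does not descend to $X\times S$ as a map to a \emph{constant} finite scheme; only its further image in $\Hhom(\mu_N,\BG)/(\BG\rtimes\Out(\BG))$ does. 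From that one concludes merely that $\gamma_{X'}$ and $\kappa$ are $(\BG\rtimes\Out(\BG))$-conjugate, which does not immediately produce the element $g\in\BG$ you want in the last paragraph. Equivalently, what descends to $X\times S$ is a section of the finite \'etale $X$-scheme $\Hhom(\mu_N,G)/G$, and after trivialising $\rho$ over $X'$ the induced map to $\Hhom(\mu_N,\BG)/\BG$ can a priori take different values on different components of $X'$, depending on how the chosen section of $\tilde E$ trivialises $\rho$ on each component.

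The paper avoids this issue by working entirely on $X'$ rather than descending. It first conjugates $\gamma$ \'etale-locally into $\BT$ and enlarges $X'$ so that $\BT[r]$ becomes a constant group scheme; then both $\gamma$ and $\kappa$ are locally constant sections of $\BT[r]$, hence constant on each connected component of $X'$. Conjugacy at a single point above $\infty$ forces conjugacy on every component of $X'$ meeting the preimage of $\infty$, and the remaining components are handled by an overlap argument: for components $X'_1$ (meeting $\infty$) and $X'_2$ (arbitrary), the fibre product $X'_1\times_{X\times S}X'_2$ has dense image in both, and on it the two incarnations of $\gamma$ are \'etale-locally conjugate, which transfers the conjugacy with $\kappa$ from $X'_1$ to $X'_2$.
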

  \begin{proof}  
 First we claim that \'etale-locally on $X'$, the homomorphism $\gamma$ factors through maximal torus of $\BG$: Consider the centraliser $C_\BG(\gamma)$ of $\gamma$ in $\BG_{X'}$. For example by \cite[Lemma 2.2.4]{MR3362641} this is representable by a smooth group scheme of $\BG_{X'}$, and there exists an open and closed subgroup scheme $C_\BG(\gamma)^\circ$ which fibrewise is given by the identity component. Then $C_\BG(\gamma)^\circ$ is a reductive subgroup scheme of $\BG_{X'}$ through which $\gamma$ factors, as this holds in every geometric fibre. Then any maximal torus of $C_\BG(\gamma)^\circ$, which exists after replacing $X'$ by a suitable covering, is a maximal torus of $\BG_{X'}$ through which $\gamma$ factors.

Hence, by the \'etale-local conjugacy of maximal tori of $\BG$, after enlarging $X'$ and conjugating we may assume that $\gamma$ factors through $\BT_{X'}$. 

Let $r\geq 1$ be such that both $\gamma$ and $\kappa$ factor through a homomorphism $\mu_r \to \BT_{X'}[r]$. After suitably enlarging $X'$ we may assume that $\BT_{X'}[r]$ is a constant group scheme.


 By construction, the homomorphisms $\gamma$ and $\kappa$ are conjugate above $\infty$. Let $X'_1$ be a connected component of $X'$ whose image in $X\times S$ intersects $\infty$. Since $X'_1$ is connected and $\BT[r]$ is constant on $X'$, the fact that the sections $\gamma$ and $\kappa$ of $\BT[r](X'_1)$ are conjugate in some point of $X'_1$ implies that they are conjugate over all of $X'_1$. 

Now let $X'_2$ be any other connected component of $X'$ and let $X'_{12}\defeq X'_1 \times_{X\times S} X'_2$ which has open dense image in both $X'_1$ and $X'_2$. By the construction of $\gamma$, the images of $\gamma|_{X'_1}$ and $\gamma|_{X'_2}$ in $\BT[r](X'_{12})$ are \'etale-locally conjugate. Hence the conjugacy of $\gamma$ and $\kappa$ over $X'_1$ implies that that $\gamma$ and $\kappa$ are conjugate over some geometric point of $X'_2$. Hence they are conjugate over $X'_2$ by the same argument as above. By varying $X'_2$ over all of $X'$ this proves the lemma.
  \end{proof}

Now let $\tilde F \subset \tilde E$ be the subsheaf consisting of those sections of $\tilde E$ on which the actions of $\hat\mu$ through $\gamma$ and $\kappa$ coincide. By the Lemma just proved this is a $(\BG \rtimes \Out(\BG))_\kappa \times \BG_m$-subtorsor of $\tilde E$. Let $\rho_\kappa$ be the $\pi_0(\kappa)$-torsor $\pi_0(F)$ over $X \times S$. The composition 
\begin{equation*}
  \tilde F \to \tilde E \to \mathbf{g} \times \rho \times D \to \rho
\end{equation*}
induces an $o_\BG$-equivariant morphism $\rho_\kappa \to \rho$ over $X \times S$. 

 Next we consider the fibre product
\begin{equation*}
  \xymatrix{
    \hat E \ar[r] \ar[d] & \tilde E \ar[d] \\
    \mathbf{g} \times \rho_\kappa \times D \ar[r] & \mathbf{g} \times \rho \times D.
}
\end{equation*}
This diagram is naturally equivariant under the fibre product
\begin{equation*}
  \xymatrix{
    \BG \rtimes (\pi_0(\kappa) \times \BG_m) \ar[r] \ar[d] & \BG \rtimes (\Out(\BG) \times \BG_m) \ar[d]\\
    \BG \rtimes (\pi_0(\kappa) \times \BG_m) \ar[r] & \BG \rtimes (\Out(\BG) \times \BG_m) ,
  }
\end{equation*}
which makes $\hat E$ into a $\BG \rtimes (\pi_0(\kappa) \times \BG_m)$-torsor. The quotient morphism $\tilde F \to \rho_\kappa$ induces a closed embedding $\tilde F \into \hat E$. Using Proposition \ref{ikProps} one sees that via this embedding $\tilde F$ is a $\BH_\kappa \rtimes (\pi_0(\kappa) \times \BG_m)$-subtorsor of $\hat E$, where this group is embedded into $\BG \rtimes (\pi_0(\kappa \times \BG_m)$ via $\iota_\kappa \times \id_{\BG_m}$.

By the definition of $\tilde F$, on sections of $\tilde F$ the actions on $\tilde E$ of $\hat\mu$ through $\gamma$ and $\kappa$ coincide. Since the image of $\gamma$ fixes $\tilde\theta$, it follows that the composition
\begin{equation*}
  \tilde F \to \tilde E \to \mathbf{g}\times \rho \times D \to \mathbf{g}
\end{equation*}
factors through the fixed points of $\kappa$ on $\mathbf{g}$, that is through $\mathbf{h}_\kappa$. Thus we obtain a $\BH_\kappa \rtimes (\pi_0(\kappa) \times \BG_m)$-equivariant morphism $\tilde F \to \mathbf{h}_\kappa \times \rho_\kappa \times D$. 

\end{construction}

The following follows by combining Constructions \ref{ReductionCons} and \ref{muCons}: 
\begin{proposition} \label{kCase}
  In case $S=\Spec(k)$, the datum $(\kappa,\rho_\kappa,\rho_\kappa \to \rho)$ from Construction \ref{ReductionCons} is a coendoscopic datum $\CE$ for $G$ over $k$ and the sheaf $\tilde F$ corresponds under Construction \ref{TwistedQuotient} to a $H_\CE$-Higgs bundle $F$ over $\Spec(k)$ which maps to the point $((E,\theta),\gamma)$ of $I\BM(k)$ under $\mu_\CE$. 
\end{proposition}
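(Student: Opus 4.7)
The proof plan is to verify each of the three claims in the statement by carefully tracing through the constructions.

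First, I would verify that $\CE = (\kappa, \rho_\kappa, \rho_\kappa \to \rho)$ satisfies Definition \ref{DEDef1}. The type $\kappa\colon \hat\mu \to \BT$ is given. By Construction \ref{ReductionCons} the subsheaf $\tilde F \subset \tilde E$ is a $(\BG \rtimes \Out(\BG))_\kappa \times \BG_m$-subtorsor, so $\rho_\kappa = \pi_0(\tilde F)$ obtained by quotienting out $\BH_\kappa \times \BG_m$ is indeed a $\pi_0(\kappa)$-torsor over $X$, using the connected--\'etale sequence $1 \to \BH_\kappa \to (\BG \rtimes \Out(\BG))_\kappa \to \pi_0(\kappa) \to 1$. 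The composition $\tilde F \to \tilde E \to \mathbf{g} \times \rho \times D \to \rho$ is constant on $\BH_\kappa \times \BG_m$-orbits and hence descends to an $o_\BG$-equivariant morphism $\rho_\kappa \to \rho$.

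Second, I would identify $\tilde F$ with an $H_\CE$-Higgs bundle using the isomorphism
\[ [\mathfrak{h}_{\CE,D}/H_\CE] \cong [(\mathbf{h}_\CE \times \rho_\kappa \times D)/(\BH_\kappa \rtimes (\pi_0(\kappa) \times \BG_m))] \]
of Construction \ref{TwistedQuotient}. The torsor $\tilde F$, together with its $\BH_\kappa \rtimes (\pi_0(\kappa) \times \BG_m)$-equivariant map to $\mathbf{h}_\kappa \times \rho_\kappa \times D$ produced at the end of Construction \ref{ReductionCons}, is precisely the datum of a section $X \to [\mathfrak{h}_{\CE,D}/H_\CE]$, i.e.\ an $H_\CE$-Higgs bundle $F$ over $\Spec(k)$ by Construction \ref{construction:ngo_higgs}.

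Third, for the identification $\mu_\CE(F) = ((E,\theta),\gamma)$ in $I_{\hat\mu}\BM(k)$, I would inspect the construction of $\mu_\CE$ from Constructions \ref{ikCons} and \ref{muCons}. At the level of torsors, $\mu_\CE$ is induced by pushing out along $\iota_\kappa \times \id_{\BG_m}\colon \BH_\kappa \rtimes (\pi_0(\kappa)\times \BG_m) \to \BG \rtimes (\pi_0(\kappa) \times \BG_m)$ and then along $\BG \rtimes (\pi_0(\kappa) \times \BG_m) \to \BG \rtimes (\Out(\BG) \times \BG_m)$. By Construction \ref{ReductionCons}, the intermediate torsor is exactly $\hat E$, and further pushout along $\pi_0(\kappa) \to \Out(\BG)$ (given by the morphism $\rho_\kappa \to \rho$) recovers $\tilde E$; compatibility of the Higgs fields follows from the $\BH_\kappa$-equivariant factorisation $\tilde F \to \mathbf{h}_\kappa \times \rho_\kappa \times D \hookrightarrow \mathbf{g} \times \rho \times D$. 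Thus the underlying $G$-Higgs bundle is $(E,\theta)$. For the homomorphism, the canonical $\gamma_\CE\colon \hat\mu \to Z(H_\CE) \subset \Aut(F)$ induced by $\kappa$ maps, under $\mu_\CE$, to the action of $\hat\mu$ on $\tilde E$ obtained by pushing forward $\kappa$ along $\iota_\kappa$. But by the very definition of $\tilde F$, the actions of $\hat\mu$ on $\tilde E$ via $\gamma$ and via $\kappa$ agree on sections of $\tilde F$, so the induced automorphism of $(E,\theta)$ is $\gamma$.

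The main obstacle is purely organisational: the passage between the three quotient descriptions (the original $\BG \rtimes (\Out(\BG) \times \BG_m)$-torsor, the intermediate $\BG \rtimes (\pi_0(\kappa) \times \BG_m)$-torsor $\hat E$, and the $\BH_\kappa \rtimes (\pi_0(\kappa) \times \BG_m)$-torsor $\tilde F$) involves three group homomorphisms whose interaction with Higgs field data must be tracked. The conceptual input is Proposition \ref{ikProps}, which guarantees that $\iota_\kappa$ lands in the centraliser of $\kappa$, ensuring the compatibility of the $\hat\mu$-actions needed in the last step.
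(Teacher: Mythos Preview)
Your proposal is correct and is essentially an expanded version of the paper's own argument: the paper simply states that the proposition ``follows by combining Constructions \ref{ReductionCons} and \ref{muCons}'' without further detail, and what you have written is a careful unpacking of exactly those two constructions. Your three-step verification (that $\CE$ is a coendoscopic datum, that $\tilde F$ defines an $H_\CE$-Higgs bundle via the quotient description, and that pushing out along $\iota_\kappa$ recovers $((E,\theta),\gamma)$) is precisely the content implicit in the paper's one-line justification.
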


\begin{definition}
  A coendoscopic datum $\CE$ for $G$ over $k$ occurs on $I_{\hat\mu} \widetilde\BM^{\ani}_G(k)$ if there exists a point on $I_{\hat\mu} \widetilde\BM^{\ani}_G(k)$ for which the associated coendoscopic datum from Proposition \ref{kCase} is equal to $\CE$.
\end{definition}

\begin{theorem} \label{IMTheorem}
  For a point $\infty_\rho \in \rho_\infty(k)$ and points $\infty_{\rho_\kappa} \in \rho_{\kappa,\infty}(k)$ mapping to $\infty_\rho$ for every coendoscopic datum $\CE$ for $G$ over $k$ occurring on $I_{\hat\mu}\widetilde\BM^{\ani}_G(k)$, the resulting morphism of groupoids
\begin{equation*}
  \bigsqcup_{\CE}\tilde \mu_\CE \colon  \bigsqcup_{\CE} \widetilde{\BM}^{G-\infty,\ani}_{H_\CE}(k) \to I_{\hat\mu}\widetilde{\BM}^{\ani}_G(k)
\end{equation*}
is an equivalence.
\end{theorem}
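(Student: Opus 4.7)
The plan is to prove the theorem by constructing an inverse functor using Construction \ref{ReductionCons} and Proposition \ref{kCase}, and then verifying that this inverse is well-defined at the level of the rigidified tilde structure and is fully faithful. Essentially everything reduces to carefully reading off what Construction \ref{ReductionCons} produces from a point of the twisted inertia stack, so most of the work has already been done.

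\medskip

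\noindent\textbf{Essential surjectivity.} Start with a point $((E,\theta,\tilde\infty),\gamma) \in I_{\hat\mu}\widetilde{\BM}^{\ani}_G(k)$. The homomorphism $\Aut(E,\theta) \to T_\infty$ of Construction \ref{cons:auto_torus}, combined with the identification $T_\infty \cong \BT$ afforded by the chosen point $\infty_\rho$, sends $\gamma$ to a canonical homomorphism $\kappa\colon \hat\mu \to \BT$; by Construction \ref{ReductionCons} this yields a coendoscopic datum $\CE=(\kappa,\rho_\kappa,\rho_\kappa\to\rho)$ together with the reduction $\tilde F$, and by Proposition \ref{kCase} the sheaf $\tilde F$ corresponds to an $H_\CE$-Higgs bundle $F$ whose image under $\mu_\CE$ is $((E,\theta),\gamma)$. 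The anisotropy of $F$ follows from Lemma \ref{AniComp}, and $F \in \widetilde{\BM}^{G-\infty,\ani}_{H_\CE}(k)$ because $\tilde\infty$ lands in $\mathfrak{t}^{rs}_{D,\infty}$, which identifies with $\mathfrak{t}_{\CE,D,\infty}$ via the chosen lift $\infty_{\rho_\kappa}$ (Lemma \ref{BlaComp}); this identification provides the required section $\tilde\infty_\CE$ on the $H_\CE$-side, and compatibility with $\tilde\mu_\CE$ is built into Construction \ref{TildeMuCons}.

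\medskip

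\noindent\textbf{Fully faithfulness on each component.} Given $F_1,F_2 \in \widetilde{\BM}^{G-\infty,\ani}_{H_\CE}(k)$, a morphism $\tilde\mu_\CE(F_1) \to \tilde\mu_\CE(F_2)$ in the twisted inertia stack is a morphism $(E_1,\theta_1) \to (E_2,\theta_2)$ of $G$-Higgs bundles commuting with $\gamma_1=\gamma_2$ (this comes from $\gamma_\CE$). Translating to the $\BG\rtimes(\pi_0(\kappa) \times \BG_m)$-torsor picture of Construction \ref{ReductionCons}, such a morphism automatically preserves the subsheaf on which the action of $\hat\mu$ through $\gamma$ and $\kappa$ agree, i.e.\ it restricts to a morphism $\tilde F_1 \to \tilde F_2$; this translates back to a morphism $F_1 \to F_2$ of $H_\CE$-Higgs bundles, and compatibility with the $\tilde\infty$-structures is automatic from the construction. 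Hence $\tilde\mu_\CE$ is fully faithful.

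\medskip

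\noindent\textbf{Disjointness of different $\CE$.} Since $\kappa$ is recovered canonically from $\gamma$ via the homomorphism to $T_\infty \cong \BT$ fixed by $\infty_\rho$, two non-equal coendoscopic data of distinct types cannot give isomorphic images. For data of the same type $\kappa$, the torsor $\rho_\kappa$ and the map $\rho_\kappa \to \rho$ are recovered from the reduction $\tilde F \subset \tilde E$ as $\pi_0(\tilde F)$ together with its natural map to $\rho$, so different coendoscopic data with the same type still produce distinct reductions inside $\tilde E$ and hence land in disjoint components of the coproduct.

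\medskip

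\noindent\textbf{Main obstacle.} The delicate point is verifying that the reduction $\tilde F$ produced by Construction \ref{ReductionCons} genuinely lifts to a point of $\widetilde{\BM}^{G-\infty,\ani}_{H_\CE}(k)$ with the prescribed $\tilde\infty_\CE$, and that this lift is canonical up to unique isomorphism. This requires combining the fibrewise conjugation-of-maximal-tori argument from the lemma in Construction \ref{ReductionCons} with the compatibility Lemma \ref{BlaComp} between the various Chevalley bases at $\infty$, and checking that the $o_\BG$-equivariant map $\rho_\kappa \to \rho$ produced generically is the correct one to reconstruct the original datum after applying $\tilde\mu_\CE$.
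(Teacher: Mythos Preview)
Your proposal is correct and follows essentially the same strategy as the paper: produce an inverse to $\bigsqcup_\CE \tilde\mu_\CE$ via Construction \ref{ReductionCons} and Proposition \ref{kCase}, then verify compatibility with the $\tilde\infty$-structure. The paper phrases this as ``construct an inverse functor and check it is a two-sided inverse'' rather than your essential-surjectivity/fully-faithful split, but the content is the same.

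The one place where the paper supplies a concrete argument that you only gesture at is precisely your ``Main obstacle'', namely checking that $(F,\tilde\infty_\kappa)$ really defines a point of $\widetilde{\BM}^{G-\infty,\ani}_{H_\CE}(k)$, i.e.\ that the image of $F$ in $\mathfrak{h}_{\CE,D}/H_\CE$ at $\infty$ agrees with the image of $\tilde\infty_\kappa$. Your claim that this is ``built into Construction \ref{TildeMuCons}'' is too quick. The paper's argument is short but has a real idea: take $x \in \tilde F(\bar k) \subset \tilde E(\bar k)$ lying over $\infty$ with image $\bar x \in \mathbf{h}_\kappa \subset \mathbf{g}$; by commutativity of the diagram \eqref{SomeDiag} defining $\tilde\infty$, the points $\bar x$ and $\tilde\infty$ are $\BG(\bar k)$-conjugate, say by $g$; but since $\kappa$ was \emph{defined} as the image of $\gamma$ under $\Aut(E,\theta)\to \BT$ at $\infty$, any such $g$ must centralise $\kappa$, hence $g \in (\BG \rtimes \Out(\BG))_\kappa$, so $\bar x$ and $\tilde\infty_\kappa$ have the same image in $\mathbf{h}_{\kappa}/\BH_\kappa \cong \mathfrak{h}_{\CE,D,\infty}/H_{\CE,\infty}$. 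This is the step you should spell out rather than defer.
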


\begin{rmk} \label{SplitingRmk}
  Using Construction \ref{cons:auto_torus}, Construction \ref{ReductionCons} and the fact that $I_{\hat\mu}\widetilde{\BM}^{\ani}_G$ is of finite type over $\Spec(k)$ one sees that there exist only finitely many coendoscopic data for $G$ over $\bar k$ which occur on $I_{\hat\mu}\widetilde\BM^{\ani}_{G,\bar k}$. Hence in case $k$ is finite one may always choose points $\infty_\rho$ and $\infty_{\rho_\kappa}$ as in Theorem \ref{IMTheorem} after possibly replacing $k$ by a finite extension.
\end{rmk}
\begin{proof}[Proof of Theorem \ref{IMTheorem}]

We construct an inverse morphism: Consider a point $((E,\theta),\tilde \infty,\gamma)$ of $I_{\hat\mu}\widetilde\BM^{\ani}_G(k)$. By composing $\gamma \colon \hat\mu \to \Aut(E,\theta)$ with the homomorphism $\Aut(E,\theta) \to \BT$ from Construction \ref{cons:auto_torus} we obtain a homomorphism $\kappa \colon \hat\mu \to \BT$ over $\Spec(k)$. Then Construction \ref{ReductionCons} gives us a coendoscopic datum $\CE$ for $\BG$ over $k$ and $H_\CE$-Higgs bundle $F$ mapping to $((E,\Theta),\gamma)$ under $\mu_\CE$. By Lemma \ref{AniComp}, the $H_\CE$-Higgs bundle $F$ lies above $\A_\CE^{\ani}$. 

It remains to lift $F$ to $\widetilde{\BM}^{G-\infty,\ani}_{H_\CE}$. Consider the morphism $\tilde \infty_\kappa\colon \Spec(k) \to \mathfrak{t}_{\CE,\infty}$ obtained by composing $\tilde\infty\colon \Spec(k) \to \mathfrak{t}_{D,\infty}$ with the isomorphism $\mathfrak{t}_{D,\infty} \cong \mathbf{t}_{D,\infty} \cong \mathfrak{t}_{\CE,D,\infty}$ as in Lemma \ref{BlaComp}. We claim that endowing $F$ with the morphism $\tilde \infty_\kappa$ gives a preimage $\underline F$ of $\underline E$ in $\widetilde{\BM}^{G-\infty,\ani}_{H_\CE}$. That is we claim the following:

\begin{lemma}
The left part of the following diagram commutes:
\begin{equation*}
  \xymatrix{
    X \ar[r] & [\mathfrak{h}_D/H] \ar[r]^{\mu_\CE} \ar[d] & [\mathfrak{g}_D/G]  \ar[d]\\
    & \mathfrak{h}_D/H \ar[r]^{\nu_\CE} & \mathfrak{g}_D/G \\
    \Spec(k)  \ar[uu]^\infty \ar[r]^{\tilde\infty_\kappa} & \mathfrak{t}_{\CE,D,\infty}^{\text{rs}} \ar[ru] \ar[u] &
}
\end{equation*}
\end{lemma}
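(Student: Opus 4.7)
The plan is to reduce this to a direct verification by tracing through Construction \ref{ReductionCons} at the fibre over $\infty$. First, I would decompose the diagram into three subregions whose commutativity is already available and use them to pin down the claim. Namely, the outer rectangle that runs from $\Spec(k)$ through $[\mathfrak{g}_D/G]$ and down to $\mathfrak{g}_D/G$ commutes because $(E,\theta,\tilde\infty)$ is a point of $\widetilde{\BM}^{\infty,\ani}(k)$, i.e.\ by diagram \eqref{SomeDiag}. The right-hand square commutes by the very construction of $\nu_\CE$ from $\mu_\CE$ (Construction \ref{muCons}). The bottom-right triangle commutes by Lemma \ref{BlaComp}. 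In addition, the top-left square commutes by the defining property of $F$: the $H_\CE$-Higgs bundle $F$ maps to $E$ under $\mu_\CE$.

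Putting these pieces together shows that the two morphisms $\Spec(k) \to \mathfrak{h}_D/H$ in the left part of the diagram --- one going via $X$, the other via $\tilde\infty_\kappa\colon \Spec(k) \to \mathfrak{t}_{\CE,D,\infty}^{\text{rs}}$ --- become equal after postcomposition with $\nu_\CE$. The main obstacle is that $\nu_\CE$ is in general not injective (this is the whole point of endoscopy), so one cannot simply cancel it to conclude. The plan is therefore to verify the equality before applying $\nu_\CE$, using the explicit reduction $\tilde F \subset \tilde E$ from Construction \ref{ReductionCons}.

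To carry this out, I would work in the local picture at $\infty$. The chosen points $\infty_\rho \in \rho_\infty(k)$ and $\infty_{\rho_\kappa}\in \rho_{\kappa,\infty}(k)$ produce trivialisations of $\rho$ and $\rho_\kappa$ above $\infty$, and hence identifications $G_\infty \cong \BG$ and $H_{\CE,\infty} \cong \BH_\kappa$ compatible with the pinnings (as in Lemma \ref{BlaComp}). Under these identifications, the Higgs field of $E$ at $\infty$ is $\tilde\infty \in \mathfrak{t}_{D,\infty}^{\text{rs}} \cong \mathbf{t}_D$, and by Construction \ref{cons:auto_torus} the homomorphism $\gamma|_\infty$ is by construction (since $\kappa$ is defined as the image of $\gamma$ in $\BT$ via Construction \ref{cons:auto_torus}) the fixed homomorphism $\kappa\colon \hat\mu \to \BT$. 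Consequently, the trivialisation of $\tilde E|_\infty$ determined by $\infty_\rho$ lies in the subsheaf $\tilde F|_\infty$ from Construction \ref{ReductionCons}, since precisely those sections along which $\gamma$ and $\kappa$ act identically constitute $\tilde F$.

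Finally, since the restriction of $\tilde\theta$ to $\tilde F$ factors through $\mathbf{h}_\kappa \times \rho_\kappa \times D$, the Higgs field of $F$ at $\infty$ reads off, in the trivialisation provided by $\infty_{\rho_\kappa}$, as the same element $\tilde\infty \in \mathbf{t}_D$. Via the isomorphism $\mathbf{t}_D \cong \mathfrak{t}_{\CE,D,\infty}$ furnished by $\infty_{\rho_\kappa}$ this element is, by definition, $\tilde\infty_\kappa$. This is exactly the content of the commutativity of the left part of the diagram. The main delicate step is the verification that the canonical section given by $\infty_\rho$ really lies in $\tilde F|_\infty$, but this is forced by the very definition of $\kappa$ as the image of $\gamma|_\infty$ in $\BT$ under Construction \ref{cons:auto_torus} together with the observation that the $\pi_0(\kappa)\times \BG_m$-component acts without fixed points on $\mathbf{g}\times \rho_\kappa \times D$.
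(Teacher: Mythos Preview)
Your reduction is sound: the outer rectangle, the right square, the bottom-right triangle, and the top-left square all commute for the reasons you state, and this does reduce the lemma to an equality of two points of $\mathfrak{h}_{\CE,D}/H_\CE$ in the fibre over $\infty$. You also correctly identify that $\nu_\CE$ cannot simply be cancelled.

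The gap is in your final step. You assert that ``the trivialisation of $\tilde E|_\infty$ determined by $\infty_\rho$ lies in $\tilde F|_\infty$'', but no such trivialisation exists. The point $\infty_\rho \in \rho_\infty(k)$ trivialises the structure group, i.e.\ gives $G_\infty \cong \Gb$; it does not produce a section of the $G$-torsor $E|_\infty$, nor of $\tilde E|_\infty$. The data $(E,\theta,\tilde\infty)\in\widetilde{\mathbb M}$ only records a lift of $\chi(E,\theta)|_\infty$ to $\mathfrak{t}_{D,\infty}^{\mathrm{rs}}$, which is a lift in the coarse moduli space, not in the stack. Moreover, Construction~\ref{cons:auto_torus} tells you that $\gamma|_\infty$, viewed abstractly as a section of $J$ and identified with an element of $\BT$ \emph{via the lift $\tilde\infty$}, equals $\kappa$. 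This is a statement in $J$, not a statement that $\gamma$ acts by $\kappa$ on any particular point of $\tilde E|_\infty$; for an arbitrary $y\in\tilde E|_\infty$ one has $\gamma(\zeta)\cdot y = c_y(\zeta)\cdot y$ with $c_y$ only conjugate to $\kappa$, not equal to it.

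The paper's argument supplies exactly the missing step. One takes any $x\in\tilde F(\bar k)$ above $\infty$ and lets $\bar x\in\mathbf{h}_\kappa\subset\mathbf{g}$ be its image. Commutativity of diagram~\eqref{SomeDiag} forces $\bar x$ and $\tilde\infty$ (viewed in $\mathbf{t}$ via $\infty_{\rho_\kappa}$) to be $\Gb(\bar k)$-conjugate, say by $g$. The crucial observation is then that conjugation by $g$ realises the canonical identification of the two fibres of $J$ at $\bar x$ and at $\tilde\infty$; since $\gamma|_\infty$ reads off as $\kappa$ in both fibres (by definition of $\tilde F$ on one side and by Construction~\ref{cons:auto_torus} on the other), this forces $g\kappa g^{-1}=\kappa$, i.e.\ $g\in(\Gb\rtimes\Out(\Gb))_\kappa$. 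Hence $\bar x$ and $\tilde\infty$ already coincide in $\mathfrak{h}_{\CE,D}/H_\CE$, which is the required equality.
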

\begin{proof}
  Let $x\in \tilde F(\bar k) \subset \tilde E(\bar k)$ and $\bar x$ its image in $\mathbf{h}_\kappa \subset \mathbf{g}$. By the commutativity of \eqref{SomeDiag} the image of $\tilde\infty_\kappa$ under the isomorphism $\mathfrak{t}_{\CE,\infty}\cong \mathbf{t}$ given by $\infty_{\rho_\kappa}$ and the point $\bar x$ of $\mathbf{g}_{\bar k}$ are conjugate by an element $g$ of $\BG(\bar k)$. Furthermore the construction of $\kappa$ shows that such an element $g$ must stabilise $\kappa$ so that $g \in (\BG \rtimes \Out(\BG))_\kappa$. This shows that the images of $x$ and $\tilde\infty_\kappa$ in $\mathbf{h}_{\CE,D}/H_\CE$ coincide, which is what we wanted.
\end{proof}
So we have constructed a preimage of $\underline E$ under $\bigsqcup_{\CE}\tilde \mu_\CE$. One checks that this construction is functorial. Then to show that $\bigsqcup_{\CE}\tilde \mu_\CE$ gives an equivalence on $k$-valued points one checks that if one starts with a coendoscopic datum $\CE$ for $\BG$ over $k$ and a point $\underline F$ of $\widetilde \BM^{G-\infty,\ani}_{H_\CE}(k)$ and applies the above construction to $\tilde\mu_\CE(\underline F)$, one gets back the same endoscopic datum $\CE$ and the same point $\underline F$.
\end{proof}

For a coendoscopic datum $\CE = (\kappa,\rho_{\kappa},\rho_{\kappa} \to \rho)$, the condition that the torsor $(\rho_{\kappa})_{\infty} \to \Spec k$ splits, is of arithmetic nature. It is always satisfied over an algebraic closure $\bar{k}$, respectively after replacing $k$ by an appropriate finite field extension. It is therefore possible that for a non-algebraically closed field $k$, this splitting exists only for some coendoscopic data $\CE$. For this reason we record the following variant of the result above. It will be handy to introduce the following notation:
$$\left(I_{\hat\mu}\widetilde{\BM}^{\ani}_G(k)\right)_{\kappa} = I_{\hat\mu}\widetilde{\BM}^{\ani}_G(k) \cap \bigsqcup_{\CE,\text{ type }=\kappa}\widetilde{\BM}^{G-\infty,\ani}_{H_\CE}(\bar{k}).$$

For a fixed $k$-rational point $a \in \wac^{\ani}_G(k)$, and a fixed $\kappa$ there is at most one coendoscopic datum $\CE_a$ of type $\kappa$ which occurs in the \emph{Hitchin fibre} $(I_{\hat\mu}\Mc_G)_{a}$. This follows from \cite[Proposition 6.3.3]{MR2653248}.

\begin{corollary}\label{kappa_IMTheorem}
Assume that the torsor $\infty_{\rho_{\CE_a}}(k)$ is non-empty, and choose a point $\infty_{\rho_\kappa} \in \rho_{\kappa,\infty}(k)$ mapping to $\infty_\rho$. The morphism of groupoids
\begin{equation*}
 \mu_{\CE_a} \colon  \widetilde{\BM}^{G-\infty,\ani}_{H_{\CE_a},a}(k) \to \left(I_{\hat\mu}\widetilde{\BM}^{\ani}_G(k)\right)_{a,\kappa}
\end{equation*}
is an equivalence.
\end{corollary}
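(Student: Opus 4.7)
The plan is to deduce this refinement of Theorem \ref{IMTheorem} by isolating the contribution of the unique coendoscopic datum of fixed type above a fixed point $a$. By Remark \ref{SplitingRmk}, only finitely many coendoscopic data for $G$ occur on $I_{\hat\mu}\widetilde\BM^{\ani}_{G,\bar k}$, so after replacing $k$ by a sufficiently large finite extension $k'$ we may choose points $\infty_{\rho}$ and $\infty_{\rho_\kappa}$ for every such datum and apply Theorem \ref{IMTheorem} over $k'$ to obtain the equivalence
\[
\bigsqcup_\CE \tilde\mu_\CE\colon \bigsqcup_\CE \widetilde{\BM}^{G-\infty,\ani}_{H_\CE}(k') \xrightarrow{\sim} I_{\hat\mu}\widetilde{\BM}^{\ani}_G(k').
\]
Base-changing further to $\bar k$ and restricting to the fibre over $a$, the uniqueness statement of \cite[Proposition 6.3.3]{MR2653248} tells us that for fixed $a$ and $\kappa$ at most one coendoscopic datum $\CE_a$ of type $\kappa$ occurs above $a$. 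The above equivalence therefore restricts, fibrewise, to the equivalence
\[
\tilde\mu_{\CE_a}\colon \widetilde{\BM}^{G-\infty,\ani}_{H_{\CE_a},a}(\bar k) \xrightarrow{\sim} \left(I_{\hat\mu}\widetilde{\BM}^{\ani}_G(\bar k)\right)_{a,\kappa}.
\]

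Next I would perform Galois descent. The Galois group $\Gamma=\Gal(\bar k/k)$ acts on both sides, and since the equivalence above is natural in its $\bar k$-inputs it is $\Gamma$-equivariant, once one knows the morphism $\tilde\mu_{\CE_a}$ is itself defined over $k$. For this defininition one needs the coendoscopic datum $\CE_a$ together with the point $\infty_{\rho_{\CE_a}}$ used in Construction \ref{TildeMuCons} to exist over $k$. The uniqueness of $\CE_a$ already implies that $\Gamma$ permutes the (unique) coendoscopic datum of type $\kappa$ above $a$ to itself, so $\CE_a$ descends to $k$. The assumption $\infty_{\rho_{\CE_a}}(k)\ne\emptyset$ then supplies the remaining $k$-rational splitting. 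Taking $\Gamma$-fixed points of the displayed equivalence—which on the left yields $\widetilde{\BM}^{G-\infty,\ani}_{H_{\CE_a},a}(k)$ and on the right yields $(I_{\hat\mu}\widetilde{\BM}^{\ani}_G(k))_{a,\kappa}$, by the very definition of the latter set—gives the claim.

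The main obstacle is to verify that the inverse equivalence produced in the proof of Theorem \ref{IMTheorem} is genuinely defined over $k$. Concretely, given a $k$-rational section of the right hand side, one must run Construction \ref{ReductionCons} with all the auxiliary data over $k$ rather than over $\bar k$: the homomorphism $\gamma$, the torsor $\tilde F$ carved out of $\tilde E$, and the resulting $H_{\CE_a}$-Higgs bundle $F$ together with its framing at $\infty$ coming from $\tilde\infty_\kappa$. This is where the hypothesis $\infty_{\rho_{\CE_a}}(k)\ne \emptyset$ is crucial, because the identification $\mathfrak{t}_{\CE_a,\infty}\cong \mathfrak{t}_\infty$ used to produce $\tilde\infty_\kappa$ from $\tilde\infty$ (Lemma \ref{BlaComp}) depends on such a rational point; without it the inverse is only defined after base change to $\bar k$. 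Once this is established, both essential surjectivity and full faithfulness on $k$-points follow formally from Galois invariance of the bijection on $\bar k$-points.
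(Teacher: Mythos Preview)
Your argument is correct, but it takes a detour that the paper avoids. In the paper the corollary is stated without proof because it is meant to follow by simply re-reading the proof of Theorem \ref{IMTheorem} restricted to the fibre over $a$ and the single type $\kappa$: the inverse is built directly over $k$ via Construction \ref{ReductionCons} and Proposition \ref{kCase}, and the only place a splitting $\infty_{\rho_\kappa}$ enters is Lemma \ref{BlaComp}, used to manufacture $\tilde\infty_\kappa$ for the one datum $\CE_a$ that arises. So no field extension and no Galois descent are needed. Your approach instead enlarges $k$ to $k'$ so that Theorem \ref{IMTheorem} applies in full, passes to $\bar k$, isolates the $\kappa$-piece over $a$ using \cite[Proposition 6.3.3]{MR2653248}, and then descends. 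This is sound, but notice that in your final paragraph you essentially rediscover the direct argument: once you have checked that the inverse from the proof of Theorem \ref{IMTheorem} is defined over $k$ for $\CE_a$ (which is exactly what the hypothesis $\infty_{\rho_{\CE_a}}(k)\neq\emptyset$ buys), you already have both directions of the equivalence on $k$-points, and the appeal to Galois invariance of the $\bar k$-bijection becomes redundant. The detour does buy one thing: it makes transparent that the $k$-rationality of $\CE_a$ itself is automatic from uniqueness, a point the paper leaves implicit.
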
 



\subsection{The twisted inertia stack of the rigidification}
In Definition \ref{defi:higgs}(c) we introduced the rigidification $\widetilde{\Mc}^{\ani}_G$ of the stack $\widetilde{\BM}^{\ani}_G$ by its generic stabiliser $Z(X,G)$. Here we explain how to modify the statement of Theorem \ref{IMTheorem} to obtain a description of the twisted inertia stack $I_{\hat\mu} \widetilde{\mathcal{M}}^{\ani}_G$.

\begin{construction}\label{grig}
   Let $\CE=(\kappa,\rho_\kappa,\rho_\kappa \to \rho)$ be a coendoscopic datum for $\BG$ over $k$ and let $\infty_{\rho_\kappa}$ be a point in $\rho_\kappa(k)$ above $\infty$. 

Since the actions of $\Out(\BG)$ on $\BG$ and $\Out(\BH)$ on $\BH_\kappa$ preserve $Z(\BG)$ and $Z(\BH_\kappa)$ we have $Z(G)=Z(\BG) \times^{\pi_0(\kappa)} \rho_\kappa$ and $Z(H_\CE)=Z(\BH_\kappa)\times ^{\pi_0(\kappa)} \rho_\kappa$. Furthermore, property (iii) of Construction \ref{ikCons} implies that the inclusion $Z(\BG) \into Z(\BH_\kappa)$ is equivariant with respect to the action of $\pi_0(\kappa)$ on $Z(\BG)$ through $o_\BG$ and on $Z(\BH_\kappa)$ through $o_{\BH}$. Hence by twisting this inclusion with $\rho_\kappa$ we obtain a natural inclusion $Z(G) \into Z(H_\CE)$ over $X$. This induces an inclusion $Z(X,G) \into Z(X,H_\CE)$ of the Weil restrictions of these group schemes to $\Spec(k)$ (c.f. Definition \ref{defi:higgs}).

The group scheme $Z(X,H_\CE)$ acts naturally on every section of $\widetilde{\BM}^{G-\infty,\ani}_{H_\CE}$. Hence through the above inclusion $Z(X,G) \into Z(X,H_\CE)$ the same is true for the group $Z(X,G)$. 
\begin{definition}\label{defi:Grig}
We let $\widetilde{\mathcal{M}}^{G-\infty,\ani,G-\rig}_{H_\CE}$ be the rigidification (c.f. \cite[Theorem 5.1.5]{abramovich2003twisted}) of $\widetilde{\BM}^{G-\infty,\ani}_{H_\CE}$ by this action of $Z(X,G)$.
\end{definition}

The morphism $$\tilde\mu_\CE\colon \widetilde{\BM}^{G-\infty,\ani}_{H_\CE} \to I_{\hat\mu} \widetilde{\BM}^{\ani}_G$$ given by Construction \ref{TildeMuCons} is equivariant with respect to the action of $Z(X,G)$ on both these stacks. Hence it induces a morphism
\begin{equation*}
  \bar\mu_\CE\colon \widetilde{\mathcal{M}}^{G-\infty,\ani,G-\rig}_{H_\CE} \to I_{\hat\mu} \widetilde{\mathcal{M}}^{\ani}_G.
\end{equation*}
\end{construction}

\begin{construction}\label{const:coend_action}
Consider the set of pairs $(\CE,\infty_{\rho_\kappa})$ consisting of an endoscopy datum $\CE=(\kappa,\rho_\kappa,\rho_\kappa \to \rho)$ for $G$ over $k$ together with a point $\infty_{\rho_\kappa} \in \rho_{\kappa}(k)$ above $\infty$. There is a natural free action of the group $\Hom(\widehat{\mu},Z(X,\Gb))(k)$ on this set as follows: 

Let $(\CE,\infty_{\rho_\kappa})$ be such a pair and $\lambda\colon \widehat\mu \to Z(X,G)$ a homomorphism over $k$. The pair $(\rho_\kappa,\infty_{\rho_\kappa})$ amounts to a group homomorphism $\pi_1^{\text{\'et}}(X,\infty) \to \pi_0(\kappa)$. Since the $\pi_0(\kappa)$-torsor $\rho_{\kappa}$ induces the $\Out(\Gb)$-torsor $\rho$, we obtain
$$\Theta \defeq \mathsf{image}(\pi_1^{\text{\'e}t}(X,\infty) \to \pi_0(\kappa) \to \Out(\Gb)) = \mathsf{image}(\pi_1^{\text{\'et}}(X,\infty) \to \Out(\Gb)) .$$
We define $\pi_0(\kappa)_{\Theta}$ to be the pullback $\pi_0(\kappa) \times_{\Out(\Gb)} \Theta$. By the observation above, the $\pi_0(\kappa)$-torsor $\rho_{\kappa}$ has a canonical reduction to a $\pi_0(\kappa)_{\Theta}$-torsor $\rho_{\kappa,\Theta}$ which is equipped with a $k$-rational point $\infty_{\rho_{\kappa,\Theta}}$ above $\infty$. For this reason, the datum $(\CE,\infty_{\rho_\kappa})$ is equivalent to the datum $((\kappa,\rho_{\kappa,\Theta},\rho_{\kappa,\Theta}\to \rho),\infty_{\rho_{\kappa,\Theta}})$.

Now let $\kappa'\defeq \lambda \cdot \kappa$. Using the fact that $\lambda \in \Hom(\widehat{\mu},Z(X,G))(k) = \Hom(\widehat{\mu},Z(\Gb))(k)^{\Theta}$ we observe $\pi_0(\lambda\cdot{} \kappa)_{\Theta} = \pi_0(\kappa)_{\Theta}$. Hence we can consider the pair $((\kappa', \rho_{\kappa,\Theta},\rho_{\kappa,\Theta} \to \rho),\infty_{\rho_{\kappa,\Theta}})$ which by the above corresponds to a coendoscopic datum $\CE'\defeq (\kappa',\rho_{\kappa'},\rho_{\kappa'}\to \rho)$ together with a $k$-rational point $\infty_{\rho_{\kappa'}}$ above $\infty$. This defines the action of $\lambda$.


By construction, one has $H_\CE \simeq H_{\lambda \cdot{} \CE}$, and therefore an isomorphism $\widetilde{\mathcal{M}}^{G-\infty,\ani,G-\rig}_{H_\CE} \simeq \widetilde{\mathcal{M}}^{G-\infty,\ani,G-\rig}_{H_{\lambda \cdot{} \CE}}$.
\end{construction}

\begin{definition} \label{CompChoiceDef}
  We call a choice of points $\infty_\rho \in \rho_\infty(k)$ and $\infty_{\rho_\kappa} \in \rho_{\kappa,\infty}(k)$ for all coendoscopic data $\CE=(\kappa,\rho_\kappa,\rho_\kappa\to \rho)$ for $G$ over $k$ occurring on $I_{\hat\mu}\widetilde\BM^{\ani}_G(k)$ as in Theorem \ref{IMTheorem} \emph{compatible} if for every such $\CE$ and every $\lambda\colon \widehat{\mu} \to Z(G,X)$ the image of $(\CE,\infty_{\rho_\kappa})$ under $\lambda$ is of the form $(\CE',\infty_{\rho_{\kappa'}})$.
\end{definition}
\begin{rmk} \label{CompChoiceRmk}
  A compatible choice of points as in Definition \ref{CompChoiceDef} exists as soon as all the torsors $\rho_{\kappa,\infty}$ appearing there are trivial. This follows from the fact that for a pair $(\CE,\infty_{\rho_\kappa})$ as in Construction \ref{const:coend_action} and a non-trivial homomorphism $\lambda \colon \widehat{\mu} \to Z(X,G)$ sending $(\CE,\infty_{\rho_\kappa})$ to $(\CE',\infty_{\rho_{\kappa'}})$, the coendoscopic datum $\CE'$ is necessarily different from $\CE$.
\end{rmk}
Now Theorem \ref{IMTheorem} implies the following:
\begin{theorem} \label{IMTheoremTwisted}
  For a compatible choice of points $\infty_\rho \in \rho_\infty(k)$ and $\infty_{\rho_\kappa} \in \rho_{\kappa,\infty}(k)$ mapping to $\infty_\rho$ for every coendoscopic datum $\CE$ for $G$ over $k$ occurring on $I_{\hat\mu}\widetilde\BM^{\ani}_G(k)$, the resulting morphism of groupoids
\begin{equation*}
  \bigsqcup_{\{(\CE,\infty_{\rho_\kappa})\}/\Hom(\widehat\mu,Z(X,G))(k)}\bar \mu_\CE \colon  \bigsqcup_{\{(\CE,\infty_{\rho_\kappa})\}/\Hom(\widehat\mu,Z(X,G))(k)} \widetilde{\mathcal{M}}^{G-\infty,\ani,G-\rig}_{H_\CE}(k) \to I_{\hat\mu}\widetilde{\mathcal{M}}^{\ani}_G(k)
\end{equation*}
is an equivalence.
\end{theorem}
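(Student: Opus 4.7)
The plan is to deduce Theorem \ref{IMTheoremTwisted} from Theorem \ref{IMTheorem} by carefully tracking how both sides transform under the rigidification by $Z(X,G)$. First I would unpack what the rigidification does on the target: a $k$-point of $I_{\hat\mu}\widetilde{\mathcal{M}}^{\ani}_G(k)$ is a tuple $((E,\theta),\tilde\infty,\bar\gamma)$ where $\bar\gamma\colon \hat\mu \to \Aut(E,\theta)/Z(X,G)$. Since $\hat\mu$ is profinite (of order coprime to $p$) and $Z(X,G)$ is a finite \'etale abelian group scheme over $k$, any two lifts of $\bar\gamma$ to $\hat\mu \to \Aut(E,\theta)$ differ by a homomorphism $\lambda\colon \hat\mu \to Z(X,G)$. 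This gives the projection
\begin{equation*}
  I_{\hat\mu}\widetilde{\BM}^{\ani}_G(k) \to I_{\hat\mu}\widetilde{\mathcal{M}}^{\ani}_G(k)
\end{equation*}
the structure of a (pointed) $\Hom(\hat\mu,Z(X,G))(k)$-torsor on fibres (up to the obvious automorphism equivalences).

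Next I would match this free action with the one on the source side. By Construction \ref{cons:auto_torus}, the type $\kappa$ attached to $((E,\theta),\tilde\infty,\gamma)$ is the image of $\gamma$ under $\Aut(E,\theta) \to T_\infty \cong \BT$; replacing $\gamma$ by $\lambda \cdot \gamma$ replaces $\kappa$ by $\lambda \cdot \kappa$, and by Construction \ref{const:coend_action} this agrees precisely with the action of $\lambda$ on pairs $(\CE,\infty_{\rho_\kappa})$. The compatibility hypothesis of Definition \ref{CompChoiceDef} is exactly what guarantees that the $\Hom(\hat\mu,Z(X,G))(k)$-orbit of the chosen points $\infty_{\rho_\kappa}$ corresponds precisely to the orbit of coendoscopic data, so that summing over orbits $\{(\CE,\infty_{\rho_\kappa})\}/\Hom(\hat\mu,Z(X,G))(k)$ on the source matches the quotient on the target.

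Then I would combine this with the source-side rigidification. For a fixed $\CE$, the inclusion $Z(X,G) \hookrightarrow Z(X,H_\CE)$ from Construction \ref{grig} shows that the natural projection
\begin{equation*}
\widetilde{\BM}^{G-\infty,\ani}_{H_\CE}(k) \to \widetilde{\mathcal{M}}^{G-\infty,\ani,G-\rig}_{H_\CE}(k)
\end{equation*}
quotients out the $Z(X,G)$-automorphisms, which are exactly the automorphisms of $\underline F$ that map under $\tilde\mu_\CE$ to the trivial $Z(X,G)$-automorphism of $(E,\theta,\tilde\infty,\gamma)$. Taking the disjoint union over orbits of compatible pairs and combining with the essential surjectivity/fully-faithfulness from Theorem \ref{IMTheorem}, the induced morphism $\bar\mu_\CE$ factors through the rigidifications on both sides to yield the desired equivalence.

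The main obstacle I expect is bookkeeping of automorphisms in the fully-faithful check: one needs to verify that an automorphism $\bar\beta$ of $((E,\theta),\tilde\infty,\bar\gamma)$ in the rigidified inertia stack -- which only commutes with $\gamma$ up to a homomorphism $\hat\mu \to Z(X,G)$ -- lifts to an actual $\underline F$-automorphism after possibly jumping between different components $\widetilde{\mathcal{M}}^{G-\infty,\ani,G-\rig}_{H_{\CE'}}(k)$ in the same $\Hom(\hat\mu,Z(X,G))(k)$-orbit. The compatibility condition from Definition \ref{CompChoiceDef} should ensure that such jumps are either impossible (giving an honest $Z(X,G)$-automorphism, hence an automorphism in the rigidified source) or automatically accounted for by the quotient on the indexing set; making this dichotomy precise, using Remark \ref{CompChoiceRmk}, is where one must argue with care.
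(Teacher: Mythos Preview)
Your approach is correct and is precisely the unpacking of the paper's one-line proof (``Now Theorem~\ref{IMTheorem} implies the following''); the deduction from Theorem~\ref{IMTheorem} via tracking the $Z(X,G)$-rigidification on both sides and the $\Hom(\hat\mu,Z(X,G))(k)$-action on coendoscopic data is exactly what is intended. The obstacle you flag does not in fact arise: by Construction~\ref{cons:auto_torus} the group $\Aut(E,\theta)$ embeds into $T_\infty$ and is therefore abelian, so any lift $\beta$ of $\bar\beta$ automatically commutes with any lift $\gamma$ of $\bar\gamma$, and the fully-faithfulness check reduces to the trivial observation that both sides have automorphism group $\Aut(E,\theta)/Z(X,G)$.
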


As observed in Corollary \ref{kappa_IMTheorem}, there's a slightly more refined statement by considering only coendoscopic data of type $\kappa$ and a fixed Hitchin fibre. For a $k$-rational point $a \in \wac^{\ani}_G(k)$, and a fixed $\kappa$ there is at most one coendoscopic datum $\CE_a$ of type $\kappa$ which occurs in the \emph{Hitchin fibre} $(I_{\hat\mu}\Mc_G)_{a}$. This follows from \cite[Proposition 6.3.3]{MR2653248}.

\begin{corollary}\label{kappa_IMTheoremTwisted}
Assume that the torsor $\infty_{\rho_{\CE_a}}(k)$ is non-empty, and choose a point $\infty_{\rho_\kappa} \in \rho_{\kappa,\infty}(k)$ mapping to $\infty_\rho$. The resulting morphism of groupoids
\begin{equation*}
 \mu_{\CE_a} \colon  \widetilde{\mathcal{M}}^{G-\infty,\ani,G-\rig}_{H_{\CE_a}}(k) \to \left(I_{\hat\mu}\widetilde{\mathcal{M}}^{\ani}_G(k)\right)_{a,\kappa}
\end{equation*}
is an equivalence.
\end{corollary}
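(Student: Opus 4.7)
The plan is to deduce this corollary from Theorem \ref{IMTheoremTwisted} by restricting the equivalence stated there to the Hitchin fibre above $a$ and to twisted inertia classes of type $\kappa$. Two compatibilities will make the restriction transparent: first, the morphism $\bar\mu_\CE$ is $\wac_G$-linear, so restricting to the fibre over $a$ on the target amounts to replacing $\wmc^{G-\infty,\ani,G-\rig}_{H_\CE}$ by its $\nu_\CE$-fibre over $a$ on the source; second, by Constructions \ref{cons:auto_torus} and \ref{ReductionCons} the type of the resulting twisted inertia point coincides with the type of the coendoscopic datum $\CE$, so $\bar\mu_\CE$ lands in the type-$\kappa$ locus if and only if $\CE$ itself has type $\kappa$.

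Next I would analyse the action of $\Hom(\widehat\mu, Z(X,G))(k)$ from Construction \ref{const:coend_action}: a non-trivial $\lambda$ changes the type from $\kappa$ to $\lambda\cdot \kappa$, so each orbit meets the type-$\kappa$ locus in at most one representative. Combining this with the type compatibility above, after restricting to the fibre over $a$ and to type $\kappa$, the disjoint union on the source of Theorem \ref{IMTheoremTwisted} becomes indexed by coendoscopic data of type $\kappa$ whose Hitchin base meets the $\nu_\CE$-fibre above $a$.

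The key input \cite[Proposition 6.3.3]{MR2653248}, cited in the paragraph preceding Corollary \ref{kappa_IMTheorem}, then tells us that at most one such datum $\CE_a$ exists; under the hypothesis that $\infty_{\rho_{\CE_a}}(k)$ is non-empty (which in particular supplies the compatible choice of base points required by Theorem \ref{IMTheoremTwisted} via Remark \ref{CompChoiceRmk}), exactly one datum occurs and contributes a single summand, namely $\wmc^{G-\infty,\ani,G-\rig}_{H_{\CE_a}}$ pulled back to the $\nu_{\CE_a}$-fibre over $a$. The resulting restricted equivalence is precisely the assertion of the corollary.

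The substantive mathematical work is carried out in Theorem \ref{IMTheoremTwisted} together with Ng\^o's Proposition 6.3.3; the present corollary is a book-keeping refinement, and I expect no significant obstacle beyond carefully tracking how the type interacts with the $\Hom(\widehat\mu, Z(X,G))(k)$-action, the rigidification, and the passage to the Hitchin fibre.
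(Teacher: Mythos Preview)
Your overall strategy is exactly the one the paper intends: restrict to the Hitchin fibre over $a$ and to type $\kappa$, use \cite[Proposition~6.3.3]{MR2653248} to pin down a unique coendoscopic datum $\CE_a$, and observe that the $\Hom(\widehat\mu,Z(X,G))(k)$-action moves the type and hence does not collapse anything within the type-$\kappa$ locus. The paper gives no separate proof; the corollary is meant to follow from the same argument that proves Theorem~\ref{IMTheoremTwisted}, localised to one fibre and one type.

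There is, however, one imprecision. You write that the hypothesis ``in particular supplies the compatible choice of base points required by Theorem~\ref{IMTheoremTwisted} via Remark~\ref{CompChoiceRmk}''. That remark requires \emph{all} the torsors $\rho_{\kappa',\infty}$ (for every coendoscopic datum occurring on $I_{\hat\mu}\widetilde{\Mb}^{\ani}_G(k)$) to split, whereas the corollary only assumes the single torsor $\rho_{\CE_a,\infty}$ splits. So you cannot invoke Theorem~\ref{IMTheoremTwisted} as a black box. The fix is easy and is implicit in the paper: either deduce the statement from Corollary~\ref{kappa_IMTheorem} (which already has exactly the right hypothesis) by the same rigidification argument that passes from Theorem~\ref{IMTheorem} to Theorem~\ref{IMTheoremTwisted}; or observe that the \emph{proof} of Theorem~\ref{IMTheorem} is local in type---for a point of $I_{\hat\mu}\widetilde{\Mb}^{\ani}_G(k)$ of type $\kappa$ lying over $a$, the inverse construction in that proof uses only the splitting of $\rho_{\CE_a,\infty}$, so no other splittings are needed. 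With either of these adjustments your argument goes through.
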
 

We also have the following corollary. 

\begin{corollary}\label{cor:codimension2}
The stack $\Mc_G^{\ani}$ is an algebraic space up to codimension $2$.
\end{corollary}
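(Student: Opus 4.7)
The plan is to reduce the claim to a dimension count on the ``endoscopic strata'' produced by Theorem \ref{IMTheoremTwisted}. A Deligne-Mumford stack is an algebraic space precisely on the open locus where its geometric stabiliser groups are trivial, so the corollary is equivalent to the statement that the closed locus of $\Mc_G^{\ani}$ consisting of points with non-trivial automorphism group has codimension at least two. To access this locus I would first pass to $\widetilde{\Mc}_G^{\ani}$ via the \'etale surjection $\widetilde{\A} \to \A^{\ani}$, which is harmless for codimension estimates, and then ultimately work over a sufficiently large algebraic closure $\bar k$.

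Since $\Mc_G^{\ani}$ is a tame DM-stack under the standing hypotheses on $p$, every non-trivial automorphism of a geometric point has order coprime to $p$ and hence determines a non-trivial $\bar k$-point of $I_{\hat\mu}\widetilde{\Mc}_G^{\ani}$. I would then invoke (the $\bar k$-point version of) Theorem \ref{IMTheoremTwisted}: every such non-trivial point lies in the image of a morphism $\bar\mu_\CE\colon \widetilde{\Mc}_{H_\CE}^{G-\infty,\ani,G-\rig} \to \widetilde{\Mc}_G^{\ani}$ for one of the finitely many (cf. Remark \ref{SplitingRmk}) coendoscopic data $\CE$ with $\kappa \neq 1$; the trivial coendoscopic datum recovers $H_\CE = G$ and contributes only the identity section of the inertia. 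Hence the locus with non-trivial stabilisers in $\widetilde{\Mc}_G^{\ani}$ is contained in the finite union of the images of these morphisms $\bar\mu_\CE$.

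Finally I would estimate dimensions. For a non-trivial $\kappa$ the subgroup $H_\CE \subsetneq G$ is a proper endoscopic group, and from the dimension formula $r_H^G(D) = \tfrac{1}{2}(\dim \widetilde{\Mc}_G - \dim \widetilde{\Mc}_{H_\CE})$ recalled in Theorem \ref{gmst}, together with the standing assumptions on $\deg D$ collected in Section \ref{proofgst}, one has $r_H^G(D) \geq 1$; consequently each such image has codimension at least $2$ in $\widetilde{\Mc}_G^{\ani}$. The same therefore holds for the finite union, and hence for the corresponding locus in $\Mc_G^{\ani}$. The main point that requires some care is that Theorem \ref{IMTheoremTwisted} is phrased as an equivalence of groupoids of $k$-points, whereas we need a geometric statement; however the morphisms $\bar\mu_\CE$ themselves are genuine morphisms of stacks (Construction \ref{TildeMuCons}), and the $\bar k$-point version of the decomposition supplied by the proof of Theorem \ref{IMTheorem} is all that the dimension estimate requires.
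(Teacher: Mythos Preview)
Your approach is correct and essentially identical to the paper's: reduce to $\widetilde{\Mc}_G^{\ani}$ by an \'etale base change, decompose the (twisted) inertia via the coendoscopic data, and observe that the non-trivial pieces have codimension at least $2$; the paper phrases this last step as ``the inertia is a union of even-dimensional stacks'' (using \cite[4.13.4]{MR2653248} together with the standing assumption that $\deg D$ is even), which is exactly the parity reason underlying your inequality $r_H^G(D)\geq 1$. One small correction: the map $\widetilde{\A}\to\A^{\ani}$ is \emph{not} surjective for a fixed choice of $\infty$ (it only hits the locus where the evaluation at $\infty$ is regular semisimple), so the paper explicitly varies $\infty$ over $X$ to obtain an \'etale cover of $\A^{\ani}$ before making the reduction.
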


\begin{proof}
It suffices to prove this for $\widetilde{\Mc}_G^{\ani}$, since $\widetilde{\Mc}_G^{\ani} = \Mc_G^{\ani} \times_{\A^{\ani}} \widetilde{\A}^{\ani}$, and $\widetilde{\A}^{\ani} \to \A^{\ani}$ is \'etale. We remark that the map $\widetilde{\A} \to \A^{\ani}$ is not surjective for a fixed $\infty \in \C$. However we may vary the point to produce an \'etale cover of $\A^{\ani}$.

According to Theorem \ref{IMTheorem}, the closed points of the (twisted) inertia stack of $\widetilde{\Mc}_G^{\ani}$ is a disjoint union of even-dimensional stacks (this follows from \cite[4.13.4]{MR2653248} and our assumption that $\deg D$ is even). This implies the assertion, as the inertia stack measures the locus where an algebraic stack ceases to be an algebraic space.
\end{proof}

\subsection{Functoriality of the abstract Prym}
\begin{construction} \label{JFunct}
    Let $\CE=(\kappa,\rho_\kappa,\rho_\kappa \to \rho)$ be a coendoscopic datum for $\BG$ over $k$. Let $\mathfrak{c}_\CE^{\text{$G$-rs}} \subset \mathfrak{c}_\CE$ be the preimage of $\mathfrak{c}^\text{rs} \subset \mathfrak{c}$ under the morphism $\nu_\CE\colon \mathfrak{c}_\CE \to \mathfrak{c}$ from Construction \ref{muCons}.  Consider the group scheme $J$ over $\mathfrak{c}$ of regular centralizers associated to $G$ by \cite[Lemma 2.1.1]{MR2653248} as well as the analogous group scheme $J_\CE$ associated to $H_\CE$. By an argument completely analogous to the one given in \cite[Proposition 2.5.1]{MR2653248} we construct a canonical homomorphism
    \begin{equation*}
      \nu_\CE^* J \to J_{\CE}
    \end{equation*}
over $\mathfrak{c}_\CE$ which is an isomorphism over $\mathfrak{c}_\CE^{\text{$G$-rs}}$ as follows: 

We begin by considering the group schemes $J^1$ and $J^1_\CE$ associated to $G$ and $H_\CE$ as in \cite[2.4]{MR2653248}. As in the proof of \cite[Proposition 2.5.1]{MR2653248}, the group schemes can be described by
\begin{equation*}
  J^1=\prod_{\rho_\kappa \times \mathbf{t}/ \mathfrak{c}}(\rho_\kappa \times \mathbf{t} \times \BT)^{\BW \rtimes \pi_0(\kappa)}
\end{equation*}
and
\begin{equation*}
  J^1_\CE=\prod_{\rho_\kappa \times \mathbf{t}/ \mathfrak{c}_\CE}(\rho_\kappa \times \mathbf{t} \times \BT)^{\BW_{\BH_\kappa} \rtimes \pi_0(\kappa)}.
\end{equation*}
The morphism $\iota_\kappa\colon \BH_\kappa \rtimes \pi_0(\kappa)  \to \BG \rtimes \pi_0(\kappa)$ from Construction \ref{ikCons} induces a morphism $\BW \rtimes \pi_0(\kappa) \to \BW \rtimes \pi_0(\kappa)$ with respect to which the identity of $\rho_\kappa \times \mathbf{t} \times \BT$ is equivariant. Hence this morphism induces a morphism
\begin{equation*}
  \nu_\CE^*J^1 \to J^1_\CE.
\end{equation*}

By \cite[Proposition 2.4.7]{MR2653248} there are canonical open immersions $J \into J^1$ and $J_\CE \into J^1_\CE$. We want to show that under these embeddings the above morphism $\nu_\CE^* J^1 \to J^1_\CE$ restricts to a morphism $\nu_\CE^*J \to J_\CE$. To show this, as in the proof of \cite[Proposition 2.5.1]{MR2653248} it suffices to show that for every geometric point $a_\CE \in \mathfrak{c}_\CE(\bar k)$ with image $a=\nu_\CE(a_\CE) \in \mathfrak{c}(\bar k)$ the induced homomorphism 
\begin{equation*}
  \pi_0(J^1_a) \to \pi_0(J^1_{\CE,a_\CE})
\end{equation*}
sends $\pi_0(J_a)$ to $\pi_0(J_{\CE,a_\CE})$. Again as in \emph{loc.cit.} this follows from the description of $J$ inside $J^1$ given by \cite[Definition 2.4.5]{MR2653248} and the fact that the root system of $\BH_\kappa$ is contained in the root system of $\BG$.

Finally it follows from \cite[Lemma 1.9.2]{MR2653248} that over $\mathfrak{c}_\CE^{\text{$G$-rs}}$ the morphism $\nu_\CE^*J^1 \to J^1_\CE$, and hence the morphism $\nu_\CE^* J \to J_\CE$, is an isomorphism.

Now we consider the abstract Pryms $\Pb$ and $\Pb_\CE$ over $\A$ and $\A_\CE$ associated to $G$ and $H_\CE$. By definition, for a scheme $S$ over $\A$, the category $\Pb(S)$ is the category of torsors under the pullback of $J$ under the morphism $X \times S \to X \times \A \to \mathfrak{c}$. Hence the above morphism $\nu_\CE^* J \to J_\CE$ induces a canonical morphism
\begin{equation} \label{PFunc}
  \nu_\CE^* \Pb \to \Pb_\CE
\end{equation}
over $\A_\CE$.
\end{construction}

\begin{proposition} \label{PFuncComp}
  Let $\CE=(\kappa,\rho_\kappa,\rho_\kappa \to \rho)$ be a coendoscopic datum for $\BG$ over $k$. The morphisms $\BM_{H_\CE} \to \BM_G$ from Construction \ref{muCons} and the morphism $\nu_\CE^* \Pb \to \Pb_\CE$ are compatible in the following sense: Let $S$ be a scheme over $\A_\CE$ together with objects $F \in \BM_{H_\CE}(S)$ and $M \in \Pb(S)$. If $E$ denotes the image of $F$ in $\BM_G(S)$ and $N$ the image of $M$ in $\Pb_\CE(S)$, then there is a canonical isomorphism between the image of $F \otimes N$ in $\BM_G(S)$ and $E \otimes M$. 
\end{proposition}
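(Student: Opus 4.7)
My plan is to unwind the definitions, reduce the claim to the commutativity of a square of group homomorphisms, and then verify that square. Recall that an object of $\Pb(S)$ is a torsor $M$ under the pullback $J_S$ of $J$ to $X_S = X \times S$, with the action $E \otimes M = E \times^{J_S} M$ given by the canonical homomorphism $J_S \to \Aut_{\BM_G}(E,\theta)$ of \cite[4.3]{MR2653248}; an analogous description holds on the $H_\CE$-side. By the construction of \eqref{PFunc}, the image of $M$ in $\Pb_\CE(S)$ is $N = M \times^{J_S} J_{\CE,S}$. First I would write
\begin{equation*}
  F \otimes N \;=\; F \times^{J_{\CE,S}} \bigl(M \times^{J_S} J_{\CE,S}\bigr) \;=\; F \times^{J_S} M,
\end{equation*}
where on the right the $J_S$-action on $F$ is taken through $J_S \to J_{\CE,S} \to \Aut_{\BM_{H_\CE}}(F,\theta_F)$. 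Applying $\mu_\CE$, which commutes with associated bundle constructions, then gives $\mu_\CE(F \otimes N) = \mu_\CE(F) \times^{J_S} M = E \otimes M$, as required.

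The crux is to justify this last substitution, i.e. to verify the commutativity of the square
\begin{equation*}
\xymatrix{
\nu_\CE^* J \ar[r] \ar[d] & J_\CE \ar[d] \\
\Aut_{\BM_G}(E,\theta) & \Aut_{\BM_{H_\CE}}(F,\theta_F). \ar[l]_-{\mu_{\CE,*}}
}
\end{equation*}
First I would check this over the $G$-regular semisimple locus $\mathfrak{c}_\CE^{G\text{-rs}} \subset \mathfrak{c}_\CE$, where the top arrow is an isomorphism by Construction \ref{JFunct}, where $J$ and $J_\CE$ agree with the centraliser group schemes $I$ and $I_\CE$ of \cite[2.1]{MR2653248}, and where the commutativity is built into the construction of $\mu_\CE$: by design, the homomorphism $\iota_\kappa\colon \BH_\kappa \rtimes \pi_0(\kappa) \to \BG \rtimes \pi_0(\kappa)$ sends the $H_\CE$-centraliser of $\theta_F$ into the $G$-centraliser of $\mu_\CE\theta_F$. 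To propagate commutativity to the full base, I would use that $J$ and $J_\CE$ are smooth commutative group schemes (\cite[2.4.2]{MR2653248}), so that the two compositions $\nu_\CE^* J \rightrightarrows \Aut(E,\theta)$ are group-scheme morphisms agreeing over a schematically dense open, and hence agree everywhere, in the spirit of the proof of \cite[Proposition 2.5.1]{MR2653248}.

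The hard part will be this extension from $\mathfrak{c}_\CE^{G\text{-rs}}$ to the full base, since one must track that the non-canonical ingredients in Construction \ref{muCons}---the decomposition $\tilde\phi = g_\phi\, o_\BG(\phi)$ defining the splitting of $\iota_\kappa$, and the twisting via the $\pi_0(\kappa)$-torsor $\rho_\kappa$---match up on automorphism groups, not merely on underlying objects. A perhaps cleaner alternative would be to reduce to the Kostant section via Proposition \ref{prop:kostant}: on the Kostant section, the automorphism group (modulo the generic stabiliser $Z(X,G)$) is trivial, so the entire functoriality statement reduces to the compatibility of the Kostant section for $H_\CE$ with the one for $G$ under $\mu_\CE$, together with the torsor structure of $\BM^{\reg}$ over $\Pb$, a direct check.
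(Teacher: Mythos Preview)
Your approach is essentially the paper's: reduce to a commuting square of group-scheme homomorphisms, verify it over the $G$-regular semisimple locus where everything is the torus of semisimple centralisers, and extend by density using smoothness of $J$. The paper carries this out almost verbatim.

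One point to tighten, however, concerns where the density argument takes place. You phrase the square over $X_S$ for a fixed pair $(F,E)$, with target the automorphism sheaves $\Aut_{\BM_{H_\CE}}(F,\theta_F)$ and $\Aut_{\BM_G}(E,\theta)$, and then propose to extend from the preimage of $\mathfrak{c}_\CE^{G\text{-rs}}$ in $X_S$. But for an arbitrary $S \to \A_\CE$ nothing prevents the classifying section $X_S \to \mathfrak{c}_{\CE,D}$ from landing entirely in the complement of the $G$-regular-semisimple locus (e.g.\ $a=0$), in which case your ``schematically dense open'' is empty and the extension step is vacuous. The paper avoids this by formulating the square \emph{universally} over $[\mathfrak{h}_{\CE,D}/H_\CE]$, replacing your automorphism sheaves with the inertia stacks $I[\mathfrak{h}_{\CE,D}/H_\CE]$ and $\mu_\CE^* I[\mathfrak{g}_D/G]$, and using that $[\mathfrak{h}_{\CE,D}^{G\text{-rs}}/H_\CE]$ is Zariski dense in $[\mathfrak{h}_{\CE,D}/H_\CE]$ (non-empty in every geometric fibre over $X$). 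Once that universal square commutes, your square for any particular $S$ is obtained by pullback along the classifying map and hence commutes as well. Your citation of \cite[Proposition 2.5.1]{MR2653248} is apt---that argument is of exactly this universal type---so you are close; just lift the square to the stack level before invoking density.

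The Kostant-section alternative you sketch is unnecessary and, as you suspect, insufficient on its own: the proposition is asserted for arbitrary $F \in \BM_{H_\CE}(S)$, not only for those in $\BM_{H_\CE}^{\reg}$, and the torsor structure of $\BM^{\reg}$ over $\Pb$ does not directly determine the action on the whole of $\BM$.
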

\begin{proof}
  Let $[\chi]\colon [\mathfrak{g}_D/G] \to \mathfrak{c}_D$ and $[\chi_\CE]\colon [\mathfrak{h}_{\CE,D}/H_\CE] \to\mathfrak{c}_{\CE,D}$ be the canonical morphisms. By \cite[4.3.2]{MR2653248}, the action of $\Pb$ on $\BM_G$ is defined via the canonical morphism of group stacks $[\chi]^*J \to I[\mathfrak{g}_D/G]$ given by \cite[Lemma 2.2.1]{MR2653248}. Using this, and the analogous description of the action of $\Pb_\CE$ on $\BM_{H_\CE}$, one sees that to prove the claim it suffices to prove that the diagram
  \begin{equation} \label{ISquare}
    \xymatrix{
      [\chi_\CE]^* \nu_\CE^* J \ar[r] \ar[d] & [\chi_\CE]^* J_\CE \ar[d] \\
      \mu_\CE^* I[\mathfrak{g}_D/G]  & I [\mathfrak{h}_{\CE,D}/H_\CE], \ar[l] \\
}
  \end{equation}
of group stacks over $[\mathfrak{h}_{\CE,D}/H_\CE]$ in which the bottom morphism is the one induced by the morphism $\mu_\CE\colon [\mathfrak{h}_{\CE,D}/H_\CE] \to [\mathfrak{g}_D/G]$ from Construction \ref{muCons}, is commutative.

The commutativity of \eqref{ISquare} can be shown as follows: Let $\mathfrak{h}_{\CE,D}^\text{$G$-rs} \subset \mathfrak{h}_{\CE,D}$ be the preimage of the open subset $\mathfrak{c}_{\CE}^\text{$G$-rs}$ from Construction \ref{JFunct}. This is an $H_\CE$-invariant open subset of $\mathfrak{h}_{\CE,D}$ which non-empty in every geometric fibre and hence Zariski dense. Since the group scheme $J$ is smooth, this implies that $[\chi_\CE]^* \nu_\CE^* J|_{[\mathfrak{h}_{\CE,D}^\text{$G$-rs}/H_\CE]}$ is Zariski dense in $[\chi_\CE]^* \nu_\CE^* J$. Hence it suffices to check the commutativity of \eqref{ISquare} over $[\mathfrak{h}_{\CE,D}^\text{$G$-rs}/H_\CE]$.

Over $[\mathfrak{h}_{\CE,D}^\text{$G$-rs}/H_\CE]$, all the groups appearing in \eqref{ISquare} are tori which are canonically equivalent to the torus of regular semisimple centralizers on $[\mathfrak{h}_{\CE,D}^\text{$G$-rs}/H_\CE]$. Using this one can verify the desired commutativity.
\end{proof}

\begin{construction}\label{const:funct}
Let $\CE=(\kappa,\rho_\kappa,\rho_\kappa \to \rho)$ be a coendoscopic datum for $\BG$ over $k$ and let $\infty_{\rho_\kappa} \in \rho_\kappa(k)$ be a point above $\infty$.
  \begin{enumerate}[(i)]
  \item We let $\widetilde{\Pb}^{G-\infty,\ani}_\CE$ be the pullback of $\Pb_\CE$ along $\widetilde{\A}^{G-\infty,\ani}_\CE \to \A$.
  \item As in Construction \ref{grig}, the group $Z(X,G)$ acts on every section of $\widetilde{\Pb}^{\G-\infty,\ani}_\CE$ through the natural homomorphism $Z(X,G) \to Z(X,H_\CE)$. We let $\widetilde{\Pc}^{G-\infty,\ani,G-\rig}_\CE$ be the rigidification of $\widetilde{\Pb}^{\G-\infty,\ani}_\CE$ by this action.
  \item The action of $\Pb_\CE$ on $\Mb_\CE$ induces an action of $\widetilde{\Pc}^{G-\infty,\ani,G-\rig}_\CE$ on $\widetilde{\Mc}^{G-\infty,\ani,G-\rig}_\CE$.
  \item The morphism $\nu^*\Pb \to \Pb_\CE$ from \eqref{PFunc} is compatible with the actions of $Z(X,G)$. Hence it induces a morphism
    \begin{equation}
      \label{PFunc2}
      \tilde\nu_\CE^* \widetilde{\Pc} \to \widetilde{\Pc}^{G-\infty,\ani,G-\rig}_\CE
    \end{equation}
    over $\widetilde{\A}_\CE^{G-\infty,\ani,G-\rig}$.
  \item The morphism \eqref{PFunc2} satisfies a compatibility property with the morphism 
    \begin{equation*}
      \bar\mu_\CE\colon \widetilde{\mathcal{M}}^{G-\infty,\ani,G-\rig}_{H_\CE} \to I_{\hat\mu} \widetilde{\mathcal{M}}^{\ani}_G.
    \end{equation*}
    analogous to the one given by Proposition \ref{PFuncComp}.
  \end{enumerate}
\end{construction}
\begin{lemma} \label{PFunctSurj}
  Let $\CE=(\kappa,\rho_\kappa,\rho_\kappa \to \rho)$ be a coendoscopic datum for $\BG$ over $k$ and let $\infty_{\rho_\kappa} \in \rho_\kappa(k)$ be a point above $\infty$. For every point $a \in \widetilde\A_\CE(\bar k)$, the fibre $\widetilde{\Pc}_a \to \widetilde{\Pc}^{G-\infty,\ani,G-\rig}_{\CE,a}$ of the morphism from point (iv) above is surjective.
\end{lemma}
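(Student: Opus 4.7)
The plan is to establish surjectivity by decomposing the homomorphism
\[
\phi_a\colon \widetilde{\Pc}_a \to \widetilde{\Pc}^{G-\infty,\ani,G-\rig}_{\CE,a}
\]
into its restriction to identity components and the induced map on groups of connected components. Since $a$ maps to $\A^{\ani}$ by Lemma \ref{AniComp}, both $\widetilde{\Pc}_a$ and $\widetilde{\Pc}^{G-\infty,\ani,G-\rig}_{\CE,a}$ are smooth commutative group (stacks) of finite type over $\bar k$; their component groups are finite since anisotropy amounts to requiring that the corresponding Beilinson $1$-motives (Lemma \ref{lemma:B1}) have finite $\pi_0$. Consequently, surjectivity of $\phi_a$ may be checked separately on identity components and on $\pi_0$.

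\textbf{Surjectivity on $\pi_0$.} Here we rely on Proposition \ref{prop:surjection}. The Abel-Jacobi map of Construction \ref{cons:AJ} applied to $G$ yields a surjection $\Xb_*(\Tb) \to \pi_0(\widetilde{\Pc}_a)$ (using the chosen $\infty_\rho$), and the same construction applied to the coendoscopic group $H_\CE$—which shares the maximal torus $\Tb$ with $\BG$ by virtue of Lemma \ref{BlaComp} and the chosen point $\infty_{\rho_\kappa}$—yields an analogous surjection $\Xb_*(\Tb) \to \pi_0(\widetilde{\Pc}^{G-\infty,\ani,G-\rig}_{\CE,a})$. The morphism $\nu_\CE^*J \to J_\CE$ of Construction \ref{JFunct} restricts to the identity on the pullback of $\BT$ over the $G$-regular semisimple locus, and the Abel-Jacobi map factors through such a $T$-torsor on the cameral cover; consequently the two surjections fit into a commutative triangle with $\pi_0(\phi_a)$, giving the desired surjectivity.

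\textbf{Surjectivity on identity components.} This is a statement about abelian varieties. The construction of $\nu_\CE^*J \to J_\CE$ in \ref{JFunct} is a direct analog of Ng\^o's construction in \cite[Proposition 2.5.1]{MR2653248} for standard endoscopy, and the two key features used in Ng\^o's proof of \cite[Proposition 4.17.2]{MR2653248} carry over without change: (i) the morphism $\nu_\CE^*J \to J_\CE$ is an isomorphism on the dense open $G$-regular semisimple locus $\mathfrak{c}_\CE^{G\text{-}rs} \subset \mathfrak{c}_\CE$, and (ii) the root system of $\BH_\kappa$ is by definition a subsystem of that of $\BG$, so that the Euler-characteristic computation on the cameral cover yielding the required dimension count is formally identical. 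Applying Ng\^o's argument with these inputs gives surjectivity of $\phi_a^\circ$.

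\textbf{Main obstacle.} The most delicate point is verifying that Ng\^o's surjectivity argument for standard endoscopy truly transfers to the coendoscopic setting, since the role of $G$ and $\widehat G$ are formally interchanged. The verification is essentially combinatorial and concerns the behaviour of $\Lie\nu_\CE^*J \to \Lie J_\CE$ on the cameral cover, but care is needed to ensure that the passage to rigidifications (by $Z(X,G)$ rather than $Z(X,H_\CE)$) does not destroy surjectivity; we expect that the compatibility $Z(X,G) \hookrightarrow Z(X,H_\CE)$ established in Construction \ref{grig} suffices.
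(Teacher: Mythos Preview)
Your approach is correct but more elaborate than necessary. The paper's proof is a single sentence: it observes that the morphism $J_a \to J_{\CE,a}$ from Construction \ref{JFunct} is generically an isomorphism (over the $G$-regular semisimple locus of $X$), and then invokes the argument of \cite[4.17.2]{MR2653248} verbatim. That argument does not proceed by splitting into identity component and $\pi_0$; rather, since the cokernel of $J_a \hookrightarrow J_{\CE,a}$ is a skyscraper sheaf on $X$, one has $H^1(X,J_{\CE,a}/J_a)=0$, and the long exact sequence gives surjectivity of $\Pb_a \to \Pb_{\CE,a}$ (hence of the rigidified map) in one stroke. No dimension count or Euler-characteristic computation on the cameral cover is needed for surjectivity---that machinery in Ng\^o pertains to the finer claim about the kernel being affine.

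Your decomposition is not wrong, but the separate $\pi_0$ argument via Abel--Jacobi compatibility (essentially reproving part of Lemma \ref{lemma:coendo_surjection}) is redundant: the cohomological argument already handles $\pi_0$ and the connected part simultaneously. Your identification of the key input---that $\nu_\CE^*J \to J_\CE$ is an isomorphism over $\mathfrak{c}_\CE^{G\text{-}rs}$, established in Construction \ref{JFunct}---is exactly right and is the only substantive ingredient. The rigidification concern you raise is harmless: both source and target are rigidified by the \emph{same} group $Z(X,G)$ (see Construction \ref{const:funct}(ii)), and quotienting a surjection by a compatible finite central action preserves surjectivity.
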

\begin{proof}
  As in \cite[4.17.2]{MR2653248} this follows from the fact that the morphism $ J_a \to J_{\CE,a}$ from Construction \ref{JFunct} is generically an isomorphism.
\end{proof}

\section{The proof of Geometric Stabilisation}\label{proofgst}

In this section we finally prove the Geometric Stabilisation Theorem in Corollary \ref{cor:geometric_endoscopy}. The proof will be a consequence of Theorem \ref{mainidentity}, which is an equality between character sums on the twisted inertia stacks $I_{\widehat{\mu}}\Mc_G$ and $I_{\widehat{\mu}}\Mc_{\widehat{G}}$.

Let $k$ be a finite field and write $F=k((x))$ and $\Oo = k[[x]]$. We fix objects $X_k$, $D_k$, $\BG_k$, $\rho_k$, $G_k$, etc. satisfying Situation \ref{situation:Higgs_general} over $R=k$. We assume the existence of a point $\infty \in X(k)$ and fix such a choice. 
Furthermore, we assume that our base field $k$ is sufficiently big, such that the following holds:

\begin{situation}\label{situation:split}
We assume that the torsor $\rho_{\infty}$ splits over $\Spec k$. 
\end{situation}
We choose a point $\infty_\rho \in \rho_\infty(k)$. This choice induces in particular an isomorphism $T_{\infty} \simeq \Tb$.

We denote by $X$, $D$, $\BG$, $\rho$, $G$, etc. the base changes of the objects just introduced to $\Oo$. These satisfy the conditions of Situation \ref{situation:Higgs_general} over $R=\Oo$ and we will denote by $\A$, $\Mc_G$, $\Pc_G$, etc. the associated objects over $\Oo$ from Section \ref{HiggsBundles}.

We assume that the degree $d$ of $D$ is even and satisfies $d \geq 2g-2$. By \cite[4.14.1]{MR2653248} this implies smoothness of $\widetilde{\Mc}_G^{\ani}$. We will also assume that $\A^\Diamond \subset \A$ is non-empty. By \cite[4.7.1]{MR2653248} a sufficient condition for this is $d > 2g$.

By construction, the space $\widetilde{\Mc}_G^{\ani}$ is constant over $\Oo$ (i.e. it is isomorphic to the base change of its special fibre to $\Oo$). By the above assumptions \cite[Corollary 4.11.3]{MR2653248} it is a smooth tame DM-stack. Hence by applying \cite[Theorem 4.4 \& Proposition 5.2]{MR2483938} to the special fibre of $\widetilde{\Mc}_G^{\ani}$ it follows that $\widetilde{\Mc}_G^{\ani}$ is Zariski-locally a finite \'etale quotient stack. This will enable us to apply Theorem \ref{thm:volume} to this stack.

Throughout this section we work over the base $\widetilde{\A}^{\ani}$. To avoid proliferation of superscripts we will generally drop the superscript ${}^{\ani}$. For a coendoscopic datum $\CE$ for $G$ we will also abbreviate the space $\widetilde{A}^{G-\infty}_\CE$ from Construction \ref{TildeMuCons} as $\widetilde{A}^G$ and the space $\widetilde{\mathcal{M}}^{G-\infty,\ani,G-\rig}_{H_\CE}$ from Definition \ref{defi:Grig} as $\widetilde{\mathcal{M}}^G_{H_\CE}$. For a stack $\widetilde{\CM}_*^?$, we will denote by $\widetilde{M}_*^?$ its coarse moduli space.

We will consider the type $\kappa$ of a coendoscopic datum for $G$ over $k$ interchangeably as an element of one of the following groups:
\begin{equation*}
  \Hom_k(\widehat{\mu},\Tb)=\colim_{n \in \Nb'} \Hom_k(\mu_n,\Tb)=\colim_{n \in \Nb'} \Xb_*(\Tb) \otimes \mathbb{Z} /n \mathbb{Z} = \Xb_*(\Tb) \otimes (\mathbb{Q}/ \mathbb{Z})' = \Xb^*(\widehat{\Tb}) \otimes (\mathbb{Q}/\mathbb{Z})'.
\end{equation*}

We let $a$ be a point in $\widetilde{\A}(k)$.

In order to apply our results on the twisted inertia stack of $\wmc_G$ from Section \ref{secinst} we will need to assume that for certain coendoscopic data $(\kappa,\rho_\kappa,\rho_\kappa \to \rho)$ the torsor $\rho_{\kappa,\infty}$ splits. The strongest assumption of this kind would be the following:
\begin{situation} \label{situation:strongsplit}
  The torsors $\rho_{\kappa,\infty}$ for all endoscopic data $\CE$ for $G$ (resp. $\hat G$) over $k$ occurring on $I_{\hat\mu}\widetilde\BM^{\ani}_G(k)$ (resp. $I_{\hat\mu}\widetilde\BM^{\ani}_{\hat G}(k)$), split. 
\end{situation}
By Remark \ref{SplitingRmk}, the assumption of Situation \ref{situation:strongsplit} can always be achieved by replacing $k$ by a finite extension. If we put ourselves in this situation, we always fix a compatible choice of splittings $\infty_\rho$ and $\infty_{\rho_\kappa}$ of these torsors as in Definition \ref{CompChoiceDef}. This is possible by Remark \ref{CompChoiceRmk}.

However, for the proof of the Fundamental Lemma the assumptions of Situation \ref{situation:strongsplit} are too strong. Hence in the Geometric Stabilisation Theorem, following Ng\^o we will only assume that the torsor $\rho_{\kappa,\infty}$ for one specific coendoscopy datum splits.
\subsection{The set-up}\label{set-up}
Recall Ng\^o's result on the existence of a surjective morphism 
\begin{equation}\label{ngomap}
  \widetilde{\A} \times \Xb^*(\widehat{\Tb}) \twoheadrightarrow \pi_0(\Pc_G),
\end{equation} 
see Proposition \ref{prop:surjection}. 

We denote by $\blacksquare \subset \pi_0(\Pc_G)$ the image of the subgroup $\Xb^*(\widehat{\Tb}/Z(X,\widehat{G}))$. Let $\Pc_G^{\blacksquare}$ be the base change
$$\Pc_G^{\blacksquare} = \Pc_G \times_{\pi_0(\Pc_G)} \blacksquare.$$

We fix a square root $D'$ of $D$ and denote the associated Kostant section by $\epsilon = [\epsilon]^{D'}: \wac \to \wmc_G$ (see \ref{prop:kostant}). Let $\wmc_G^\blacksquare \subset \wmc_G$ be the closure inside $\widetilde{\Mc}_G$ of the $\wpc_G^{\blacksquare}$-translate of the Kostant section $\epsilon: \wac \to \wmc_G$. 
%

\begin{rmk}
The generic fibre of $\wpc_G$ is an extension of an abelian variety by a finite group scheme. The group scheme $\wpc_{G}^\blacksquare$ restricted to $\A^\Diamond$ satisfies $\pi_0(\Pc_G^{\blacksquare}) = 0$ (see \cite[Corollaire 4.10.4]{MR2653248}). In particular, its restriction to $\A^{\Diamond}$ is a family of abelian varieties. This is the main reason for introducing the ${}^\blacksquare$-spaces.
\end{rmk}

For the remainder of this subsection let $\CE=(\kappa,\rho_\kappa,\rho_\kappa \to \rho)$ be a coendoscopic datum for $G$ over $k$ together with a point $\infty_{\rho_{\kappa}} \in \rho_{\kappa,\infty}(k)$. Similarly to the above we define an open substack $\wmc_{H_{\Ec}}^\blacklozenge \subset \wmc^{G}_{H_{\Ec}}$: 

\begin{definition}
 We define $\wmc_{H_{\Ec}}^\blacklozenge \subset \wmc^{G}_{H_{\Ec}}$ to be the maximal open substack (actually a union of connected components), for which the diagram
\[
\xymatrix{
 \wmc_{H_{\Ec}}^\blacklozenge \ar[r] \ar[d] & I_{\widehat{\mu}}\widetilde{\Mc}_G^{\blacksquare} \ar[d] \\
 \wmc_{H_{\Ec}}^G \ar[r]^{\bar{\mu}_\CE} & I_{\widehat{\mu}}\widetilde{\Mc}_G
}
\]
commutes.
\end{definition}

Further we remark that the choice of square root $D'$ induces a Kostant section $\widetilde{\A}_\CE \to \widetilde{\BM}_{H_\CE}$. By pulling back  $\wmc_{H_{\Ec}}^{\blacklozenge}$ along the induced map $\Pc_{H_{\Ec}} \to \wmc_{H_{\Ec}}$ we define $\wpc_{H_{\Ec}}^{\blacklozenge}$.

\begin{rmk}\label{remarky}
One has a natural inclusion $\Pc_{H_{\Ec}}^{\blacklozenge} \supset \Pc_{H_{\Ec}}^{\blacksquare}$, and the quotient agrees with $(Z(X,\widehat{H}_{\Ec})/Z(X,\widehat{G}))^{\vee}$. This follows from
\[
\xymatrix{
\Xb^*(\widehat{\Tb}/Z(X,\widehat{H}_{\Ec})) \ar[r] \ar@{^(->}[d] & \pi_0(\Pc_{H_{\Ec}}^{\blacksquare}) \ar@{^(->}[d] \\
\Xb^*(\widehat{\Tb}/Z(X,\widehat{G})) \ar[r] & \pi_0(\Pc_{H_{\Ec}}^{\blacklozenge}) 
}
\]
and the fact that connected components of $\widetilde{\Mc}_{H_{\Ec}}$ are parametrised by $Z(X,\widehat{H}_{\Ec})^{\vee}$, and similarly connected components of $\widetilde{\Mc}_G$ by $Z(X,\widehat{G})^{\vee}$ (this follows from \cite[Corollaire 4.10.4]{MR2653248} and the density result \cite[Proposition 4.16.1]{MR2653248}).
\end{rmk}


%

\subsection{Unramified twists of Hitchin systems}

For a base scheme $S$ (the cases $\Spec k$ and $\Spec \Oo$ suffice), an element $t \in H^1_{\text{\'et}}(S,\wpc_{G,a}^{\blacksquare})$ corresponds to a $\wpc_{G,a}^{\blacksquare}$-torsor $T$, which is well-defined up to isomorphism. We define the twist to be the Deligne-Mumford stack
$$\wmc_{G,a}^{\blacksquare,t} = \wmc_{G,a}^{\blacksquare} \times_S^{\wpc_{G,a}^{\blacksquare}} T = [(\wmc_{G,a}^{\blacksquare} \times T) / {\wpc_{G,a}^{\blacksquare}}].$$
By construction, $\wmc_{G,a}^{\blacksquare,t} \to S$ is \'etale-locally equivalent to $\wmc_{G,a}^{\blacksquare} \to S$.

By Lang's Theorem and Proposition \ref{prop:surjection} we have $H^1(k,\wpc_{G,a}^\blacksquare) = H^1(k,\pi_0(\wpc_{G,a}^\blacksquare)) \cong \pi_0(\wpc_{G,a}^\blacksquare)$ and hence we can define for every $t \in \Xb^*(\widehat{\Tb}/Z(X,\widehat{G}))$ twists $\wmc_{G,a}^{\blacksquare,t}$ of $\wmc_{G,a}^{\blacksquare}$ by $t$, where we use that $t$ induces an element in $\pi_0(\wpc_{G,a}^\blacksquare)$, using the map \eqref{ngomap}. Analogously we define twists $\wmc_{G,a}^t$. The relevance of these twists is demonstrated by the following lemma:

\begin{lemma} \label{Fourier}
The Fourier transform of the function 
$$\qquad \qquad \ \pi_0(\wpc_{G,a}^{\blacksquare}) \to \bar{\Qb}_{\ell}, \; \;t \mapsto \#\wmc_{G,a}^{\blacksquare,t}(k) = \Tr(\Fr, H_c^*(\wmc_{G,a}^{\blacksquare},\bar{\Qb}_{\ell}))$$ is the function
$$\pi_0(\wpc_{G,a}^{\blacksquare})^* \to \bar{\Qb}_{\ell},\; \; \chi \mapsto \Tr(\Fr, H^*_c(\wmc_{G,a}^{\blacksquare},\bar{\Qb}_{\ell})^{\chi}).$$
In particular, we have that the average value  $\frac{1}{|\pi_0(\wpc_{G,a}^{\blacksquare})|} \sum_{t} \#\wmc_{G,a}^{\blacksquare,t}(k)$ equals $\Tr(\Fr, H_c^*(\wmc_{G,a}^{\blacksquare},\bar{\Qb}_{\ell})^{\id})$, the stable part of the point-count.
\end{lemma}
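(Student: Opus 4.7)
The starting point is the Grothendieck--Lefschetz trace formula
\[ \#\widetilde{\Mc}_{G,a}^{\blacksquare,t}(k) = \sum_n (-1)^n \Tr\bigl(\Fr, H_c^n(\widetilde{\Mc}_{G,a}^{\blacksquare,t} \otimes_k \bar k, \bar{\Qb}_\ell)\bigr). \]
By Lang's theorem one identifies $H^1(k, \widetilde{\Pc}_{G,a}^\blacksquare) = H^1(k, \pi_0(\widetilde{\Pc}_{G,a}^\blacksquare)) = \pi_0(\widetilde{\Pc}_{G,a}^\blacksquare)(k)$, and under this identification the class of the torsor defining the twist $t$ corresponds to the continuous $1$-cocycle sending Frobenius to $t$. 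After base change to $\bar k$ the twisted stack becomes canonically isomorphic to $\widetilde{\Mc}_{G,a}^\blacksquare \otimes_k \bar k$, and under this isomorphism the Frobenius on $V \defeq H_c^*(\widetilde{\Mc}_{G,a}^\blacksquare \otimes_k \bar k, \bar{\Qb}_\ell)$ for the twist is the composition of the original Frobenius with the action of $t$ induced by the $\widetilde{\Pc}_{G,a}^\blacksquare$-action on $\widetilde{\Mc}_{G,a}^\blacksquare$.

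The next step is to analyse this $\widetilde{\Pc}_{G,a}^\blacksquare$-action on $V$. The action of the neutral connected component of $\widetilde{\Pc}_{G,a}^\blacksquare$ on $V$ is trivial by the standard homotopy invariance of compactly supported $\ell$-adic cohomology under the action of a connected algebraic group, so the action factors through the finite abelian group $\pi_0(\widetilde{\Pc}_{G,a}^\blacksquare)(\bar k)$. Since this action is defined over $k$, Frobenius intertwines it with itself via $\Fr(\pi \cdot v) = \Fr(\pi) \cdot \Fr(v)$. Decomposing $V = \bigoplus_\chi V^\chi$ into $\chi$-isotypic components, Frobenius permutes the summands according to the Galois action on characters, so only Frobenius-stable components contribute a nonzero trace. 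For such a $\chi$ and for $t \in \pi_0(\widetilde{\Pc}_{G,a}^\blacksquare)(k)$ the twisted Frobenius acts on $V^\chi$ as the scalar $\chi(t)$ times the original $\Fr$.

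Combining the two steps yields
\[ \#\widetilde{\Mc}_{G,a}^{\blacksquare,t}(k) = \sum_\chi \chi(t)\, \Tr(\Fr, V^\chi), \]
which is precisely the Fourier transform identity asserted in the first half of the lemma. The second assertion follows immediately by averaging this identity over $t \in \pi_0(\widetilde{\Pc}_{G,a}^\blacksquare)(k)$ and invoking the orthogonality relation that $\frac{1}{|\pi_0|}\sum_t \chi(t)$ equals $1$ if $\chi$ is trivial and $0$ otherwise, which isolates only the contribution of the trivial character and produces $\Tr(\Fr, V^{\id})$.

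The only step requiring any real care is the first: establishing the exact formula for the Frobenius action on the cohomology of an unramified twist by a torsor under a commutative algebraic group. This is a standard piece of descent theory, but the sign convention in the relation between $t$ and the cocycle $\Fr \mapsto t$ must be tracked carefully so that the Fourier transform identity comes out with the correct direction; the remaining manipulations are formal character-theoretic calculations.
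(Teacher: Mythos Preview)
Your proposal is correct and follows essentially the same route as the paper: identify the twisted Frobenius as $t\cdot\Fr$ on $H_c^*(\wmc_{G,a}^{\blacksquare},\bar{\Qb}_\ell)$ via homotopy invariance of the connected component action, decompose into isotypic pieces to get $\Tr(\Fr^t,V^\chi)=\chi(t)\Tr(\Fr,V^\chi)$, and then sum. Your treatment is in fact slightly more careful than the paper's, as you spell out the Galois permutation of the isotypic summands and flag the sign convention for the cocycle $\Fr\mapsto t$.
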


\begin{proof}
By construction, the base change $\wmc_{G,a}^{\blacksquare,t} \times_k \bar{k}$ is isomorphic to $\wmc_{G,a}^{\blacksquare}$. From this we get an isomorphism of $\ell$-adic vector spaces
$$H_c^*(\wmc_{G,a}^{\blacksquare},\bar{\Qb}_{\ell}) \simeq H_c^*(\wmc_{G,a}^{\blacksquare,t},\bar{\Qb}_{\ell}).$$
With respect to this identification, the action of the geometric Frobenius $\Fr^t$ on the right hand side, is sent to
$$\Fr^t=t\cdot{}\Fr,$$
where we use that $t \in \pi_0(\wpc_{G,a}^{\blacksquare})$ acts on the cohomology of $\wmc_{G,a}^{\blacksquare}$ by homotopy invariance. One has
$$\Tr(\Fr_t, H_c^*(\wmc_{G,a}^{\blacksquare})^{\chi}) = \chi(t)\cdot{}\Tr(\Fr, H_c^*(\wmc_{G,a}^{\blacksquare})^{\chi}).$$
Which implies
$$\frac{1}{|\pi_0(\wpc_{G,a}^{\blacksquare})|} \sum_{t} \chi^{-1}(t) \#\wmc_{G,a}^{\blacksquare,t}(k) = \Tr(\Fr, H_c^*(\wmc_{G,a}^{\blacksquare})^{\chi}).$$
This concludes the argument.
\end{proof}

Let now $\Ec = (\kappa,\rho_\kappa,\rho_\kappa \to \rho)$ be a coendoscopy datum for $G$ over $k$ together with a point $\infty_{\rho_\kappa} \in \rho_{\kappa,\infty}(k)$.
\begin{lemma}\label{lemma:coendo_surjection}
There exists a surjective map $\pi_0(\wpc_{G}^\blacksquare) \times_{\widetilde{\A}} \widetilde{\A}_{H_{\Ec}} \twoheadrightarrow  \pi_0(\wpc_{H_{\Ec}}^{\blacklozenge})$, rendering the diagram
\[
\xymatrix{
\Xb^*(\widehat{\Tb}/Z(X,\widehat{G})) \times \wac_{H_{\Ec}} \ar@{->>}[r]  &  \pi_0( \wpc_{{G}}^{\blacksquare}) \times_{\wac_G} \wac_{H_{\Ec}} \ar@{->>}[d]\\
\Xb^*(\widehat{\Tb}/Z(X,\widehat{H}_{\Ec})) \times \wac_{H_{\Ec}} \ar@{->}[r] \ar@{^(->}[u]  & \pi_0( \wpc_{{H}_{\Ec}}^{\blacklozenge})
}
\]
commutative. 
\end{lemma}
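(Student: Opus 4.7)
The plan is to construct the right vertical arrow from the canonical functoriality morphism of abstract Prym varieties, and to deduce commutativity of the square from a compatibility of Abel-Jacobi maps for $G$ and $H_\Ec$.

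I would begin with Construction~\ref{const:funct}(iv), which produces a canonical morphism $\tilde\nu_\Ec^* \widetilde{\Pc} \to \widetilde{\Pc}^{G-\infty,\ani,G-\rig}_\Ec$ of commutative group algebraic spaces over $\widetilde{\A}_{H_\Ec}$. By Lemma~\ref{PFunctSurj} this morphism is surjective on every geometric fibre and hence induces a surjective homomorphism of component group schemes. Taking $k$-points and restricting to the subgroup $\pi_0(\widetilde{\Pc}_G^\blacksquare)\times_{\widetilde{\A}_G}\widetilde{\A}_{H_\Ec}$ provides the candidate for the right vertical arrow.

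The central step is to establish the following compatibility of the Abel-Jacobi morphisms of Construction~\ref{cons:AJ}: for every $a\in \widetilde{\A}_{H_\Ec}(k)$ and every $\lambda \in \Xb_*(\Tb)$, the image of $\AJ_G(\nu_\Ec(a),\lambda)\in \widetilde{\Pc}_{G,\nu_\Ec(a)}$ under the morphism from Construction~\ref{const:funct}(iv) equals $\AJ_{H_\Ec}(a,\lambda)\in \widetilde{\Pc}_{H_\Ec,a}$. This should follow by unraveling Construction~\ref{JFunct}: the morphism $\nu_\Ec^* J \to J_\Ec$ is an isomorphism over the regular semisimple locus, and the canonical identification of the pullback along $\mathfrak{t}_\infty^{\mathrm{rs}}\to \mathfrak{c}$ with $T$ used in Construction~\ref{cons:auto_torus} is the same for $G$ and $H_\Ec$ by virtue of the compatibility of pinnings recorded in Lemma~\ref{BlaComp}. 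Granted this compatibility, for $\lambda\in \Xb^*(\widehat\Tb/Z(X,\widehat G))$ the composition along the top and right arrows sends $(\lambda, a)$ to $\AJ_{H_\Ec}(a,\lambda)$, which by Remark~\ref{remarky} automatically lies in $\pi_0(\widetilde{\Pc}_{H_\Ec}^\blacklozenge)$. This simultaneously shows that the right vertical arrow lands in $\pi_0(\widetilde{\Pc}_{H_\Ec}^\blacklozenge)$ and, upon restricting to the sublattice $\Xb^*(\widehat\Tb/Z(X,\widehat H_\Ec))$, that the square commutes.

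Surjectivity of the right vertical onto $\pi_0(\widetilde{\Pc}_{H_\Ec}^\blacklozenge)$ is then immediate: by Remark~\ref{remarky} this group is the image of $\Xb^*(\widehat\Tb/Z(X,\widehat G))$ under Ng\^o's map for $H_\Ec$, and the Abel-Jacobi compatibility identifies this image with the image of the composition of the top and right arrows, which is therefore surjective as well. The main technical obstacle lies in the Abel-Jacobi compatibility itself, which requires a careful comparison of the cameral covers of $G$ and $H_\Ec$ combined with the explicit description of $\nu_\Ec^* J \to J_\Ec$ in terms of $W$-invariants given in Construction~\ref{JFunct}.
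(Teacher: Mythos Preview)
Your argument is correct, and its overall skeleton---starting from Construction~\ref{const:funct}(iv) and invoking Lemma~\ref{PFunctSurj}---matches the paper's. The difference lies in how you verify that the image of $\pi_0(\widetilde{\Pc}_G^\blacksquare)$ lands in $\pi_0(\widetilde{\Pc}_{H_\Ec}^\blacklozenge)$ and how you deduce surjectivity.

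The paper argues more directly from the \emph{definition} of the $\blacklozenge$-locus: $\widetilde{\Mc}_{H_\Ec}^\blacklozenge$ is by construction the preimage of $I_{\widehat{\mu}}\widetilde{\Mc}_G^\blacksquare$ under $\bar\mu_\Ec$, and $\widetilde{\Pc}_{H_\Ec}^\blacklozenge$ is obtained from this by pullback along the Kostant section. Using the compatibility of the Prym morphism with $\bar\mu_\Ec$ recorded in Construction~\ref{const:funct}(v), together with the fibrewise surjectivity of Lemma~\ref{PFunctSurj}, one sees immediately that $\widetilde{\Pc}_G^\blacksquare$ is carried into $\widetilde{\Pc}_{H_\Ec}^\blacklozenge$. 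Surjectivity on $\pi_0$ then follows from the same fibrewise surjectivity, without any reference to Abel--Jacobi maps. This route entirely sidesteps the comparison you flag as the main technical obstacle.

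Your route via the Abel--Jacobi compatibility and Remark~\ref{remarky} also works and has the advantage of making the commutativity of the square completely explicit (the paper merely asserts that passing to component sheaves ``produces the commutative diagram above''). The trade-off is that you must carry out the Abel--Jacobi comparison, and your appeal to Remark~\ref{remarky} for surjectivity implicitly uses that $\pi_0(\widetilde{\Pc}_{H_\Ec}^\blacklozenge)$ coincides with the image of $\Xb^*(\widehat\Tb/Z(X,\widehat G))$ under Ng\^o's map for $H_\Ec$; this is true but requires a short diagram chase beyond what the remark literally states.
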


\begin{proof}
In Construction \ref{const:funct} we define a natural morphism 
\begin{equation}\label{eqn:endo}\wpc_{{G}} \times_{\wac_{G}} \wac_{H_{\Ec}} \to \wpc_{{H}_{\Ec}}.
\end{equation} 
We claim that this morphism sends $\wpc_G^\blacksquare \times_{\wac_G} \wac_{H_\Ec}$ to $\wpc_{{H}_{\Ec}}^{\blacklozenge}$. Since we are dealing with open subspaces, it suffices to check this on $\bar k$-valued points. Hence we may also restrict to the fibre over a point $a \in \widetilde \A_{\Ec}(\bar k)$. By Lemma \ref{PFunctSurj} the morphism $\wpc_{G,a} \to \wpc_{H_\Ec,a}$ is surjective. Using this and the compatibility given by Construction \ref{const:funct} (v) the desired fact follows from the definition of the $\blacksquare$- and $\blacklozenge$-spaces.

So we obtain a morphism
\begin{equation}
  \wpc^{\blacksquare}_{{G}} \times_{\wac_{G}} \wac_{H_{\Ec}} \to \wpc_{{H}_{\Ec}}^\blacklozenge.
\end{equation} 
The induced map between sheaves of connected components produces the commutative diagram above.

It remains to show that the map $\pi_0( \wpc_{{G}}^{\blacksquare}) \times_{\wac_G} \wac_{H_{\Ec}} \to  \pi_0( \wpc_{{H}_{\Ec}}^{\blacklozenge})$ is surjective. This may again be checked on geometric fibres, in which case it follows from the surjectivity of $\wpc_{G,a} \to \wpc_{H_\Ec,a}$ used above.
\end{proof}



The morphism $ \pi_0(\wpc_{G,a}^\blacksquare) \to  \pi_0(\wpc_{H_{\Ec},a}^\blacklozenge)$ allows us to define twists $\wmc_{{H}_{\Ec},a}^{\blacklozenge,t}$ which are compatible with twists $\wmc_{{G},a}^{\blacksquare,t}$ in the following sense:

\begin{lemma}

For every $a \in \wac_\CE(k)$ and every $t \in \pi_0(\wpc_{G,a}^{\blacksquare})$ there is a natural equivalence
\[    \left(I_{\widehat{\mu}}\Mc_{G,a}^{\blacksquare,t}(k)\right)_{\kappa} \cong \Mc_{H_{\Ec},a}^{\blacklozenge,t}(k).
\]
\end{lemma}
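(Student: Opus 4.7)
The plan is to reduce this to Corollary \ref{kappa_IMTheoremTwisted} via the naturality of twisting under an equivariant equivalence. Since the coendoscopic datum $\CE_a = \CE$ is already defined over $k$ by hypothesis ($a \in \wac_\CE(k)$), Corollary \ref{kappa_IMTheoremTwisted} yields an equivalence of groupoids
\[
\mu_{\CE}\colon \wmc_{H_\CE,a}^{G}(k) \xrightarrow{\;\simeq\;} \left(I_{\widehat\mu}\wmc_{G,a}(k)\right)_\kappa.
\]
The first key step is to observe that this equivalence is equivariant with respect to the surjection $\phi\colon \wpc_{G,a}^\blacksquare \twoheadrightarrow \wpc_{H_\CE,a}^\blacklozenge$ from Lemma \ref{lemma:coendo_surjection}: the $\wpc_{G,a}^\blacksquare$-action on $(I_{\widehat\mu}\wmc_{G,a})_\kappa$ (induced by its action on $\wmc_{G,a}$, which commutes with automorphisms and hence preserves the $\kappa$-component) corresponds under $\mu_\CE$ to the action of $\wpc_{H_\CE,a}^\blacklozenge$ on $\wmc_{H_\CE,a}^G$ pulled back through $\phi$. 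This equivariance is precisely the compatibility of Proposition \ref{PFuncComp}, packaged for the rigidifications in Construction \ref{const:funct}(v).

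Next I would verify that the formation of the twisted inertia stack commutes with twisting by a $\wpc^{\blacksquare}_{G,a}$-torsor $T$. Base changing to $\bar k$ trivialises $T$, so
\[
I_{\widehat\mu}\wmc_{G,a}^{\blacksquare,t}\times_k \bar k \;\simeq\; I_{\widehat\mu}\wmc_{G,a}^{\blacksquare}\times_k \bar k,
\]
and under this identification the new Frobenius acts as $\sigma\cdot \mathrm{Fr}$ where $\sigma\in \wpc^{\blacksquare}_{G,a}(\bar k)$ is a cocycle representative of $t$ (recall $H^1(k,\wpc^\blacksquare_{G,a}) = \pi_0(\wpc^\blacksquare_{G,a})$ by Lang). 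Here we use that the $\widehat\mu$-action appearing in the definition of $I_{\widehat\mu}$ is intrinsic to the stack and is unaffected by twisting by a $\wpc^\blacksquare$-torsor, so restricting to the $\kappa$-component commutes with taking the twist.

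Combining these two steps: the $k$-points of $\wmc_{G,a}^{\blacksquare,t}$ and of $\wmc_{H_\CE,a}^{\blacklozenge,t}$ can both be computed as $\sigma$-twisted (resp.\ $\phi(\sigma)$-twisted) Frobenius fixed points of the corresponding $\bar k$-points, and by the $\phi$-equivariance of $\mu_\CE$ these two descriptions match: an element $x\in\wmc_{H_\CE,a}^G(\bar k)$ satisfies $\mathrm{Fr}(x) = \phi(\sigma)\cdot x$ iff its image $\mu_\CE(x)$ in $(I_{\widehat\mu}\wmc_{G,a})_\kappa(\bar k)$ satisfies $\mathrm{Fr}(\mu_\CE(x)) = \sigma\cdot \mu_\CE(x)$. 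Passing to isomorphism classes with the twisted automorphism groups ($\wpc^\blacklozenge_{H_\CE,a}(k)$ on one side, the stabiliser of $\sigma$ in $\wpc^\blacksquare_{G,a}(\bar k)$ on the other, which again match under $\phi$) yields the claimed equivalence.

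The main obstacle is the bookkeeping in the Galois-descent / Lang-twist step: making precise that ``$k$-points of a twisted stack are $\sigma$-fixed points of the untwisted $\bar k$-points, modulo the centraliser of $\sigma$'' in a groupoid-compatible way, and then checking that the $\phi$-equivariance of $\mu_\CE$ really does intertwine these two descriptions on both objects and morphisms. Everything else is formal once Corollary \ref{kappa_IMTheoremTwisted}, Lemma \ref{lemma:coendo_surjection}, and Proposition \ref{PFuncComp} are in hand.
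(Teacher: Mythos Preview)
Your proposal is correct and follows essentially the same strategy as the paper: reduce to the untwisted case (Corollary \ref{kappa_IMTheoremTwisted}), use the $\wpc$-equivariance of $\mu_\CE$ (Proposition \ref{PFuncComp} / Construction \ref{const:funct}(v)), and then identify $k$-points of the twists as twisted-Frobenius fixed points over $\bar k$. The only cosmetic difference is that the paper, rather than working with an abstract cocycle $\sigma$, uses the Abel--Jacobi map of Construction \ref{cons:AJ} to produce an explicit $k$-rational lift $\AJ_G(\lambda)\in\wpc_{G,a}(k)$ of $t$ (and its image $\AJ_{H_\CE}(\lambda_H)\in\wpc_{H_\CE,a}(k)$), so that the twisted Frobenius is literally translation by a $k$-point; this makes the compatibility of the two cocycles immediate and gives a concrete morphism $\wmc_{H_\CE,a}^{\blacklozenge,t}\to I_{\widehat\mu}\wmc_{G,a}^{\blacksquare,t}$ over $k$ before passing to Galois descent.
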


\begin{proof}
For every $t \in \pi_0(\wpc_{G,a})$ there exists a $\lambda \in \Xb_*(\Tb/Z(X,G))$, such that the map $\Xb_*(\Tb) \to \pi_0(\wpc_G)$ of Proposition \ref{prop:surjection} sends $\lambda \mapsto t$. We denote by $\lambda_H$ the induced element of $\Xb_*(\Tb/Z(X,H_{\Ec}))$.

The Abel-Jacobi map of Construction \ref{const:AJ} yields rational points $\AJ_{G}(\lambda)\in \wpc_{G,a}(k)$ and $\AJ_{{H}_{\Ec}}(\lambda_H) \in \wpc_{{H}_{\Ec}}(k)$ which are compatible with respect to the natural map $\wpc_{G,a} \to \wpc_{{H}_{\Ec},a}$. Via the natural action of these groups we obtain automorphisms $\AJ_G(\lambda)$ of $\widetilde{\Mc}_{G,a}$ and $\AJ_{H_{\Ec}}(\lambda_H)$ of $\widetilde{\Mc}_{H_{\Ec},a}$.These fit into a commutative diagram
\[
\xymatrix{
\widetilde{\Mc}^{\blacklozenge}_{{H}_{\Ec},a} \ar[r]^-{\AJ_{H_{\Ec}}(\lambda_H)} \ar[d] & \widetilde{\Mc}^{\blacklozenge}_{{H}_{\Ec},a} \ar[d] \\
I_{\widehat{\mu}}\widetilde{\Mc}_{G,a} \ar[r]^-{\AJ_{G}(\lambda)} & I_{\widehat{\mu}}\widetilde{\Mc}_{G,a}.
}
\]
This yields a morphism $\widetilde{\Mc}_{H_{\Ec},a}^{\blacklozenge,\lambda_H} \to I_{\widehat{\mu}}\widetilde{\Mc}_{G,a}^{\blacklozenge,\lambda}$.

The claim that 
$$\Mc_{H_{\Ec},a}^{\blacklozenge,t} (k) \to \left(I_{\widehat{\mu}}\wmc_{G,a}^{\blacksquare,t}(k)\right)_{\kappa}$$
is a bijection follows from the statement above for the trivial torsor $t =0$, the fact $\wmc_{G,a}^{\blacksquare,t} \times_{k} \bar{k} \simeq \wmc_{G,a}^{\blacksquare} \times_{k} \bar{k}$, and Galois descent: with respect to the identification above, a $\bar{k}$-rational point $y \in I_{\widehat{\mu}}\wmc_{G,a}^{\blacksquare}(\bar{k})$ corresponds to a $k$-rational of the twist $I_{\widehat{\mu}}\wmc_{G,a}^{\blacksquare,t}$ if and only if it satisfies 
$$\phi(y) = \AJ_G(\lambda)^{-1}y,$$
where $\phi$ denotes the Frobenius of $\bar{k}/k$.
This is the case if and only if the corresponding $\bar{k}$-rational point of $\widetilde{\Mc}_{H_{\Ec},a}^{\blacklozenge,G-\rig} (\bar{k})$ satisfies $\phi(y) = \AJ_G(\lambda)^{-1}y.$ Hence, if and only if $y$ corresponds to a $k$-rational point of $\widetilde{\Mc}_{H_{\Ec},a}^{\blacklozenge,t} (k)$.
\end{proof}

\subsection{A comparison of $p$-adic integrals} \label{PAdicComparison}

For a groupoid $\Xc$ we denote by $[\Xc]$ its set of isomorphism classes. For every character $s\in \Xb^*(\Tb/Z(X,G))$ we define the function 

\[ \chi_s: [I_{\widehat{\mu}}\wmc_{G,a}^{\blacksquare,t}(k)] \to \BQ / \BZ \]
by

\[  \chi_{s}| _{[I_{\widehat{\mu}}\wmc_{G,a}^{\blacksquare,t}(k)_\kappa] } \equiv s(\kappa)\]
for every $\kappa \in \Xb^*(\widehat \Tb)\otimes (\mathbb{Q}/\mathbb{Z})'$.

As before we write $\Oo^h_a$ for the henselisation of $\Oo_{\wac}$ at $a$ and $U_a=\Spec(\Oo^h_a)$. The $\Oo$-rational points of $U_a$ can be identified with the $\Oo$-rational points of $\wac$ whose special fibre equals $a$. We then define 
\[U_a(\Oo)^\flat  = U_a(\Oo) \cap \wac^\Diamond(F).\]
For a $U_a$-space $Y$ we write $Y(\Oo)^{\flat}$ to denote the fibre product of sets $Y(\Oo) \times_{U_a(\Oo)} U_a(\Oo)^\flat$.

\begin{rmk}\label{rmk:zeroset}
We have $U_a(\Oo)^\flat = U_a(\Oo) \setminus (U_a \setminus U_a^{\Diamond})(\Oo_F)$, and $(U_a \setminus U_a^{\Diamond})(\Oo_F)$ is a set of measure zero (with respect to Weil's canonical measure). This follows from Proposition \ref{intools}\eqref{measurezero}.
\end{rmk}

By virtue of $H^1(\Oo_a^h,\wpc_{G,U_a}^\blacksquare) \cong \pi_0(\wpc_{G,a}^\blacksquare)$, we get twists $\wmc_{G,U_a}^{\blacksquare,t}$ and $I_{\widehat{\mu}}\wm_{G,U_a}^{\blacksquare,t}$ for every $t \in \pi_0(\wpc_{G,a}^\blacksquare)$. Recall that we have a $p$-adic manifold $\wm_{G,U_a}^{\blacksquare,t}(\Oo)^\natural \subset \wm_{G,U_a}^{\blacksquare,t}(\Oo)$ and furthermore we have a specialisation morphism (see Definition \ref{defi:e})

\begin{equation}\label{prest}  e\colon \widetilde{M}_{G,U_a}^{\blacksquare,t}(\Oo)^\natural \to    [I_{\widehat{\mu}}\wmc_{G,a}^{\blacksquare,t}(k)]. 
\end{equation}

By construction, we have an inclusion $\wm_{G,U_a}^{\blacksquare,t}(\Oo)^\flat \subset \wm_{G,U_a}^{\blacksquare,t}(\Oo)^\natural$.

The composition of $\chi_s$ with the specialisation map yields a function 
$$\widetilde{M}_{G,U_a}^{\blacksquare,t}(\Oo)^\natural \to \Qb/\Zb \subset \Cb^{\times}$$ which we also denote by $\chi_s$.


In Definition \ref{defi:weight} we defined the weight function
$$w\colon [I_{\widehat{\mu}}\widetilde{\Mc}_G(k)] \to \Qb$$
which is locally constant on $I_{\widehat{\mu}}\widetilde{\Mc}_G(k)$. We claim that with respect to the decomposition
\begin{equation*}
  [I_{\widehat{\mu}}\widetilde{\Mc}_G(k)] \simeq \bigsqcup_{\kappa} [I_{\widehat{\mu}}\widetilde{\Mc}_G(k)_\kappa]
\end{equation*}
the weight function depends only $\kappa$:

\begin{lemma}\label{lemma:shift_constant}
There exists a function $\CF\colon \Xb_*(\Tb) \otimes (\Qb/\Zb)' \to \Qb$, such that for every $\kappa \in \Xb_*(\Tb)\otimes (\Qb/\Zb)'$  we have $w|_{[I_{\widehat{\mu}}\widetilde{\Mc}_G(k)_\kappa]} = \CF(\kappa)$.
\end{lemma}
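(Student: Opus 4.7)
The weight $w(x,\alpha)$ is, by Definition \ref{defi:weight}, entirely determined by the isomorphism class of $T_x\widetilde{\Mc}_G$ as a $\widehat{\mu}$-representation (via $\alpha$). The plan is therefore to show that this isomorphism class — and in particular the multiplicity of each character — depends only on the type $\kappa$. I would compute these multiplicities using the Higgs deformation complex and Riemann–Roch.

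Let $(x,\alpha)$ be a point in $[I_{\widehat{\mu}}\widetilde{\Mc}_G^{\ani}(k)_\kappa]$ with $x$ represented by a Higgs bundle $(E,\theta)$. First, I would use that $T_x\widetilde{\Mc}_G = \mathbb{H}^1(C)$ for the Higgs deformation complex $C = [\mathrm{ad}(E) \to \mathrm{ad}(E) \otimes D]$. The $\widehat{\mu}$-action via $\alpha$ factors through $\kappa \colon \widehat{\mu} \to \BT$ by the very definition of the type in Construction \ref{ReductionCons}, so $C$ decomposes into character-isotypic subcomplexes $C^\chi = [\mathrm{ad}(E)^\chi \to \mathrm{ad}(E)^\chi \otimes D]$. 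The rank of $\mathrm{ad}(E)^\chi$ equals the multiplicity of $\chi$ in the $\widehat{\mu}$-representation $\mathbf{g}$ via $\kappa$, namely the number of roots $\beta$ of $\BG$ relative to $\BT$ with $\beta \circ \kappa = \chi$ (plus $\mathrm{rk}\,\BT$ if $\chi = 0$) — a count which is intrinsic to $\kappa$.

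For $\chi \neq 0$ I would argue that $\mathbb{H}^0(C^\chi) = 0$ on the anisotropic locus. Indeed, by Construction \ref{cons:auto_torus} the group of infinitesimal automorphisms $\mathbb{H}^0(C)$ is controlled by a homomorphism to the Lie algebra of $\BT$, on which $\widehat{\mu}$ acts trivially (since $\BT$ is abelian and $\kappa$ lands in $\BT$). Hence the entire $\mathbb{H}^0(C)$ sits in the $\chi = 0$ part. Then Riemann–Roch for the two-term complex $C^\chi$ yields
\[
\dim T^\chi \;=\; \dim \mathbb{H}^1(C^\chi) \;=\; \mathrm{rk}(\mathrm{ad}(E)^\chi) \cdot \deg D,
\]
depending only on $\kappa$ and $\deg D$. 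For the trivial character $\chi = 0$, the invariant piece $T^0$ equals the tangent space of the component of $I_{\widehat{\mu}}\widetilde{\Mc}_G$ containing $(x,\alpha)$, which by Theorem \ref{IMTheorem} identifies with $\widetilde{\Mc}_{H_\CE}^{G-\infty,\ani}$ for the associated coendoscopic datum $\CE$; its dimension is determined by the root datum of $\BH_\kappa$ (itself a function of $\kappa$) and the fixed global data $(X, D)$.

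Summing the contributions, $w(x,\alpha) = \sum_\chi c(\chi)\cdot\dim T^\chi$ becomes a function $\CF(\kappa)$ of $\kappa$ alone. The main technical obstacle will be rigorously establishing the vanishing $\mathbb{H}^0(C^\chi) = 0$ for $\chi \neq 0$ over the anisotropic locus, together with correctly tracking the effect of the rigidification by $Z(X,G)$: since $Z(X,G) \subset \BT$ is pointwise $\widehat{\mu}$-fixed (the $\widehat{\mu}$-action on $\BT$ through $\kappa$ is by conjugation, hence trivial), its contribution is confined to the trivial isotypic component $T^0$ and does not perturb the dimensions $\dim T^\chi$ for $\chi \neq 0$ computed above.
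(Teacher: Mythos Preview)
Your approach is correct and genuinely different from the paper's. The paper argues indirectly: after enlarging $k$ and invoking Corollary \ref{kappa_IMTheorem} to identify $[I_{\widehat{\mu}}\widetilde{\Mc}_G(k)_\kappa]$ with $\bigsqcup_{\CE} \widetilde{\Mc}^G_{H_\CE}(k)$, it uses only that $w$ is locally constant on the inertia stack and that for each $a$ the Prym $\widetilde{\Pc}_{G,a}$ acts (through automorphisms of $\widetilde{\Mc}_G$, hence preserving $w$) transitively on $\pi_0(\widetilde{\Mc}^G_{H_\CE,a})$ --- the latter by the surjection $\pi_0(\widetilde{\Pc}_{G,a}) \twoheadrightarrow \pi_0(\widetilde{\Pc}_{H_\CE,a})$ of Lemma \ref{lemma:coendo_surjection}. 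Constancy on each Hitchin fibre plus local constancy along the (irreducible) base gives the result. By contrast, you compute $\dim T^\chi$ directly via Riemann--Roch on the isotypic subcomplexes of the deformation complex; this is more hands-on and has the advantage of producing an explicit formula $\CF(\kappa)=\sum_\chi c(\chi)\,(\dim \mathbf{g}^\chi)\cdot \deg D$, and of making independence of the particular $\CE$ of type $\kappa$ immediate (since everything is read off the adjoint representation of $\kappa$ on $\mathbf{g}$).

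There is one gap you should close: the equality $\dim \mathbb{H}^1(C^\chi)=\mathrm{rk}(\mathrm{ad}(E)^\chi)\cdot \deg D$ requires not only $\mathbb{H}^0(C^\chi)=0$ but also $\mathbb{H}^2(C^\chi)=0$, which you do not mention. This is available from the standing hypothesis $\deg D \ge 2g-2$: by \cite[4.14.1]{MR2653248} the stack $\mathbb{M}_G^{\ani}$ is smooth, so $\mathbb{H}^2(C)=0$, and hence $\mathbb{H}^2(C^\chi)=0$ for every $\chi$ since the isotypic decomposition splits $\mathbb{H}^2(C)$. (Your $\mathbb{H}^0$ argument can also be simplified: on $\A^{\ani}$ the automorphism group of $(E,\theta)$ is finite by \cite[4.11.3]{MR2653248}, so already $\mathbb{H}^0(C)=0$.) With that filled in, your Riemann--Roch computation is valid --- note in particular that the degree of $\mathrm{ad}(E)^\chi$, which could a priori depend on $(E,\theta)$, cancels in $\chi(\mathrm{ad}(E)^\chi)-\chi(\mathrm{ad}(E)^\chi\otimes D)$, leaving only the rank.
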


\begin{proof}
For the proof of the claim we may replace $k$ by a finite extension. Hence we may put ourselves into Situation \ref{situation:strongsplit}. Then under the description of $I_{\widehat{\mu}}\widetilde{\Mc}_G(k)_\kappa$ given by Corollary \ref{kappa_IMTheorem} we want to show that for a coendoscopic datum $\CE$ for $G$ over $k$ of type $\kappa$ the restriction of the weight function to $\wmc_{H_\CE}^G(k)$ is constant with value $\CF(\kappa)$.

The weight function $w\colon I_{\widehat{\mu}}\widetilde{\Mc}_G(k) \to \Qb$ is locally constant. Therefore for $a \in \widetilde{\A}_{\Ec}(k)$ it induces a function $w\colon \pi_0(\widetilde{\Mc}_{H_{\Ec},a}^G) \to \Qb$. In order to show that this function is constant, it suffices to to prove that the Prym $\widetilde{\Pc}_{G,a}$ (which acts through automorphisms of $\widetilde{\Mc}_G$), acts transitively on $\pi_0(\widetilde{\Mc}^G_{H_{\Ec},a})$. We have seen in Lemma \ref{lemma:coendo_surjection} that the natural map $\pi_0(\widetilde{\Pc}_{G,a}) \to \pi_0(\widetilde{\Pc}_{H_{\Ec},a})$ is surjective.
\end{proof}

We can now formulate our main identity, which is a relation between point counts of Hitchin fibres of all coendoscopic groups at once. Here the point count $\# \Mc(k)$ of a smooth Deligne-Mumford stack $\Mc$ over $k$ is defined as the weighted count of the groupoid $\Mc(k)$:
\[ \# \Mc(k) = \sum_{[x] \in [ \Mc(k)]}  \frac{1}{|\Aut(x)|}.  \]
In order to relate this to the point counts in \eqref{ladicpc} we will implicitly use the trace formula \cite{Su12}
 \[\# \Mc(k) = \sum_n (-1)^n \mathrm{tr}\left(\text{Frob}_k,H_c^n(\Mc, \overline{\Qb}_\ell)\right).\]

\begin{theorem}\label{mainidentity} For all $a \in \wac(k)$, $t \in \Xb^*(\widehat{\Tb}/Z(X,\widehat{G}))$ and $s \in   \Xb^*(\Tb/Z(X,G))$ we have
\begin{equation}\label{mainsum} 
\sum_{y \in [I_{\widehat{\mu}} \wmc_{G,a}^{t,\blacksquare}(k)]} \frac{\chi_s(y)\cdot q^{-w(y)}}{|\Aut_{I_{\widehat{\mu}} \wmc_{G,a}^{t,\blacksquare}(k)}(y)|} = \sum_{z \in [I_{\widehat{\mu}} \wmc_{\widehat{G},a}^{s,\blacksquare}(k)]} \frac{\chi_t(z)\cdot q^{-w(z)}}{|\Aut_{I_{\widehat{\mu}} \wmc_{\widehat{G},a}^{t,\blacksquare}(k)}(z)|}.
\end{equation}
\end{theorem}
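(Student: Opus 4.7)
The plan is to reinterpret both sides of \eqref{mainsum} as $p$-adic integrals using Theorem \ref{thm:volume}, and then to match the resulting integrals by combining Langlands duality (Theorem \ref{thm:duality_quasi-split}) with Tate duality for abelian varieties over $F$. Applying Theorem \ref{thm:volume} to $\widetilde{\Mc}_{G,U_a}^{\blacksquare,t}$ rewrites the left-hand side of \eqref{mainsum} as
\[
\int_{\widetilde{M}_{G,U_a}^{\blacksquare,t}(\Oo)^{\#}} (\chi_s\circ e)\, d\vol_{\widetilde{\Mc}_G^{\blacksquare,t}},
\]
and analogously the right-hand side, with $G$ replaced by $\widehat G$, $s$ by $t$, and $\chi_s$ by $\chi_t$. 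After noting that the complement $(\Oo)^{\#}\setminus(\Oo)^{\flat}$ has measure zero (Remark \ref{rmk:zeroset}), I would decompose the integral via the Hitchin fibration over $U_a(\Oo)^{\flat}$ and thereby reduce to proving a fibrewise identity for each $b\in U_a(\Oo)^{\flat}$ restricting to $a$.

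The first key ingredient is to interpret $\chi_s\circ e$ as the Hasse invariant of a canonical $\G_m$-gerbe on the Hitchin fibre. Via the Abel--Jacobi map of Construction \ref{const:AJ} and the chosen Kostant section $\epsilon$, the character $s$ gives rise to a line bundle $L_s$ of finite order on $\widetilde{\Mc}_G^{\blacksquare,t}$, which by Definition \ref{defi:unramified_gerbe} produces an unramified $\G_m$-gerbe $\alpha_{L_s}$. Corollary \ref{cor:hasse} combined with Lemma \ref{lemma:unramified_gerbe} then identifies $\chi_s\circ e(x) = \inv(x_F^*\alpha_{L_s})$ as $\Cb^{\times}$-valued functions on $\widetilde{M}_{G,U_a}^{\blacksquare,t}(\Oo)^{\#}$. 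This reduces the fibrewise integral over $\widetilde{\Mc}_{G,b}^{\blacksquare,t}(F)$ to integrating a Brauer-theoretic invariant against the canonical measure on a torsor under the abelian variety $\widetilde{\Pc}_{G,b}^{\blacksquare}$.

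The second and main step is the fibrewise comparison. For generic $b$, Theorem \ref{thm:duality_quasi-split} identifies $\widetilde{\Pc}_{\widehat G,b}^{\blacksquare}$ as the dual abelian variety of $\widetilde{\Pc}_{G,b}^{\blacksquare}$. The unramified twist $t$ corresponds to an element of $H^1(F,\widetilde{\Pc}_{G,b}^{\blacksquare})$, while $\chi_s$ evaluates on $F$-points of $\widetilde{\Pc}_{G,b}^{\blacksquare}$ through $s$; Tate local duality pairs these two data into $\Qb/\Zb$ in a manner symmetric in $(s,t)$ and $(G,\widehat G)$. Proposition \ref{abvol} equates the $p$-adic volumes of the dual Prym varieties up to the normalising factors $|Z(X,\widehat G)|$ and $|Z(X,G)|$ already present in \eqref{stdcomp}; putting these inputs together produces the fibrewise identity, and integrating over $U_a(\Oo)^{\flat}$ then yields \eqref{mainsum}. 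The principal obstacle is carrying out the Hasse-invariant interpretation precisely, i.e.\ matching the combinatorially defined character $\chi_s$ (via the endoscopic description of Theorem \ref{IMTheoremTwisted}) with $\inv(x_F^*\alpha_{L_s})$; this will require unwinding the Donagi--Gaitsgory description of the Prym (Theorem \ref{thm:dg}) and verifying compatibility of the Abel--Jacobi map with the Tate pairing.
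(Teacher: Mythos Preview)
Your proposal is correct and follows essentially the same route as the paper: rewrite both sides as $p$-adic integrals via Theorem~\ref{thm:volume}, fibre over the Hitchin base (Lemma~\ref{lemma:relative}), identify $\chi_s\circ e$ with the Hasse invariant of an unramified gerbe (Lemma~\ref{lemma:chi_gerbe}), and finish with Tate duality plus Proposition~\ref{abvol}.

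Two small corrections. First, the normalising factors $|Z(X,G)|$ and $|Z(X,\widehat G)|$ from \eqref{stdcomp} do \emph{not} appear here: the whole point of passing to the $\blacksquare$-spaces is that $\widetilde{\Pc}^{\blacksquare}_{G,b}$ is already a connected abelian variety over $\A^\Diamond$, so Proposition~\ref{abvol} (applied to the isogeny of Construction~\ref{const:ngo_isogeny}) gives equality of volumes on the nose. Second, you glossed over the case split that does the real work in the fibrewise comparison. The argument is not a single symmetric Tate-duality identity but a dichotomy (Lemmas~\ref{lemma:0} and~\ref{lemma:1}): if $\widetilde{M}^{\blacksquare,t}_{G,b}(F)=\emptyset$, then the twist $t$ is a nontrivial class in $H^1(F,\widetilde{\Pc}^{\blacksquare}_{G,b})$, and \emph{non-degeneracy} of the Tate pairing forces $\chi_t$ to be a nontrivial character on $\widetilde{\Pc}^{\blacksquare}_{\widehat G,b}(F)$, whence the dual integral vanishes; if both sides have $F$-points, then both characters are trivial and one is left with the bare volume comparison. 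Your sentence ``Tate local duality pairs these two data into $\Qb/\Zb$ in a manner symmetric in $(s,t)$'' hides exactly this mechanism; when you write it up, make the two cases explicit.
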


We will deduce this theorem from $p$-adic integration.The proof of \ref{mainidentity} can be found at the end of this subsection, as we have to establish a few auxiliary results first. 

If we are in Situation \ref{situation:strongsplit}, Theorem \ref{mainidentity} can be reformulated as follows using the description of the twisted inertia stack given in Theorem \ref{IMTheoremTwisted}:
\begin{corollary}\label{cor:mainidentity} 
For all $a \in \wac(k)$, $t \in \Xb^*(\widehat{\Tb}/Z(X,\widehat{G}))$ and $s \in  \Xb^*(\Tb/Z(X,G))$ we have
\begin{equation}\label{cor:mainsum} 
  \sum_{\{(\CE,\infty_{\rho_\kappa})\}/\Hom(\widehat\mu,Z(X,G))(k)}  s( \kappa) q^{-\CF(\kappa)} \# \wmc_{H_{\Ec},a}^{\blacklozenge,t}(k)  =  \sum_{\{(\CE',\infty_{\rho_\kappa})\}/\Hom(\widehat\mu,Z(X,\widehat G))(k)}  t(\nu) q^{-\CF(\nu)} \# \wmc_{{H}_{\nu,\rho'},a}^{\blacklozenge,s}(k),
\end{equation}
where $\CE$ ranges over the coendoscopy data for $G$ over $k$ occurring on $I_{\hat\mu}\widetilde\BM^{\ani}_G(k)$ and $\CE'$ ranges over the coendoscopy data for $\hat G$ over $k$ occurring on $I_{\hat\mu}\widetilde\BM^{\ani}_{\hat G}(k)$.
\end{corollary}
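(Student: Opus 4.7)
The plan is to convert both sides of \eqref{mainsum} into $p$-adic integrals over the moduli spaces of Higgs bundles, reduce to an identity over generic Hitchin fibres, and then invoke the duality of Langlands dual Pryms (Theorem \ref{thm:duality_quasi-split}) combined with the invariance of $p$-adic volumes under isogeny (Proposition \ref{abvol}). First, by Theorem \ref{thm:volume} applied to the smooth tame Deligne-Mumford stack $\widetilde{\Mc}_{G,U_a}^{\blacksquare,t}$, the left-hand side equals
\[
  \int_{\widetilde{M}_{G,U_a}^{\blacksquare,t}(\Oo)^{\flat}} (\chi_s \circ e)\, d\vol_{\widetilde{\Mc}_{G,U_a}^{\blacksquare,t}},
\]
where passing from $(\Oo)^{\hash}$ to $(\Oo)^\flat$ is harmless by Remark \ref{rmk:zeroset}. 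By Fubini with respect to the Hitchin map $\widetilde{M}_{G,U_a}^{\blacksquare,t} \to U_a$, this splits as an iterated integral over $b \in U_a(\Oo)^\flat$ of integrals over the generic fibres $\widetilde{M}_{G,b}^{\blacksquare,t}(F)$; over the $\flat$-locus the underlying Prym $\widetilde{\Pc}_{G,b}^{\blacksquare}$ is an abelian variety, and $\widetilde{\Mc}_{G,b}^{\blacksquare,t}$ is a torsor under it.

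Next, I interpret the function $\chi_s \circ e$ as a Hasse invariant. Starting from $s \in \Xb^*(\Tb/Z(X,G))$, the Abel-Jacobi construction (Construction \ref{cons:AJ}) together with the identification $\Xb^*(\Tb) = \Xb_*(\widehat{\Tb})$ yields a line bundle $L_s$ on $\widetilde{\Mc}_G^{\blacksquare,t}$ whose associated character function $\chi_{L_s}\colon I_{\widehat{\mu}}\widetilde{\Mc}_G^{\blacksquare,t}(k) \to \Qb/\Zb$ (see the paragraph preceding Corollary \ref{cor:hasse}) coincides with $\chi_s$; this is consistent with the fact that $\chi_s$ is constant on $\kappa$-components by Lemma \ref{lemma:shift_constant}. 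Applying Corollary \ref{cor:hasse} to the unramified gerbe $\alpha_{L_s}$ of Definition \ref{defi:unramified_gerbe} gives the pointwise identity $\chi_s \circ e = \inv \circ x_F^* \alpha_{L_s}$ on $\widetilde{M}_{G,U_a}^{\blacksquare,t}(\Oo)^\flat$.

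Now fix $b \in U_a(\Oo)^\flat$. By Theorem \ref{thm:duality_quasi-split}, the abelian varieties $\widetilde{\Pc}_{G,b}^{\blacksquare}$ and $\widetilde{\Pc}_{\widehat{G},b}^{\blacksquare}$ are canonically dual, so Tate local duality furnishes a perfect pairing $H^1(F,\widetilde{\Pc}_{G,b}^{\blacksquare}) \times H^1(F,\widetilde{\Pc}_{\widehat{G},b}^{\blacksquare}) \to \Qb/\Zb$. The restriction of $\alpha_{L_s}$ to $\widetilde{\Mc}_{G,b}^{\blacksquare,t}$ is the gerbe pulled back from a cohomology class on $\widetilde{\Pc}_{G,b}^{\blacksquare}$ corresponding under this pairing to the class of the $\widetilde{\Pc}_{\widehat{G},b}^{\blacksquare}$-torsor $\widetilde{\Mc}_{\widehat{G},b}^{\blacksquare,s}$. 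Integrating $\inv \circ x_F^* \alpha_{L_s}$ over $\widetilde{M}_{G,b}^{\blacksquare,t}(F)$ therefore yields the $p$-adic volume of $\widetilde{M}_{\widehat{G},b}^{\blacksquare,s}(F)$, up to the ratio $|H^0(X,Z(\widehat{G}))|/|H^0(X,Z(G))|$ arising from the rigidifications; here Proposition \ref{abvol} applied to Ng\^o's isogeny (Construction \ref{const:ngo_isogeny}) ensures compatibility of the canonical measures. Reassembling the iterated integral over $b$ and reapplying Theorem \ref{thm:volume} on the $\widehat{G}$-side then produces the right-hand side of \eqref{mainsum}.

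The main obstacle will be the bookkeeping in the third paragraph: explicitly matching, under the rigidifications, twists, and the $\blacksquare/\blacklozenge$ decorations, the class of $\alpha_{L_s}$ restricted to a generic Hitchin fibre with the twist $s$ on the Langlands dual side, using Ng\^o's surjection \eqref{ngomap} and its dual. Concretely, one must verify that the pairing value produced by Tate local duality equals the explicit character value $s(\kappa)$ appearing in the definition of $\chi_s$, which forces one to carefully compare the Abel-Jacobi morphism with $\AJ_G^*$ of Theorem \ref{thm:duality_quasi-split} and to track the neutral component and component group contributions to the volumes on both sides.
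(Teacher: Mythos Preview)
Your proposal conflates the proof of the Corollary with the proof of Theorem \ref{mainidentity}. In the paper, Corollary \ref{cor:mainidentity} is a one-line deduction: one is in Situation \ref{situation:strongsplit}, so Theorem \ref{IMTheoremTwisted} (together with the twist-compatibility in Corollary \ref{kappa_IMTheoremTwisted} and its twisted variant, and Lemma \ref{lemma:shift_constant} for the weight) decomposes the groupoids $I_{\widehat\mu}\wmc_{G,a}^{\blacksquare,t}(k)$ and $I_{\widehat\mu}\wmc_{\widehat G,a}^{\blacksquare,s}(k)$ appearing in \eqref{mainsum} into the coendoscopic pieces $\wmc_{H_\CE,a}^{\blacklozenge,t}(k)$, and \eqref{cor:mainsum} drops out. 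What you have written is instead a sketch of the proof of Theorem \ref{mainidentity} itself; your overall strategy for that theorem (volume formula, Fubini along the Hitchin map, Hasse invariant interpretation via Corollary \ref{cor:hasse}, duality of Pryms, Proposition \ref{abvol}) does match the paper's.

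However, your sketch of that argument has genuine gaps. First, the Tate pairing you invoke is misstated: local Tate duality is a perfect pairing $A(F)\times H^1(F,A^\vee)\to \Qb/\Zb$, not between two $H^1$-groups. Second, and more seriously, the sentence ``Integrating $\inv\circ x_F^*\alpha_{L_s}$ over $\widetilde{M}_{G,b}^{\blacksquare,t}(F)$ therefore yields the $p$-adic volume of $\widetilde{M}_{\widehat G,b}^{\blacksquare,s}(F)$'' is not correct as stated. The integrand on the dual side is $\chi_t$, not the constant $1$, and the integral of a translated character over a torsor is either $0$ or the full volume. The paper handles this by a case split: if $\widetilde{M}_{G,b}^{\blacksquare,t}(F)=\varnothing$, Tate duality forces $\chi_t$ to be a non-trivial character on $\widetilde{M}_{\widehat G,b}^{\blacksquare,s}(F)$ and the dual integral vanishes (Lemma \ref{lemma:0}); if both fibres have rational points, both $\chi_s$ and $\chi_t$ are constant equal to $1$ and one compares the volumes via the isogeny of Construction \ref{const:ngo_isogeny} and Proposition \ref{abvol} (Lemma \ref{lemma:1}). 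Your argument does not isolate this dichotomy, and the ratio $|H^0(X,Z(\widehat G))|/|H^0(X,Z(G))|$ you insert does not appear in the fibrewise identity \eqref{eqn:key} for the $\blacksquare$-spaces (the passage to $\blacksquare$ is precisely what makes the generic Pryms connected abelian varieties and removes any such factor).
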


Using the specialisation map recalled above, we have according to Theorem \ref{thm:volume}
$$\vol_G\left({e^{-1}(y)}\right) = \frac{q^{-w(y)}}{|\Aut_{ I_{\widehat{\mu}}\widetilde{\Mc}_{G,a}^{\blacksquare,t}}(y)|}$$
for every $y \in I_{\widehat{\mu}}\widetilde{\Mc}_{G,a}^{\blacksquare,t}(k)$, where we also write $\vol_G$ instead of $\vol_{\widetilde{\Mc}_{G,U_a}^{\blacksquare,t}}$. Theorem \ref{mainidentity} is therefore equivalent to the identity

\begin{equation}\label{uauaua} 
  \int_{\widetilde{M}_{G,U_a}^{\blacksquare,t}(\Oo)} \chi_s \cdot \vol_G =  \int_{\widetilde{M}_{\widehat{G},U_a}^{\blacksquare,s}(\Oo)} \chi_t \cdot \vol_{\widehat{G}}.
\end{equation}
 We will prove this identity by fibrewise integration relative to the Hitchin base. 
\begin{lemma}\label{lemma:relative}
Let $\eta_0$ be a generating section of $\Omega_{\widetilde{\A}}^{\top}$.
\begin{enumerate}[(a)]
\item The line bundle $\Omega^{\top}_{\widetilde{\Mc}^{\text{reg},t}_{G}/\widetilde{\A}}$ on $\widetilde{\Mc}^{\text{reg},t}_{G}$ is trivial.
\item Let $\eta_1 $ be a generating section of $\Omega^{\top}_{\widetilde{\Mc}^{\text{reg},t}_{G}/\widetilde{\A}}$ on $\widetilde{\Mc}^{\text{reg},t}_{G}$. There exists a unique generating section $\omega \in \Omega^{\top}_{\widetilde{\Mc}_G^t}$, such that $\omega|_{\widetilde{\Mc}^{\text{reg},t}_G} = \eta_0 \wedge \eta_1$.
\item For an integrable function $g\colon \widetilde{M}_G^t(\Oo) \to \Cb$, we have
$$\int_{\widetilde{M}_G^t(\Oo)} g \cdot{} \vol_{\widetilde{\Mc}_G^t} = \int_{\widetilde{\A}(\Oo)} |\eta_0| \int_{\widetilde{M}^t_{G,b}(F)} g\cdot{} |\eta_1|.$$
\end{enumerate}
\end{lemma}

\begin{proof}
To simplify notation we drop the superscript ${}^t$ in the rest of the argument. 

Claim (a) follows from \cite[Proposition 4.16.1]{MR2653248} since $\widetilde{\Mc}^{\text{reg}}_G$ is a $\wpc^{\text{reg}}_G$-torsor.

Over the open subset $\widetilde{\Mc}^{\text{reg}} \subset \widetilde{\Mc}$ the Hitchin map $f\colon \widetilde{\Mc}_G \to \widetilde{\A}$ is smooth. Therefore we have a short exact sequence
$$0 \to f^*\Omega_{\widetilde{\A}}^1 \to \Omega_{\widetilde{\Mc}^{\text{reg}}_G}^1 \to \Omega_{\widetilde{\Mc}^{\text{reg}}_G/\widetilde{\A}}^1 \to 0.$$
Passing to top exterior powers we obtain an isomorphism of line bundles 
\begin{equation}\label{eqn:det}
\Omega_{\widetilde{\Mc}^{\text{reg}}_G}^{\top} \simeq \Omega_{\widetilde{\Mc}^{\text{reg}}_G/\widetilde{\A}}^{\top} \otimes \Omega_{\widetilde{\A}}^{\top}.
\end{equation}
On the level of sections this isomorphism sends $\omega_0 \otimes \omega_1 \mapsto \omega_0 \wedge \omega_1$.
Assertion (b) follows from Hartogs's extension property for the line bundle $\Omega^{\top}_{\widetilde{\Mc}_{G}/\widetilde{\A}}$ applied to the open subset $\widetilde{\Mc}^{\text{reg}} \subset \widetilde{\Mc}$. We remark that this is possible since its complement has codimension at least $2$ by \cite[Proposition 4.16.1]{MR2653248}.


We now turn to the proof of (c). Since $\widetilde{M}_G^{\Diamond} \to \widetilde{\A}^\Diamond$ is a smooth morphism of algebraic varieties we can apply \cite[Theorem 7.6.1]{MR1743467} to conclude the identity over $\widetilde{M}_G(\Oo) \cap \widetilde{M}^{\Diamond}_G(F)$:
$$\int_{\widetilde{M}_G(\Oo)\cap \widetilde{M}_G^{\Diamond}(F)} g \cdot{} \vol_{\widetilde{\Mc}_G}= \int_{\widetilde{M}_G(\Oo) \cap \widetilde{M}_G^{\Diamond}(F)} g \cdot{} |\omega|= \int_{\widetilde{M}_G(\Oo) \cap \widetilde{M}_G^{\Diamond}(F)} g \cdot{} |\eta_0 \wedge \eta_1|  = \int_{\widetilde{\A}(\Oo)} d\mu_{\widetilde{\A}} \int_{\widetilde{M}_{G,b}^{\Diamond}(F)} g\cdot{} |\eta_1|,$$
where for the first equality we used Lemma \ref{defcanmes}. By \ref{intools}(a) the complement of $\widetilde{M}_G(\Oo) \cap \widetilde{M}_G^{\Diamond}(F)$ is a set of measure zero, and therefore this implies the claim.
\end{proof}

Now fix $\eta_0$ and $\eta_1$ as in Lemma \ref{lemma:relative} for $\widetilde{M}_{\widehat{G}}^{s}$. Since $\widetilde{M}_{\widehat{G}}^{\text{reg},s}$ is a $\wpc^{\text{reg}}_{\widehat{G}}$-torsor over $\widetilde{\A}^{\text{reg}}$, the section $\eta_1$ induces a generating section $\eta_1'$ of $\Omega^{\top}_{\wpc^{\text{reg}}_{G}/\widetilde{\A}}$, see \cite[Lemma 6.13]{gwz}. By abuse of notation we write $\rho^*\eta_1$ for the generating section of $\Omega^{\top}_{\widetilde{\Mc}^{\text{reg},t}_{G}/\widetilde{\A}}$ obtained from the pullback of $\eta'_1$ under the isogeny $\rho: \wpc_G \to \wpc_{\widehat{G}}$ of Construction \ref{const:ngo_isogeny}.

By virtue of Lemma \ref{lemma:relative} above, \eqref{uauaua} follows if we show for all $b \in U_a(\Oo)^\flat$ the equality
\begin{equation*} \int_{ \widetilde{M}_{G,b}^{\blacksquare,t}(F) } \chi_s \cdot |\rho^*\eta_1| =  \int_{ \widetilde{M}_{\widehat{G},b}^{\blacksquare,s}(F) } \chi_t \cdot |\eta_1|, \end{equation*}
as by Proposition \ref{intools}(a) the complement of the subset $U_a(\Oo)^{\flat}\subset U_a(\Oo)$ is a set of measure $0$ (see also Remark \ref{rmk:zeroset}). We deduce this equality from the following two lemmas, which we will prove in the next subsection.


\begin{lemma}\label{lemma:0}
Let $b \in U_a(\Oo)^{\flat}$, such that the corresponding fibre $\widetilde{M}^{\blacksquare,t}_{G,b}$ does not have an $F$-rational point. Then $\chi_t|_{\widetilde{M}^{\blacksquare,s}_{\widehat{G},b}(F)}$ is a non-trivial character (up to translation), and hence 
$$\int_{ \widetilde{M}_{\widehat{G},b}^{\blacksquare,s}(F) } \chi_t \cdot{} |\eta_1| = 0.$$
\end{lemma}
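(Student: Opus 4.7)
The plan is to interpret $\chi_t$ on $\widetilde{M}^{\blacksquare,s}_{\widehat{G},b}(F)$ as the Hasse invariant of a canonical $\G_m$-gerbe on $\widetilde{\Mc}^{\blacksquare,s}_{\widehat{G},b}$ and then invoke Tate duality for abelian varieties to deduce its non-triviality.

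First I would package $t \in \Xb^*(\widehat{\Tb}/Z(X,\widehat{G}))$ into a line bundle $L_t$ on $\widetilde{\Mc}^{\blacksquare,s}_{\widehat{G}}$: pulling back along the Abel--Jacobi map in Construction \ref{cons:AJ} and using the equivalence $\Xb^*(\widehat{\Tb}/Z(X,\widehat{G})) = \Xb_*(T/Z(X,G))$, $t$ gives a character of the fibres of $\widetilde{\Pc}^{\blacksquare}_{\widehat{G}}$, whence (by torsor-trivialising against the Kostant section) a line bundle $L_t$ on $\widetilde{\Mc}^{\blacksquare,s}_{\widehat{G}}$. Let $\alpha_t\defeq \alpha_{L_t} \in H^2_{\text{\'et}}(\widetilde{\Mc}^{\blacksquare,s}_{\widehat{G}},\G_m)$ be the associated unramified gerbe of Definition \ref{defi:unramified_gerbe}. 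Combining Corollary \ref{cor:hasse} with Lemma \ref{lemma:unramified_gerbe} gives, for every $x\in \widetilde{M}^{\blacksquare,s}_{\widehat{G},b}(F)$, the identity
\[
\chi_t(e(x)) \;=\; \inv(x_F^*\alpha_t),
\]
since by construction the pairing $t(\kappa)$ read off from $e(x)$ agrees with the character of $\mu_N$ produced by $L_t$ on the isotropy at $e(x)$.

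Next I would relate the restriction $\alpha_t|_{\widetilde{\Mc}^{\blacksquare,s}_{\widehat{G},b}}$ to the torsor class of $\widetilde{M}^{\blacksquare,t}_{G,b}$ under Tate duality. Since $b\in U_a(\Oo)^{\flat}$, the generic fibres $A\defeq \widetilde{\Pc}^{\blacksquare}_{G,b}$ and $\widehat A\defeq \widetilde{\Pc}^{\blacksquare}_{\widehat{G},b}$ are abelian varieties over $F$, and by Theorem \ref{thm:duality_quasi-split} (via the isogeny of Construction \ref{const:ngo_isogeny}) they are dual. The twist $\widetilde{M}^{\blacksquare,t}_{G,b}$ defines a class $\xi_t \in H^1(F,A)$, and the hypothesis says exactly $\xi_t\neq 0$. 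On the other hand, after translating by any $F$-point of $\widetilde{M}^{\blacksquare,s}_{\widehat{G},b}(F)$ (if this set is empty the lemma is trivial), the torsor $\widetilde{M}^{\blacksquare,s}_{\widehat{G},b}(F)$ is identified with $\widehat A(F)$, and $\alpha_t$ becomes an element of $H^2_{\text{\'et}}(\widehat A,\G_m)$ in the image of $\Pic(\widehat A)=H^1(\widehat A, \G_m)$ under the boundary map. Its Hasse-invariant pairing with points of $\widehat A(F)$ is by construction the local Tate pairing with a class in $H^1(F,A)$, and compatibility of the Abel--Jacobi construction of $L_t$ with Ng\^o's surjection \eqref{ngomap} identifies that class with $\xi_t$.

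The main obstacle is the precise identification in the previous step: one needs to check that the Brauer-class obtained by restricting the stacky gerbe $\alpha_t$ to the generic fibre really matches the image of $\xi_t$ under the local Tate pairing. This is a naturality statement compatible with the duality of Theorem \ref{thm:duality_quasi-split}; modulo it, non-degeneracy of Tate duality for abelian varieties over $F$ forces the resulting continuous character $\widehat A(F)\to \Qb/\Zb \hookrightarrow \Cb^\times$ to be non-trivial because $\xi_t\neq 0$.

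Finally, $\widehat A(F)$ is compact (abelian variety over a non-archimedean local field), and the measure $|\eta_1|$ on $\widetilde M^{\blacksquare,s}_{\widehat G,b}(F)$ is translation-invariant, being induced by a translation-invariant top form on a torsor under $\widehat A$. Integrating a non-trivial continuous character against a translation-invariant measure of finite total mass on a compact abelian group yields $0$, which completes the argument.
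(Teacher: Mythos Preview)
Your proposal is correct and follows essentially the same strategy as the paper: realize $\chi_t$ as the Hasse invariant of an unramified gerbe on $\widetilde{\Mc}^{\blacksquare,s}_{\widehat{G}}$, identify its restriction to the generic fibre with the image of the torsor class $\xi_t\in H^1(F,\widetilde{\Pc}^{\blacksquare}_{G,b})$ under the Tate pairing, and invoke non-degeneracy of Tate duality. The paper constructs the relevant line bundle as $t(\mathcal{T}_{\widehat{G}})$, where $\mathcal{T}_{\widehat{G}}$ is the $\widehat{\Tb}/Z(X,\widehat{G})$-torsor coming from evaluation at $\infty$ (Definition \ref{defi:T-torsor}), rather than via your Abel--Jacobi description; but Lemma \ref{lemma:dual_torsor} shows these agree and is precisely the ``naturality statement'' you flag as the main obstacle. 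The paper's Lemma \ref{lemma:chi_gerbe} is the $\widehat{G}$-side analogue of your first displayed identity, and the case $\widetilde{M}^{\blacksquare,s}_{\widehat{G},b}(F)=\emptyset$ is handled separately (as you note parenthetically).
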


These statements also play an important role in our article \cite{gwz} devoted to the proof of the Hausel--Thaddeus conjecture, see Proposition 6.16 in \emph{loc. cit}. The lemma below is a generalisation of \cite[Lemma 6.15]{gwz}. In order to keep this article self-contained we include the arguments in Subsection \ref{tate}.

\begin{lemma}\label{lemma:1}
Let $b \in U_a(\Oo)^{\flat}$, such that the corresponding fibres $\widetilde{M}^{\blacksquare,t}_{G,b}$ and $\widetilde{M}^{\blacksquare,s}_{\widehat{G},b}$ have $F$-rational points. Then we have $\chi_t|_{\widetilde{M}^{\blacksquare,s}_{\widehat{G},b}} = 1$ and $\chi_s|_{\widetilde{M}^{\blacksquare,t}_{G,b}} = 1$, and an equality of integrals
$$\int_{ \widetilde{M}_{G,b}^{\blacksquare,t}(F) }  |\rho^* \eta_1|=  \int_{ \widetilde{M}_{\widehat{G},b}^{\blacksquare,s}(F) } |\eta_1|. $$ 
\end{lemma}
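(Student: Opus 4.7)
The plan is to separate the two character-triviality assertions from the equality of integrals and address them using the Hasse-invariant theory of Section \ref{sec:hasse} for the former and the duality of Prym varieties for the latter.

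For the integral identity, granted the character claims, the content reduces to $\int_{\widetilde{M}_{G,b}^{\blacksquare,t}(F)} |\rho^*\eta_1| = \int_{\widetilde{M}_{\widehat{G},b}^{\blacksquare,s}(F)} |\eta_1|$. Since $b \in U_a(\Oo)^{\flat}$, the generic fibre $b_F$ lies in $\widetilde{\A}^{\Diamond}$, and over this locus $\wpc^{\blacksquare}_{G}$ and $\wpc^{\blacksquare}_{\widehat{G}}$ are abelian schemes (this is the purpose of the $\blacksquare$-construction, cf.\ \cite[Corollaire 4.10.4]{MR2653248}). By Theorem \ref{thm:duality_quasi-split} their fibres over $b_F$ are Langlands dual abelian varieties, and the isogeny $\rho\colon \wpc_G \to \wpc_{\widehat{G}}$ from Construction \ref{const:ngo_isogeny}, of degree coprime to $p$, restricts to the corresponding dual isogeny over $F$. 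The assumption that both Hitchin fibres have $F$-rational points trivialises them as torsors under these Prym varieties, and the forms $\rho^*\eta_1$ and $\eta_1$ descend to compatible gauge forms on the two abelian varieties. Proposition \ref{abvol} then yields the equality.

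For the character triviality I proceed through Corollary \ref{cor:hasse}. Out of the character $t \in \Xb^*(\widehat{\Tb}/Z(X,\widehat{G}))$ I will construct a finite-order line bundle $L_t$ on the coarse moduli space $\widetilde{M}_{\widehat{G}}^{\blacksquare,s}/\Oo_F$, using the Abel--Jacobi morphism of Construction \ref{const:AJ} and the duality of Theorem \ref{thm:duality_quasi-split}. The line bundle $L_t$ will be characterised by the property that for every point $(y,\alpha) \in I_{\widehat{\mu}}\widetilde{\Mc}_{\widehat{G},a}^{\blacksquare,s}(k)$ of coendoscopic type $\nu\colon \mu_N \to \widehat{\Tb}$, the restriction $y^*L_t$ is the character $t \circ \nu\colon \mu_N \to \G_m$. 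Lemma \ref{lemma:unramified_gerbe} then identifies $\chi_t$ with the Hasse-invariant function $f_{\alpha_{L_t}}$ attached to the associated unramified gerbe $\alpha_{L_t}$, and Corollary \ref{cor:hasse} gives $\chi_t(e(x)) = \inv(x_F^*\alpha_{L_t})$ for every $x \in \widetilde{M}^{\blacksquare,s}_{\widehat{G},b}(\Oo_F)^{\#}$.

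Properness of the Hitchin morphism implies that every $F$-rational point of $\widetilde{M}^{\blacksquare,s}_{\widehat{G},b}$ extends to an $\Oo_F$-point, and the set of points outside $\widetilde{M}^{\blacksquare,s}_{\widehat{G},b}(\Oo_F)^{\#}$ has $p$-adic measure zero. Because $\alpha_{L_t}$ is unramified, its pullback to $\Spec \Oo_F$ along such an extension lies in $\mathrm{Br}(\Oo_F) = 0$, hence is trivial, so its Hasse invariant vanishes. Therefore $\chi_t|_{\widetilde{M}^{\blacksquare,s}_{\widehat{G},b}(F)} = 1$, and the symmetric argument interchanging the roles of $G$ and $\widehat{G}$, and of $s$ and $t$, yields $\chi_s|_{\widetilde{M}^{\blacksquare,t}_{G,b}(F)} = 1$. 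The principal obstacle will be producing the line bundle $L_t$ with the prescribed behaviour along the twisted inertia stack: this amounts to transporting $t$ through the duality $(\wpc_G^{\Diamond})^{\vee} \simeq \wpc_{\widehat{G}}^{\Diamond}$ and making the resulting universal line bundle descend to the rigidified moduli space $\widetilde{\Mc}_{\widehat{G}}^{\blacksquare,s}$, exploiting the functoriality of the $\blacksquare$-construction established in Lemma \ref{lemma:coendo_surjection}.
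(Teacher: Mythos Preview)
Your treatment of the integral equality is correct and is exactly what the paper does: over $\A^{\Diamond}$ the $\blacksquare$-Pryms are abelian varieties, the isogeny $\rho$ from Construction \ref{const:ngo_isogeny} has degree coprime to $p$, and Proposition \ref{abvol} applies once both torsors are trivialised by the assumed $F$-points.

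Your argument for the character triviality, however, has a genuine gap. Notice that your reasoning for $\chi_t|_{\widetilde{M}^{\blacksquare,s}_{\widehat{G},b}(F)}=1$ never uses the hypothesis that $\widetilde{M}^{\blacksquare,t}_{G,b}(F)\neq\emptyset$. If it were correct it would prove $\chi_t\equiv 1$ for \emph{every} $b\in U_a(\Oo)^{\flat}$, in direct contradiction with Lemma \ref{lemma:0}. The specific error is the sentence ``its pullback to $\Spec \Oo_F$ along such an extension lies in $\mathrm{Br}(\Oo_F)=0$''. Properness only gives you an $\Oo_F$-point of the \emph{coarse moduli space} $\widetilde{M}^{\blacksquare,s}_{\widehat{G}}$; the gerbe $\alpha_{L_t}$ lives on the \emph{stack} $\widetilde{\Mc}^{\blacksquare,s}_{\widehat{G}}$, and the generic point $x_F$ need not extend to an $\Oo_F$-point of the stack. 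What it extends to is precisely the stack $\Xc_{\Gamma,Q}$ of Construction \ref{defi:XGamma}, whose Brauer group is not trivial (Lemma \ref{lemma:Brauer_stack}). This is the entire content of Corollary \ref{cor:hasse}: the Hasse invariant of $x_F^*\alpha_{L_t}$ is $\chi_{L_t}(e(x))$, which is in general non-zero.

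The paper's argument is different and does use the hypothesis on the $G$-side. Lemma \ref{lemma:dual_torsor} identifies the gerbe $\alpha$ on the $\widehat{G}$-side with the element of $\Ext^2(\widetilde{\Pc}^{\blacksquare}_{\widehat{G},b},\G_m)$ corresponding, under the Tate-duality dictionary of Subsection \ref{tate}, to the torsor $\widetilde{M}^{\blacksquare,t}_{G,b}$ over the dual abelian variety $\widetilde{\Pc}^{\blacksquare}_{G,b}$. The function $\chi_t$ on $\widetilde{\Pc}^{\blacksquare}_{\widehat{G},b}(F)$ is therefore the Tate pairing against the class $[t]\in H^1(F,\widetilde{\Pc}^{\blacksquare}_{G,b})$. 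The assumption $\widetilde{M}^{\blacksquare,t}_{G,b}(F)\neq\emptyset$ says exactly that this class is trivial, whence $\chi_t\equiv 1$ by perfectness of the Tate pairing. This is the step you are missing: you must relate the character on one side to the torsor on the other via Tate duality, not try to kill the gerbe by an integrality argument that does not hold at the stack level.
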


\begin{proof}[Proof of Theorem \ref{mainidentity}] We just summarise the discussion above. For every $b \in U_a(\Oo)^{\flat} = U_a(\Oo) \cap \widetilde{\A}(\Oo)^{\flat}$ we have to prove the equality
\begin{equation}\label{eqn:key} \int_{ \widetilde{M}_{G,b}^{\blacksquare,t}(F) } \chi_s \cdot |\rho^* \eta_1| =  \int_{ \widetilde{M}_{\widehat{G},b}^{\blacksquare,s}(F) } \chi_t \cdot |\eta_1|. \end{equation}
If $\widetilde{M}_{G,b}^{\blacksquare,t}(F) = \emptyset$, then by Lemma \ref{lemma:0} the right hand side of \eqref{eqn:key} is $0$ as well. The same argument works in the case $\widetilde{M}_{\widehat{G},b}^{\blacksquare,s}(F) = \emptyset$, so we're left with the case when both Hitchin fibres have a rational point, which is covered by Lemma \ref{lemma:1}
\end{proof} 

\subsection{Character sums}\label{character}

The first consequence of our comparison of $p$-adic integrals for moduli spaces of Higgs bundles is a special case of (the geometric analogue of) Waldspurger's non-standard Fundamental Lemma. At first we introduce some notation. We fix an identification of $\Cb$ with $\bar{\Qb}_{\ell}$. 

Assume that a finite group $\Gamma$ acts on $H^*_c(Y_{\bar{k}},\bar{\Qb}_{\ell})$ where $Y/k$ is a $k$-variety, and that this action commutes with the action of $\Gal(k)$ on \'etale cohomology. For a character $\kappa$ of $\Gamma$ we write $\#^{\kappa}Y(k)$ to denote $\Tr(Fr,H^*_{c}(Y,\bar{\Qb}_{\ell})^{\kappa})$, where $V^{\kappa}$ denotes the $\kappa$-isotypical component of a $\Gamma$-representation $V$. We also use the shorthand $\#^{stab} = \#^{\id}$ for the identity character. The proof of the following lemma is an easy exercise left to the reader.

\begin{lemma}\label{lemma:easystab}
Assume that $Y^{\blacksquare} \subset Y$ is a union of connected components for which $H^*_c(Y^\blacksquare,\bar{\Qb}_{\ell})$ is acted on by a subgroup $\Gamma^{\blacksquare} \subset \Gamma$, in such a way that $\Gamma\cdot{} (H^*_c(Y^\blacksquare,\bar{\Qb}_{\ell})) = H^*_c(Y,\bar{\Qb}_{\ell})$, and $\dim (H^*_c(Y^\blacksquare,\bar{\Qb}_{\ell})) = \frac{|\Gamma|}{|\Gamma^{\blacksquare}|} \cdot{} \dim (H^*_c(Y,\bar{\Qb}_{\ell}))$.

Then $\Ind_{\Gamma^{\blacksquare}}^{\Gamma}H^*_c(Y^\blacksquare,\bar{\Qb}_{\ell}) = H^*_c(Y,\bar{\Qb}_{\ell})$, and hence we have
for every character $\kappa \in \Gamma^{*}$ an equality
$$\#^{\kappa|_{\Gamma^{\blacksquare}}}Y^{\blacksquare}(k) = \#^{\kappa}Y(k).$$
\end{lemma}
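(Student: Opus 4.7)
The plan is to identify $H^*_c(Y, \bar{\Qb}_{\ell})$ with the induced representation $\Ind_{\Gamma^{\blacksquare}}^{\Gamma} H^*_c(Y^{\blacksquare}, \bar{\Qb}_{\ell})$ and then apply Frobenius reciprocity on isotypic components. First I would construct the natural comparison map
\[
\Phi\colon \Ind_{\Gamma^{\blacksquare}}^{\Gamma} H^*_c(Y^{\blacksquare}, \bar{\Qb}_{\ell}) \to H^*_c(Y, \bar{\Qb}_{\ell}), \qquad \gamma \otimes v \mapsto \gamma \cdot v.
\]
This is well-defined because $\Gamma^{\blacksquare}$ preserves $H^*_c(Y^{\blacksquare}, \bar{\Qb}_{\ell})$ (which uses the fact that $Y^{\blacksquare}$ is a union of connected components), and it is $\Gamma$-equivariant by construction. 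Moreover it is Frobenius-equivariant since by assumption the $\Gamma$-action on cohomology commutes with the Galois action.

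Next I would show that $\Phi$ is an isomorphism. Surjectivity is immediate from the hypothesis $\Gamma \cdot H^*_c(Y^{\blacksquare}, \bar{\Qb}_{\ell}) = H^*_c(Y, \bar{\Qb}_{\ell})$. For injectivity (or equivalently, bijectivity) it suffices to compare dimensions: one has
\[
\dim \Ind_{\Gamma^{\blacksquare}}^{\Gamma} H^*_c(Y^{\blacksquare}, \bar{\Qb}_{\ell}) = [\Gamma:\Gamma^{\blacksquare}] \cdot \dim H^*_c(Y^{\blacksquare}, \bar{\Qb}_{\ell}),
\]
which by the dimension hypothesis coincides with $\dim H^*_c(Y, \bar{\Qb}_{\ell})$.

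Finally, for a character $\kappa \in \Gamma^{*}$ I would invoke Frobenius reciprocity in the form of the explicit natural isomorphism
\[
\bigl(\Ind_{\Gamma^{\blacksquare}}^{\Gamma} V\bigr)^{\kappa} \xrightarrow{\sim} V^{\kappa|_{\Gamma^{\blacksquare}}},\qquad f \mapsto f(e_{\Gamma}),
\]
valid for any $\Gamma^{\blacksquare}$-representation $V$ and any one-dimensional character $\kappa$ of the abelian group $\Gamma$. Applied to $V = H^*_c(Y^{\blacksquare}, \bar{\Qb}_{\ell})$, and combined with the isomorphism $\Phi$ of the previous step, this yields a Frobenius-equivariant identification
\[
H^*_c(Y, \bar{\Qb}_{\ell})^{\kappa} \;\cong\; H^*_c(Y^{\blacksquare}, \bar{\Qb}_{\ell})^{\kappa|_{\Gamma^{\blacksquare}}}.
\]
Taking the trace of Frobenius on both sides gives the required identity $\#^{\kappa} Y(k) = \#^{\kappa|_{\Gamma^{\blacksquare}}} Y^{\blacksquare}(k)$. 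There is no substantive obstacle here; the only point requiring modest care is checking Frobenius-equivariance of the Frobenius reciprocity isomorphism, which follows formally from the commutation of the $\Gamma$- and Galois actions assumed at the outset.
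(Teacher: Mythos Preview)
Your proof is correct and is precisely the standard argument; the paper itself leaves this lemma as ``an easy exercise'' without proof, so there is nothing to compare against beyond noting that your approach is exactly what the authors had in mind. One small remark: the dimension hypothesis as printed in the paper has the fraction inverted (it should read $\dim H^*_c(Y,\bar{\Qb}_{\ell}) = [\Gamma:\Gamma^{\blacksquare}]\cdot \dim H^*_c(Y^\blacksquare,\bar{\Qb}_{\ell})$ for the conclusion to make sense), and you have silently used the correct version.
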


\begin{corollary}\label{nonstd} 
For every $a \in \widetilde \A(k)$ we have 
\[\#^{stab} \wmc_{G,a}(k)  =  \#^{stab} \wmc_{\widehat{G},a}(k).\]
\end{corollary}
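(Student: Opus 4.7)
The strategy is to specialise Theorem \ref{mainidentity} with $s = 0$ and Fourier-average over the finite group $\pi_0(\wpc_{G,a}^\blacksquare)$ of twists to extract the stable point counts on the $\blacksquare$-spaces, then pass to the full moduli stacks using Lemma \ref{lemma:easystab}.

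First I would apply Lemma \ref{lemma:easystab} to the inclusion $\wmc_{G,a}^\blacksquare \subset \wmc_{G,a}$: using the natural action of $\pi_0(\wpc_{G,a})$ and its restriction to the subgroup $\pi_0(\wpc_{G,a}^\blacksquare)$ one verifies the dimension and surjectivity hypotheses, which gives $\#^{stab}\wmc_{G,a}(k) = \#^{stab}\wmc_{G,a}^\blacksquare(k)$ (and likewise for $\widehat G$). This reduces the corollary to comparing $\#^{stab}\wmc_{G,a}^\blacksquare(k)$ and $\#^{stab}\wmc_{\widehat G,a}^\blacksquare(k)$, which by Lemma \ref{Fourier} are the averages of the point counts $\#\wmc^{\blacksquare,t}(k)$ over the twists. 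Note that $\#\wmc^{\blacksquare,t}(k)$ is exactly the $\kappa = 0$ stratum of $[I_{\widehat\mu}\wmc^{\blacksquare,t}(k)]$, since the $\kappa = 0$ component parametrises points with trivial homomorphism $\widehat\mu \to \Aut$.

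To produce the required averaged identity, I would apply Theorem \ref{mainidentity} with $s = 0$ for each $t$, decompose both sides along $\kappa$-type via Lemma \ref{lemma:shift_constant}, and sum over $t \in \pi_0(\wpc_{G,a}^\blacksquare)$. On the left, Lemma \ref{Fourier} applied stratum by stratum converts the sum into $|\pi_0(\wpc_{G,a}^\blacksquare)|$ times the $q^{-w}$-weighted stable count on $I_{\widehat\mu}\wmc_{G,a}^\blacksquare(k)$; on the right, the character sum $\sum_t e^{2\pi i t(\kappa')}$ vanishes unless $\kappa'$ annihilates $\pi_0(\wpc_{G,a}^\blacksquare)$, in which case it contributes $|\pi_0(\wpc_{G,a}^\blacksquare)|$. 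The Tate/Cartier duality of the Prym Beilinson $1$-motives (Theorem \ref{thm:duality_quasi-split}), combined with the symmetric argument obtained by fixing $t = 0$ and summing over $s$, identifies these annihilator conditions between the $G$- and $\widehat G$-sides and pairs the $\kappa \neq 0$ contributions with the corresponding strata via the coendoscopic-group description of Theorem \ref{IMTheoremTwisted}. The $\kappa \ne 0$ terms therefore cancel, and the surviving $\kappa = 0$ contributions (where $\CF(0) = 0$ and the stratum is $\wmc^\blacksquare$ itself) yield the desired equality.

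The main obstacle will be the careful character-theoretic bookkeeping: verifying that the surviving $\kappa \ne 0$ contributions cancel exactly between the two sides, which ultimately rests on the precise form of the Langlands-dual Prym duality in Theorem \ref{thm:duality_quasi-split} and its compatibility with the coendoscopic description of the twisted inertia stack.
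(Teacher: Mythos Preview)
Your reduction via Lemma \ref{lemma:easystab} to the $\blacksquare$-spaces is correct and matches the paper, but the subsequent strategy has a gap.

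You fix $s=0$ and sum only over $t$. With $s=0$ the character $\chi_s$ on the left of \eqref{mainsum} is identically $1$, so summing over $t$ and applying Lemma \ref{Fourier} stratum by stratum leaves you with the $q^{-\CF(\kappa)}$-weighted \emph{stable} count of \emph{all} of $I_{\widehat{\mu}}\wmc_{G,a}^{\blacksquare}$, i.e.\ a sum over every $\kappa$, not just $\kappa=0$. On the right you get a sum over those $\kappa'$ that pair trivially with all $t$. Neither side is the $\kappa=0$ term alone, and your proposed cancellation of the non-trivial $\kappa$-contributions via ``duality plus the symmetric argument with $t=0$'' is not actually carried out: you end up with two equations, each mixing many $\kappa$-terms, and no mechanism is given to disentangle them. (Incidentally, $\CF(0)=\dim\wmc_G$, not $0$: the trivial automorphism has all characters trivial, so each $c_i=1$ in Definition \ref{defi:weight}.)

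The paper's fix is to sum over \emph{both} $s$ and $t$. The point is that on the left hand side of \eqref{mainsum} the dependence on $s$ is entirely through the factor $s(\kappa)$; hence $\sum_s s(\kappa)=0$ for every non-trivial $\kappa$, and the double sum $\sum_{s,t}$ of the left side collapses to $q^{-\dim\wmc_G}\sum_t \#\wmc_{G,a}^{\blacksquare,t}(k)$, which by Lemma \ref{Fourier} equals $q^{-\dim\wmc_G}|\pi_0(\wpc_{G,a}^{\blacksquare})|\cdot|\pi_0(\wpc_{\widehat{G},a}^{\blacksquare})|\cdot\#^{stab}\wmc_{G,a}^{\blacksquare}(k)$. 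By symmetry the same computation on the right gives the analogous expression for $\widehat{G}$, and one is done. No appeal to Prym duality, Theorem \ref{IMTheoremTwisted}, or any cancellation between non-trivial $\kappa$-strata is needed: the extra sum over $s$ does all the work you were trying to do by hand.
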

\begin{proof} 
By virtue of Lemma \ref{lemma:easystab} for $\wmc_{G,a}^{\blacksquare} \subset \wmc_{G,a}$, we have
$$\#^{stab}\wmc_{G,a}^{\blacksquare}(k) = \#^{stab}\wmc_{G,a}(k).$$

We will obtain the identity above by summing up the identities of \eqref{mainsum} for all possible values of $t \in \pi_0(\wpc_{G,a}^{\blacksquare})$ and $s \in \pi_0(\wpc_{\widehat{G},a}^{\blacksquare})$.
On the left hand side we have
$$\sum_{s,t} \text{LHS of \eqref{mainsum}} = \sum_{\kappa} \sum_{s,t}s(\kappa) q^{-\mathcal{F}(\kappa)} \hash \left(I_{\widehat{\hat\mu}}\widetilde{\Mc}^{\blacksquare}_{G,a}(k)\right)_{\kappa} = q^{-\dim \wmc_{G}}\sum_{s,t} \hash \widetilde{\Mc}_{G,a}^{\blacksquare,t}(k),$$
where we use for the second equality sign that $s \mapsto s(\kappa)$ is a non-trivial character for $\kappa \neq 1$ and $\CF(1) = \dim \wmc_G$. Using Lemma \ref{Fourier} we find
$$\sum_{s,t} \hash \widetilde{\Mc}_{G,a}^{\blacksquare,t}(k) = |\pi_0(\Pc_{G,a}^{\blacksquare})|\sum_s \hash^{stab} \widetilde{\Mc}_{G,a}^{\blacksquare}(k)  = |\pi_0(\Pc_{G,a}^{\blacksquare})||\pi_0(\Pc_{\widehat{G}, a}^{\blacksquare})| \hash^{stab} \widetilde{\Mc}_{G,a}^{\blacksquare}(k).$$
The same computation for the right hand side of \eqref{mainsum} yields 
$$\sum_{s,t} \text{RHS of \eqref{mainsum}} = q^{-\dim \wmc_{\widehat{G}}} |\pi_0(\Pc_{G,a}^{\blacksquare})||\pi_0(\Pc_{\widehat{G}, a}^{\blacksquare})| \hash^{stab} \widetilde{\Mc}_{\widehat{G},a}^{\blacksquare}(k).$$
Therefore we conclude $\hash^{stab} \widetilde{\Mc}_{G,a}^{\blacksquare}(k)=\hash^{stab} \widetilde{\Mc}_{\widehat{G},a}^{\blacksquare}(k)$.
\end{proof}

This result on point-counts implies an assertion on the topology of the Hitchin map over $\bar k$: Using Chebotarev density and the fact that $(R\widetilde{f}_*\bar{\Qb}_{\ell})_{\bar k}$ is pure (since $\widetilde{f}$ is proper and $\widetilde{\Mc}$ and $\widetilde{\A}$ are smooth) and semi-simple (by virtue of the Decomposition Theorem) one can deduce from this assertion that the complexes of constructible sheaves $(R\widetilde{f}_{G,*}\bar{\Qb}_{\ell})^{stab}_{\bar k}$ and $(R\widetilde{f}_{\widehat{G},*}\bar{\Qb}_{\ell})^{stab}_{\bar k}$ are equivalent. However, for establishing Waldspurger's non-standard form of the Fundamental Lemma, the above version in terms of point-counts suffices.



The next statement is equivalent to Ng\^o's Geometric Stabilisation Theorem \ref{gmst}. Using purity of the cohomology of Hitchin fibres, it allows one to reconstruct the dimension of the $\lambda$-isotypical component $H^i(\wmc_{G,a},\bar{\Qb}_{\ell})^{\lambda}$, where $\lambda \in \Hom(\Xb_*(\Tb),(\Qb/\Zb)')$ is a character of finite order.
Since the action of $\Xb_*(\Tb)$ on the cohomology of a Hitchin fibre $\wmc_{G,a}$ factors (by definition) through $\pi_0(\wpc_{G,a})$, we see that the $\lambda$-isotypical component $H^i(\wmc_{G,a},\bar{\Qb}_{\ell})^{\lambda}$ is non-zero, if and only if $\lambda$ factors through $\pi_0(\wpc_{G,a})$, and therefore defines an element of $(\pi_0(\wpc_{G,a}))^*$.
Since $\Hom(\Xb_*(\Tb),(\Qb/\Zb)') = \Hom(\widehat{\mu},\widehat{\Tb})$ we may consider coendoscopic data $\Ec$ for $\widehat{G}$ of type $\lambda$.

\begin{corollary}\label{cor:geometric_endoscopy}  
Let $\CE=(\lambda,\rho_\lambda,\rho_\lambda \to \rho)$ be a coendoscopic datum for $\widehat G$ over $k$ for which the torsor $\rho_{\lambda,\infty}$ splits. We also use the notation $\lambda \in \pi_0(\wpc_{G,a})^*$ to denote the image of $\lambda$ in $\pi_0(\wpc_{G,a})^*$.
For every point $a \in \widetilde{\A}_\CE(k)$ we have
\[ \#^{\lambda} \wmc_{G,a}(k) =q^{\dim \wmc_G -\CF(\lambda)}\#^{stab} \wmc_{\widehat{H}_\CE,a}(k). \]
Here $\widehat{H}_\CE$ is the Langlands dual group of ${H}_\CE$, that is, an endoscopy group of $G$. 
\end{corollary}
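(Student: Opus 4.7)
The plan is to extract the $\lambda$-isotypical part of the $G$-side count from Theorem \ref{mainidentity} by Fourier inversion in the twist parameter $t$, and then to identify the resulting $\widehat G$-side expression using Corollary \ref{kappa_IMTheoremTwisted} together with the non-standard Fundamental Lemma (Corollary \ref{nonstd}) applied to the Langlands dual pair $(H_\CE,\widehat H_\CE)$. The strategy parallels the proof of Corollary \ref{nonstd}, but instead of summing uniformly over all characters, I weight the sum by $\lambda^{-1}(t)$ in order to single out a single coendoscopic contribution on the dual side.

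Concretely, I would set $s=0$ in Theorem \ref{mainidentity} so that $\chi_s\equiv 1$ and then decompose both sides by type using Lemma \ref{lemma:shift_constant}, rewriting the identity as
\[
\sum_\kappa q^{-\CF(\kappa)}\,\#(I_{\widehat\mu}\wmc^{\blacksquare,t}_{G,a})_\kappa(k) \;=\; \sum_\nu t(\nu)\,q^{-\CF(\nu)}\,\#(I_{\widehat\mu}\wmc^\blacksquare_{\widehat G,a})_\nu(k).
\]
Multiplying by $\lambda^{-1}(t)$ and summing over $t\in\pi_0(\wpc^\blacksquare_{G,a})$, orthogonality of characters on the right isolates the term with $\nu=\lambda$, yielding a normalised multiple of $q^{-\CF(\lambda)}\#(I_{\widehat\mu}\wmc^\blacksquare_{\widehat G,a})_\lambda(k)$. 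On the left, Lemma \ref{Fourier} applied componentwise to each $\kappa$-piece converts the Fourier-weighted sum into $|\pi_0(\wpc^\blacksquare_{G,a})|\cdot\#^\lambda(I_{\widehat\mu}\wmc^\blacksquare_{G,a})_\kappa(k)$. The splitting hypothesis on $\rho_{\lambda,\infty}$ combined with Corollary \ref{kappa_IMTheoremTwisted} identifies the $\lambda$-piece on the right with $\wmc^\blacklozenge_{H_\CE,a}(k)$, and Corollary \ref{nonstd} applied to $(H_\CE,\widehat H_\CE)$, together with Lemma \ref{lemma:easystab} to pass between $\blacklozenge$/$\blacksquare$ and the full stacks, rewrites this as $\#^{stab}\wmc_{\widehat H_\CE,a}(k)$.

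On the left, the $\kappa=0$ term produces $q^{-\dim\wmc_G}\#^\lambda\wmc^\blacksquare_{G,a}(k)$, which by Lemma \ref{lemma:easystab} equals $q^{-\dim\wmc_G}\#^\lambda\wmc_{G,a}(k)$, namely the quantity that should appear in the final formula. The main obstacle is showing that the contributions with $\kappa\neq 0$ vanish. For each such $\kappa$ arising from a coendoscopic datum $\CE_\kappa$ for $G$, the $\pi_0(\wpc^\blacksquare_{G,a})$-action on $(I_{\widehat\mu}\wmc^\blacksquare_{G,a})_\kappa \simeq \wmc^\blacklozenge_{H_{\CE_\kappa},a}$ factors through the surjection $\pi_0(\wpc^\blacksquare_{G,a})\twoheadrightarrow \pi_0(\wpc^\blacklozenge_{H_{\CE_\kappa},a})$ of Lemma \ref{lemma:coendo_surjection}, and I expect $\lambda$ to pull back incompatibly with this quotient unless $\CE_\kappa = \CE$, forcing the $\lambda$-isotypical component to vanish. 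Verifying this vanishing rigorously, and then keeping track of the ratio $|\pi_0(\wpc^\blacksquare_{\widehat G,a})|/|\pi_0(\wpc^\blacksquare_{G,a})|$ and the powers of $q$ in the bookkeeping, will produce the claimed factor $q^{\dim\wmc_G-\CF(\lambda)}$ and complete the proof.
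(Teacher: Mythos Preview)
Your overall strategy---Fourier inversion in $t$ to isolate the $\lambda$-piece on the $\widehat G$-side, then identifying that piece via Corollary~\ref{kappa_IMTheoremTwisted} and Corollary~\ref{nonstd}---is exactly right, and matches the paper. The gap is in how you handle the $G$-side: setting $s=0$ is a mistake. With $s=0$ the left-hand side of Theorem~\ref{mainidentity} retains \emph{all} the $\kappa$-pieces $q^{-\CF(\kappa)}\#(I_{\widehat\mu}\wmc^{\blacksquare,t}_{G,a})_\kappa(k)$, and your proposed mechanism for killing the $\kappa\neq 0$ terms does not work. The character $\lambda$ is a coendoscopic type for $\widehat G$, while the $\kappa$ are coendoscopic types for $G$; there is no sense in which ``$\CE_\kappa=\CE$'' could hold, and there is no reason $\lambda$ should fail to factor through $\pi_0(\wpc^\blacklozenge_{H_{\CE_\kappa},a})$ for nonzero $\kappa$. (Already for $\lambda$ trivial this fails: every $\kappa$-piece contributes a nonzero stable part.)

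The paper's fix is to keep $s$ free and sum over \emph{both} $s$ and $t$ with weight $\lambda^{-1}(t)$. On the $G$-side, the sum over $s$ introduces the factor $\sum_s s(\kappa)$, which vanishes for every $\kappa\neq 0$ by orthogonality; this is the step that isolates the $\kappa=0$ term $q^{-\dim\wmc_G}\#\wmc^{\blacksquare,t}_{G,a}(k)$. After that the $t$-sum with weight $\lambda^{-1}(t)$ gives $\#^\lambda\wmc^\blacksquare_{G,a}(k)$ via Lemma~\ref{Fourier}, exactly as you anticipated. On the $\widehat G$-side the $t$-sum isolates $\nu=\lambda$ (as you said), and the remaining $s$-sum, together with the surjection of Lemma~\ref{lemma:coendo_surjection}, produces $\#^{stab}\wmc^\blacklozenge_{H_\CE,a}(k)$ via Lemma~\ref{Fourier}. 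The $|\pi_0(\wpc^\blacksquare_{G,a})|\cdot|\pi_0(\wpc^\blacksquare_{\widehat G,a})|$ factors then cancel cleanly from both sides, so no ratio needs to be tracked. In short: reinstate the sum over $s$ and your argument goes through.
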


\begin{proof} 
This time we apply $\sum_{s,t} \lambda^{-1}(t)$ to both sides of \eqref{mainsum}. We get 
$$\sum_{s,t} \lambda^{-1}(t) \cdot{} \text{LHS of \eqref{mainsum}} =  \sum_{\kappa} \sum_{s,t}   \lambda^{-1}(t) s( \kappa) q^{-\CF(\kappa)} \# \left(I\wmc_{G,a}^{\blacksquare,t}(k)\right)_{\kappa} = \sum_{s,t} \lambda^{-1}(t) q^{-\dim \wmc_G} \# \wmc_{G,a}^{\blacksquare,t}(k),$$
where we use that $s \mapsto s(\kappa)$ is a non-trivial character for $\kappa \neq 1$. The right hand side above simplifies further to
$$\sum_{s,t} \lambda^{-1}(t) \cdot{} \text{LHS of \eqref{mainsum}} = q^{-\dim \wmc_G}|\pi_0(\widetilde{\Pc}_{G,a}^{\blacksquare})||\pi_0(\widetilde{\Pc}^{\blacksquare}_{\widehat{G},a})| \cdot{} \hash^{\lambda} \widetilde{\Mc}_{G,a}^{\blacksquare}.$$
Next we evaluate what happens to the right hand side of \eqref{mainsum}. By virtue of \cite[Proposition 6.3.3]{MR2653248}, the coendoscopy datum $\CE$ for $\widehat G$ is the only such datum of type $\lambda$ such that $a \in A_\CE(k)$. Using this we find
$$\sum_{s,t} \lambda^{-1}(t) \cdot{} \text{RHS of \eqref{mainsum}} = \sum_{\nu}  \sum_{s,t} \lambda^{-1}(t) t( \nu) q^{-\CF(\nu)} \# \left(I\wmc_{\widehat{G},a}^{\blacksquare,s}(k)\right)_{\nu} = \sum_{s,t}q^{-\CF(\lambda)} \# \wmc_{{H}_\CE,a}^{\blacklozenge,s}(k),$$
where we use that $t \mapsto \lambda(t)^{-1} t(\nu)$ is a non-trivial character for $\lambda \neq \nu$, and the last equality sign uses the equivalence
$$\wmc_{{H}_\CE,a}^{\blacklozenge,s}(k) \simeq \left(I\wmc_{\widehat{G},a}^{\blacksquare,s}(k)\right)_{\lambda}$$
of Corollary \ref{kappa_IMTheoremTwisted}.
Since we have a surjective morphism $\pi_0(\widetilde{\Pc}_{G,a}^{\blacksquare}) \to \pi_0(\widetilde{\Pc}_{H_{\Ec},a}^{\blacklozenge})$ (see Lemma \ref{lemma:coendo_surjection}), using Lemma \ref{Fourier} this expression simplifies to
$$\sum_{s,t} \lambda^{-1}(t) \cdot{} \text{RHS of \eqref{mainsum}} =  q^{-\CF(\lambda)}|\pi_0(\widetilde{\Pc}_{G,a}^{\blacksquare})|\cdot{}|\pi_0(\widetilde{\Pc}_{\widehat{G},a}^{\blacksquare})|\cdot{} \#^{stab} \wmc_{{H}_\CE,a}^{\blacklozenge}(k).$$
Using the fact that $\wmc_{H_{\Ec}}^{G}$ is a gerbe over $\wmc_{H_{\Ec}}$ banded by $Z(X,H_{\Ec})/Z(X,G)$, Remark \ref{rmk:cohomology} and Lemma \ref{lemma:easystab} imply $ \hash^{stab} \widetilde{\Mc}_{H_{\Ec},a}^{\blacklozenge}(k)=\hash^{stab} \widetilde{\Mc}_{H_{\Ec},a}(k)$. Lemma \ref{lemma:easystab} and Corollary \ref{nonstd} applied to $H_{\CE}$ yield $\#^{\lambda} \wmc_{G,a}(k) =q^{\dim \wmc_G-\CF(\lambda)}\#^{stab} \wmc_{\widehat{H}_\CE,a}(k)$.
\end{proof}
As before, one can base change to $\bar{k}$ and show by applying Chebotarev density and the Decomposition Theorem that 
the complexes of constructible sheaves $(R\widetilde{f}_{G,*}\bar{\Qb}_{\ell})^{\lambda}_{\bar k}$ and $(R\widetilde{f}_{\widehat{H}_{\CE},*}\bar{\Qb}_{\ell})^{stab}_{\bar k}$ are equivalent. For the proof of the Fundamental Lemma (see \cite[Section 8]{MR2653248}) this equality of complexes is not needed. It suffices to prove the equality of point-counts
$$\#^{\lambda} \wmc_{G,a}(k) =q^{\dim \wmc_G -\CF(\lambda)}\#^{stab} \wmc_{\widehat{H}_\CE,a}(k).$$
The arguments given above establish this equality without relying on the theory of perverse sheaves. Apart from $p$-adic integration, the main ingredient (to be discussed subsequently) is \emph{Tate duality}.

\begin{rmk}
The assumption of Corollary \ref{cor:geometric_endoscopy} is precisely the one used in \cite[8.6.3]{MR2653248} to deduce the Fundamental Lemma from Geometric Stabilisation. In \emph{loc. cit.} the assumption is formulated for the endoscopy group $\widehat{H}_{\CE}$, but this is equivalent to our assumption for the Langlands dual group $H_{\CE}$ stated in Theorem \ref{cor:geometric_endoscopy}.
\end{rmk}

In order to conclude our proof of the Geometric Stabilisation Theorem we have to prove the following identity for the constant $\mathcal{F}(\kappa)$ appearing in Corollary \ref{cor:geometric_endoscopy}. For a co-endoscopy group $H_{\CE}$ of  $G$ of type $\kappa$ 
we set $r^G_{H_{\CE}}(D)=\frac{1}{2}(\dim \widetilde{\Mc}_{G} - \dim \widetilde{\Mc}_{H_{\CE}} )$.

\begin{lemma} We have 
\[  \dim \wmc_G - \mathcal{F}(\kappa)= r^G_{H_{\CE}}(D). \]
\end{lemma}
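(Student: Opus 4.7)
The plan is to compute $\CF(\kappa) = w(x,\kappa)$ at a convenient representative and exploit a root-theoretic symmetry to reduce everything to a dimension count.

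First I would apply Corollary \ref{kappa_IMTheoremTwisted} (combined with Lemma \ref{lemma:shift_constant}) to reduce to evaluating the weight at a single point $x = \bar\mu_\CE(F)$ coming from an $H_\CE$-Higgs bundle $F$ over (a finite extension of) $k$. Setting $(E_G,\theta) = (F\times^{H_\CE}G,\theta)$, standard Higgs-bundle deformation theory identifies
\[
T_x \wmc_G = \mathbb{H}^1\bigl(X,\; [\ad E_G \xrightarrow{[\theta,-]} \ad E_G \otimes D]\bigr),
\]
and the $H_\CE$-equivariant decomposition $\mathfrak{g} = \mathfrak{h}_\CE \oplus \mathfrak{g}/\mathfrak{h}_\CE$ gives a $\widehat\mu$-equivariant splitting $\ad E_G = \ad F \oplus \mathfrak{N}$ with $\mathfrak{N} = F\times^{H_\CE}(\mathfrak{g}/\mathfrak{h}_\CE)$. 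Since $\kappa$ is central in $H_\CE$ it acts trivially on $\ad F$ and nontrivially on the root-space summands of $\mathfrak{N} = \bigoplus_{\chi\neq 0}\mathfrak{N}^\chi$. This will identify the weight-zero summand $T_x^0$ with $T_F\wmc^G_{H_\CE}$, of dimension $\dim \wmc_{H_\CE}$.

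Next I would show that $\dim T_x^\chi = \dim T_x^{-\chi}$ for every nonzero weight. Since $x$ lies in the anisotropic locus, $\mathrm{Aut}(E_G,\theta)$ is finite by \cite[Proposition 4.11.4]{MR2653248}, so $\mathbb{H}^0 = 0$ for the full deformation complex, and by $\widehat\mu$-equivariance the same vanishing holds on each summand $[\mathfrak{N}^\chi \to \mathfrak{N}^\chi\otimes D]$. Riemann--Roch on $X$ then gives $\dim T_x^\chi = d\cdot \mathrm{rank}(\mathfrak{N}^\chi) = d\cdot \dim V_\chi$, where $V_\chi \subset \mathfrak{g}/\mathfrak{h}_\CE$ is the $\chi$-weight space of $\kappa$ and $d = \deg D$. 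The Killing form pairs $\mathfrak{g}_\alpha$ with $\mathfrak{g}_{-\alpha}$, and the fact that the root system of $H_\CE$ is closed under negation forces $\dim V_\chi = \dim V_{-\chi}$ and hence $\dim T_x^\chi = \dim T_x^{-\chi}$.

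Once these symmetries are in hand, the computation becomes straightforward: using $c_\chi + c_{-\chi} = 1$ for every $\chi\neq 0$ and $c_0 = 1$, together with the dimension symmetry just established,
\begin{align*}
\CF(\kappa) &= \sum_\chi c_\chi \dim T_x^\chi = \tfrac{1}{2}\sum_\chi (c_\chi + c_{-\chi})\dim T_x^\chi = \dim T_x^0 + \tfrac{1}{2}\sum_{\chi\neq 0}\dim T_x^\chi \\
&= \dim \wmc_{H_\CE} + \tfrac{1}{2}(\dim \wmc_G - \dim \wmc_{H_\CE}) = \tfrac{1}{2}(\dim \wmc_G + \dim \wmc_{H_\CE}),
\end{align*}
so $\dim \wmc_G - \CF(\kappa) = \tfrac{1}{2}(\dim \wmc_G - \dim \wmc_{H_\CE}) = r^G_{H_\CE}(D)$, as required. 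The main technical point will be pinning down the identification $T_x^0 \cong T_F \wmc^G_{H_\CE}$ \emph{compatibly} with the $\widehat\mu$-action: this requires unwinding the construction of $\bar\mu_\CE$ (Constructions \ref{muCons} and \ref{grig}) to check that on tangent complexes it realises the natural map coming from the inclusion $\iota_\kappa\colon H_\CE \hookrightarrow G$ from Construction \ref{ikCons}.
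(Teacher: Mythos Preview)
Your approach is essentially correct and genuinely different from the paper's. The paper does not compute $\CF(\kappa)$ directly: instead it uses the already-established Corollary \ref{cor:geometric_endoscopy}, which says $\#^{\kappa}\wmc_{G,a}(\Fb_q)=q^{\dim\wmc_G-\CF(\kappa)}\#^{stab}\wmc_{H_\CE,a}(\Fb_q)$, and then compares the leading $q$-asymptotics of both sides. The top-degree cohomology of a Hitchin fibre is one-dimensional in each isotypical piece (the $\pi_0(\wpc_{G,a})$-action on $H^{2d_G}$ is the regular representation), so $\#^{\kappa}\wmc_{G,a}(\Fb_q)\sim q^{d_G}$ and $\#^{stab}\wmc_{H_\CE,a}(\Fb_q)\sim q^{d_{H_\CE}}$, forcing $\dim\wmc_G-\CF(\kappa)=d_G-d_{H_\CE}=r^G_{H_\CE}(D)$. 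Your direct deformation-theoretic computation is independent of the main theorem and yields the cleaner closed formula $\CF(\kappa)=\tfrac{1}{2}(\dim\wmc_G+\dim\wmc_{H_\CE})$; the paper's argument is shorter but is \emph{a posteriori}.

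There is one gap you should close. In your Riemann--Roch step you assert $\dim T_x^\chi = d\cdot\mathrm{rank}(\mathfrak{N}^\chi)$ after only establishing $\mathbb{H}^0=0$ on each summand; but Riemann--Roch gives $-\dim\mathbb{H}^0+\dim\mathbb{H}^1-\dim\mathbb{H}^2 = d\cdot\mathrm{rank}(\mathfrak{N}^\chi)$, so you also need $\mathbb{H}^2=0$ on each summand. This is available: the paper assumes $\deg D\geq 2g-2$, so by \cite[4.14.1]{MR2653248} the DM stack $\wmc_G^{\ani}$ is smooth, hence the obstruction space $\mathbb{H}^2$ of the full deformation complex vanishes at $(E_G,\theta)$; then the same $\widehat\mu$-equivariance argument you used for $\mathbb{H}^0$ gives $\mathbb{H}^2_\chi=0$ for every $\chi$. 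With that addition your argument goes through. (The identification $T_x^0\cong T_F\wmc_{H_\CE}$ is unproblematic: Lemma \ref{AniComp} puts $F$ in the anisotropic locus for $H_\CE$, and rigidification and \'etale base change do not affect dimensions.)
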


\begin{proof} Let $a \in \wac_{\CE}(k) \subset \wac(k)$. The fibre $\widetilde{\Mc}_{G,a}$ is a projective $k$-scheme with set of irreducible components $\pi_0(\wpc_{G,a})$, see \cite[4.16.3]{MR2653248}. Furthermore all irreducible components have the same dimension $d_G $ by \cite[4.16.1]{MR2653248}. Therefore we see that $H^{2d}(\widetilde{\Mc}_{G,a},\bar{\Qb}_{\ell}) \simeq \bar{\Qb}_{\ell}^{\pi_0(\wpc_{G,a})}(d_G)$. The finite group $\pi_0(\wpc_{G,a})$ acts via the regular representation and therefore we conclude that 
$$H^{2d}(\widetilde{\Mc}_{G,a},\bar{\Qb}_{\ell})^{\kappa} \simeq \bar{\Qb}_{\ell}(d_G)$$
and hence obtain the asymptotic estimate $\#^{\kappa} \widetilde{\Mc}_{G,a}(\BF_q) \sim q^{d_G}$ as $q \to \infty$ for finite overfields $\BF_q$ of $k$.

The same argument shows that we have $\#^{stab}\widetilde{\Mc}_{H_{\CE}}(\mathbb{F}_q) \sim q^{d_{H_{\CE}}}$. By virtue of Corollary \ref{cor:geometric_endoscopy} we have $\#^{\kappa}\widetilde{\Mc}_{G,a}(\mathbb{F}_q) = q^{\dim \wmc_G - \mathcal{F}(\kappa)} \#^{stab} \widetilde{\Mc}_{H_{\CE},a}(\mathbb{F}_q)$. The asymptotic considerations therefore show that $\dim \wmc_G - \mathcal{F}(\kappa) = d_G - d_{H_{\CE}} = r^G_{H_{\CE}}(D)$.
\end{proof}

This concludes the reduction of Ng\^o's Geometric Stabilisation Theorem \ref{gmst} to Theorem \ref{mainidentity}. The next section will be devoted to proving this result.

\subsection{Tate duality}\label{tate}

Given a Beilinson $1$-motive $\Pc$ over base scheme $S$ (see Definition \ref{beil}), there exists an interesting correspondence between $\mathsf{Ext}^2_S(\Pc,\G_m)$ and torsors over $\Pc^{\vee}$ (later also referred to as \emph{twists} of $\Pc^{\vee}$). We briefly recall this correspondence.

A torsor $\Tt$ over $\Pc^{\vee}$ gives rise to a short exact sequence
\begin{equation}\label{eqn:tor}0 \to \Pc^{\vee} \to \bigsqcup_{n \in \Zb} \Tt^{\otimes n} \to \Zb \to 0.\end{equation}
Dualising it, we obtain a short exact sequence of abelian group stacks
\begin{equation}\label{eqn:ext}0 \to B\G_m \to \Gg \to \Pc \to 0,\end{equation}
where we used $\Pc^{\vee \vee} \simeq \Pc$. This short exact sequence represents an element of $\mathsf{Ext}^2_S(\Pc,\G_m)$. Vice versa, one dualises \eqref{eqn:ext} to obtain a short exact sequence as in \eqref{eqn:tor}. It is clear that this defines a bijection between isomorphism classes (and in fact an equivalence of groupoids).

In the case that $\Pc = A$ is an abelian variety and the base $S = \Spec F$ where $F$ is a local non-archimedean field, this construction gives rise to the Tate pairing
$$A(F) \times H^1(F,A^{\vee}) \to \Qb/\Zb$$
as follows: a torsor $t \in H^1(F,A^{\vee})$ corresponds to $\alpha_t \in \mathsf{Ext}^2_F(A,\G_m)$ which induces a gerbe $\alpha_t \in Br(A)$. One defines the value of the Tate pairing at $(x,t) \in A(F) \times H^1(F,A^{\vee})$ to be the Hasse invariant of the gerbe $x^*\alpha_t$ on $F$. According to a theorem of Tate, this is a perfect pairing (see \cite[I.3.4]{MilneADT}). We refer to this as \emph{Tate Duality}.

This subsection is devoted to the study of a technical aspect of this story for (regular) Hitchin fibres. It can be skipped on a first reading of this text.

\begin{goal}\begin{enumerate}[(1)]
\item We will construct a $\widehat{\Tb}/Z(X,\widehat{G})$-gerbe $\alpha$ on $\widetilde{\Mc}_{\widehat{G}}^{s}$, such that for every $x \in \widetilde{M}_{\widehat{G}}^{s}(\Oo)^{\natural}$ we have for $\lambda \in \Xb^*(\widehat{\Tb}/Z(X,\widehat{G})) \subset \Xb_*(\Tb) \to \Xb_*(\Tb/Z(X,G))$ that the Hasse invariant of $\lambda(x_F^*\alpha) \in Br(F)$ equals $\chi_{\lambda}(e(x))$. See Lemma \ref{lemma:chi_gerbe}.
\item Subsequently we will verify that $\lambda(\alpha)$ corresponds to the twist $\widetilde{\Mc}_G^{\blacksquare,t}$ with respect to Tate duality. See Lemma \ref{lemma:chi_gerbe}.
\end{enumerate}
\end{goal}

Recall from Construction \ref{construction:ngo_higgs} that for a $k$-scheme $S$, an $S$-family of $G$-Higgs bundles on $X_k$ corresponds to a section of the map $[\mathfrak{g}_D/G] \to X \times S$. Let $\infty \in X$ be our marked point on the curve $X$, we fix an isomorphism $(\mathfrak{g}_D)_{\infty} \simeq \mathbf{g} = \mathsf{Lie}(G)$ and obtain an evaluation morphism $\ev_{\infty}\colon \mathbb{M}_G \to [\mathbf{g}/\Gb]$.

\begin{construction}\label{const:T-torsor}
There is a $\Tb$-torsor on $\widetilde{\mathbb{M}}_G(X_k)$, induced by the morphism $\widetilde{\mathbb{M}}_G(X_k) \to B\Tb$ of the diagram below
\[
\xymatrix{
\widetilde{\mathbb{M}}_G(X_k) \ar[r] \ar[d] & [\mathbf{t}^{rs}/\Tb] \ar[d] \ar[r]^{\simeq} & \mathbf{t}^{rs} \times B_k\Tb \ar[r] & B_k\Tb \\
\mathbb{M}_G(X_k) \ar[r]^{\ev_{\infty}} & [\mathbf{g}/\Gb]. & &
}
\] 
By virtue of Definition \ref{defi:tilde} we have $\widetilde{\A} = \A \times_{\mathbf{c}} \mathbf{t}^{rs}$. Since $\widetilde{\mathbb{M}} = \mathbb{M} \times_{\A} \widetilde{\A}$ we obtain map $\widetilde{\mathbb{M}}_G \to [\mathbf{t}^{rs}/\Tb] = [\mathbf{g}^{rs}/\Gb] \times_{\mathbf{c}} \mathbf{t}^{rs}$.
\end{construction}

\begin{definition}\label{defi:T-torsor}
Assume that we are in Situation \ref{situation:split}. The $\Tb$-torsor on $\widetilde{\mathbb{M}}_G(X_k)$ represented by the morphism $\widetilde{\mathbb{M}}_G(X) \to B_k\Tb$ of Construction \ref{const:T-torsor} will be denoted by $\mathcal{T}_G \to \widetilde{\mathbb{M}}$. By quotienting out the centre $Z(X,G) \subset \Tb$ we obtain a $\Tb/Z(X,G)$-torsor on $\widetilde{M}_G(X_k)$ which we also denote by $\mathcal{T}_G$.
\end{definition}

In fact, a similar construction is used in Ng\^o's description of automorphism groups of Higgs bundles which we recalled in \ref{cons:auto_torus}. 

\begin{rmk}\label{rmk:auto}
For $(E,\theta,\widetilde{\infty}) \in \widetilde{\mathbb{M}}_G^{\ani}(X_k)$ we have that the map $\Aut(E,\theta) \to \Tb$ induced by $\widetilde{\mathbb{M}}_G^{\ani}(X_k) \to B_k\Tb$ is the embedding described in Lemma \ref{cons:auto_torus}.
\end{rmk}

In the following definition we make use of the fact that a morphism of abelian group stacks $\mathcal{H} \to BA$ gives rise to an extension of abelian group stacks 
$$1 \to A \to \widetilde{\mathcal{H}} \to \mathcal{H} \to 1,$$
where $\widetilde{\mathcal{H}}$ is defined as the fibre product $\mathcal{H} \times_{BA} \{1\}$. This is standard, which amounts to the well-known description of morphisms in the derived category of an abelian category $\C$ in terms of Ext-groups 
$$\Hom_{D(\C)}(B,A[1]) \simeq \mathsf{Ext}_{\C}(B,A).$$
In the case of abelian group stacks, it can be verified directly from the definitions of $BA$ as classifying $A$-torsors. We refer the reader to \cite[2.A4]{MR1988970} for a similar construction for groupoids (without the abelian assumption).
\begin{definition}\label{defi:T-torsor2}
We assume that we are in Situation \ref{situation:split}. Let $\Pc_G \to \Bun_{T}(\Cc/\A)$ be the morphism which forgets the $W$-equivariant and $+$-structures (see Definition \ref{defi:donagi-gaitsgory} and Theorem \ref{thm:dg}). The marked point $\widetilde{\infty} \in \Cc^{sm}$ yields a morphism $\widetilde{\Pc}_G \to \Bun_{\Tb}(\Cc/\A) \to B_k\Tb$ of abelian group stacks. We denote the corresponding extension of abelian group stacks by 
$$1 \to \Tb \to \mathcal{T}_{\Pc} \to \widetilde{\Pc}_G \to 1.$$
\end{definition}

\begin{rmk}
The action of the Prym $\widehat{\Pc}_G$ on $\widehat{\mathbb{M}}_G$ lifts to an action of $\mathcal{T}_{\Pc}$ on the $\Tb$-torsor $\mathcal{T}_G$ over $\widetilde{\mathbb{M}}_G$. 
\end{rmk}

This remark has an important consequence, it implies that the restriction of $\mathcal{T}_G|_{\widehat{\Mc}_{G,a}}$ is translation invariant.

\begin{corollary}
Let us be in Situation \ref{situation:split} and $y \in \Pc_{G,a}(k)$. We denote by $\psi_y\colon \widetilde{\Mc}_{G,a} \to \widetilde{\Mc}_{G,a}$ the induced automorphism given by acting through $y$. Then $\psi_y^*\mathcal{T}_G$ is non-canonically isomorphic to $\mathcal{T}_G$.
\end{corollary}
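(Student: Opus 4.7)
The plan is to exploit the extension
$$1 \to \Tb \to \mathcal{T}_{\Pc} \to \widetilde{\Pc}_G \to 1$$
from Definition \ref{defi:T-torsor2} together with the lifted action on $\mathcal{T}_G$ mentioned in the remark immediately preceding the corollary. Concretely, if we can lift $y \in \Pc_{G,a}(k)$ to an element $\tilde y \in \mathcal{T}_{\Pc,a}(k)$, then acting by $\tilde y$ on the restriction $\mathcal{T}_G|_{\widetilde{\Mc}_{G,a}}$ will provide a $\Tb$-equivariant isomorphism $\psi_y^*\mathcal{T}_G \xrightarrow{\sim} \mathcal{T}_G$, and pushing out along $\Tb \to \Tb/Z(X,G)$ will yield the desired statement for $\Tb/Z(X,G)$-torsors.

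First I would restrict the extension above to the fibre over $a \in \widetilde{\A}(k)$, obtaining an exact sequence of abelian group $k$-stacks
$$1 \to \Tb \to \mathcal{T}_{\Pc,a} \to \Pc_{G,a} \to 1$$
(using that $\widetilde{\Pc}_{G,a} = \Pc_{G,a}$ since $a$ is a $k$-point of $\widetilde{\A}$). Taking $k$-points produces a long exact sequence whose relevant portion is
$$\mathcal{T}_{\Pc,a}(k) \to \Pc_{G,a}(k) \to H^1(k,\Tb).$$
Since $\Tb$ is a split torus over the finite field $k$, Lang's theorem gives $H^1(k,\Tb) = 0$, so the map $\mathcal{T}_{\Pc,a}(k) \to \Pc_{G,a}(k)$ is surjective and $y$ admits a lift $\tilde y$.

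The second step is simply to invoke the lifted $\mathcal{T}_\Pc$-action: by construction this action commutes with the $\Tb$-scaling on $\mathcal{T}_G$ and covers the translation action of $\widetilde{\Pc}_G$ on $\widetilde{\mathbb{M}}_G$. Thus translation by $\tilde y$ induces an isomorphism $\psi_y^*\mathcal{T}_G \cong \mathcal{T}_G$ of $\Tb$-torsors over $\widetilde{\Mc}_{G,a}$, hence an isomorphism of $\Tb/Z(X,G)$-torsors over $\widetilde{M}_{G,a}$. The ambiguity in the choice of $\tilde y$, which forms a $\Tb(k)$-torsor of lifts, accounts for the non-canonical nature of the isomorphism asserted in the corollary. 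I do not anticipate any substantial obstacle: the only non-formal input is Lang's vanishing, and the remaining verification — that the $\mathcal{T}_\Pc$-action really preserves the torsor structure rather than just the underlying stack — is built into Definition \ref{defi:T-torsor2} via the evaluation-at-$\widetilde\infty$ maps that define both $\mathcal{T}_G$ and $\mathcal{T}_\Pc$.
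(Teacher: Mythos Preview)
Your proposal is correct and is exactly the argument the paper has in mind: the corollary is stated immediately after the remark about the lifted $\mathcal{T}_{\Pc}$-action and is presented without proof as a direct consequence of that remark, so you have simply spelled out the implicit details (lifting $y$ via the vanishing $H^1(k,\Tb)=0$ and then translating by $\tilde y$).
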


This implies that $\mathcal{T}_G$ descends automatically to a $\Tb/Z(X,G)$-torsor on the unramified twist $\widetilde{\Mc}_{G,a}^{t}$. Indeed, the obstruction to the existence of a Galois descent datum is given by a $2$-cocycle with values in $H^2_{\text{\'et}}(k,\Tb/Z(X,G)) = Br(k) \otimes \Xb_*(\Tb/Z(X,G))$. Since the Brauer group of a finite field vanishes, so does the obstruction and a Galois-descent datum exists. Furthermore, it is unique (up to a unique isomorphism) since $H^1_{\text{\'et}}(k,\Tb/Z(X,G)) = H^1_{\text{\'et}}(k,\G_m) \otimes \Xb_*(\Tb/Z(X,G)) = 0$.

\begin{definition}
We denote the $\Tb/Z(X,G)$-torsor on $\widetilde{\Mc}_{G,a}^t$ defined above by $\mathcal{T}_G^t$.
\end{definition}

Recall from Theorem \ref{thm:duality_quasi-split} that we have an equivalence of abelian group stacks over $\A^{\Diamond}$ 
$$\Hhom(\Pc_G^{\Diamond},B\G_m) \simeq \Pc_{\widehat{G}}^{\Diamond}.$$
The classifying map $\widetilde{\Pc}_G^{\Diamond}\to B\Tb$ of the torsor $\Tt_G$ therefore yields a section of 
\begin{equation}\label{eqn:dual}
  \widetilde{\Pc}_{\widehat{G}}^{\Diamond} \otimes \Xb_*(\Tb) \simeq \Hhom(\Xb_*(\widehat{\Tb}),\widetilde{\Pc}_{\widehat{G}}^{\Diamond}).
\end{equation}

\begin{lemma}\label{lemma:dual_torsor}
The map $\Xb_*(\widehat{\Tb}) \to \widetilde{\Pc}^{\Diamond}_{\widehat{G}}$ of \eqref{eqn:dual} agrees with the map \eqref{ajconst}.
\end{lemma}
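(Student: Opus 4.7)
The plan is to unravel both sides of the asserted equality, reducing the comparison to the symmetry of the universal diagonal divisor on $\Cc \times_{\widetilde{\A}} \Cc$. Concretely, an element $\mu \in \Xb_*(\widehat{\Tb}) = \Xb^*(\Tb)$ gives a character of $\Tb$, and hence decomposes $\mathcal{T}_{\Pc}$ into the $\G_m$-torsor $\mu(\mathcal{T}_{\Pc})$ on $\widetilde{\Pc}_G^{\Diamond}$, i.e. a section of $(\widetilde{\Pc}_G^{\Diamond})^{\vee}$. By construction, the map \eqref{eqn:dual} sends $\mu$ to the image of $\mu(\mathcal{T}_{\Pc})$ under the duality equivalence $(\widetilde{\Pc}_G^{\Diamond})^{\vee} \simeq \widetilde{\Pc}_{\widehat{G}}^{\Diamond}$ of Theorem \ref{thm:duality_quasi-split}. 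On the other hand, the map \eqref{ajconst}, after identifying $\Xb_*(\Tb)$ for $\widehat{G}$ with $\Xb_*(\widehat{\Tb})$, sends $\mu$ to the $\widehat{\Tb}$-torsor $\mu(\Oo_{\Cc}(\widetilde{\infty})) = \Oo_{\Cc}(\widetilde{\infty}) \times^{\mu,\G_m} \widehat{\Tb}$ on $\Cc^{\Diamond}$, viewed as an element of $\widetilde{\Pc}_{\widehat{G}}^{\Diamond}$ via Donagi--Gaitsgory (Theorem \ref{thm:dg}).

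Next, I would invoke Chen--Zhu's concrete description of the duality equivalence recalled in the paragraph preceding Theorem \ref{thm:duality_quasi-split}: the image of a $\G_m$-torsor $L$ on $\widetilde{\Pc}_G^{\Diamond}$ under $\AJ_G^*$ is the unique $\widehat{\Tb}$-torsor $Q$ on $\Cc^{\Diamond}$ with $\nu(Q) \simeq \AJ_G^*L|_{\Cc^{\Diamond} \times \{\nu\}}$ for every $\nu \in \Xb_*(\Tb) = \Xb^*(\widehat{\Tb})$. Applying this to $L = \mu(\mathcal{T}_{\Pc})$, one has to compute the pullback $\AJ_G^*\mu(\mathcal{T}_{\Pc})$ along $\AJ_G\colon \Cc^{\Diamond} \times \Xb_*(\Tb) \to \widetilde{\Pc}_G^{\Diamond}$. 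By Definition \ref{defi:T-torsor2}, the $\Tb$-torsor $\mathcal{T}_{\Pc}$ on $\widetilde{\Pc}_G^{\Diamond} = \Bun_\Tb^{W,+}(\Cc)^{\Diamond}$ is obtained by restricting the universal $\Tb$-torsor to the section $\widetilde{\infty}$, while $\AJ_G(x,\nu) = \nu(\Oo_{\Cc}(x))$. Consequently at $(x,\nu)$ the fibre of $\AJ_G^*\mu(\mathcal{T}_{\Pc})$ equals $\mu(\nu(\Oo_{\Cc}(x)|_{\widetilde{\infty}})) = \Oo_{\Cc}(x)|_{\widetilde{\infty}}^{\otimes \langle \mu,\nu\rangle}$.

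The crucial step is then the tautological symmetry of the diagonal $\Delta \subset \Cc \times_{\widetilde{\A}} \Cc$: the identity $\Oo_{\Cc\times_{\widetilde{\A}}\Cc}(\Delta)|_{(x,\widetilde{\infty})} = \Oo_{\Cc\times_{\widetilde{\A}}\Cc}(\Delta)|_{(\widetilde{\infty},x)}$ of line bundles on $\Cc^{\Diamond}$ identifies the family (in $x$) $\Oo_{\Cc}(x)|_{\widetilde{\infty}}$ with $\Oo_{\Cc}(\widetilde{\infty})$. Hence
\[\AJ_G^*\mu(\mathcal{T}_{\Pc})|_{\Cc^{\Diamond}\times\{\nu\}} \simeq \Oo_{\Cc}(\widetilde{\infty})^{\otimes\langle \mu,\nu\rangle} \simeq \nu\bigl(\mu(\Oo_{\Cc}(\widetilde{\infty}))\bigr),\]
so by the uniqueness in Chen--Zhu's recipe the $\widehat{\Tb}$-torsor $Q$ attached to $\mu(\mathcal{T}_{\Pc})$ is precisely $\mu(\Oo_{\Cc}(\widetilde{\infty})) = \AJ_{\widehat{G}}(\widetilde{\infty},\mu)$, proving the lemma.

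The main obstacle I anticipate is bookkeeping: one must verify that the $W$-equivariant and $+$-structures produced by Chen--Zhu's recipe on $\mu(\Oo_{\Cc}(\widetilde{\infty}))$ coincide with those the Abel-Jacobi map \`a la Construction \ref{const:AJ} places on the same torsor. Since Chen--Zhu's construction of $\AJ_G^*$ is designed precisely so as to respect these structures (see \cite[\S 3]{chenzhu}), and the $+$-structure is canonically attached to any $\widehat{\Tb}$-torsor of the form $\mu(\Oo_{\Cc}(y))$ for a section $y$ of $\Cc^{\sm} \to X$, this should follow from their setup. A residual compatibility to check is that everything is equivariant for the natural $Z(X,G)$-actions used to pass between $\Pc_G$ and $\widetilde{\Pc}_G$, but this is built into the naturality of the duality in Theorem \ref{thm:duality_quasi-split}.
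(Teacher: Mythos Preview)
Your proposal is correct and follows essentially the same route as the paper's proof: both unwind the Chen--Zhu duality by pulling back $\mathcal{T}_{\Pc}$ along the Abel--Jacobi map and reduce to computing the restriction of $\Oo(\lambda\Delta)$ on $\Cc \times_{\widetilde{\A}} \Cc$ along the section $\widetilde{\infty}$ in one factor, obtaining the $W$-equivariantisation of $\Oo(\lambda\widetilde{\infty})$ on the other. Your phrasing of this step as ``symmetry of the diagonal'' is a nice way to name what the paper does by direct restriction; the only cosmetic difference is that you fix $\mu \in \Xb^*(\Tb)$ first and vary $\nu \in \Xb_*(\Tb)$, whereas the paper keeps both variable throughout and reorganises at the end.
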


\begin{proof}
The isomorphism $(\Pc_G^{\Diamond})^{\vee} \simeq \Pc^{\Diamond}_{\widehat{G}}$ is induced by pullback along the Abel-Jacobi map
\begin{equation*}
  \AJ_{{G}}^{\Diamond} \colon \Cc^\Diamond \times_X \Xb_*({T})\to \Pc_{{G}}^{\Diamond}
\end{equation*} 
from Construction \ref{const:AJ}.

We recall the definition of the Abel-Jacobi map in slightly different wording, and refer the reader to Construction \ref{const:AJ} for more details. The Abel-Jacobi map is induced by a strongly $W$-equivariant $T$-torsor on $\Cc^{\Diamond} \times_{\A^{\Diamond}} \Cc^{\Diamond} \times_X \Xb_*({T})$ whose fibre over a local section $\lambda\colon U \to \Xb_*({T})$ is given by 
$W$-equivariantisation of $\Oo(\lambda \Delta),$ where $\Delta$ denotes the diagonal divisor. \'Etale-locally on $U$ the underlying $T$-torsor can be described as
$$\bigotimes_{w \in W} \Oo(\lambda \Delta)^w,$$
it is endowed with a tautological $+$-structure.

By definition of the ${\Tb}$-torsor $\mathcal{T}_{\Pc}/\widetilde{\Pc}_{{G}}$,  we have that $(\AJ_{{G}}^\Diamond)^*\mathcal{T}_{\Pc}$ is given by restricting the torsor above along 
$$\id_{\widetilde{\Cc}} \times \widetilde{\infty} \times \id_{\Xb_*(T)} \colon \widetilde{\Cc}^{\Diamond}   \times_X \Xb_*({T}) \hookrightarrow \widetilde{\Cc}^{\Diamond} \times_{\A^{\Diamond}} \widetilde{\Cc}^{\Diamond} \times_X \Xb_*({T}).$$
Hence the pullback $(\AJ_{{G}}^{\Diamond})^*\Tt_{\Pc}$ is given by $W$-equivariantisation of $\Oo(\lambda \widetilde{\infty})$ with its tautological $+$-structure, where this time we denote by $\widetilde{\infty} \subset \widetilde{\Cc}$ the divisor given by the image of the section $\widetilde{\infty} \colon \widetilde{\A} \to \widetilde{\C}$.

For every \'etale-local section $\nu$ of $\Xb^*(T)$ we obtain therefore a $\G_m$-torsor $\nu((\AJ_{{G}}^{\Diamond})^*\Tt_{\Pc})$ which is given by the $W$-equivariantisation of $\Oo(\langle \lambda, \nu \rangle \widetilde{\infty})$ with the tautological $+$-structure. This yields a $\G_m$-torsor $\langle (\AJ_{{G}}^{\Diamond})^*\mathcal{T}_{\Pc} \rangle$ on 
$$\widetilde{\Cc}^{\Diamond}   \times_X \Xb_*({T}) \times_X \Xb^*(T).$$
By virtue of the isomorphism $\Xb_*(T) \simeq \Xb^*(\widehat{T})$, we can view it as induced by a strongly $W$-equivariant $\widehat{T}$-torsor on
$$\widetilde{\Cc}^{\Diamond}   \times_X \Xb_*(\widehat{T})$$
with $+$-structure, given by $W$-equivariantisation of $\Oo(\nu \widetilde{\infty})$ for every \'etale-local section $\nu$ of $\Xb_*(\widehat{T}) = \Xb^*(\Tb)$. This corresponds precisely to the section of $\AJ_{\widehat{G}}(\widetilde{\infty})\colon \widetilde{A} \times \Xb_*(\widehat{T}) \to \widetilde{\Pc}_{\widehat{G}}$ of Definition \eqref{ajconst} and hence concludes the proof.
\end{proof}

We have seen in Definition \ref{defi:unramified_gerbe} that for varieties over finite fields (or ring of integers of local fields), every torsor $\mathcal{T}$ of finite order gives rise to a gerbe. The reason for this is that $\pi_1^{\text{\'et}}(\Spec k) \cong \widehat{\mathbb{Z}}$ is topologically generated by the Frobenius automorphism. By restricting the $\Tb/Z(X,G)$-torsor to a Hitchin fibre $\M_{G,a}$ we show first that we obtain a torsor of finite order, and hence can apply the construction.

\begin{lemma}
The restriction of the $\Tb/Z(X,G)$-torsor $\mathcal{T}_G$ to a Hitchin fibre $\widetilde{\Mc}_{G,a}$ is of finite order.
\end{lemma}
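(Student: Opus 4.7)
The strategy is to leverage translation-invariance of $\mathcal{T}_G$ under the Prym together with the finiteness of $k$-rational points of abelian varieties over the finite field $k$. It suffices to show that for every character $\chi \in \Xb^*(\Tb/Z(X,G))$ the induced $\G_m$-torsor $L_\chi \defeq \chi_*(\mathcal{T}_G|_{\widetilde{\Mc}_{G,a}}) \in \Pic(\widetilde{\Mc}_{G,a})$ has finite order; the finite generation of the character lattice $\Xb^*(\Tb/Z(X,G))$ then automatically promotes this to a uniform bound.

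First, I would fix a Kostant section $\epsilon\colon \widetilde\A \to \widetilde{\Mc}_G^{\reg}$ as in Proposition \ref{prop:kostant}, which identifies $\widetilde{\Mc}_{G,a}^{\reg}$ with $\widetilde{\Pc}_{G,a}$ as $\widetilde{\Pc}_{G,a}$-torsors and pulls $L_\chi$ back to a line bundle $\widetilde L_\chi$ on $\widetilde{\Pc}_{G,a}$. The key observation is that the central extension $\mathcal{T}_{\Pc}$ of Definition \ref{defi:T-torsor2} acts on the $\Tb$-torsor $\mathcal{T}_G$ covering the $\widetilde{\Pc}_G$-action on $\widetilde{\mathbb{M}}_G$, so that after descent to the $\Tb/Z(X,G)$-torsor the line bundle $\widetilde L_\chi$ is translation-invariant under the group law of $\widetilde{\Pc}_{G,a}$.

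Since $a \in \widetilde\A^{\ani}(k)$, the component group $\pi_0(\widetilde{\Pc}_{G,a})$ is finite and the neutral component $(\widetilde{\Pc}_{G,a})^\circ$ is a connected commutative algebraic group whose maximal abelian-variety quotient $A$ carries the translation action; a translation-invariant line bundle on $(\widetilde{\Pc}_{G,a})^\circ$ factors through $A$ and lies in $\Pic^0(A) = A^\vee$. Over a finite field $k$ this dual abelian variety has finitely many $k$-rational points, so $\widetilde L_\chi|_{(\widetilde{\Pc}_{G,a})^\circ}$ is of finite order. After base change to a finite extension $k'/k$ that splits $\pi_0(\widetilde{\Pc}_{G,a})$, translation-invariance identifies the restriction of $\widetilde L_\chi$ to each non-neutral component (after fixing a $k'$-rational base-point) with a translate of its restriction to the neutral component, giving finite order on $(\widetilde{\Pc}_{G,a})_{k'}$ and hence on $\widetilde{\Pc}_{G,a}$ by Galois descent. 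Finally, the complement of $\widetilde{\Mc}_{G,a}^{\reg}$ in $\widetilde{\Mc}_{G,a}$ has codimension at least $2$ by \cite[Proposition 4.16.1]{MR2653248} together with Corollary \ref{cor:codimension2}, so Hartogs yields an injection $\Pic(\widetilde{\Mc}_{G,a}) \hookrightarrow \Pic(\widetilde{\Mc}_{G,a}^{\reg})$ which transfers the finite-order conclusion back to $L_\chi$. The principal technical obstacle is the Galois bookkeeping for the possibly non-rational components of $\widetilde{\Pc}_{G,a}$ and the clean passage from $\widetilde{\Mc}_{G,a}^\reg$ back to $\widetilde{\Mc}_{G,a}$; both the translation-invariance and the codimension estimates follow formally from the material already developed.
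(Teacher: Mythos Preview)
Your core idea is the right one and matches the paper's: translation-invariance under $\widetilde{\Pc}_{G,a}$ forces $L_\chi$ to lie in (the $k$-points of) the dual, and the dual has finitely many $k$-points over a finite field. The paper phrases this by saying that $\mathcal{T}_G|_{\widetilde{\Mc}_{G,a}}$ is the image of a $k$-rational point of $\widetilde{\Pc}_{\widehat{G},a}$ under a map $\widetilde{\Pc}_{\widehat{G}} \otimes \Xb_*(\Tb/Z(X,G)) \to \Pic_{\Tb/Z(X,G)}(\widetilde{\Mc}_G/\widetilde{\A})$, which is the same statement in dual form.

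The gap is in your final step. You invoke Hartogs on the individual Hitchin fibre $\widetilde{\Mc}_{G,a}$, claiming that the complement of $\widetilde{\Mc}_{G,a}^{\reg}$ has codimension $\geq 2$ and that restriction gives an injection $\Pic(\widetilde{\Mc}_{G,a}) \hookrightarrow \Pic(\widetilde{\Mc}_{G,a}^{\reg})$. Neither ingredient is available: Proposition~4.16.1 of Ng\^o only gives \emph{density} of the regular locus in each fibre, not codimension $\geq 2$, and Corollary~\ref{cor:codimension2} is a statement about the smooth \emph{total space} $\widetilde{\Mc}_G$, not about the (in general singular, not known to be normal) fibre $\widetilde{\Mc}_{G,a}$. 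Without normality and the codimension bound, there is no reason for restriction to the regular locus to be injective on Picard groups, so knowing $L_\chi^{\otimes N}|_{\widetilde{\Mc}_{G,a}^{\reg}}$ is trivial does not let you conclude that $L_\chi^{\otimes N}$ is trivial on $\widetilde{\Mc}_{G,a}$.

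The paper avoids this by never working on a single fibre. It uses Hartogs on the \emph{smooth total space} $\widetilde{\Mc}_G$ to extend the Poincar\'e-bundle-induced torsor from $\widetilde{\Mc}_G^{\Diamond} \times_{\widetilde{\A}} \bigl(\widetilde{\Pc}_{\widehat{G}} \otimes \Xb_*(\Tb/Z(X,G))\bigr)$ across the codimension-$\geq 2$ complement of $\widetilde{\Mc}_G^{\Diamond}$, and then uses flatness of the Hitchin map to compare relative Picard functors. Only \emph{after} this extension does one specialise to $a$; the finite-order conclusion then follows because the entire family of line bundles over $\widetilde{\Mc}_{G,a}$ is parametrised by $\widetilde{\Pc}_{\widehat{G},a}$, whose $k$-points are finite. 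The fix for your argument is therefore to move the Hartogs step from the fibre to the total space and run the rest of your reasoning relatively over $\widetilde{\A}$.
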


\begin{proof}
We will deduce this assertion from the following claim. We denote by $\Pic_{\Tb/Z(X,G)}$ the relative (unsheafified) Picard functor.
\begin{claim}
There exists a morphism $\widetilde{\Pc}_{\widehat{G}} \otimes {\Xb_*(\Tb/Z(X,G))} \to \Pic_{\Tb/Z(X,G)}(\widetilde{\Mc}_G/\widetilde{\A})$, such that for $a \in \widetilde{A}(k)$ the torsor $\mathcal{T}_G|_{\widetilde{\Mc}_{G,a}}$ agrees with the image of a $k$-rational point of $\widetilde{\Pc}_{\widehat{G},a}$ with respect to this map. 
\end{claim}
The lemma above follows from the claim, since $\pi_0(\widetilde{\Pc}_{\widehat{G},a})$ is a torsion group, and since the neutral connected component of $\widetilde{\Pc}_{\widehat{G},a}$ is an abelian variety and hence has only finitely many $k$-rational points. It therefore suffices to prove the claim.

Since $\Tb/Z(X,G)$ is a torus, every $\Tb/Z(X,G)$-torsor on an open subspace $U \subset Y$ of a smooth algebraic space $Y$ extends to $Y$. This extension is unique up to tensor product by $\Oo_Y(E)$, where $E$ is a divisor on $Y$ set-theoretically supported on the complement of $U$. Since $\widetilde{\Mc}_G$ is a smooth DM-stack which is an algebraic space up to codimension $2$ by Corollary \ref{cor:codimension2}, we conclude that the same property holds for the open immersion 
$$\widetilde{\Mc}^{\Diamond}_G \subset \widetilde{\Mc}_G.$$
Furthermore, the Hitchin map $\widetilde{f}\colon \widetilde{\Mc}_G \to \widetilde{\A}$ is flat (\cite[Corollaire 4.16.4]{MR2653248}) and therefore we obtain an isomorphism
$$\Pic_{\Tb/Z(X,G)}(\widetilde{\Mc}_G/\widetilde{\A})(V) \simeq \Pic_{\Tb/Z(X,G)}(\widetilde{\Mc}^{\Diamond}_G/\widetilde{\A}^{\Diamond})(V^{\Diamond})$$
for every smooth morphism $V \to \widetilde{\A}$.
In particular we can apply this to $V = \widetilde{\Pc}_{\widehat{G}} \otimes \Xb_*(\Tb/Z(X,G))$, and use the above isomorphism to uniquely extend the $\mathbb{T}/Z(X,G)$-torsor on 
$$\widetilde{\Mc}^{\Diamond}_G \times_{\widetilde{\A}} \widetilde{\Pc}_{\widehat{G}} \otimes \Xb_*(\Tb/Z(X,G))$$
which is induced by the Poincar\'e line bundle on $\widetilde{\Mc}^{\Diamond}_G \times_{\widetilde{\A}} \widetilde{\Pc}_{\widehat{G}}$. 

We therefore obtain a $\Tb/Z(X,G)$-torsor on the product $\widetilde{\Mc}_G \times \widetilde{\Pc}_{\widehat{G}} \otimes \Xb_*(\Tb/Z(X,G))$ and hence a morphism $$\widetilde{\Pc}_{\widehat{G}} \otimes {\Xb_*(\Tb/Z(X,G))} \to \Pic_{\Tb/Z(X,G)}(\widetilde{\Mc}_G/\widetilde{\A}).$$
It follows from Lemma \ref{lemma:dual_torsor} that $\mathcal{T}_G$ is the image of a $k$-rational point.
\end{proof}

\begin{definition}\label{defi:alpha'}
Let $\alpha'_a \in H^2_{\text{\'et}}(\widetilde{\mathcal{M}}^t_{G,{a}},\Tb/Z(X,G))$ be the gerbe on $\widetilde{\Mc}_{G,a}$ which is induced by the element of $H^1_{\text{\'et}}(k,H^1_{\text{\'et}}(\widetilde{\mathcal{M}}^t_{G,{a}},\Tb/Z(X,G)))$ which sends the topological generator $1 \in \widehat{\Zb}\cong \Gal(\bar k/k)$ to $[\mathcal{T}^t_{G}] \in H^1_{\text{\'et}}(\widetilde{\mathcal{M}}^t_{G,{a}},\Tb/Z(X,G))$. We decompose $\alpha'_a = \alpha_a \alpha''_a$ where $\alpha_a$ is of order coprime to $p$ and the order of $\alpha''_a$ is a $p$-power.
\end{definition}

Recall that $U_a$ denotes $\Spec \Oo^h_{\A,a}$. For every integer $N$ coprime to $p$ we have a natural isomorphism $H^2_{\text{\'et}}(\widetilde{\Mc}_{G,U_a},\mu_N) \simeq H^2_{\text{\'et}}(\widetilde{\Mc}_{G,a},\mu_N)$ (as a consequence of proper base change \cite[Expos\'e XVI]{SGA43}).

\begin{definition}\label{defi:alpha}
We define $\alpha_{U_a} \in H^2_{\text{\'et}}(\widetilde{\Mc}^t_{G,U_a},\mu_N) \simeq H^2_{\text{\'et}}(\widetilde{\Mc}^t_{G,a},\mu_N)$  as the element corresponding to $\alpha_a$ of Definition \ref{defi:alpha'}.
\end{definition}

\begin{lemma}\label{lemma:chi_gerbe}
Let $x \in \widetilde{M}_{G,U_a}(\Oo)^{\natural}$ and $\lambda \in \Xb^*(\Tb/Z(X,G))$. The Hasse invariant of $\lambda(x_F^*\alpha_{U_a}) \in Br(F)$ equals $\chi_{\lambda}(e(x))$. 
\end{lemma}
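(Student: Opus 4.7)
The plan is to reduce the statement to Corollary \ref{cor:hasse} by identifying $\lambda(\alpha)$ with the \emph{unramified gerbe} (Definition \ref{defi:unramified_gerbe}) associated to the line bundle $\lambda(\mathcal{T}^t_G) \in \Pic(\widetilde{\Mc}^t_{G,a})$, and then matching the resulting character $\chi_{\lambda(\mathcal{T}^t_G)}$ with $\chi_\lambda$ on the twisted inertia stack.

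First, Definition \ref{defi:alpha'} constructs $\alpha'_a$ via the recipe of Definition \ref{defi:unramified_gerbe} with coefficients in $\Tb/Z(X,G)$ in place of $\G_m$, starting from the class $[\mathcal{T}^t_G] \in H^1_{\text{\'et}}(\widetilde{\Mc}^t_{G,a},\Tb/Z(X,G))$. This construction is manifestly functorial in the coefficients, so pushing forward along $\lambda\colon \Tb/Z(X,G) \to \G_m$ yields $\lambda(\alpha'_a) = \alpha_{\lambda(\mathcal{T}^t_G)}$. Discarding the wild part $\alpha''_a$ (whose order is a power of $p$ and which therefore contributes trivially to $\inv$ after applying $\lambda$, as encoded in the passage to $\alpha_{U_a}$ in Definition \ref{defi:alpha}) and invoking proper base change to move between $\widetilde{\Mc}^t_{G,a}$ and $\widetilde{\Mc}^t_{G,U_a}$, we obtain the identification $\lambda(\alpha) = \alpha_{\lambda(\mathcal{T}^t_G)}$ on $\widetilde{\Mc}^t_{G,U_a}$.

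Second, Corollary \ref{cor:hasse} applied to the proper tame DM-stack $\widetilde{\Mc}^t_{G,U_a}$ (which is Zariski-locally a finite \'etale quotient stack) and the line bundle $L = \lambda(\mathcal{T}^t_G)$ gives
\begin{equation*}
  \inv(x_F^*\lambda(\alpha)) \;=\; \chi_{\lambda(\mathcal{T}^t_G)}(e(x)).
\end{equation*}
It remains to verify that $\chi_{\lambda(\mathcal{T}^t_G)}(e(x)) = \chi_\lambda(e(x)) = \lambda(\kappa)$, where $\kappa$ is the type of $e(x)$. A representative $y\colon B_k\mu_N \to \widetilde{\Mc}^t_{G,a}$ of $e(x)$ pulls $\mathcal{T}^t_G$ back to a $\Tb/Z(X,G)$-torsor on $B_k\mu_N$, i.e.\ a character $\mu_N \to \Tb/Z(X,G)$. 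By Construction \ref{const:T-torsor} combined with Remark \ref{rmk:auto}, this character equals the composition
\begin{equation*}
  \mu_N \hookrightarrow \widehat{\mu} \xrightarrow{\alpha_\mu} \Aut(E,\theta) \hookrightarrow \Tb \twoheadrightarrow \Tb/Z(X,G),
\end{equation*}
in which the third arrow is the canonical embedding of Construction \ref{cons:auto_torus}. By the very definition of the type, the composition of the first three arrows is $\kappa$, so applying $\lambda$ yields $\lambda(\kappa) = \chi_\lambda(y)$.

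The main obstacle is bookkeeping: one must verify that $\mathcal{T}_G$ descends canonically to each unramified twist $\widetilde{\Mc}^t_{G,a}$ — this rests on its translation invariance together with the vanishing $Br(k) = H^1(k,\G_m) = 0$ over the finite residue field, already noted prior to Definition \ref{defi:alpha'} — and that the specialisation map $e$, proper base change, and the various rigidifications are compatible with each other. Once these compatibilities are in hand, the remainder of the argument is a straightforward unravelling of the definitions of $\mathcal{T}_G$, of the unramified gerbe construction, and of the type of a point in the twisted inertia stack.
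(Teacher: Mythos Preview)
Your proposal is correct and follows essentially the same approach as the paper: both reduce to Corollary \ref{cor:hasse} applied to the line bundle $\lambda(\mathcal{T}_G)$, and then identify the resulting character $\chi_{\lambda(\mathcal{T}_G)}$ with $\chi_\lambda$ by unwinding the definitions via Remark \ref{rmk:auto}. You are more explicit than the paper about the intermediate bookkeeping (functoriality of the unramified gerbe construction in the coefficients, discarding the $p$-power part, descent to the twist), but the core argument is the same.
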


\begin{proof}
This follows directly from Corollary \ref{cor:hasse} and the claim that the function 
$$f_{\lambda(\Tc_G)}\colon I_{\widehat{\mu}}\widetilde{\Mc}_G(k) \to \Cb$$ 
induced by the $\G_m$-torsor $\lambda(\Tc_G)/\widetilde{\Mc}_G$ agrees with $\chi_{\lambda}$. To see this, recall the definition of $f_{\lambda(\Tc_G)}$: a $k$-rational point $y$ of $I_{\widehat{\mu}}\widetilde{\Mc}_G$ gives rise to a map $B\widehat{\mu} \to \widetilde{\Mc}_G$. Post-composing this morphism with the morphism $\widetilde{\Mc}_G \to B\G_m$ corresponding to $\lambda(\Tc_G)$, we obtain a morphism of stacks $B\widehat{\mu} \to B\G_m$. The latter corresponds to a group homomorphism $\widehat{\mu} \to \G_m$, corresponding to $\lambda(\kappa)$ of $(\Qb/\Zb)'$, where $\kappa$ is the element corresponding to $y \in I_{\widehat{\mu}}\widetilde{\Mc}_G$ according to Theorem \ref{IMTheoremTwisted} (see also Remark \ref{rmk:auto}).
\end{proof}


\begin{proof}[Proof of \ref{lemma:0}]
Recall what we would like to show: let $b \in U_a(\Oo)^{\flat}$, such that the corresponding fibre $\widetilde{M}^{\blacksquare,t}_{G,b}$ does not have an $F$-rational point. Then $\chi_t|_{\widetilde{\Mc}^{\blacksquare,s}_{\widehat{G},b}(F)}$ is a non-trivial character (up to translation).

There are two cases, if $\Mc^{\blacksquare,s}_{\widehat{G},b}$ does not have an $F$-rational point there is nothing to show.
If an $F$-rational point exists, then the twist $\Mc^{\blacksquare,s}_{\widehat{G},b}$ can be identified with $\Mc^{\blacksquare}_{\widehat{G},b}$, respectively the Prym $\Pc_{G,b}^{\blacksquare}$. 

It follows from Lemma \ref{lemma:dual_torsor} that $\alpha_{U_a} \in \Ext_{\text{\'et}}^2(\widetilde{\Pc}_{G,b}^{\Diamond,\blacksquare},\Xb_*(\Tb/Z(X,G)) \otimes \G_m) = \Ext^1_{\text{\'et}}(\Xb_*(\widehat{\Tb}/Z(X,G)), \widetilde{\Pc}_{\widehat{G},b}^{\Diamond,\blacksquare})$ corresponds to the unramified twists $\widetilde{\Mc}^{\blacksquare,t}_{\widehat{G},b}$. Recall the correspondence between twists and gerbes on abelian varieties described at the beginning of Subsection \ref{tate}.

According to Tate Duality \cite[I.3.4]{MilneADT} we have a perfect pairing
$$\widetilde{\Pc}^{\blacksquare}_{{G},b}(F) \times H^1_{\text{\'et}}(F,\widetilde{\Pc}^{\blacksquare}_{\widehat{G},b}) \to \Qb/\Zb,$$ which sends $(y,t)$ to $\inv^*(y^*\alpha_t)$, where $\alpha_t \in \Ext^2_{\text{\'et}}(\widetilde{\Pc}^{\blacksquare}_{\widehat{G},b},\G_m)$ denotes the corresponding gerbe. In particular, for every non-trivial $t \in H^1_{\text{\'et}}(F,\widetilde{\Pc}^{\blacksquare}_{\widehat{G},b})$, the Hasse invariant of the gerbe $\alpha_t|_{\widetilde{\Pc}^{\blacksquare}_{{G},b}}$ is a non-trivial character.

Lemma \ref{lemma:chi_gerbe} now provides the finishing touch: the Hasse invariant of $\alpha_{U_a}|_{\widetilde{\Pc}^{\blacksquare}_{{G},b}}$ is given by the function $\chi_t$, which proves exactly what we wanted.
\end{proof}

\begin{proof}[Proof of Lemma \ref{lemma:1}]
The proof of the previous lemma shows that the function $\chi_t|_{\widetilde{\Mc}^{\blacksquare,s}_{\widehat{G},b}(F)}$ is constant and equal to $1$.
The isogeny of Construction \ref{const:ngo_isogeny} induces an isogeny of abelian varieties $\Pc_G^{\Diamond,\blacksquare} \to \Pc_{\widehat{G}}^{\Diamond,\blacksquare}$ of degree coprime to the characteristic of $k$. Lemma \ref{lemma:1} is now a consequence of Proposition \ref{abvol}. 
\end{proof}


\appendix

\section{Appendix}

\subsection{Co-descent}

As a preparation to extend the duality result of Donagi--Pantev and Chen--Zhu to outer twists of reductive group schemes $G$, we develop a \emph{co-descent} description of the Prym $\Pb$ in this subsection. As a warm up we discuss descent for Prym varieties. Subsequently we freely use standard terminology for simplicial objects, as explained in \cite[Tag 0169]{stacks-project}. We also remark that we will depict simplicial objects as semi-simplicial objects (ignoring degeneracy maps). 

\begin{definition}\label{defi:descent}
Let $Y \xrightarrow{f} X$ be a finite \'etale morphism of smooth proper curves. We fix a quasi-split reductive group scheme $G/X$, and by abuse of notation also denote by $G/Y$ its pullback to $Y$.
\begin{enumerate}[(a)]
\item We denote by $\Res_{Y/X}\colon \Pb_{G,X} \to \Pb_{G,Y}$ the natural map induced by pullback of $J$-torsors along the map $f$.
\item By forming base changes of the map $f$ along itself iteratively, we obtain an augmented simplicial object $Y_{\bullet}$ in the category of finite \'etale schemes over $X$:
\[
\xymatrix{
X & Y \ar[l] & Y\times_X Y \ar@<-0.7ex>[l] \ar@<0.7ex>[l] & Y \times_X Y \times_X Y \ar@<-1.4ex>[l] \ar[l] \ar@<1.4ex>[l] & \cdots \ar@<-0.7ex>[l] \ar@<0.7ex>[l] \ar@<-2.3ex>[l] \ar@<2.3ex>[l]  
}
\]
\item Pullback of $J$-torsors yields an augmented co-simplical object in strict Picard stacks
\[
\xymatrix{
\Pb_{X} & \Pb_{Y} \ar@{<-}[l]_{\Res} & \Pb_{Y\times_X Y} \ar@{<-}@<-0.7ex>[l]_-{\Res} \ar@{<-}@<0.7ex>[l]^-{\Res} & \Pb_{Y \times_X Y \times_X Y} \ar@{<-}@<-1.4ex>[l]_-{\Res} \ar@{<-}[l] \ar@{<-}@<1.4ex>[l]^-{\Res} & \cdots \ar@{<-}@<-0.7ex>[l] \ar@{<-}@<0.7ex>[l] \ar@{<-}@<-2.3ex>[l]_-{\Res} \ar@{<-}@<2.3ex>[l]^-{\Res}  .
}
\]
\end{enumerate}
\end{definition}

Faithfully flat descent theory applied to the surjective \'etale morphism $f\colon Y \to X$ above yields the following statement.

\begin{lemma}[Descent]\label{lemma:descent}
The augmented co-simplicial diagram of Definition \ref{defi:descent}(c) induces an equivalence $\Res\colon \Pb_X {\simeq} |\Pb_{Y_{\bullet}}|$.
\end{lemma}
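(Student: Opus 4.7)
The plan is to reduce the statement to standard faithfully flat descent for torsors under a smooth commutative group scheme. Fix an affine scheme $S$ over $\A$, and let $J_{X_S}$ denote the pullback to $X_S := X \times_\A S$ of the regular centraliser group scheme along $X \times \A \to \mathfrak{c}$. By definition, $\Pb_X(S)$ is the groupoid of $J_{X_S}$-torsors on $X_S$. Analogously, for each $n$, $\Pb_{Y_n}(S)$ is the groupoid of torsors under the pullback of $J_{X_S}$ to $(Y_n)_S$, and the coface maps in the cosimplicial diagram of Definition \ref{defi:descent}(c) are given by pullback along the face maps of $Y_\bullet \to X$. Hence the totalisation $|\Pb_{Y_\bullet}(S)|$ is precisely the groupoid of descent data for $J_{X_S}$-torsors along the faithfully flat covering $Y_S \to X_S$.

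First I would observe that $J_{X_S}$ is a smooth commutative affine group scheme over $X_S$ (smoothness follows from \cite[Lemma 2.1.1]{MR2653248} combined with the smoothness of $X_S$). Second, since $f\colon Y \to X$ is finite \'etale and surjective, the base change $Y_S \to X_S$ is faithfully flat and quasi-compact. Therefore faithfully flat descent applies to the fibred category of $J_{X_S}$-torsors: the natural functor
\[
\{J_{X_S}\text{-torsors on } X_S\} \;\longrightarrow\; \bigl|\{J_{X_S}\text{-torsors on } (Y_\bullet)_S\}\bigr|
\]
is an equivalence of groupoids. (This is a standard consequence of faithfully flat descent for quasi-coherent sheaves applied to the $J_{X_S}$-action, or equivalently of the fact that the classifying stack $BJ_{X_S}$ is an fpqc stack.) Unwinding the definitions, this is exactly the statement that $\Res\colon \Pb_X(S) \to |\Pb_{Y_\bullet}(S)|$ is an equivalence.

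The last step is to note that the equivalence is natural in $S$, hence assembles into an equivalence of strict Picard stacks over $\A$. The Picard structure on both sides is induced by the abelian group structure on the group $J_{X_S}$, so the equivalence automatically respects the symmetric monoidal structure.

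The only mildly subtle point, and the step I would be most careful about, is the bookkeeping: we are working with a cosimplicial diagram of $2$-groupoids (strict Picard stacks), so the totalisation must be interpreted as a homotopy limit in the $(2,1)$-category of Picard stacks. However, because $J_{X_S}$ is a sheaf of groups (not a $2$-group), the stack of $J_{X_S}$-torsors is a $1$-stack, and the homotopy limit of the cosimplicial $1$-groupoid of torsors over $(Y_\bullet)_S$ reduces to the usual category of descent data in the sense of \cite[Tag 023L]{stacks-project}. Thus no higher-categorical difficulty arises, and classical descent suffices.
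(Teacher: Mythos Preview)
Your proposal is correct and takes essentially the same approach as the paper, which simply asserts that the lemma is a consequence of faithfully flat descent for $J$-torsors along the surjective \'etale morphism $f\colon Y \to X$. You have merely (and correctly) unpacked this one-line justification, including the observation that the totalisation of the cosimplicial diagram of torsor groupoids computes the usual category of descent data.
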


In order to treat co-descent we have to define \emph{norm maps} for \'etale sheaves of abelian groups on schemes. This will give rise to morphisms $\Nm_{Y/X}\colon \Pb_Y \to \Pb_X$, and to a simplicial object analogous to the one of Definition \ref{defi:descent}(b).
We begin with an elementary lemma about finite \'etale morphisms.

\begin{lemma}\label{lemma:trivialisation}
Let $Y \xrightarrow{f} X$ be a surjective finite \'etale morphism of schemes of finite presentation over $\mathbb{Z}$. Then there exists a finite \'etale morphism $g\colon Z \to X$, such that the base change $Y \times_X Z \to Z$ is a trivial finite \'etale morphism. That is, there exists a finite set $F$, such that $Y \times_X Z \to Z$ is isomorphic to the morphism $\bigsqcup_{i \in F} Z \to Z$.
\end{lemma}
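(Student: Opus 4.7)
The plan is to take $Z$ to be the scheme of isomorphisms $\underline{\mathrm{Isom}}_X(F\times X, Y)$ for an appropriately chosen finite set $F$, and to observe that this is tautologically a finite étale cover of $X$ that trivializes $Y$.

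First, I would reduce to the case where $X$ is connected. Since $X$ is of finite presentation over $\mathbb{Z}$ it is Noetherian, hence has finitely many connected components $X_1,\dots,X_m$; if we can solve the problem on each $X_i$ (with possibly different finite sets $F_i$) the disjoint union of the resulting covers does the job (the formulation with a single $F$ forces all $n_i = \deg(Y_i \to X_i)$ to coincide, which one can arrange after multiplying each $Z_i$ by a suitable constant set — or one simply assumes $X$ connected, which is the only case actually used for the Prym computations). So henceforth assume $X$ connected with $n = \deg(f)$, and fix a finite set $F$ of cardinality $n$.

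Next, I would construct $Z$ explicitly as the subscheme of the $n$-fold fibre product $Y^{\times_X n} = Y \times_X \cdots \times_X Y$ obtained by removing all the partial diagonals. Because $f\colon Y \to X$ is étale, the diagonal $Y \to Y \times_X Y$ is an open and closed immersion, so the locus $Z$ where all $n$ factors are pairwise distinct is open and closed in $Y^{\times_X n}$. As $Y^{\times_X n} \to X$ is finite étale, so is $Z \to X$. By its very construction $Z$ represents $\underline{\mathrm{Isom}}_X(F\times X, Y)$: a $T$-point of $Z$ is an ordered $n$-tuple of pairwise distinct sections $T \to Y$ over $X$, i.e.\ an isomorphism $F \times T \xrightarrow{\cong} Y \times_X T$.

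Then I would check surjectivity and the trivialization. For a geometric point $\bar x \to X$, the fibre $Y_{\bar x}$ is a finite set of cardinality $n$, hence the fibre $Z_{\bar x} = \mathrm{Isom}(F, Y_{\bar x})$ has cardinality $n! \geq 1$. A finite étale morphism with non-empty fibres over a connected base is surjective (its image is open and closed by finiteness plus flat-of-finite-presentation), so $Z \to X$ is surjective. Finally, the universal isomorphism on $Z$ pulls back to give a canonical isomorphism
\[
  Y \times_X Z \;\cong\; F \times Z \;=\; \bigsqcup_{i \in F} Z
\]
over $Z$, which is the required trivialization.

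The argument is essentially formal once one has the right candidate for $Z$; there is no serious obstacle. The only mildly delicate points are (i) the verification that the ``distinct $n$-tuple'' locus is open and closed in $Y^{\times_X n}$, which comes down to the diagonal of an étale morphism being open-and-closed, and (ii) the bookkeeping needed to fit the disconnected case into the strict formulation of the lemma, which is harmless in the applications. An entirely equivalent route would be to invoke the Galois closure $\tilde Y \to X$ of $Y/X$ with deck group $\Gamma$, write $Y = \tilde Y/H$ for $H\leq \Gamma$, and observe $Y \times_X \tilde Y = (\Gamma/H) \times \tilde Y$; this is slightly less direct but uses only standard Galois theory of finite étale covers.
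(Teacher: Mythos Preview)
Your proof is correct. It differs from the paper's argument in presentation rather than substance: the paper proceeds by induction on the degree $d$ of $f$, observing that the diagonal $Y \hookrightarrow Y\times_X Y$ is open and closed, so $U=Y\times_X Y\setminus Y\to Y$ is finite \'etale of degree $d-1$; the inductive hypothesis produces a cover $Z'\to Y$ trivialising $U$, and then $Z'\to Y\to X$ trivialises $Y$ since $Y\times_X Z' \cong Z'\sqcup(U\times_Y Z')$ over $Z'$. Your construction of $Z=\underline{\mathrm{Isom}}_X(F\times X,Y)$ as the complement of all partial diagonals in $Y^{\times_X n}$ is precisely what one obtains by unwinding this induction all the way down. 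Your route is arguably cleaner (it names $Z$ as a moduli object and gives the tautological trivialisation for free), while the paper's induction avoids having to think about the Isom functor and only ever removes one diagonal at a time. The Galois-closure variant you sketch at the end is again the same object viewed through a third lens.
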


\begin{proof}
Without loss of generality we may assume that $X$ is non-empty and connected. Let $d$ be the degree of the map $f$. We prove the assertion by induction on $d$.

The base case $d = 1$ is true, as $f$ is then an isomorphism and we can choose $g=\id_X$. We assume that the assertion is already known to be true for finite \'etale morphisms of degree $d-1$. The diagonal $Y \to Y \times_X Y$ is open, since $f$ is \'etale \cite[Tag 02GE]{stacks-project}, and closed as $f$ is finite (and thus separated). Therefore, the diagonal morphism corresponds to the inclusion of a (union of) connected component $Y \hookrightarrow Y \times_X Y$. We obtain a finite \'etale map $U= Y \times_X Y \setminus Y \to Y$ of degree $d-1$.

By the inductive hypothesis there exists $g'\colon Z' \to Y$, a finite \'etale morphism, such that $U \times_Y Z' \to Z'$ is a trivial finite \'etale morphism. We define $g\colon Z=Z' \times_X Y \to X$ and observe that $Y \times_X Z \to Z$ is a trivial finite \'etale morphism.
\end{proof}

\begin{definition}\label{defi:norm}
Let $J$ a sheaf of abelian groups on the small \'etale site of $X$. Let $Y \xrightarrow{f} X$ be a surjective finite \'etale morphism. We denote by $J_Y$ the sheaf on the small \'etale site of $X$ given by $f_* f^{-1}J$.
\begin{enumerate}[(a)]
\item If $f$ is trivial, that is, there exists a finite set $F$, such that $f$ is isomorphic to the map $\bigsqcup_{i \in F} X \to X$, then we define $\Nm_{Y/X}\colon J_Y \to J_X$ to be the multiplication map
$$J^F \to J$$
which sends local sections $(s_i)_{i \in F}$ to $\prod_{i \in F} s_i$.
\item We say that a morphism of \'etale sheaves $\mathsf{nm}\colon J_Y \to J_X$ is a norm map, if for every map $g$ as in Lemma \ref{lemma:trivialisation} we have $g^*\mathsf{nm} = \Nm_{Y \times_X Z / Z}$.
\end{enumerate}
\end{definition}

Norm maps as in Definition \ref{defi:norm} always exist, and furthermore are unique.

\begin{construction}[Norm maps]
Let $f\colon Y \to X$, $J$ and $J_Y$ be as in Definition \ref{defi:norm}. We define a norm map $\Nm_{Y/X} \colon J_Y \to J$ as follows. Let $g\colon Z \to X$ be a finite \'etale morphism, such that $Y \times_X Z \to Z$ is a trivial \'etale morphism. The existence of $g$ is guaranteed by Lemma \ref{lemma:trivialisation}.

Definition \ref{defi:norm}(a) defines a norm map $\Nm_{Y \times_X Z/Z}\colon g^{-1} J_Y \to g^{-1}J_X$. Let $h\colon U \to X$ be an \'etale open of $X$ and $s \in J_Y(U)$ a local section.

The section $t=g^*s \in J_Y(U \times_X Z)$ descends by construction to $U$, that is, with respect to the two projections 
$$p_1,p_2\colon(U \times_X Z) \times_U (U \times_X Z) \to U$$
we have $p_1^* t = p_2^* t$. Furthermore, by construction of $g$ and since $J$ is a sheaf, we have a canonical isomorphism $J_Y(U) \simeq J_X(U)^F$. With respect to this identification we write $t=(s_i)_{i \in F} \in J_X(U \times_X Z)^F$. We have $p_1^* s_i = p_2^* s_i$ for all $i \in F$. This shows that $\Nm_{Y \times_X U \times_X Z/U \times_X Z}(t) = \prod_{i \in F} s_i$ also satisfies $p_1^*\Nm(t) = p_2^*\Nm(t)$. Therefore it descends to a well-defined section $\Nm_{Y/X}(s) \in J(U)$. 
We leave the verifications that the resulting map $\Nm_{Y/X}$ is a morphism of \'etale sheaves in abelian groups, and is independent of the choice of $g$, to the reader.
\end{construction}

The simplicial object $Y_{\bullet}$ in finite \'etale schemes over $X$ yields an augmented simplicial object in \'etale sheaves on the small \'etale site of $X$:
\begin{equation}\label{diag:J}
\xymatrix{
J \ar@{<-}[r]^{\Nm} & J_Y \ar@{<-}@<-0.7ex>[r] \ar@{<-}@<0.7ex>[r] & J_{Y\times_X Y} \ar@{<-}@<-1.4ex>[r] \ar[r] \ar@{<-}@<1.4ex>[r] & J_{Y \times_X Y \times_X Y} \ar@{<-}@{<-}@<-0.7ex>[r] \ar@{<-}@{<-}@<0.7ex>[r]  \ar@{<-}@<-2.3ex>[r] \ar@{<-}@<2.3ex>[r] & \cdots 
}
\end{equation}
The first incarnation of co-descent is the assertion that the augmentation of this diagram is a limit.

\begin{lemma}[Local co-descent]\label{lemma:co-descent}
Diagram \eqref{diag:J} is a limit diagram in the category of sheaves of abelian groups on the small \'etale site of $X$, that is, $J \simeq |J_{Y_{\bullet}}|$. Moreover, this holds in the derived $\infty$-category \'etale sheaves in abelian groups.
\end{lemma}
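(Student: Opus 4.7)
I would reduce to the case of a trivial finite \'etale cover, where an explicit contracting homotopy becomes available, and then propagate the result back by faithfully flat descent along a trivialising cover. Concretely: the formation of $J_{Y_n} = (f_n)_* f_n^{-1} J$ commutes with \'etale base change by proper base change (each $f_n\colon Y_n \to X$ is finite \'etale, hence proper), and compatibility of the norm maps with \'etale pullback is built into Definition~\ref{defi:norm}(b). Consequently both the ordinary and the derived statements are \'etale-local on $X$: pullback along an \'etale cover is exact and detects quasi-isomorphisms of complexes of \'etale abelian sheaves. Applying Lemma~\ref{lemma:trivialisation}, after replacing $X$ by a suitable finite \'etale cover we may assume $Y = \bigsqcup_{i \in F} X$ for a nonempty finite set $F$.

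In this trivial case one has $Y_n = \bigsqcup_{F^{n+1}} X$ and hence
\begin{equation*}
  J_{Y_n} \;\simeq\; \prod_{F^{n+1}} J \;\simeq\; J \otimes_{\Zb} \Zb[F^{n+1}],
\end{equation*}
where the second identification uses finiteness of $F^{n+1}$ to identify the product with the direct sum. Unwinding the definition of the norm map (i.e.\ the counit $f_! f^{-1} \to \id$ for a trivial finite \'etale cover), the $i$-th face map $J_{Y_n} \to J_{Y_{n-1}}$ is partial summation along the $i$-th factor. Hence the augmented simplicial abelian sheaf \eqref{diag:J} is obtained by applying $J \otimes_{\Zb} (-)$ to the augmented simplicial abelian group $\Zb[F^{\bullet+1}] \to \Zb$ associated to the \v Cech nerve of the surjection of sets $F \to *$.

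Any basepoint $i_0 \in F$ (which exists since $f$ is surjective) provides an explicit simplicial contracting homotopy $s_n\colon F^{n+1} \to F^{n+2}$, $(x_0,\dots,x_n) \mapsto (i_0, x_0, \dots, x_n)$, exhibiting the augmentation $\Zb[F^{\bullet+1}] \to \Zb$ as a simplicial homotopy equivalence. Tensoring with $J$ yields a simplicial homotopy equivalence $J \otimes_{\Zb} \Zb[F^{\bullet+1}] \to J$, which is precisely the augmentation $J_{Y_\bullet} \to J$. A simplicial homotopy equivalence of simplicial abelian groups induces both an isomorphism on the colimit (i.e.\ on $H_0$ of the associated Moore complex) and a quasi-isomorphism of the underlying objects of the derived $\infty$-category, so both assertions of the lemma follow at once in the trivial case, and hence globally by the first paragraph.

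\textbf{Main obstacle.} Nothing in this argument is conceptually hard; the only point requiring mild care is matching the abstract simplicial structure on $J_{Y_\bullet}$ defined via the counits $f_! f^{-1} \to \id$ with the combinatorial description in terms of the \v Cech nerve $F^{\bullet+1}$ after trivialising. This is tantamount to checking the base change and projection formulas for finite \'etale $f$, which are standard and pose no real difficulty.
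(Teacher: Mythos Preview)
Your proposal is correct and follows essentially the same strategy as the paper's proof: reduce \'etale-locally to a trivial cover using Lemma~\ref{lemma:trivialisation}, then identify $J_{Y_n} \simeq J^{F^{n+1}}$ and verify the augmented simplicial object is a colimit diagram. The only difference is in the execution of the trivial-cover case: the paper passes through Dold--Kan and asserts that ``a direct computation'' gives the required acyclicity of the associated chain complex, whereas you supply that computation by recognising the simplicial object as $J \otimes_{\Zb} \Zb[F^{\bullet+1}]$ and exhibiting the standard extra degeneracy from a basepoint $i_0 \in F$. Your version is a clean and explicit realisation of the step the paper leaves implicit.
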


\begin{proof}
We only prove the second assertion (in the derived $\infty$-category) as it implies the first since $\Sh_{\text{\'et}}(X)$ is a full subcategory of $D_{\text{\'et}}(X)$. By virtue of Lemma \ref{lemma:trivialisation} there exists a finite \'etale morphism $Z \to X$, such that $Y \times_X Z \to Z$ is a trivial \'etale covering. Since the assertion of the lemma can be verified \'etale locally, we may assume without loss of generality that $Y \to X$ is of the shape $\bigsqcup_{i \in F} X \to X$, where $F$ is a finite set.

Since $J_{\bigsqcup_{i \in F}X} \simeq J_X^F$, we have to verify that $J_X \xleftarrow{\Nm_{Y/X}} |J_{X}^{\times F_{\bullet}}|$ is an isomorphism. Here, we denote by $F_{\bullet}$ the simplicial set which agrees with $\times_{0=1}^n F$ in level $n$. By the Dold--Kan correspondence (see \cite[Tag 019D]{stacks-project}) the geometric realisation of this simplicial abelian group is quasi-isomorphic to the chain complex
$$J_X^F \leftarrow J_X^{F \times F} \leftarrow J_X^{F \times F \times F} \leftarrow \cdots$$
with differential $\partial_n\colon J_X^{\times_{i=0}^n F} \to J_X^{\times_{i=0}^{n-1} F}$ given by the alternating sum of the face maps ($i = 0,\dots,n$)
$$\partial_{n,i}\colon J_X^{F^{n+1}} \to J_X^{F^n},$$
which sends $(g\colon F^n+1 \to J_X) \in J_X^{F^{n+1}}$ to 
$$\partial_{n,i}g\colon (c_0,\dots,c_{n-1}) \mapsto \prod_{c \in F} g(c_0,\dots,c_{i-1},c,c_{i},\dots,c_{n-1}).$$
A direct computation shows that the homology groups of this complex are $0$ in positive degrees and isomorphic to $J_X$ in degree $0$. This concludes the proof of the lemma.
\end{proof}

Strict Picard stacks naturally from a $2$-category (to be precise, a $(2,1)$-category). The corresponding ``homotopy category", obtained by identifying $2$-isomorphism $1$-morphisms, is well-understood: in SGA IV.3 \cite[XVIII.1.4]{SGA43} Deligne proved that the homotopy category of strict Picard stacks is equivalent to the full subcategory of the derived category of sheaves of abelian groups
$$D_{[-1,0]}(\Sh) \subset D(\Sh)$$
consisting of those complexes supported in degrees $[-1,0]$. The inverse to this embedding works as follows: to a length $1$ complex $[V_{-1} \to V_0]$ of sheaves of abelian groups one associates the quotient stack $[V_0/V_{-1}]$. It is clear that the quotient stack inherits a strict monoidal structure from this presentation.

Deligne's embedding can be enhanced to capture the full $2$-categorical structure of strict Picard stacks. It is well-known that the $2$-category of strict Picard stacks embeds as a subcategory into the derived $\infty$-category 
$$\D_{[-1,0]}(\Sh) \subset \D(\Sh)$$
of complexes of sheaves of abelian groups. This result is part of the mathematical folklore and can be deduced from the Dold-Kan correspondence \cite[Tag 019G]{stacks-project}.

\begin{rmk}\label{rmk:RGamma}
With respect to Deligne's equivalence we have $\Pb \simeq R\Gamma(X_S,J[1])$.
\end{rmk}

\begin{definition}{Norm map}
Let $f\colon Y \to X$ be a finite \'etale morphism which is surjective. The norm map $\Nm_{Y/X}\colon J_Y \to J_X$ induces a morphism of strict Picard stacks $\Nm_{Y/X} \colon \Pb_Y \to \Pb_X$.
\end{definition}

The simplicial scheme $Y_{\bullet}$ of Definition \ref{defi:descent}(b) yields a co-simplicial diagram of strict Picard stacks
\begin{equation}\label{diag:Nm}
\xymatrix{
\Pb_{X} & \Pb_{Y} \ar[l]_{\Nm} & \Pb_{Y\times_X Y} \ar@<-0.7ex>[l] \ar@<0.7ex>[l] & \Pb_{Y \times_X Y \times_X Y} \ar@<-1.4ex>[l] \ar[l] \ar@<1.4ex>[l] & \cdots \ar@<-0.7ex>[l] \ar@<0.7ex>[l] \ar@<-2.3ex>[l] \ar@<2.3ex>[l] 
}
\end{equation}

\begin{corollary}[Global co-descent]\label{cor:co-descent}
For a finite \'etale morphism $Y \to X$ of smooth proper curves the diagram \eqref{diag:Nm} is a colimit diagram in the $2$-category of strict Picard stacks, that is, the augmentation yields an equivalence $\Nm_{Y/X}\colon \Pb_G(X) \simeq |\Pb_G(Y_{\bullet})|.$ Furthermore this holds in the derived $\infty$-category $D(\Sh_{\text{big-\'et}}(\Spec k))$.
\end{corollary}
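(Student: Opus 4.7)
The plan is to deduce the global co-descent of Corollary \ref{cor:co-descent} from the local co-descent of Lemma \ref{lemma:co-descent} by applying the proper pushforward $R\pi_*$ where $\pi \colon X \to \Spec k$ is the structure morphism, and exploiting that for a proper morphism, the pushforward $R\pi_*$ coincides with $R\pi_!$ and hence preserves colimits in addition to limits.

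First I would invoke Remark \ref{rmk:RGamma} to identify $\Pb_X \simeq R\pi_*(J[1])$ in the derived $\infty$-category $D(\Sh_{\text{big-\'et}}(\Spec k))$, under the fully faithful $\infty$-categorical enhancement of Deligne's embedding of strict Picard stacks into $D_{[-1,0]}$. For each level $Y^n = Y \times_X \cdots \times_X Y$ with structure morphism $\pi_n \colon Y^n \to \Spec k$ factoring as $Y^n \xrightarrow{g_n} X \xrightarrow{\pi} \Spec k$, the map $g_n$ is finite \'etale, so $Rg_{n,*} = g_{n,*}$ is exact, giving
\[\Pb_{Y^n} \simeq R\pi_{n,*}(J[1]) \simeq R\pi_*\bigl(g_{n,*} g_n^* J[1]\bigr) = R\pi_*(J_{Y^n}[1]).\]
Under these identifications, the augmented cosimplicial diagram \eqref{diag:Nm} is obtained from the diagram \eqref{diag:J} by termwise application of $R\pi_*(-)[1]$.

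Next I would observe that since $X$ is a smooth proper curve, the morphism $\pi$ is proper, so the natural transformation $R\pi_! \to R\pi_*$ is an equivalence. Consequently $R\pi_*$ admits both a left adjoint $\pi^*$ and a right adjoint $\pi^!$, and in particular preserves arbitrary colimits (in addition to arbitrary limits). Combining this with the equivalence $J \simeq |J_{Y_\bullet}|$ in the derived $\infty$-category of \'etale sheaves on $X$ provided by Lemma \ref{lemma:co-descent}, we obtain
\[\Pb_X \simeq R\pi_*(J[1]) \simeq R\pi_*\bigl(|J_{Y_\bullet}|[1]\bigr) \simeq \bigl|R\pi_*(J_{Y_\bullet}[1])\bigr| \simeq |\Pb_{Y_\bullet}|\]
in $D(\Sh_{\text{big-\'et}}(\Spec k))$, which establishes the second claim of the Corollary. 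The statement in the $2$-category of strict Picard stacks then follows from Deligne's equivalence by restricting along the fully faithful inclusion $D_{[-1,0]} \hookrightarrow D$: both $\Pb_X$ and each $\Pb_{Y^n}$ already lie in $D_{[-1,0]}$, and the equivalence above is compatible with this subcategory.

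The main point where care is required is the identification $R\pi_* \simeq R\pi_!$ for proper $\pi$ in the stable $\infty$-categorical setup of \'etale sheaves of abelian groups (so as to conclude that $R\pi_*$ preserves the simplicial colimit $|J_{Y_\bullet}|$); this is a standard consequence of proper base change and the six-functor formalism for torsion or smooth group sheaves. Once this is granted, the rest of the argument is formal.
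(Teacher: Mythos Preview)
Your approach is essentially the same as the paper's: reduce to local co-descent (Lemma \ref{lemma:co-descent}) by identifying $\Pb_{Y^n} \simeq R\pi_*(J_{Y^n}[1])$ and then pass the geometric realisation through $R\pi_*$. The only difference lies in how you justify that $R\pi_*$ preserves colimits. You invoke the identification $R\pi_* \simeq R\pi_!$ for proper $\pi$ and the existence of a right adjoint $\pi^!$, whereas the paper argues more directly: it uses that for an exact functor between stable $\infty$-categories it suffices to preserve small coproducts, verifies this for bounded complexes via \cite[Tag 0739]{stacks-project}, and then reduces the unbounded case by a truncation argument exploiting the finite cohomological dimension of the curve.

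Your route is cleaner provided the six-functor package (in particular the existence of $\pi^!$) is available for arbitrary \'etale sheaves of abelian groups, not just torsion or constructible ones; your closing caveat correctly flags this as the point needing care. The paper's argument sidesteps this by proving colimit-preservation from scratch, which is more self-contained in the present setting but less conceptual. Either way the substance of the proof is the same.
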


\begin{proof}
As before we only prove the second statement, as it implies the first. For an affine $k$-scheme $S$, the strict Picard groupoid $\Pb_G(S)$ corresponds to the complex $R\pi_*(X_S,J_{X_S}[1])$ in the derived $\infty$-category $D_{\text{\'et}}(S)$, where $\pi\colon X_S \to S$ is the canonical projection. We claim that the functor $R\pi_*\colon D^b_{\text{\'et}(X_S)} \to D^b_{\text{\'et}(S)}$ preserves small colimits. To see this one uses that for exact functors between stable $\infty$-categories, it suffices to show that the induced functor of triangulated categories
$$R\pi_*\colon \Ho(D_{\text{\'et}}(X)) \to \Ho(D_{\text{\'et}}(S))$$
preserves small coproducts (see \cite[Proposition 1.4.4.1(2)]{Lurie:ha}). For bounded complexes this follows from \cite[Tag 0739]{stacks-project}. The general case can be reduced to the bounded case as follows: let $(\F^{\bullet}_i)_{i \in I}$ be a small set of complexes of sheaves on the small \'etale site of $X_S$. We will show that the canonical map of complexes
$$\bigoplus_{i \in I} R\pi_* \F_i^{\bullet} \to R\pi_*\left(\bigoplus_{i \in I}\F_i^{\bullet}\right).$$
is a quasi-isomorphism. Since $\F_i^{\bullet} \simeq \varinjlim_{d \to -\infty} \tau_{\geq d}$, and $\tau_{\geq d}$ commute with small coproducts, and $R\pi_*$ with small limits, we can assume that the $\F^{\bullet}_i$ are uniformly bounded below. That is, there exists $c \in \mathbb{N}$, such that for all $i \in I$ the cohomology sheaves $\Hc^j(\F_i^{\bullet}) = 0$ if $j < c$.

We want to show for all $j \in \mathbb{N}$ that the induced morphism
$$\Hc^j\left(\bigoplus_{i \in I} R\pi_* \F_i^{\bullet}\right) \to \Hc^j\left(R\pi_*(\bigoplus_{i \in I}\F_i^{\bullet})\right)$$
is an isomorphism. Choose $d > j$, then this is equivalent to the induced morphism 
$$\Hc^j\left(\bigoplus_{i \in I} R\pi_*\tau_{\leq d} \F_i^{\bullet}\right) \to \Hc^j\left(R\pi_*(\bigoplus_{i \in I}\tau_{\leq d}\F_i^{\bullet})\right)$$
to be an isomorphism. The complexes $\tau_{\leq d} \F_i^{\bullet}$ are bounded and we have already remarked that $R\pi_*$ commutes with coproducts of bounded complexes.

The fact that $R\pi_*$ preserves colimits and Lemma \ref{lemma:co-descent} imply that we have an equivalence of Picard groupoids
$\Nm_{Y/X}\colon \Pb_G(X)(S) \simeq |\Pb_G(Y_{\bullet})(S)|.$
This concludes the proof.
\end{proof}

\subsection{Langlands duality and the Prym varieties for quasi-split reductive groups}

The following theorem is due to Donagi--Pantev (\cite[Theorem A]{MR2957305}) in characteristic $0$ and due to Chen--Zhu (\cite[Theorem 1.2.1]{chenzhu}) in its more general incarnation.

\begin{theorem}[Donagi--Pantev, Chen--Zhu]\label{thm:duality}
Let $\Gb/k$ be a split reductive group scheme over $k$, such that $\charac(k)$ is $0$ or, $|W|$ is not divisible by $\charac(k)$. We denote by $\widehat{G}$ the Langlands dual group scheme. For $X/k$ a smooth projective curve we obtain an isomorphism $\A_{\Gb} \simeq \A_{\widehat{\Gb}}$, which maps $\A_{\Gb}^{\Diamond}$ to $\A_{\widehat{\Gb}}^{\Diamond}$. With respect to this identification the abelian group stacks $\Pb_{\Gb}$ and $\Pb_{\widehat{\Gb}}$ are dual, that is there is an equivalence
$$\AJ^*\colon \Pb_{\Gb} \simeq \Pb_{\widehat{\Gb}}.$$
\end{theorem}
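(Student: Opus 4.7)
My plan is to prove the equivalence by combining Donagi--Gaitsgory's description of $\Pb$ (Theorem \ref{thm:dg}) with a Poincar\'e-sheaf/Abel-Jacobi construction, and then checking non-degeneracy fibrewise over $\A^{\Diamond}$. The identification $\A_{\Gb} \simeq \A_{\widehat{\Gb}}$ is purely root-theoretic: the Chevalley base $\mathbf{c} = \mathbf{g}/\!/G$ depends only on the Weyl group action on $\mathbf{t}$, and this action is essentially the same for $\Gb$ and $\widehat{\Gb}$ (transferred across the canonical pairing $\mathbf{t} \simeq \widehat{\mathbf{t}}^\vee$). Smoothness of the cameral cover is a purely Weyl-group-theoretic condition, so this identification sends $\A^{\Diamond}$ to $\A^{\Diamond}$, and for a fixed $a \in \A^{\Diamond}$ the \emph{same} smooth cameral cover $\Cc_a$ controls both sides.

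Next I would construct $\AJ^*$ explicitly. Using Donagi--Gaitsgory I identify $\Pb_{\Gb}^{\Diamond} \simeq \Bun_{\Tb}^{W,+}(\Cc)$ and $\Pb_{\widehat{\Gb}}^{\Diamond} \simeq \Bun_{\widehat{\Tb}}^{W,+}(\Cc)$. A character of $\Pb_{\Gb}^{\Diamond}$ (i.e.\ a line bundle $L$ locally of finite order) pulled back along the Abel-Jacobi morphism $\Cc^{\sm} \times_X \Xb_*(\Tb) \to \Pb_{\Gb}^{\Diamond}$ of Construction \ref{const:AJ} yields a line bundle on $\Cc^{\sm} \times_X \Xb^*(\widehat{\Tb})$, which corresponds canonically to a $\widehat{\Tb}$-torsor $Q$ on $\Cc^{\sm}$. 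I would then verify: (i) $Q$ extends to $\Cc$ by Hartogs/reflexivity since $\Cc$ is smooth in codimension $\le 1$; (ii) the $W$-equivariance of the Abel-Jacobi construction descends to a strong $W$-equivariant structure on $Q$; (iii) the behaviour of the Abel-Jacobi map along the ramification divisors $\Cc^{s_\alpha}$ pins down a natural $+$-structure for the dual root datum on $Q$. Applying Donagi--Gaitsgory for $\widehat{\Gb}$ yields the morphism $\AJ^*\colon (\Pb_{\Gb}^{\Diamond})^{\vee} \to \Pb_{\widehat{\Gb}}^{\Diamond}$.

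To show $\AJ^*$ is an equivalence I would use that both sides are Beilinson $1$-motives (Lemma \ref{lemma:B1}), so by smooth descent it suffices to check the statement on geometric fibres $a\in \A^{\Diamond}(\bar k)$. Each fibre sits in an extension $1 \to \Pb_a^\circ \to \Pb_a \to \pi_0(\Pb_a) \to 1$ with $\Pb_a^\circ$ an abelian variety and $\pi_0(\Pb_a)$ finite, and one checks:
\begin{enumerate}[(a)]
\item on neutral components, $\AJ^*$ realises $\Pb_{\widehat{\Gb},a}^\circ$ as the classical dual abelian variety $(\Pb_{\Gb,a}^\circ)^{\vee}$. This reduces via Donagi--Gaitsgory to a Weil/Poincar\'e computation on the smooth curve $\Cc_a$: the usual self-duality of the Jacobian of $\Cc_a$, twisted by the perfect pairing $\Xb_*(\Tb) \otimes \Xb_*(\widehat{\Tb}) \to \Zb$ and restricted to the $W$-invariant parts on either side;
\item on component groups, the Cartier dual of $\pi_0(\Pb_{\Gb,a})$ is identified with $\pi_0(\Pb_{\widehat{\Gb},a})$. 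This follows from Ng\^o's explicit description of $\pi_0(\Pb_a)$ in terms of the cocharacter lattice modulo the sublattice generated by images of coroots fixed by stabilisers of cameral points, combined with Cartier duality of finite diagonalisable group schemes.
\end{enumerate}

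The main obstacle is the correct handling of the $+$-structure under duality. Without it, pairing $\Bun_{\Tb}^W(\Cc)$ with $\Bun_{\widehat{\Tb}}^W(\Cc)$ would be a routine application of Poincar\'e duality for tori on the smooth curve $\Cc$ plus the finite $W$-action. The $+$-structure imposes specific rigidifications along the ramification divisors $\Cc^{s_\alpha}$, and these rigidifications must be transported to the compatible $+$-data for the dual root system under the duality; this is where the hypothesis that $\charac(k)$ does not divide $|W|$ is used (to guarantee tameness of the cameral cover and the canonical identification $J^\circ \simeq \widehat J^\circ$ of neutral regular centralisers coming from \cite[Proposition 2.4.7]{MR2653248}). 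Once this bookkeeping has been set up, assembling (a) and (b) into an equivalence of Beilinson $1$-motives is formal.
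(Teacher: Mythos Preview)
The paper does not prove this theorem at all: it is stated with attribution to Donagi--Pantev \cite[Theorem A]{MR2957305} (characteristic $0$) and Chen--Zhu \cite[Theorem 1.2.1]{chenzhu} (positive characteristic), and used as a black box. The paper's own contribution in the appendix is the \emph{extension} to quasi-split $G$ (Corollary \ref{cor:duality}), carried out by a descent/co-descent argument along a finite \'etale cover trivialising the outer twist, combined with the compatibility $\Res^\vee=\Nm$ of Remark \ref{rmk:dual}.

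Your outline is essentially a high-level sketch of the Chen--Zhu proof, and the overall architecture is right: identify the Hitchin bases via the Weyl-group action, build $\AJ^*$ by Abel--Jacobi pullback and Donagi--Gaitsgory, then check the equivalence fibrewise by splitting into neutral component and $\pi_0$. Two minor points: over $\A^{\Diamond}$ the cameral cover is already smooth by definition, so the Hartogs step in your (i) is unnecessary; and your step (a) understates the work involved, since the $W$-equivariant/$+$-conditions on both sides interact nontrivially with the Poincar\'e pairing (this is precisely the technical core of \cite[Section 3]{chenzhu}, not a routine reduction to the Jacobian of $\Cc_a$). You correctly flag the $+$-structure as the main obstacle; in the cited references this is handled by an explicit cocycle analysis rather than general principles.
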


The following observation plays an important role in extending this theorem to quasi-split reductive groups. We refer the reader to the paragraph preceding \ref{thm:duality_quasi-split} for context and references of the map $\AJ^*$.

\begin{rmk}\label{rmk:dual}
For $f\colon Y \to X$ a finite \'etale map, we have $\Res_{Y/X}^{\vee} = \Nm_{Y/X}\colon \Pb_{G,X}\to \Pb_{G,Y}$, that is, the diagram
\[
\xymatrix{
\Pb_{G,X} \ar[d]_{\Res_{Y/X}} \ar[r]^-{\AJ^*}_{\simeq} & \Pb_{\widehat{G},X}^{\vee} \ar[d]^{\Nm_{Y/X}^{\vee}} \\
\Pb_{G,Y} \ar[r]^-{\AJ^*}_{\simeq} & \Pb_{\widehat{G},Y}^{\vee}
}
\]
commutes. Commutativity of this diagram is a consequence of commutativity of 
\[
\xymatrix{
\Cc_Y \times \Xb_*(\widehat{\Tb}) \ar[r]^-{\AJ_Y} \ar[d]_f & \Pb_{\widehat{G},Y} \ar[d]^{\Nm_{Y/X}} \\
\Cc_X \times \Xb_*(\widehat{\Tb}) \ar[r]^-{\AJ_X} & \Pb_{\widehat{G},X}.
}
\]
Indeed, the upper horizontal map sends a pair $(y,\lambda)$, where $y$ is a point of the spectral cover $\Cc_Y$, and $\lambda \in \Xb^*(\widehat{\Tb})$ to the $\mathbb{W}$-equivariantisation of the $\widehat{\Tb}$-torsor $\Oo_{\Cc_Y}(\lambda y)$. The image of the norm map of this $\widehat{\Tb}$-torsor is the $\mathbb{W}$-equivariantisation of the $\widehat{\Tb}$-torsor $\Oo_{\Cc_X}(\lambda f(y))$ which agrees with $\AJ(f(y),\lambda)$.
\end{rmk}

We combine this remark with the co-descent picture of the previous paragraph \ref{cor:co-descent} in order to extend \ref{thm:duality} to the case of quasi-split reductive group schemes on $X$.

\begin{corollary}\label{cor:duality}
Let $X/k$ be a smooth projective curve over a field $k$, and $G/X$ a quasi-split reductive group scheme, that is, an outer form of a split reductive group $\Gb/k$. For $\charac(k)$ is $0$ or $|W|$ is not divisible by $\charac(k)$, we obtain an isomorphism $\A_{G} \simeq \A_{\widehat{G}}$, which maps $\A_{G}^{\Diamond}$ to $\A_{\widehat{G}}^{\Diamond}$. With respect to this identification the Beilinson $1$-motives $\Pb_{G}$ and $\Pb_{\widehat{G}}$ are dual.
\end{corollary}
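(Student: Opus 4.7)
The plan is to bootstrap the duality from the split case (Theorem \ref{thm:duality}) via the descent/co-descent machinery of the appendix, applied to a finite \'etale cover that trivialises the outer twist. By Lemma \ref{Isotriviality} there exists a finite \'etale surjection $f\colon Y \to X$ of smooth projective $k$-curves along which the $\Out(\Gb)$-torsor $\rho$ defining $G$ becomes trivial. Form the \v Cech nerve $Y_\bullet$ with $Y_n = Y \times_X \cdots \times_X Y$; since $f^*\rho$ splits, so does its pullback to every $Y_n$, whence $G_{Y_n} \cong \Gb \times_k Y_n$ and $\widehat{G}_{Y_n} \cong \widehat{\Gb} \times_k Y_n$ canonically and compatibly with face maps. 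Finite \'etale base change preserves smoothness of cameral covers, so $\A^\Diamond_{G,X}$ maps into $\A^\Diamond_{\Gb,Y_n}$ on every level, and the working base for the duality makes sense throughout.

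Next, apply co-descent (Corollary \ref{cor:co-descent}) on the $G$-side and descent (Lemma \ref{lemma:descent}) on the $\widehat{G}$-side to obtain equivalences
\begin{equation*}
\Pb_{G}^\Diamond \simeq \colim_{\Delta^{\mathrm{op}}} \Pb_{\Gb, Y_\bullet}^\Diamond \qquad \text{and} \qquad \Pb_{\widehat{G}}^\Diamond \simeq \lim_{\Delta} \Pb_{\widehat{\Gb}, Y_\bullet}^\Diamond
\end{equation*}
of Beilinson $1$-motives over $\A^\Diamond$, the former via norm maps and the latter via restriction maps. Dualising the right-hand equivalence (duality for Beilinson $1$-motives is an involutive anti-equivalence, hence sends limits to colimits) and invoking Theorem \ref{thm:duality} to identify $\Pb_{\widehat{\Gb},Y_n}^{\Diamond,\vee} \simeq \Pb_{\Gb,Y_n}^\Diamond$ termwise yields
\begin{equation*}
\Pb_{\widehat{G}}^{\Diamond,\vee} \simeq \colim_{\Delta^{\mathrm{op}}} \Pb_{\widehat{\Gb},Y_\bullet}^{\Diamond,\vee} \simeq \colim_{\Delta^{\mathrm{op}}} \Pb_{\Gb, Y_\bullet}^\Diamond,
\end{equation*}
and comparing this with the first equivalence delivers the desired $\Pb_G^\Diamond \simeq \Pb_{\widehat{G}}^{\Diamond,\vee}$.

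The crux is the compatibility of transition maps: the first colimit runs over norm maps, the second over duals of restriction maps. Under the Abel-Jacobi identification the two coincide by Remark \ref{rmk:dual}, so the two cosimplicial diagrams in $\Pb_{\Gb,Y_\bullet}^\Diamond$ agree and hence so do their colimits. The main obstacle is promoting the levelwise naturality to a coherent equivalence of the entire simplicial diagrams, which is what allows one to pass from termwise equivalences to equivalences of homotopy (co)limits; concretely, one must verify that the Chen--Zhu/Donagi--Pantev identification $\AJ^*$ is functorial under finite \'etale pullbacks in a way compatible with both restriction and norm, so that Remark \ref{rmk:dual} extends from the single morphism $Y \to X$ to the whole cosimplicial system $Y_\bullet \to X$. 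This functoriality is visible from the explicit Poincar\'e-bundle construction of $\AJ^*$ on spectral covers, and is what makes the co-descent argument close.
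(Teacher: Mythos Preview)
Your proposal is correct and follows essentially the same strategy as the paper: trivialise the outer twist over a finite \'etale cover $Y \to X$, apply the split duality of Donagi--Pantev/Chen--Zhu levelwise on the \v Cech nerve, and assemble via Remark \ref{rmk:dual}, Lemma \ref{lemma:descent}, and Corollary \ref{cor:co-descent}. The only cosmetic difference is that the paper dualises co-descent on the $G$-side and compares two limit diagrams, whereas you dualise descent on the $\widehat{G}$-side and compare two colimit diagrams; these are equivalent, and your explicit flagging of the coherence of $\AJ^*$ across the simplicial levels is a point the paper leaves implicit.
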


\begin{proof}
Let $Y \xrightarrow{f} X$ be a finite \'etale morphism, such that $f^*G$ is isomorphic to the split group $Y$-scheme $\Gb \times Y$. According to Remark \ref{rmk:dual} we have a commutative diagram
\[
\xymatrix{
\Pb_{X}^{\vee} \ar[r]^{\Nm^{\vee}} & \Pb_{Y}^{\vee} \ar[d]^{\AJ^*} \ar@<-0.7ex>[r] \ar@<0.7ex>[r] & \Pb_{Y\times_X Y}^{\vee} \ar[d]^{\AJ^*} \ar@<-1.4ex>[r] \ar[r] \ar@<1.4ex>[r] & \Pb_{Y \times_X Y \times_X Y}^{\vee} \ar[d]^{\AJ^*} \ar@<-0.7ex>[r] \ar@<0.7ex>[r]  \ar@<-2.3ex>[r] \ar@<2.3ex>[r] & \cdots \\
\widehat{\Pb}_{X} \ar[r]^{\Res} & \widehat{\Pb}_{Y} \ar@<-0.7ex>[r] \ar@<0.7ex>[r] & \widehat{\Pb}_{Y \times_X Y} \ar@<-1.4ex>[r] \ar[r] \ar@<1.4ex>[r] & \widehat{\Pb}_{Y \times_X Y \times_X Y} \ar@<-0.7ex>[r] \ar@<0.7ex>[r]  \ar@<-2.3ex>[r] \ar@<2.3ex>[r] & \cdots
}
\]
with each row being an augmented co-simplicial object in strict Picard stacks. The augmentation map of the first row induces an equivalence of $\Pb_{a,X}^{\vee}$ with the geometric realisation of the first row, as can be seen by dualising the co-descent statement of Corollary \ref{cor:co-descent}. The augmentation map of the second row induces an equivalence of $\widehat{\Pb}_{a,X}$ with the geometric realisation, as a consequence of descent theory for $\widehat{\Pb}$ (see Lemma \ref{lemma:descent}).

Since the morphisms $\AJ^*$ are equivalences of strict Picard stacks, we obtain an induced equivalence of geometric realisations $\Pb_{a,X}^{\vee} \simeq \widehat{\Pb}_{a,X}$. 
\end{proof}

\bibliographystyle{amsalpha}
\bibliography{master}
\end{document}